\newtheorem{thm}{Theorem}[section]
\newtheorem{lemma}[thm]{Lemma}
\newtheorem{cor}[thm]{Corollary}
\newtheorem{prop}[thm]{Proposition}
\newtheorem{defn}[thm]{Definition}
\newtheorem{example}[thm]{Example}
\newtheorem{remark}[thm]{Remark}
\newtheorem*{claim*}{Claim}
\newcommand{\Z}{\mathbb{Z}} 
\newcommand{\C}{\mathbb{C}} 
\newcommand{\wt}[1]{\widetilde{#1}} 
\newcommand{\ainf}{A_\infty} 
\newcommand{\F}[1]{\mathcal{F}(#1)} 
\newcommand{\EW}[1]{\mathcal{W}_\Diamond(#1)} 
\newcommand{\cHH}[1]{HH^*_c(#1)} 
\newcommand{\dd}{\mathfrak{d}}
\newcommand{\KK}{\mathfrak{K}}
\newcommand{\qq}{\mathbf{q}}
\newcommand{\Qv}{{\mathcal{Q}^\vee}} 
\newcommand{\Qvv}{{\mathcal{Q}^\vee_0}} 
\newcommand{\Qve}{{\mathcal{Q}^\vee_1}} 
\newcommand{\psurf}{\Sigma^\vee \setminus\Qvv} 
\newcommand{\psurfv}{\Sigma^\vee\setminus\Qvv}
\newcommand{\SH}{SH^*(X_\Qv)} 
\newcommand{\ksmap}{\mathsf{ks}} 
\newcommand{\ks}[1]{\mathsf{ks}(#1)} 
\newcommand{\lL}{\mathbb{L}} 
\newcommand{\Xe}{X_e} 
\newcommand{\xe}{x_e} 
\newcommand{\bXe}{\bar{X}_e} 
\newcommand{\unit}{\mathbf{1}_\lL} 
\newcommand{\pt}{\mathbf{pt}}
\newcommand{\WL}{W_\lL} 
\newcommand{\CF}{CF^*(\lL,\lL)}  
\newcommand{\CFloop}{CF_{cyc}^*(\lL,\lL)} 
\newcommand{\mk}{\left\{m_k\right\}_{k\geq1}} 
\newcommand{\CFb}{CF^*((\lL,b),(\lL,b))} 
\newcommand{\mbb}[1]{m_1^{b,b}\left(#1\right)} 
\newcommand{\mkb}{m_k^{b,\ldots,b}} 
\newcommand{\deltap}{\delta_+} 
\newcommand{\deltam}{\delta_-} 
\newcommand{\MCA}{A_\lL} 
\newcommand{\CFbloop}{CF_{cyc}^*((\lL,b),(\lL,b))} 
\newcommand{\HFbloop}{HF_{cyc}^*((\lL,b),(\lL,b))} 
\newcommand{\Q}{\mathcal{Q}} 
\newcommand{\PM}{\mathcal{P}} 
\newcommand{\Jac}{J} 
\newcommand{\mf}{\mathrm{mf}(W)} 
\newcommand{\Cent}[1]{\mathcal{Z}#1} 
\newcommand{\Loop}[1]{#1_{cyc}} 
\newcommand{\antipos}[1]{\mathcal{O}^+(#1)} 
\newcommand{\antineg}[1]{\mathcal{O}^-(#1)} 
\newcommand{\antipm}[1]{\mathcal{O}^\pm(#1)} 
\newcommand{\cP}{\mathcal{P}} 
\newcommand{\cJ}{\mathcal{J}} 
\newcommand{\MP}{MP\left(\Q\right)} 
\newcommand{\zig}[1]{\mathrm{zig}(#1)} 
\newcommand{\zag}[1]{\mathrm{zag}(#1)} 
\numberwithin{equation}{section}
\begin{document}
\author[Dahye Cho]{Dahye Cho}
\address{G-LAMP AI-Bio Convergence Research Institute \\ Soongsil University \\ 369 Sangdo-ro \\ Dongjak-Gu \\ Seoul 06978 \\ Korea} 
\email{dahye.cho@gmail.com}

\author[Hansol Hong]{Hansol Hong}
\address{Department of Mathematics \\ Yonsei University \\ 50 Yonsei-Ro \\ Seodaemun-Gu \\ Seoul 03722 \\ Korea} 
\email{hansolhong@yonsei.ac.kr}

\author[Hyeongjun Jin]{Hyeongjun Jin}
\address{Department of Mathematics \\ Yonsei University \\ 50 Yonsei-Ro \\ Seodaemun-Gu \\ Seoul 03722 \\ Korea} 
\email{hj\_jin@yonsei.ac.kr}

\author[Sangwook Lee]{Sangwook Lee}
\address{Department of Mathematics and Integrative Institute for Basic Science \\ Soongsil University \\ 369 Sangdo-ro \\ Dongjak-Gu \\ Seoul 06978 \\ Korea} 
\email{sangwook@ssu.ac.kr}

\title[Closed-String Mirror Symmetry for Dimer Models]{Closed-String Mirror Symmetry for Dimer Models}
\begin{abstract}
For all punctured Riemann surfaces arising as mirror curves of toric Calabi–Yau threefolds, we show that their symplectic cohomology is isomorphic to the compactly supported Hochschild cohomology of the noncommutative Landau–Ginzburg model defined on the NCCR of the associated toric Gorenstein singularities.
This mirror correspondence is established by analyzing the closed–open map with boundaries on certain combinatorially defined immersed Lagrangians in the Riemann surface, yielding a ring isomorphism.
We give a detailed examination of the properties of this isomorphism, emphasizing its relationship to the singularity structure. 
\end{abstract}

\maketitle
\tableofcontents

\section{Introduction}

Dimer models are quivers $\mathcal{Q}$ embedded in a Riemann surface $\Sigma$ such that the embedding partitions $\Sigma$ into faces with opposite boundary orientations. 
This structure gives rise to nontrivial relations in the path algebra of $\Q$, resulting in an interesting noncommutative algebra called the  Jacobi algebra  $J=\mathrm{Jac}\,(\Q)$. 
The algebra $\Jac$ comes equipped with a distinguished central element $W$, so that $(\Jac, W)$ defines a \emph{noncommutative Landau--Ginzburg (LG) model}, which we take as our mirror $B$-model throughout.

In particular, when $\mathcal{Q}$ satisfies a suitable consistency condition and $\Sigma$ is a torus, it is known that $\Jac$ provides a  noncommutative crepant resolution (NCCR) of a $3$-dimensional toric Gorenstein singularity $Y_\Q$. 
Hence, $(\Jac, W)$ is equivalent to a Landau--Ginzburg model on a toric crepant resolution $\widetilde{Y_\Q}$ of the same singularity. 
In fact, the potential $\tilde{W}$ on the commutative resolution $\widetilde{Y_\Q}$ is induced by the height function for which all vectors in the toric fan have height~$1$. 
\[
\xymatrix{
(\Jac, W) \ar[rd]_{\mathrm{NCCR}} && (\widetilde{Y_\Q}, \tilde{W}) \ar[ld]^{\mathrm{CCR}} \\
& (Y_\Q, \underline{W}) &
}
\]
Locally, on each toric $\mathbb{C}^3$-chart, the potential is given by the product of three affine coordinates. 

The mirror of the Landau--Ginzburg (LG) model $(\widetilde{Y_\Q}, \tilde{W})$ 
is known to be a hypersurface in $(\mathbb{C}^\times)^2$, namely a punctured 
Riemann surface $X$ whose tropical degeneration can be described combinatorially 
by the fan data of $\widetilde{Y_\Q}$.  
In this paper, we study the mirror symmetry of $X$, incorporating 
the noncommutative LG model $(\Jac, W)$ on the mirror side.  
Specifically, we begin with a dimer model $\mathcal{Q}$ on a torus and 
investigate the Lagrangian Floer theory of the punctured Riemann surface $X$, 
which serves as the mirror $A$-model to the noncommutative LG model $(\Jac, W)$.

Mirror symmetry in this direction---taking the commutative LG model 
$(\widetilde{Y_\Q}, \tilde{W})$ as the $B$-model---has been extensively 
studied in the literature; see, for example, \cite{HLee, CHL_gl, PS}.  
The opposite direction of mirror symmetry, where 
$(\widetilde{Y_\Q}, \tilde{W})$ plays the role of the $A$-model, 
was investigated in~\cite{AA}, where the authors construct and compute 
the (partially wrapped) Fukaya category associated with 
$(\widetilde{Y_\Q}, \tilde{W})$.

On the other hand, the punctured Riemann surface $X$ can, in fact, be obtained combinatorially via \emph{dimer duality}. 
Given a dimer $\mathcal{Q}$, one performs a simple operation---consisting of cutting, flipping, and pasting---to obtain another dimer $\mathcal{Q}^\vee$, called the \emph{dual dimer}, which is embedded in another surface $\Sigma^\vee$ (see \ref{subsec:zigzag Lagrangians}). 
This process is involutive: applying the same operation to $\mathcal{Q}^\vee$ recovers the original dimer $\mathcal{Q}$. 
The punctured surface $X=X_\Qv$ is then obtained by removing the vertices of $\mathcal{Q}^\vee$ from $\Sigma^\vee$. 

Bocklandt~\cite{Bock16} explained the homological mirror symmetry for $X_\Qv$ using this dimer duality, by comparing the wrapped Floer theory of the edges of $\mathcal{Q}^\vee$---which define noncompact Lagrangians in $X_\Qv$ by construction---with certain matrix factorizations of~$W$.

\begin{thm}\cite[Corollary 9.4.]{Bock16} Let $\Q$ be a consistent dimer which admits a perfect matching (see \ref{subsec:pmpmpm}), and  $X_{\Q^\vee}:=\Sigma^\vee \setminus \Q^\vee_0$.Then there is a fully faithful embedding
\[
\mathrm{WFuk}(X_\Qv)\hookrightarrow \mathrm{mf}(\Jac,W).
\]
where the right hand side is the matrix factorization category of the noncommutative Landau-Ginzburg model $(J,W)=(\mathrm{Jac}\,(\Q),W)$.
\end{thm}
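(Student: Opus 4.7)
The plan is to construct an explicit $A_\infty$-functor $\Phi \colon \mathrm{WFuk}(X_\Qv) \to \mathrm{mf}(\Jac, W)$ on a set of generators, and then verify that it is quasi-fully-faithful by identifying morphism complexes on both sides with the same combinatorial model coming from the dimer $\Q$. The natural generators on the $A$-side are the Lagrangian arcs $\Xe$ corresponding to edges $e$ of the dual dimer $\Qv$; since $\Qv$ is embedded in $\Sigma^\vee$ and we puncture $\Sigma^\vee$ along $\Qvv$, each edge of $\Qv$ defines a properly embedded noncompact Lagrangian in $X_\Qv$ (the \emph{zigzag Lagrangians}), and these collectively split-generate $\mathrm{WFuk}(X_\Qv)$ by standard arguments for punctured surfaces.

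On the $B$-side, the strategy is to assign to each edge $e$ of $\Qv$ an explicit matrix factorization $M_e$ of $W$ over $\Jac$. Concretely, I would use that an edge $e$ of $\Qv$ corresponds, under dimer duality, to an arrow $a_e$ of $\Q$ together with a decomposition of $W$ near $a_e$. The \emph{zig} and \emph{zag} paths emanating from $a_e$ in $\Q$ give two elements whose product equals $W$ on the relevant idempotent summand of $\Jac$; twisting these into a $\mathbb{Z}/2$-graded two-term complex yields $M_e$. The assignment $\Xe \mapsto M_e$ on objects is the object-level part of $\Phi$.

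Next, I would compute both sides of $\mathrm{Hom}(\Xe, X_{e'})$. On the Floer side, wrapped intersections of two zigzag Lagrangians in $X_\Qv$ can be read off directly from the combinatorics of the dimer: every intersection point corresponds to a path in $\Q$ from one end of $a_e$ to one end of $a_{e'}$ modulo the Jacobi relations. On the matrix factorization side, morphisms $\mathrm{Hom}_{\mathrm{mf}}(M_e, M_{e'})$ can be computed directly from the Jacobi algebra in terms of the same path combinatorics. This gives a linear isomorphism on $\mathrm{Hom}$, and one then upgrades it to a chain map by matching the differentials---both are encoded by multiplying a path by the appropriate zig or zag factor of $W$.

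The main obstacle is promoting this to an honest $A_\infty$-functor and verifying compatibility of all higher products $m_k$, not just the linear identification on cohomology. On the Fukaya side, $m_k$ counts immersed holomorphic polygons in $X_\Qv$ with boundary on the zigzag Lagrangians; on the matrix-factorization side, composition is algebraic concatenation of paths modulo $\partial_a W$. The key combinatorial lemma to nail down is that immersed polygons in $X_\Qv$ with corners at prescribed intersection points are in bijection (with correct signs and area weights, in case one works over a Novikov ring) with the monomials appearing in the iterated composition of the corresponding morphisms of matrix factorizations. Once this bijection is established, full faithfulness follows from a term-by-term comparison, and fullness on each $\mathrm{Hom}$ space upgrades to a fully faithful embedding since the generators $\Xe$ split-generate $\mathrm{WFuk}(X_\Qv)$. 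I would expect the hardest technical step to be keeping track of the signs and the orientation data coming from the consistency condition on $\Q$, since this is exactly what ensures that the boundary cycles of the polygons close up to give the central potential $W$ rather than an inhomogeneous expression.
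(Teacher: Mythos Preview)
This theorem is not proved in the paper: it is stated in the introduction with an explicit citation to \cite[Corollary~9.4]{Bock16}, and the paper simply quotes it as background. There is no ``paper's own proof'' to compare against.

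That said, your sketch is broadly in the spirit of Bocklandt's argument, but a few points are off. First, a terminological slip: in this paper the name \emph{zigzag Lagrangians} is reserved for the compact immersed circles $\lL = \bigoplus L_i$ built from zigzag cycles in $\Qv$ (Section~\ref{subsec:zigzag Lagrangians}); the noncompact arcs you want are simply the edges of $\Qv$ sitting inside $X_\Qv = \Sigma^\vee \setminus \Qvv$, and the paper's notation $\Xe$ refers to Floer generators of $\CF$, not to arc objects. Second, your description of the matrix factorization attached to an edge is slightly imprecise: an edge $e \in \Q_1$ sits on two faces of opposite sign, and the two factors in $M_e$ are the complementary boundary paths $\partial F^+ \setminus e$ and $\partial F^- \setminus e$ (this is exactly $\partial_e \Phi_\Q$ split into its two terms), not ``zig and zag paths emanating from $a_e$.'' Third, the statement is only a fully faithful \emph{embedding}, so you do not need (and Bocklandt does not prove in that corollary) that the image split-generates $\mathrm{mf}(\Jac,W)$; your use of split-generation of $\mathrm{WFuk}(X_\Qv)$ by arcs is fine for extending the functor from the arcs to the whole category, but be careful not to conflate the two directions.

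The core mechanism you identify---matching wrapped intersections of arcs with paths in $\Q$ modulo Jacobi relations, and matching polygon counts with path concatenation---is indeed what drives Bocklandt's proof, and zigzag consistency is precisely what makes the dictionary go through without extra relations appearing.
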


Alternatively, the dimer duality can be interpreted through the mirror construction based on Maurer--Cartan deformations of immersed Lagrangians via weak bounding cochains. 
In this approach, we start with the dual dimer $\mathcal{Q}^\vee$. 
Certain cyclic paths in the path algebra of $\mathcal{Q}^\vee$ canonically determine an immersed Lagrangian 
\[
\mathbb{L} = \bigoplus_i L_i
\]
in $X_\Qv$, where each $L_i$ is an immersed circle in $\Sigma^\vee$. 

In~\cite{CHL21}, the second-named author established a local mirror construction by allowing weak bounding cochains (in the sense of~\cite{FOOO09a}) to vary with noncommutative coefficients. 
This yields a noncommutative LG model as the weak Maurer--Cartan moduli space parameterizing Floer-theoretic deformations of~$\mathbb{L}$. 
Applying this construction to $\mathbb{L}$ recovers precisely the same noncommutative LG model $(\Jac, W)$; see Section~\ref{sec:floerpre} for further details on the corresponding formal deformation theory.

One of the advantages of this new interpretation is that it enables us to derive $B$-model invariants of $(\Jac, W)$ directly from the Floer-theoretic data of $\mathbb{L}$, thereby providing a geometric framework for comparing them with $A$-model invariants purely through the Floer theory of $\mathbb{L} \subset X$.  
For instance, the compactly supported Hochschild cohomology $HH_c^\ast(\mf)$---which serves as the closed-string $B$-model invariant of $(\Jac, W)$---can be obtained from a suitable variant of the Floer complex of $\mathbb{L}$, as described below.

In the same spirit as \cite{CT13}, one may alternatively define $HH_c^\ast(\mf)$ as the Hochschild cohomology $HH^\ast(\Jac, W)$ of the curved algebra $(\Jac, W)$, which is computed explicitly in \cite{Wong21} via a spectral sequence argument.  Our first result shows that $HH^\ast(\Jac, W)$ can be realized as a boundary deformation of the Floer complex $CF(\mathbb{L}, \mathbb{L})$, where the (weak) bounding cochain is allowed to vary over the entire (noncommutative) weak Maurer--Cartan space.

\begin{thm} There is a chain-level identification which induces
\[
\HFbloop  \;\cong\; HH^\ast(\Jac,W) \;=\; \cHH{\mf}.
\]
Here, ``cyc'' indicates that $\HFbloop$ carries a natural quiver structure arising from the composability of morphisms between $L_i$ and $L_j$, and we collect cyclic elements with respect to this structure. 
(The precise definition of $\HFbloop$ will be given in Section~\ref{sec:floerpre}.)
\end{thm}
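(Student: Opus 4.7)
The plan is to construct a chain-level map $\Phi : \CFbloop \to CH^\ast(\Jac, W)$ from the cyclic Floer complex into the Hochschild complex of the curved algebra $(\Jac, W)$, and to verify that $\Phi$ is an isomorphism of cochain complexes. The equality $HH^\ast(\Jac, W) = \cHH{\mf}$ is then a purely algebraic consequence of \cite{Wong21} together with the curved-algebra model of compactly supported Hochschild cohomology developed in \cite{CT13}, so the entire theorem reduces to identifying the deformed Floer complex with the Hochschild complex of $(\Jac, W)$.

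First I would make the cyclic structure explicit. Since $\lL = \bigoplus_i L_i$, the Floer complex splits as $CF^\ast(\lL,\lL) = \bigoplus_{i,j} CF^\ast(L_i, L_j)$, and by construction the minimal generators---intersection points between zigzag curves $L_i$ and $L_j$---are in canonical bijection with arrows of $\Q$ joining the corresponding vertices. A cyclic tensor $x_1 \otimes \cdots \otimes x_k$ is one in which the head of $x_s$ matches the tail of $x_{s+1}$ cyclically, so that cyclic tensors are naturally identified, using the presentation of $\Jac$ by paths in $\Q$ recalled in Section~\ref{sec:floerpre}, with bar cochains taking values in $\Jac$.

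Next, the deformed operation $\mbb{x}$ is computed by summing over holomorphic polygons in $X_\Qv$ with boundary on $\lL$ and insertions of the universal weak bounding cochain $b = \sum_\alpha \alpha \cdot T_\alpha$ along the boundary arcs. I would split the contributions into two types: polygons whose boundary word reads off a simple tensor in $\Jac^{\otimes k}$, which reproduce the usual bar differential of $\Jac$ after applying the $A_\infty$-relations for $\lL$; and polygons whose boundary sweeps a closed loop of $b$'s at a single corner, which by the weak Maurer--Cartan equation $m(e^b) = W \cdot \unit$ produce insertions of $W$ on one side. Taken together, $\Phi$ intertwines $\mbb{-}$ with $\partial_{\mathrm{Hoch}} + [W,-]$, the differential computing $HH^\ast(\Jac, W)$. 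Injectivity and surjectivity of $\Phi$ at the chain level follow from the explicit bijection between intersection generators and arrows, and passing to cohomology gives the desired isomorphism.

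The main obstacle will be the combinatorial and sign-theoretic match between holomorphic polygons and bar cochains in this noncommutative setting. Because $b$ takes values in a noncommutative algebra, the ordering of the $T_\alpha$-insertions along a disk boundary must be tracked carefully, and the restriction to cyclic tensors is precisely what is needed to compare Floer disks (whose boundary carries a canonical cyclic order) with Hochschild cochains on the nose rather than up to cyclic permutation. A secondary technical issue is ensuring that the deformed operations $\mbb{-}$ are well-defined when $b$ ranges over the full noncommutative weak Maurer--Cartan space; I would address this by working formally over the universal coefficient ring and invoking the standard $T$-adic convergence arguments available for immersed Floer theory on exact punctured surfaces.
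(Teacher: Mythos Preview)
Your proposal has a structural gap: the cyclic Floer complex $\CFbloop$ cannot be chain-isomorphic to the bar Hochschild complex $CC_c^\ast(\Jac,W)$. The Floer complex has exactly four types of generators over $\Jac$---the units $\mathbf{1}_i$, the odd immersed generators $X_e$, the even immersed generators $\bar X_e$, and the point classes $\pt_i$---whereas the bar complex has a summand $\hom_\Bbbk(\Jac^{\otimes n},\Jac)$ for every $n\ge 0$. The bijection between intersection points and arrows of $\Q$ that you invoke gives you only the degree-$1$ piece, not the full bar tower, so your claimed chain-level bijection of generators cannot hold as written.

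What the paper actually does is different in two essential ways. First, it identifies $\CFbloop$ at the chain level with the Hochschild cochains computed from the \emph{Koszul} resolution $J^\bullet$ of $\Jac$ (a length-$3$ bimodule resolution coming from the Calabi--Yau-$3$ structure), which has exactly the same four graded pieces $J_{cyc},\,(J\otimes_\Bbbk E^\ast)_{cyc},\,(J\otimes_\Bbbk E)_{cyc},\,J_{cyc}$. Second, the paper does not match the full differential $m_1^{b,b}$ to $d_{\mathrm{Hoch}}+d_W$ directly. Instead it introduces an auxiliary ``exterior'' $\mathbb{Z}$-grading on the Floer complex, splits $m_1^{b,b}=\delta_+ +\delta_-$ into pieces of degree $+1$ and $-1$, and runs a spectral-sequence argument: the pair $(\CFbloop,\delta_+)$ is checked to be chain-isomorphic to the Koszul Hochschild complex (so the $E_1$ page is $HH^\ast(\Jac,\Jac)$), and then $\delta_-$ is verified to coincide with $d_W=\{W,-\}$ on that $E_1$ page by evaluating both on Wong's explicit generators $\partial_{\mathcal P_i}$, $\partial_\alpha$, $\theta_v$, $\psi_{i,j}$ of $HH^\ast(\Jac,\Jac)$. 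Your heuristic of sorting polygons into ``bar-differential'' versus ``$W$-insertion'' types is morally the $\delta_+/\delta_-$ split, but making it precise requires the exterior grading and the Koszul model, and the comparison of $\delta_-$ with $d_W$ is done only at the $E_1$ level, not on the nose. As a minor point, no $T$-adic completion is needed here: a fractional grading built from perfect matchings forces all the deformed operations to be finite sums.
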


With this in mind, one can naturally construct a geometric map relating $HH^\ast(\Jac,W)$ to the corresponding A-model invariants of $X$, namely, the symplectic cohomology $SH^\ast(X)$.
Indeed, one has a chain-level map
\begin{equation}\label{eqn:ksintro}
SC^\ast(X) \;\longrightarrow\; \CFbloop,
\end{equation}
which can be viewed as an adaptation of the Kodaira--Spencer map of~\cite{FOOO16}, counting pseudo-holomorphic disks with interior insertions from ambient chains in $SC^\ast (X)$. 
The map~\eqref{eqn:ksintro} is closer in spirit to that of~\cite{HJL25}, in the sense that the mirror critical locus is non-isolated and thus involves higher-degree information as well. 
The construction presented here provides a noncommutative generalization of this. 

On the cohomology level, the induced map
\[
\mathsf{KS}: SH^\ast(X) \;\longrightarrow\; HH^\ast(\Jac,W)
\]
is automatically a ring homomorphism, whose proof essentially goes back to~\cite{FOOO16} and relies on degeneration arguments in the TQFT formalism. 
In this paper, we establish the closed-string mirror symmetry between $X$ and $(\Jac,W)$ by showing that $\mathsf{KS}$ is an isomorphism. 
Our main result is as follows.

\begin{thm}\label{thm:closed-string mirror symmetry}
The Kodaira--Spencer map
\[
\mathsf{KS}: SH^\ast(X) \;\xrightarrow{\;\cong\;}\; \cHH{\mf}
\]
is a ring isomorphism.
\end{thm}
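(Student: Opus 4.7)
The plan is to factor $\mathsf{KS}$ through the cyclic, $b$-deformed Floer complex of $\lL$ and then reduce the isomorphism statement to a split-generation argument on the $A$-side. By \eqref{eqn:ksintro}, the Kodaira--Spencer map already factors on cohomology as
\[
\SH \xrightarrow{\;\mathsf{CO}_b\;} \HFbloop \xrightarrow{\;\cong\;} \cHH{\mf},
\]
where the second arrow is the chain-level quasi-isomorphism proved in the earlier theorem, and the first is the closed--open map weighted by the noncommutative weak bounding cochain $b$. Since the ring-homomorphism property is automatic from the TQFT degeneration already noted, it suffices to show that the first arrow is bijective.

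Next I would establish that the union of zigzag circles $\lL=\bigoplus_i L_i$ split-generates the wrapped Fukaya category $\W{X_\Qv}$. Bocklandt's fully faithful embedding $\W{X_\Qv}\hookrightarrow \mf$ identifies each edge of $\Qv$ with a specific matrix factorization, and one can realize each zigzag circle $L_i$ as an iterated cone on these edge objects using the dimer structure around each face of $\Qv$; this mirrors the resolution of the simple $\Jac$-modules over the NCCR on the $B$-side. Combined with standard generation results for wrapped Fukaya categories of punctured surfaces, this yields split-generation.

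With split-generation in hand, I would invoke an adaptation of Ganatra's open--closed isomorphism theorem to the weakly obstructed, immersed setting with noncommutative coefficients: when $\lL$ split-generates $\W{X_\Qv}$, the $b$-deformed closed--open map $\mathsf{CO}_b$ is an isomorphism onto the cyclic Hochschild cohomology $\HFbloop$. The cyclic restriction on the target precisely encodes the composability constraint coming from the quiver decomposition of the self-intersections of $\lL$, so no Hochschild classes on the $\cHH{\mf}$ side are missed. Composing with the chain-level identification yields the desired isomorphism.

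The principal obstacle lies in this last Ganatra-style step. The standard proof requires a smooth, compact, Calabi--Yau $A_\infty$-algebra, whereas here $(\MCA, W\cdot\unit)$ is curved, $b$ varies over a noncommutative weak Maurer--Cartan space, and $\lL$ is immersed; one must verify that the Calabi--Yau pairing underlying Ganatra's argument survives these deformations and that the disks with interior insertions behave well under the chosen perturbations. A fallback route is a direct puncture-by-puncture computation: Reeb orbits around each puncture of $X_\Qv$ furnish explicit generators of $\SH$, and the disks contributing to $\mathsf{CO}_b$ of these orbits can be enumerated from the local dimer combinatorics around the corresponding vertex of $\Qv$, where they should match the classes in $\cHH{\mf}$ coming from Wong's spectral sequence; combined with the ring-homomorphism property, a local match at each puncture upgrades to the global isomorphism.
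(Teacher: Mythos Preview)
Your primary route has a conceptual gap that is deeper than the technical difficulties you flag. Ganatra's theorem concludes that $\mathcal{CO}:SH^*(X)\to HH^*(\mathcal{W}(X))$ is an isomorphism once a generator with suitable Calabi--Yau data is in hand; its target is the Hochschild cohomology of the wrapped Fukaya category. The target of the map you need, $\HFbloop$, is not that object: it is the self-Floer cohomology of $\lL$ deformed by the \emph{universal} noncommutative bounding cochain and then restricted to quiver-cyclic elements. Split-generation by $\lL$ would let you compute $HH^*(\mathcal{W}(X))$ as $HH^*$ of the endomorphism $A_\infty$-algebra of $\lL$, but there is no general statement identifying the latter with $\HFbloop$, and Bocklandt's functor is only a fully faithful embedding into $\mf$, so you cannot simply transport Ganatra's isomorphism across it either. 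In short, the ``adapted Ganatra'' step is not a matter of extending the proof to curved/immersed/noncommutative coefficients; the two cohomology theories are genuinely different, and you have not supplied a bridge between them.

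Your fallback is in fact what the paper does, but it is substantial work rather than a safety net. The paper filters both sides by the exterior grading, identifies the $E_1$-page of $\CFbloop$ with $HH^*(\Jac,\Jac)$ and the induced $d_1$ with $\{W,-\}$, and then computes $\mathsf{KS}$ explicitly on a carefully chosen set of generators. On the even side the key point is that the sum of all primitive orbits around punctures dual to \emph{parallel} zigzag cycles in a fixed homology class $-\eta$ maps to $x_\eta\unit$, while individual such orbits hit the extra classes $\Psi_{i,j}$ recording codimension-two singularities of $Y_\Q$; this is proved by an explicit polygon count together with a divisibility-by-$W$ argument using corner matchings. On the odd side one chooses specific Morse cycles $p,q\in H^1(X;\mathbb{C})$ mapping to $U,V$, shows the remaining Morse classes hit the $\Theta_v$'s, and then finishes by a dimension count and a module-structure argument over the degree-zero subring. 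None of these steps follows from ``local dimer combinatorics'' without the global input of perfect matchings and the spectral-sequence bookkeeping, so your sketch as written does not yet constitute a proof.
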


The Kodaira--Spencer map $\mathsf{KS}$ considered here differs substantially from the one introduced in~\cite{FOOO16}. 
Even when restricted to the degree-zero component (with respect to the usual $\mathbb{Z}/2$-grading), its image is no longer a scalar multiple of the unit class
\[
\mathbf{1}_{\mathbb{L}} \in \HFbloop = \cHH{\mf}.
\]
In fact, the subring generated by the unit class $\mathbf{1}_{\mathbb{L}} = \sum_i \mathbf{1}_{L_i}$ is strictly smaller than $SH^{\mathrm{even}}(X)$ whenever the associated toric variety $Y_{\mathcal{Q}}$ has singularities along codimension-two strata.  Intuitively, this phenomenon can be understood from the perspective of 
the commutative crepant resolution $\widetilde{Y_\Q} \to Y_\Q$, as illustrated below.

\medskip

\noindent\textbf{The smooth case.} 
Consider first the nonsingular situation, where $X=X_\Qv$ is a pair of pants and $Y_\Q = \mathbb{C}^3$. In this case, $W:Y\to \mathbb{C}$ is given by $W=xyz$.
There is a natural correspondence between three components of conical ends at punctures of $X$ and the three coordinate axes of the mirror $\mathbb{C}^3$, which together form the critical locus of~$W$. 
The subring of $SH^*(X)$ spanned by even and odd $\mathbb{Z}_{>0}$-familes of Reeb orbits encircling each puncture of~$X$ is isomorphic to $\mathbb{C}[x,dx]$, which can be interpreted as the holomorphic de Rham complex of the corresponding coordinate axis in $\mathbb{C}^3$.

\medskip

\noindent\textbf{The singular case.} 
When $Y_\Q$ carries nontrivial singularities, the situation becomes more subtle. In Figure~\ref{fig:exintro}, the surface $X=X_\Qv$ is an elliptic curve with seven punctures, and its mirror is a Landau--Ginzburg model on the NCCR $\Jac$ of the toric Gorenstein singularity $Y_\Q$. 
The toric fan of $Y_\Q$ is the cone spanned by lattice vectors lying in a polygon $P_\Q$ (the pentagon in Figure~\ref{fig:exintro}) contained in the height-one hyperplane (of $N$). 

\begin{figure}[h]
\includegraphics[scale=0.5]{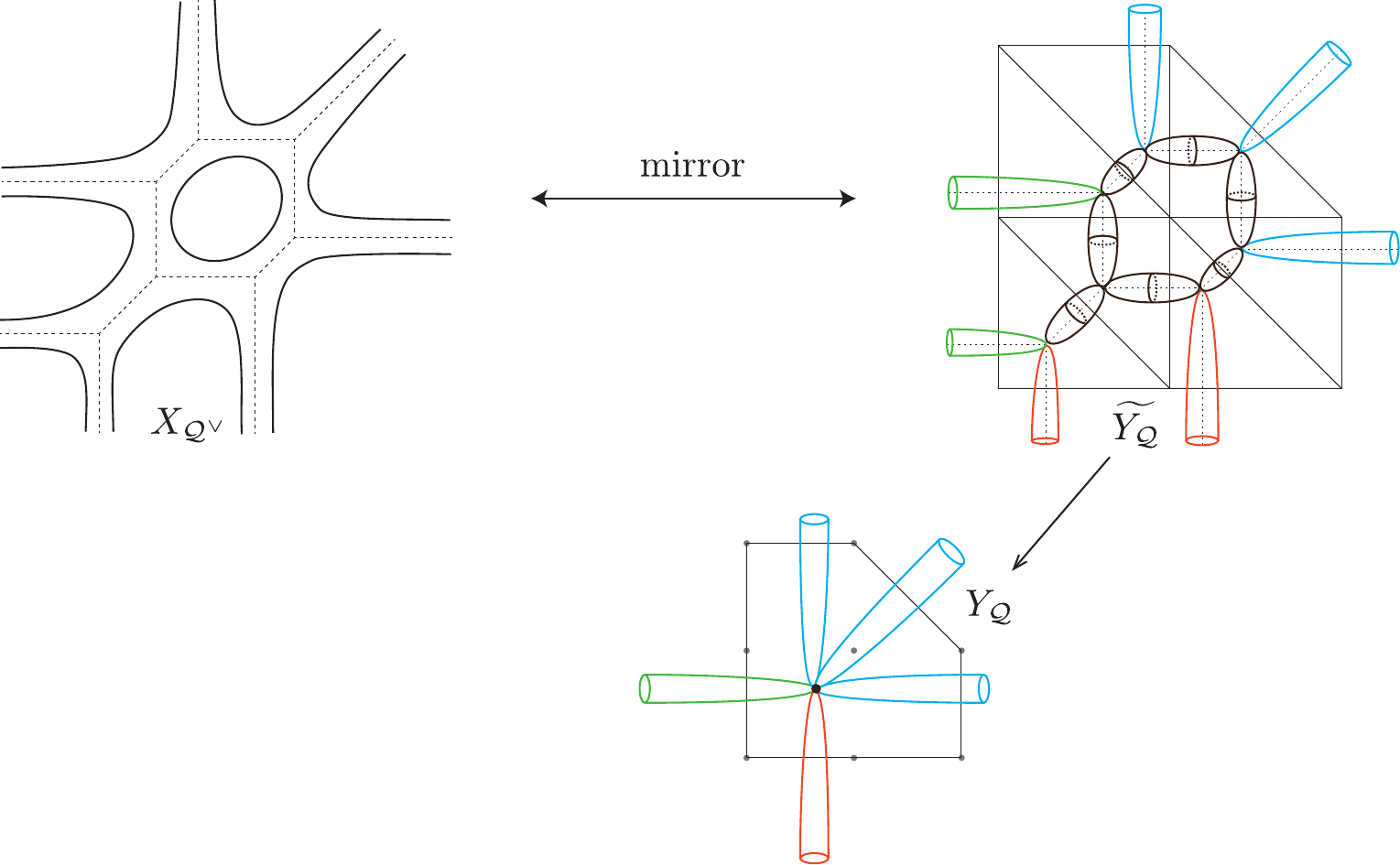}
\caption{A punctured Riemann surface, the mirror toric Gorenstein singularity, and its toric crepant resolution}\label{fig:exintro}
\end{figure}

Unlike the smooth case, $Y_\Q$ possesses only five irreducible codimension-two strata, so two expected components are ``missing.'' The subring generated by the unit class in $ \HFbloop$ can captures these five irreducible components, only.
Examining a toric crepant resolution $\widetilde{Y_\Q} $ obtained by any simplicial decomposition of $P_\Q$,
we find that $\widetilde{Y_\Q}$ indeed contains the expected number of codimension-two strata (each isomorphic to~$\mathbb{C}$); however, under the resolution map, 
\[
\widetilde{Y_\Q} \longrightarrow Y_\Q,
\]
certain components are identified, creating the singularities of $Y_\Q$.

Therefore, by comparing the orbits around the punctures of $X$ with their $\mathsf{KS}$-images, one can identify precisely the additional elements in $HF_{\mathrm{cyc}}^{\mathrm{even}} ((\mathbb{L},b),(\mathbb{L},b))$ that correspond to the singularities along the codimension-two strata of~$Y_\Q$. 
Roughly, these elements are the special classes $\Psi_{i,j} \in HH^{\mathrm{even}}_c(\mathrm{mf}(W))$ appearing in Wong’s computation \cite{Wong21}. 
The odd-degree component of $\mathsf{KS}$ exhibits a similar feature, reflecting the singularity of $Y_\Q$ at the torus fixed point and giving rise to distinguished classes $\Theta_v \in HH^{\mathrm{odd}}_c(\mathrm{mf}(W))$.
This behavior in the odd-degree part is closely related to the fact that the immersed Lagrangian $\mathbb{L}$ has multiple irreducible components, as will be clarified in \ref{subsec:KSmapandsing}.

\begin{remark}
Our previous work \cite{HJL25} deals with the special case where $Y_\Q$ is an orbifold given as an abelian quotient of $\mathbb{C}^3$. 
The classes $\Psi_{i,j}$ and $\Theta_v$ are analogous to the \emph{twisted sectors} of the (closed-string) $B$-model invariants introduced in \cite{HJL25}, which account for the orbifold singularities of the toric orbifold $Y_\Q$ in this case.
\end{remark}

We conclude by noting that the methods developed in this paper may provide a general framework for geometrically relating the closed-string $A$-model invariants of a symplectic manifold $X$ to those of its (local) noncommutative mirror $B$-model $(Y_{\mathbb{L}}, W_{\mathbb{L}})$, constructed from the Maurer--Cartan deformation theory of any given Lagrangian$\mathbb{L}$. 
We conjecture that $\HFbloop$ computes the Hochschild-type invariant of $(Y_{\mathbb{L}}, W_{\mathbb{L}})$. This can indeed be verified in the uncurved case (i.e., when $W_{\mathbb{L}} = 0$), up to certain completion issues (see \cite{HLT}). 
Furthermore, we expect this invariant to coincide with the ``local piece'' of the symplectic cohomology $SH^*(X)$ that can be probed by $\mathbb{L}$ via the open--closed string maps.

\begin{center}
{\bf Notations}
\end{center}

\begin{itemize}
\item $\Q$ : a zigzag consistent dimer embedded in a torus ($B$-model quiver)
\item $\C\Q$ : the path algebra of $\Q$ over $\C$
\item $\C\Q_{cyc} = \C\Q/[\C\Q,\C\Q]$ : the set of cyclic words in $\C\Q$
\item $\Q_2 = \Q_2^+ \cup \Q_2^-$ : the set of faces, positive faces, and negative faces 
\item $\Phi_\Q = \sum_{F\in\Q_2^+} \partial F - \sum_{F\in\Q_2^-} \partial F$ : the spacetime superpotential in $\C\Q_{cyc}$
\item $\Jac = \C\Q/(\partial_e\Phi_\Q:e \in \Q_1)$ : the Jacobi algebra for the dimer $\Q$
\item $\Cent{\Jac}$ : the center of $\Jac$
\item $\Jac_{cyc}$ : cyclic subalgebra of $\Jac$
\item $\Qv$ : the dual dimer obtained from $\Q$ ($A$-model quiver)
\item $\Sigma^\vee$ : the compact Riemann surface where the dimer $\Qv$ is embedded
\item $X=X_{Q^\vee}=\psurf$ : the punctured Riemann surface for the pair $(\Sigma^\vee,\Q_0^\vee)$
\end{itemize}



\section*{Acknowledgements}
The authors would like to thank Siu-Cheong Lau, Ju Tan, and Myungjin Jeong for valuable discussions and insightful comments. The first and second authors were supported by the National Research Foundation of Korea (NRF) grant funded by the Korean government (MSIT) (RS-2025-00517727, 2020R1A5A1016126).
The first and fourth authors were supported by the NRF through the Basic Science Research Program (RS-2021-NR060140) and the G-LAMP Program (RS-2025-25441317), both funded by the Ministry of Education.

\section{Floer theory preliminaries: noncommutative mirror construction}\label{sec:floerpre}
 
In this section, we briefly review the general construction of a (local) noncommutative mirror arising from the algebraic deformation of the $A_\infty$-algebra $\CF$ associated to a compact exact Lagrangian $\mathbb{L}$. 
In our main application, we will employ the pearl trajectory model for $\CF$, whose technical details are provided in~\cite{Sei11}. 
When $\mathbb{L}$ is an immersed Lagrangian with transverse self-intersections, $\CF$ acquires additional generators coming from each self-intersection point, which record the branch jumps between the local sheets of $\lL$ meeting at that point. 
We will give a concrete description of their roles as inputs of the $A_\infty$-operations in Section~\ref{subsec:zigzag Lagrangians} for the case $\dim \mathbb{L} = 1$. 
For more general aspects of immersed Lagrangian Floer theory, see~\cite{AJ10}.

\subsection{Noncommutative deformation of $\CF$ by (weak) bounding cochains}\label{subsec:nc deformation}
Let $(E,\omega=d\lambda)$ be a Liouville manifold, and let $\F{E}$ denote the compact Fukaya category. We fix some possibly immersed, mutually transversal, closed, spin, oriented, unobstructed Lagrangians $L_1,\ldots,L_k \in \mathrm{Ob}(\F{E})$ and let
\begin{equation*}
\lL=\bigoplus_{i=1}^k L_i.
\end{equation*}
Following \cite{AJ10}, its (immersed) Lagrangian Floer complex takes the form
\begin{equation}\label{eqn:immersed Lagrangian Floer complex}
\begin{split}
\CF 
& = \bigoplus_{i=1}^k \left[H^*(L_i;\mathbb{C}) \oplus \bigoplus_{p \in L_i} \mathrm{Span}_{\mathbb{C}}\{(p_-,p_+),(p_+,p_-)\} \right] \oplus \bigoplus_{\substack{i,j=1,\ldots,k\\i \neq j}} CF^*(L_i, L_j)
\end{split}
\end{equation}
where $p\in L_i$ runs over all immersed points of $L_i$, and $p_\pm$ are points in the normalization so that $(p_-,p_+),(p_+,p_-)$ indicate corresponding branch-jumps.
For $H^*(L_i;\mathbb{C})$ in \eqref{eqn:immersed Lagrangian Floer complex}, we take a Morse complex of a perfect Morse function on $L_i$. For later use, we denote by $\mathbf{1}_{L_i} \in C^*(L_i)$ the unit in $H^*(L_i)$. In our setting, $\mathbf{1}_{L_i}$ is simply the maximum of the chosen perfect Morse function on $L_i$.

We have a structure of unital $\ainf$-algebra 
\begin{equation}\label{eqn:ainf algebra for L}
(\CF,\mk)
\end{equation}
over $\mathbb{C}$. Below, we apply the noncommutative base change as in \cite{CHL21} to \eqref{eqn:ainf algebra for L}. Let us first fix a few notations and conventions.

\begin{defn}[{\cite[Definition 6.1]{CHL21}}]\label{def:endomorphism quiver}
We define the quiver $\Q^{\mathbb{L}}$ as follows.  
It has a vertex $v_i$ for each irreducible component $L_i$ of $\mathbb{L}$, and an edge $x$ from $v_i$ to $v_j$ for each odd-degree immersed Floer generator 
$X \in CF^*(L_i, L_j)$.
\end{defn}

The edge $x$ in $\Q^{\mathbb{L}}$ corresponding to $X$ will later be regarded as a formal coordinate function (dual to $X$) on the noncommutative mirror.  
For each such variable $x$ from $v_i$ to $v_j$, we define its \textbf{head} and \textbf{tail} by
\[
h(x) = v_j, \qquad t(x) = v_i.
\]
For each generator $X \in CF^*(L_i, L_j) \subset \CF$, we take the \textbf{head} and \textbf{tail} as
\[
h(X) = v_i, \qquad t(X) = v_j.
\]
For a Morse--Bott generator in $H^*(L_i)$, both the head and the tail are simply $v_i$.

To be more precise, let $\{\Xe\}_{e\in I}$ be the set of all odd degree immersed generators indexed by some set $I$. Consider a set of noncommutative formal variables $\xe$. We label an edge in $\Q^\lL$ by $x_e$ when it corresponds to $X_e$, and hence we have
\begin{equation*}
h(\xe)=t(\Xe), \;\; t(\xe)=h(\Xe).
\end{equation*}
\footnote{This means that we assign an arrow from $i$ (tail) to $j$ (head) for a morphism from $L_j$ to $L_i$. This seems a little unnatural, but is unavoidable since our convention of composing morphisms in the  $A_\infty$-structure follows \cite{FOOO09a} which is opposite to the usual convention of composing functions.}
We assign degrees for them as $|\xe| = 1-|\Xe|$. 
Juxtaposition of these formal variables $x_e$ defines the product structure, for which we use the \emph{(backward) concatenation}
\begin{equation*}
x_{e'} \cdot x_{e} = \left\{\begin{array}{ll} x_{e'} x_{e} & \text{if } t(x_{e'})=h(x_{e}), \\ 0 & \text{otherwise} \end{array}\right.
\end{equation*}
This results in the path algebra of the quiver $\Q^\lL$
\begin{equation*}
K :=\mathbb{C} \Q^\lL =\mathbb{C} \langle x_{e_l}\cdots x_{e_1} :  e_i \in I \rangle
\end{equation*}
generated by words or paths forms by $x_e$'s. The maps $h$ and $t$ have natural extensions to $K$, 
$$h(x_{e_l}\cdots x_{e_1})=h(x_{e_l}),\;\; t(x_{e_l}\cdots x_{e_1})=t(x_{e_1}).$$

Note that $K$ is a bimodule over the semisimple ring $\Bbbk=\mathbb{C} \pi_1 \oplus \cdots \oplus \mathbb{C} \pi_k$
spanned by idempotents $\pi_i$ (length zero paths) with $h(\pi_i) = t(\pi_i) = v_i$ such that
\begin{equation*}
\pi_j \cdot K \cdot \pi_i = R\langle x_{e_l}\cdots x_{e_1}:  h(x_{e_l}\cdots x_{e_1})= v_j,\; t(x_{e_l}\cdots x_{e_1})=v_i\rangle.
\end{equation*}
Similarly, $\CF$ also has a $\Bbbk$-bimodule structure, but with the opposite convention:
\begin{equation*}
\begin{array}{c}
\pi_i \cdot \CF \cdot \pi_j =  CF^*(L_i,L_j).
\end{array}
\end{equation*}
We then make the base change of $\ainf$-algebra $\CF$ to obtain
\begin{equation}\label{eqn:nc tensor}
K \otimes_\Bbbk \CF.
\end{equation}
Therefore, for a word $f \in K$ and a generator $X\in \CF$ is a nontrivial element $fX (=f \otimes X) \in K\otimes_\Bbbk \CF$ only if $h(X)=t(f)$.

The $\ainf$-structure on \eqref{eqn:nc tensor} is a linear extension on $\mk$ in \eqref{eqn:ainf algebra for L}, but it is sensitive to the order in which coefficients are pulled out, due to their noncommutativity.

\begin{defn}[{\cite[Definition 2.6]{CHL21}}]\label{def:nc extension}
Let $f_1 X_1,\ldots,f_k X_k \in K \otimes_\Bbbk \CF$. The $\ainf$-structure $\mk$ on \eqref{eqn:ainf algebra for L} extends to an $\ainf$-structure on \eqref{eqn:nc tensor} by the formula
\begin{equation}\label{eqn:nc ainf structure}
m_k(f_1 X_1,\ldots,f_k X_k) :=  f_k \cdots f_2\cdot f_1 m_k(X_1,\ldots,X_k).
\end{equation}
\end{defn}
It is not difficult to see that $\mk$ in Definition \ref{def:nc extension} satisfies $\ainf$-equations (see \cite[Lemma 2.9]{CHL21}).

Nontrivial generators $f X\in K \otimes_\Bbbk \CF$ such that $h(f) = t(X)$ will be referred to as \emph{cyclic paths} or simply \emph{cycles}. The span of all cyclic paths in \eqref{eqn:nc tensor} is denoted by $\CFloop$. Visually, for a cycle $fX \in K \otimes_\Bbbk CF^*(L_i,L_j)$, the assigned arrows are depicted as
\begin{center}
\begin{tikzcd}
L_i & [-3.1em] \bullet \ar[r, bend left = 24, "f"] & \bullet \ar[l, bend left = 24, "X"] & [-3.1em] L_j.
\end{tikzcd}
\end{center}

\begin{prop}\label{prop:loops form a subalgebra}
$\CFloop$ is an $\ainf$-subalgebra of $K\otimes_\Bbbk \CF$.
\end{prop}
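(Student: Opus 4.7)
The proof is essentially a bookkeeping verification that the head/tail conventions on $K$ (which are opposite to those on $\CF$) conspire with the reversed coefficient order in Definition~\ref{def:nc extension} to preserve cyclicity. The plan is to start from $k$ cyclic inputs
\[
f_1 X_1,\,\ldots,\,f_k X_k \in \CFloop
\]
and show that the output $m_k(f_1 X_1,\ldots,f_k X_k)$ is again a sum of cyclic elements (possibly zero). By definition of a cycle, each $f_i X_i$ is nontrivial in $K\otimes_\Bbbk \CF$ (so $t(f_i) = h(X_i)$) and satisfies the cyclicity condition $h(f_i) = t(X_i)$.

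The first step is to observe that the Floer operations respect the $\Bbbk$-bimodule structure: $m_k(X_1,\ldots,X_k)$ vanishes unless the inputs are composable in $\CF$, namely $t(X_i) = h(X_{i+1})$ for $i=1,\ldots,k-1$, in which case
\[
m_k(X_1,\ldots,X_k) \in CF^*(L_{h(X_1)}, L_{t(X_k)}).
\]
In particular, the head of the output equals $h(X_1)$ and its tail equals $t(X_k)$. So we may assume composability of the $X_i$'s, as otherwise the output is already zero (and zero is trivially cyclic).

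The second step is to verify that $f_k \cdot f_{k-1} \cdots f_1$ is a nonzero element of $K$ whose head and tail match up to make $f_k\cdots f_1 \cdot m_k(X_1,\ldots,X_k)$ a cycle. For each consecutive pair,
\[
t(f_{i+1}) = h(X_{i+1}) = t(X_i) = h(f_i),
\]
where the middle equality is the composability of the $X_i$'s and the outer equalities are cyclicity of $f_i X_i$ and $f_{i+1}X_{i+1}$. Hence each multiplication $f_{i+1}\cdot f_i$ in $K$ is nontrivial, so $f_k\cdots f_1$ is nonzero with
\[
h(f_k\cdots f_1) = h(f_k) = t(X_k), \qquad t(f_k\cdots f_1) = t(f_1) = h(X_1).
\]
Comparing with the head/tail of $m_k(X_1,\ldots,X_k)$ computed above, the tensor product $f_k\cdots f_1 \cdot m_k(X_1,\ldots,X_k)$ is a nontrivial element of $K\otimes_\Bbbk \CF$ satisfying the cyclicity condition, i.e., a cycle. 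Closure under the differential $m_1$ follows immediately from the same argument, since $m_1$ preserves the $\Bbbk$-bimodule grading and hence the head/tail data of each generator is unchanged.

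The main (minor) obstacle is keeping track of the two opposing conventions: arrows in $\Q^\lL$ are defined with reversed head/tail with respect to the underlying Floer generators, and coefficients in $K$ are pulled out in reversed order. Writing everything explicitly in terms of vertex labels $v_{a_0},\ldots,v_{a_k}$ with $X_i \in CF^*(L_{a_{i-1}}, L_{a_i})$ makes the two reversals cancel out and confirms the claim. No Floer-theoretic input beyond the bimodule structure of $\mk$ is needed.
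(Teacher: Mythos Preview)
Your proof is correct and follows essentially the same approach as the paper's: both verify that composability of the $X_i$'s together with cyclicity (and nontriviality) of each $f_iX_i$ forces $t(f_{i+1})=h(f_i)$, so $f=f_k\cdots f_1$ is a nonzero word with $h(f)=t(X_k)$ and $t(f)=h(X_1)$, matching the tail and head of any output generator. One tiny inaccuracy: in your chain $t(f_{i+1}) = h(X_{i+1}) = t(X_i) = h(f_i)$, the first equality comes from \emph{nontriviality} of $f_{i+1}X_{i+1}$ (i.e.\ membership in $K\otimes_\Bbbk\CF$), not cyclicity as you label it---but since cycles are by definition nontrivial, the argument stands.
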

\begin{proof}
Suppose that there is a nontrivial $m_k$-operation taking generators $X_1,\ldots,X_k$ as inputs, and its output involves a generator $Y$. Then we have  $h(X_1)=h(Y)$, $t(X_k)=t(Y)$, and $t(X_i)=h(X_{i+1})$ for $1\leq i < k$. If $f_1,\ldots,f_k\in K$ are words for which $f_i X_i$ are all cycles($1 \le i \le k$), then by Definition \ref{def:nc extension}
\begin{equation*}
m_{k}(f_1 X_1,\ldots,f_k X_k) = \cdots + (f_k  \cdots f_2 \cdot f_1) Y +\cdots =: \cdots+ fY + \cdots.
\end{equation*}
Since $h(f_i)=t(X_i)=h(X_{i+1})=t(f_{i+1})$, $f=f_k \cdots f_2 \cdot f_1$ is not zero, and we see that
\[h(f) = h(f_k) = t(X_k) = t(Y) \;\; \text{and} \;\; t(f) = t(f_1) = h(X_1) = h(Y).\]
Thus the output $fY$ is also a cycle, and the assertion follows by applying the same argument to each generator $Y$ appearing in the output.
\end{proof}

In particular, the following element of $\CFloop$ will play an central role:
\begin{equation*}
b=\sum_{e\in I} \xe\Xe \in  \CFloop.
\end{equation*}
We solve the weak Maurer–Cartan equation for $b$ in order to determine the relations among $\{ x_e \}$ that make $b$ a weak bounding cochain. Specifically, we compute
\begin{equation}\label{eqn:ncmceqn}
m(e^b) = m_1(b) + m_2(b, b) + \cdots,
\end{equation}
and seek for conditions under which the above expression becomes a $K$-linear combination of units. In practice, $m_k(b,\cdots,b)$ in \eqref{eqn:ncmceqn} is by definition given as the sum of terms of the form $m_k (x_{e_1} X_{e_1},\cdots,x_{e_k} X_{e_k})$ which can be computed using the rule \eqref{eqn:nc ainf structure}.
 Since $b$ is in odd degree, we have
\begin{equation*}
m_1(b)+m_2(b,b)+\cdots  = \sum_{i=1}^k W_{L_i} \mathbf{1}_{L_i} + \sum_{f} P_f X_f \in K \otimes_\Bbbk CF^{\mathrm{even}}(\lL,\lL)
\end{equation*}
for some series $P_f$ and $W_{L_i}$ in $x_e$'s, where $X_f$ runs over all even degree generators. 
\begin{remark}\label{rmk:convergenceissue}
In general, the element $P_f$ appearing in the equation is an infinite series, so one typically needs to complete $K$ in order to include $P_f$. The standard approach is to use Novikov coefficients and consider $T$-adic convergence of the series. However, in our setting, this procedure is unnecessary, as all series that arise in our main applications are actually finite. 
See Lemma~\ref{lem:Floer theory coefficient for zigzag Lagrangians}.
\end{remark}

We denote the sum of componentwise units $\mathbf{1}_{L_i}$ by
\begin{equation*}
\unit := \sum_{i=1}^k \mathbf{1}_{L_i} \in \CF,
\end{equation*}
and it serves as the unit of a $\ainf$-algebra $\CF$. We then define
\begin{equation*}
\WL := \sum_{i=1}^k W_{L_i} \in K,
\end{equation*}
and call it the \emph{worldsheet superpotential} or simply the \emph{potential} of $\lL$.
Since $W_{L_i} \cdot \mathbf{1}_{L_j} = \delta_{ij}$, we have
\begin{equation}\label{eqn:weakly unobstructedness}
m_1(b) + m_2(b,b) + \cdots + m_k(b,\ldots,b) + \cdots = \WL \unit
\end{equation}
modulo the relations $P_f \equiv 0$ for each $f$.
More precisely, if we denote by $\MCA$ the quotient of $K$ by the two-sided ideal $(P_f:f\in F)$, and make the further base change
\begin{equation}\label{eqn:Maurer-Cartan base change}
\MCA\otimes_K (K \otimes_\Bbbk \CF) \cong \MCA \otimes_\Bbbk \CF,
\end{equation}
then $b = \sum x_e X_e$ viewed as an element of \eqref{eqn:Maurer-Cartan base change} now solves the weak Maurer-Cartan equation \eqref{eqn:weakly unobstructedness}. The noncommutative algebra $\MCA$ will be called the (weak) Maurer-Cartan algebra of $\mathbb{L}$.

\begin{remark}
When the $\ainf$-structure is cyclic, one can identify $A_\lL$ with the \emph{Jacobi algebra} $J(\Phi_\lL) $ of the quiver $\Q^\lL$ with the  cyclic (spacetime) potential
\begin{equation*}\label{eqn:spacetime superpotential}
\Phi_\lL = \sum_k \frac{1}{k+1} \langle m_k(b,\ldots,b),b\rangle,
\end{equation*}
defined by
\begin{equation*}
J(\Phi_\lL) := K/(\partial_{\xe}\Phi_\lL:e\in I).
\end{equation*}
\end{remark}

We abuse the notation $W$ to also denote its induced element in $A_\lL$, which consists of cyclic paths in $\Q^\lL$.\footnote{The quiver structure on $K$ descends to $\MCA$ since each $P_f$ is homogenous, i.e., all of its summands share the same head and tail.} Then we have

\begin{prop}\label{prop:nclgmirror}\cite[Theorem 3.10]{CHL21}$W$ lies in the center of $A_\lL$. (In this case, $(A_\lL,W)$ is often called a noncommutative Landau-Ginzburg model.)
\end{prop}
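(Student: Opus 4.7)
The plan is to extract the centrality of $W$ from a single instance of the $A_\infty$-relations applied to an all-$b$ input string, using the weak Maurer--Cartan equation together with strict unitality. First, I would take the $A_\infty$-relations for $(b,b,\dots,b)$ and sum them over all arities. Grouping the resulting expression by the position of the inner operation, the inner sums assemble into $\sum_j m_j(b^{\otimes j})$, which by the very definition of $W$ and $\MCA$ equals $W\unit + \sum_f P_f X_f$ in $K\otimes_\Bbbk\CF$. Passing to $\MCA\otimes_\Bbbk\CF$, where every $P_f$ vanishes, yields
\[
\sum_{i,r\ge 0} m_{i+r+1}\bigl(b^{\otimes i},\, W\unit,\, b^{\otimes r}\bigr) \;=\; 0.
\]

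Next I would invoke strict unitality. Because $\mathbf{1}_{L_j}$ is a strict unit for the underlying $\CF$, and because the noncommutative extension of Definition~\ref{def:nc extension} merely threads coefficients past the Floer input, every term with $i+r+1\ge 3$ vanishes. Only the two $k=2$ terms survive, leaving
\[
m_2(W\unit,b) + m_2(b,W\unit) \;=\; 0 \quad\text{in } \MCA\otimes_\Bbbk \CF.
\]
Expanding via $m_2(f_1 Y_1, f_2 Y_2) = f_2 f_1\, m_2(Y_1,Y_2)$ with $b=\sum_e x_e X_e$ and $W\unit=\sum_j W_{L_j}\mathbf{1}_{L_j}$, and using $m_2(\mathbf{1},X_e) = X_e$ together with the odd-degree sign $m_2(X_e,\mathbf{1}) = -X_e$, the identity reduces to $\sum_e (x_e W - W x_e)\, X_e = 0$. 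Linear independence of the odd generators $X_e$ over the semisimple ring $\Bbbk$ then forces $x_e W = W x_e$ in $\MCA$ for every edge $e$; combined with the trivial commutation of $W$ with the idempotents $\pi_i$, this shows $W$ is central.

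The step I anticipate needing the most care is tracking the noncommutative extension of Definition~\ref{def:nc extension} during the $m_2$-computation: the asymmetry between the two insertions of $b$ relative to $W\unit$ must produce exactly the commutator $x_e W - W x_e$ rather than an anticommutator or a rearranged product. This is ultimately a consequence of the reversed coefficient concatenation $f_2 f_1$ in the extension rule, combined with the fact that $\mathbf{1}_{L_j}$ and $X_e$ sit in specific bimodule pieces, so that only one choice of $j$ contributes nontrivially to each summand. A secondary check is that the $A_\infty$-identity in $K\otimes_\Bbbk \CF$ descends cleanly to $\MCA\otimes_\Bbbk \CF$; this is automatic because the extension is $K$-linear in every coefficient slot, so the error terms of the form $m_{i+r+1}(b^{\otimes i}, P_f X_f, b^{\otimes r})$ have outputs whose coefficients lie in the two-sided ideal $(P_f)\subset K$ and therefore vanish in $\MCA$.
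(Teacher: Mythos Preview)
The paper does not supply its own proof of this proposition; it simply cites \cite[Theorem~3.10]{CHL21}. Your argument is correct and is precisely the standard one: feed the $A_\infty$-relations an all-$b$ input, collapse the inner sum via the weak Maurer--Cartan equation to $W\unit$, use strict unitality to kill everything except the two $m_2$-terms, and then read off $x_eW=Wx_e$ from the coefficient-extraction rule of Definition~\ref{def:nc extension}. The care you take with the reversed concatenation $f_2f_1$ and with the descent of the identity from $K$ to $\MCA$ is exactly what is needed.
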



We next deform the $A_\infty$-structure on $ \MCA \otimes_\Bbbk \CF$ by weak bounding cochain $b$. It gives rise to a new $A_\infty$ structure $\{m_k^{b,\cdots,b}\}$ defined by
\begin{equation}\label{eqn:defainfty}
\begin{split}
m_k^{b,\ldots,b}(f_1 X_1,\ldots, f_k X_k) &= m(e^b, f_1 X_1, e^b, \ldots, e^b, f_k X_k, e^b)\\
&= \sum_{l_0,\cdots,l_k \ge 0} m_{k+l_0+\cdots+l_k}(b^{\otimes l_0}, f_1 X_1, b^{\otimes l_1}, \ldots, f_k X_k, b^{\otimes l_k}).
\end{split}
\end{equation}
A similar convergence issue potentially arises in the expansion of $m_k^{b, \ldots, b}$, as discussed in Remark~\ref{rmk:convergenceissue}. However, this can again be resolved in our main application due to the finiteness property of $m_k$ operations (see Lemma~\ref{lem:Floer theory coefficient for zigzag Lagrangians}).

\begin{remark}\label{rmk:linearmkovercenter}
Readers are warned that the $A_\infty$-structure $m_k^{b, \ldots, b}$ is not linear over $A_\lL$ since variables are no longer commutative. However, it is still linear over the center of $A_\lL$, which is obvious from \eqref{eqn:nc ainf structure} and \eqref{eqn:defainfty}.
\end{remark}

Note that we now have a nontrivial curvature $m_0^b(1) = m(e^b) = \WL \unit$. Following the standard argument as in \cite[§3.6]{FOOO09a}, this gives us a curved $\ainf$-algebra
\begin{equation}\label{eqn:boundary deformed Maurer-Cartan base change}
\CFb := \left(\MCA \otimes_\Bbbk \CF, \{\mkb\}_{k\geq0}\right)
\end{equation}
over $\MCA$. Obviously, $\CFb$ is unital, and $(m_1^{b,b})^2=0$ since $b$ is a weak bounding cochain.

Analogously to $\CFloop$, one can collect cyclic elements in $\CFb$. 

\begin{defn}\label{def:cflbcyc}
We denote this by $\CFbloop$ a subspace of of $\CFb$ generated by elements $f X ( =f \otimes X) \in \MCA \otimes_\Bbbk \CF$ such that $h(f) = t(X)$.
\end{defn}

Since $b$ itself is cyclic, the same proof as in Proposition \ref{prop:loops form a subalgebra} tells us that $\CFbloop$ is an $A_\infty$ subalgebra of $\CFb$. Moreover, it contains the potential $W_\lL$ and the unit $\unit$ (hence it is unital). We will relate this with some closed-string invariant of the noncommutative LG model $(A_\lL,W)$.

\section{Dimer models and nc mirror symmetry for punctured Riemanns surfaces} 

We briefly recall the theory of dimer models, which are quivers $\Q$ embedded in a Riemann surface $\Sigma$ in a particularly structured way. In particular, Bocklandt \cite{Bock16} provides a combinatorial construction of a noncommutative mirror to the punctured Riemann surface $\Sigma \setminus \Q_0$ using dimer duality. This construction agrees with the noncommutative Landau–Ginzburg model given in Proposition~\ref{prop:nclgmirror}, as will be reviewed in Section~\ref{subsec:zigzag Lagrangians} below. We will also specify the closed-string $B$-model invariant of this noncommutative mirror that we will work with in our main application.

Our exposition will largely follow \cite{Wong21}, including its notation.

\subsection{Dimer models and noncommutative LG mirror}\label{subsec:dimer models}
Dimer models are a special class of quivers that naturally fit into the geometry of punctured Riemann surfaces, and are defined as follows.
\begin{defn}\label{def:dimer models}
A {\bf dimer model} $\Q$ on a compact Riemann surface $\Sigma$ is an embedded quiver
\begin{equation*}
\Q=(\Q_0,\Q_1) \subset \Sigma
\end{equation*}
such that every boundary of each component $F$ of $\Sigma\setminus\Q$ consists of at least 3 quiver edges that are altogether arranged either in the direction of the orientation (positive) of $\partial F$ or in the opposite direction (negative).
\end{defn}


We call a component of $\Sigma\setminus\Q$ as a \emph{face}. We denote the collection of faces by $\Q_2$. Faces of a dimer on a surface $\Sigma$ can be divided into two classes, \emph{positive faces} $\Q_2^+$ and \emph{negative faces} $\Q_2^-$, depending on whether the alignment of their boundaries and quiver arrows is positive or negative. Note that, given a dimer model $\Q$ on a surface $\Sigma$, every edge $e\in\Q_1$ is adjacent to exactly one face in each of $\Q_2^\pm$.

%
%

We define the cyclic (spacetime) potential of $\Q$ by
\begin{equation*}
\Phi_\Q = \sum_{F\in\Q_2^+}\partial F-\sum_{F\in\Q_2^-}\partial F
\end{equation*}
regarded as an element in $\C\Q_{cyc} = \C\Q/[\C\Q,\C\Q]$, where 
$$\C\Q:=\mathbb{C} \langle x_{e_k} \cdots x_{e_1} : h(x_{e_i}) = t(x_{e_{i+1}}), e_i \in \Q_1 \rangle$$ 
is the path algebra of $\Q$. Then 
the Jacobi algebra $J=\mathrm{Jac}\,(\Q)$ for $(\Q, \Phi_\Q)$ is defined by
\begin{equation*}
J=\mathrm{Jac}\,(\Q,\Phi_\Q) = \C\Q/\left(\partial_{x_e} \Phi : e \in \Q_1\right)
\end{equation*}
where $\partial_{x_{e}}$ is  the cyclic derivative at $e\in\Q_1$
\begin{equation*}
\partial_{x_e}=\frac{\partial}{\partial x_{e}} : \C\Q_{cyc} \to \C\Q, \qquad x_{a_k} \cdots x_{a_1} \mapsto \sum_{a_i=e} x_{a_{i-1}}\cdots x_{a_1} x_{a_k} \cdots x_{a_{i+1}}.
\end{equation*}
The relation $\partial_{x_{e}} \Phi =0$ in $\Jac$ is ofter referred to as the Jacobi relation.

%
%

There is a special central element $W \in \Cent{\Jac}$ which is defined as follows. To each $v\in\Q_0$, we fix an arbitrary face $F_v\in\Q_2$ that has $v$ in its corner. Let $W_v\in\Jac$ be the image of a boundary word of $F_v$ that is starting and ending at $v$. It is easy to see that $W_v$ is independent from the choice $F_v$ due to Jacobi relations. We set
\begin{equation*}
W = \sum_{v\in\Q_0}W_v
\end{equation*}
and call it the potential of $\Jac$. One can show that $W$ lies in the center of $\Jac$.

Cyclic paths in $\Jac$ form a subalgebra, defined by
\begin{equation*}
\Jac_{cyc} = \bigoplus_{v\in\Q_0} v \cdot \Jac \cdot v \subset \Jac,
\end{equation*}
where $v$ denote the idempotent (length 0 path) at $v$. It is known that $\Jac_{cyc}$ is a semisimple algebra over the center of $\Jac$, since every cyclic path in $\Jac$ can be uniquely extended to a central element whose component at each vertex $v$ is the original cyclic path. (In particular, its rank as a semisimple algebra equals the number of vertices in $\Q$.)

%

\subsection{Zigzag paths and consistency}
We now recall the zigzag cycles in a dimer $\Q$. If two consecutive edges $a'a$(i.e., $t(a') = h(a)$) lies on the boundary of a positive face, they are called a \emph{zigzag}. ($a$ is a \emph{zig} and $a'$ is a \emph{zag}.) Likewise, if $a'a$ lies on the boundary of a negative face, they are called a \emph{zagzig}, where in this case $a$ is a \emph{zag} and $a'$ is a \emph{zig}.
Fix a universal cover $\wt{\Sigma} \to \Sigma$ and let $\wt{\Q}$ be a lifting of the quiver $\Q$ in $\wt{\Sigma}$, which is obviously a dimer model $\wt{\Sigma}$. We write $\tilde{e} \in \wt{\Q}_1$ for  any lift of an edge $e \in \Q_1$.

\begin{figure}[h]
\includegraphics[scale=0.2]{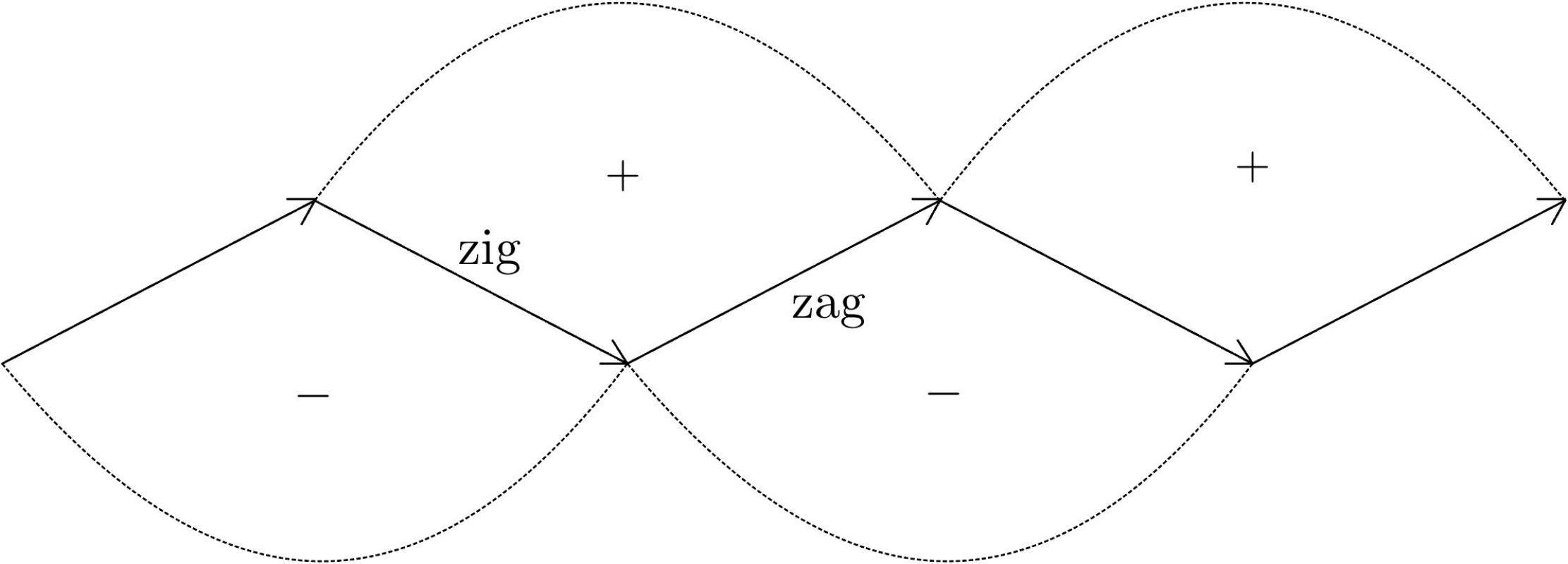}
\caption{Zigs and zags}
\label{fig:zigs and zags}
\end{figure}

\begin{defn}\label{def:zig and zag rays}
A {\bf zig ray} (resp. a {\bf zag ray}) at $\tilde{e}$ is a path
\begin{equation}\label{eqn:zigzag rays}
\cdots \tilde{b}_l \tilde{a}_l \cdots \tilde{b}_1 \tilde{a}_1
\end{equation}
in $\wt{\Q}$ such that $\tilde{a}_1=\tilde{e}$ and $\tilde{b}_l \tilde{a}_l$ are zigzags (resp. zagzigs), $\tilde{a}_{l+1}\tilde{b}_l$ are zagzigs (resp. zigzags) for all $l \geq 1$.
\end{defn}
We will focus on dimers whose zigzag rays satisfy the following condition, which guarantees several desirable algebraic properties of $\Jac$.

\begin{defn}\label{def:zigzag consistency}
A dimer $\Q$ is called to be {\bf zigzag consistent} if for every edge $e\in\Q_1$ the zig and zag rays in $\wt{\Q}_1$ that are emanating from $\tilde{e}$ does not intersect in edges other than the initial edge $\tilde{e}$.
\end{defn}

%
%
%
%

As $\Q$ is finite, any zig or zag ray \eqref{eqn:zigzag rays} projects down to a periodic path
\begin{equation}\label{eqn:projection of zigzag rays}
\cdots b_l a_l \cdots b_1 a_1
\end{equation}
in $\Q$. The shortest repeat in \eqref{eqn:projection of zigzag rays} gives rise to a closed path of even length, say $2L$,
\begin{equation}\label{eqn:zigzag cycles}
Z=b_L a_L \ldots b_1 a_1.
\end{equation}
It will be referred to as a \emph{zigzag cycle} in $\Q$. Indeed, zigzag cycles give rise to cycles in $\Sigma$ from the embedding $\Q\subset\Sigma$. It is often convenient to regard zigzag cycles as cyclic words in $\C\Q_{cyc}$ so that
\begin{equation*}
Z = a_1 b_L a_L \cdots a_2 b_1 = b_1 a_1 b_L \cdots b_2 a_2 = \cdots = a_L b_{L-1} \cdots a_1 b_L \in \C\Q_{cyc}
\end{equation*}
We will always use $\{a_l\}$ to denote the set of zigs whenever a zigzag cycle is represented in the form \eqref{eqn:zigzag cycles}.

A zigzag cycle $Z$ can be assigned with a homology class $[Z] \in H_1(\Sigma;\Z)$. If $\Q$ is zigzag consistent, $[Z]$ is always a nonzero generator in $H_1(\Sigma;\Z)$ known to be primitive, meaning that it is a part of some $\Z$-basis. 
We will focus particularly on consistent dimer models embedded in a torus.  
In this setting, the following proposition characterizes the intersection behavior of zigzag cycles in terms of their homology classes.

\begin{prop}[{\cite{Bro12}, \cite{Bock12}}]\label{prop:intersection of zigzag cycles}
Let $\Q$ be a zigzag consistent dimer on a torus $T^2$ and $Z_1$ and $Z_2$ be two distinct zigzag cycles in $\Q$.
\begin{enumerate}
\item[(i)] If $[Z_1]$ and $[Z_2]$ are linearly independent homology classes in $H_1(T^2;\Z)$, then there exists at least one edge $e\in \Q$ that appear in both $Z_1$ and $Z_2$.
\item[(ii)] If $[Z_1]=[Z_2]$, then they do not intersect.
\end{enumerate}
\end{prop}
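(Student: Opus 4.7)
The approach is to pass to the universal cover $\pi \colon \R^2 \to T^2$ and exploit the rigidity that zigzag consistency (Definition~\ref{def:zigzag consistency}) imposes on lifted zigzag trajectories. By consistency, each zigzag cycle $Z$ lifts to an embedded bi-infinite path $\widetilde{Z} \subset \R^2$ whose asymptotic direction equals the primitive homology class $[Z] \in H_1(T^2;\Z) \cong \Z^2$; every other lift of $Z$ is a lattice translate $v + \widetilde{Z}$ with $v \in \Z^2$, and hence has the same asymptotic direction.

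For (ii), assume $[Z_1]=[Z_2]$, so any lift of $Z_2$ is parallel to a fixed lift $\widetilde{Z}_1$. The plan is to show that two distinct parallel zigzag lifts can share neither an edge nor a vertex. A zigzag path is uniquely reconstructible in both directions from a single directed edge together with its zig/zag type inherited from the adjacent positive face. Hence, if two parallel lifts shared an edge $\widetilde{e}$ they would coincide as paths, contradicting $Z_1 \ne Z_2$. The case of a shared vertex $\widetilde{v}$ without a shared edge is then ruled out by observing that, since the two lifts have the same asymptotic direction, the local combinatorics of the faces of $\Q$ at $\widetilde{v}$ force the zig (or zag) ray of $\widetilde{Z}_1$ from $\widetilde{v}$ to share its initial edge with $\widetilde{Z}_2$, reducing to the previous case.

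For (i), the algebraic intersection number of $Z_1$ and $Z_2$ on $T^2$ equals $|\det([Z_1],[Z_2])| \ne 0$, so $Z_1 \cap Z_2 \ne \emptyset$. Since both cycles are unions of quiver edges, every intersection point lies on the $1$-skeleton of $\Q$, hence is either an interior point of a common edge (which is what we want) or a common vertex. Assume toward a contradiction that there is no common edge. Pick lifts $\widetilde{Z}_1, \widetilde{Z}_2$ meeting at some $\widetilde{v}$. Because $[Z_1]$ and $[Z_2]$ are linearly independent, the four rays — two zigs and two zags — issuing from $\widetilde{v}$ along $\widetilde{Z}_1 \cup \widetilde{Z}_2$ split a punctured neighborhood of $\widetilde{v}$ into transverse sectors and then diverge in different directions. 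Any ray of $\widetilde{Z}_1$ is then forced to re-enter the periodic strip between adjacent translates of $\widetilde{Z}_2$. Consistency applied to the edges of $\widetilde{Z}_1$ at $\widetilde{v}$ then forbids repeated transverse re-entries at vertices only, so some such re-entry must share an edge with a translate of $\widetilde{Z}_2$, giving an edge shared by $Z_1$ and $Z_2$ on $T^2$ and yielding the contradiction.

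The main obstacle is the final step of (i), upgrading a vertex-only crossing into a genuine shared edge; this is the combinatorial core of the statement (originally handled in \cite{Bro12, Bock12}), and it genuinely uses the full strength of zigzag consistency — indeed, inconsistent dimers admit zigzag cycles whose intersections can be confined to isolated vertices, so any valid argument must invoke Definition~\ref{def:zigzag consistency} in an essential way.
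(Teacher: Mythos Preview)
The paper does not supply its own proof of this proposition; it is simply quoted from \cite{Bro12,Bock12} and used as a black box. So there is nothing to compare your argument against from within the paper.

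That said, your sketch has a genuine gap in (ii) that you do not flag. You write that a zigzag path is uniquely reconstructible from a single directed edge \emph{together with its zig/zag type}, and then conclude: ``Hence, if two parallel lifts shared an edge $\tilde e$ they would coincide as paths.'' This ``hence'' is unjustified. Every edge $e\in\Q_1$ lies on exactly \emph{two} zigzag cycles --- one in which $e$ is a zig and one in which it is a zag --- so two distinct zigzag cycles can perfectly well share an edge without coinciding. To rule this out when $[Z_1]=[Z_2]$ you must actually use the parallelism hypothesis to show that the zig ray and the zag ray from $\tilde e$ cannot have the same asymptotic direction; this is a real consequence of zigzag consistency (the zig and zag rays from $\tilde e$ diverge and never meet again in the cover), but it needs to be argued, not asserted. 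Your vertex-only case in (ii) is similarly hand-waved.

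For (i) you correctly identify the hard step --- upgrading a vertex-only crossing to a shared edge --- and you openly acknowledge that you have not carried it out. The paragraph you give (``forbids repeated transverse re-entries at vertices only'') is not an argument; it is a description of what one hopes is true. Since this is precisely the content proved in \cite{Bro12,Bock12}, and the present paper defers to those references, your proposal as it stands is an outline rather than a proof.
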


Nonintersecting zigzag cycles will play an important role later.

\begin{defn}\label{defn:parallels}
For a zigzag consistent dimer $\Q$ on a torus, we say that two zigzag cycles are {\bf parallel} if they represent the same homology class.
\end{defn}

This terminology is justified by property (ii) of Proposition \ref{prop:intersection of zigzag cycles}.


Next, we introduce the notion of anti-zigzags. Let $Z$ be a zigzag cycle in a dimer $\Q$, and consider all the faces in $\Q_2^+$ adjacent to $Z$. Each zigzag in $Z$ forms part of the boundary of a face in $\Q_2^+$. Since these faces share the same sign, they cannot share common edges. The boundary paths of these faces complementary to $Z$ form a cycle.
We denote this by $\antipos{Z}$ and call it a \emph{positive anti-zigzag cycle} of $Z$. The cycle analogously defined from negative adjacent disks of $Z$ is denoted by $\antineg{Z}$ and is called \emph{negative anti-zigzag cycle}.

\begin{figure}[h]
\includegraphics[scale=0.2]{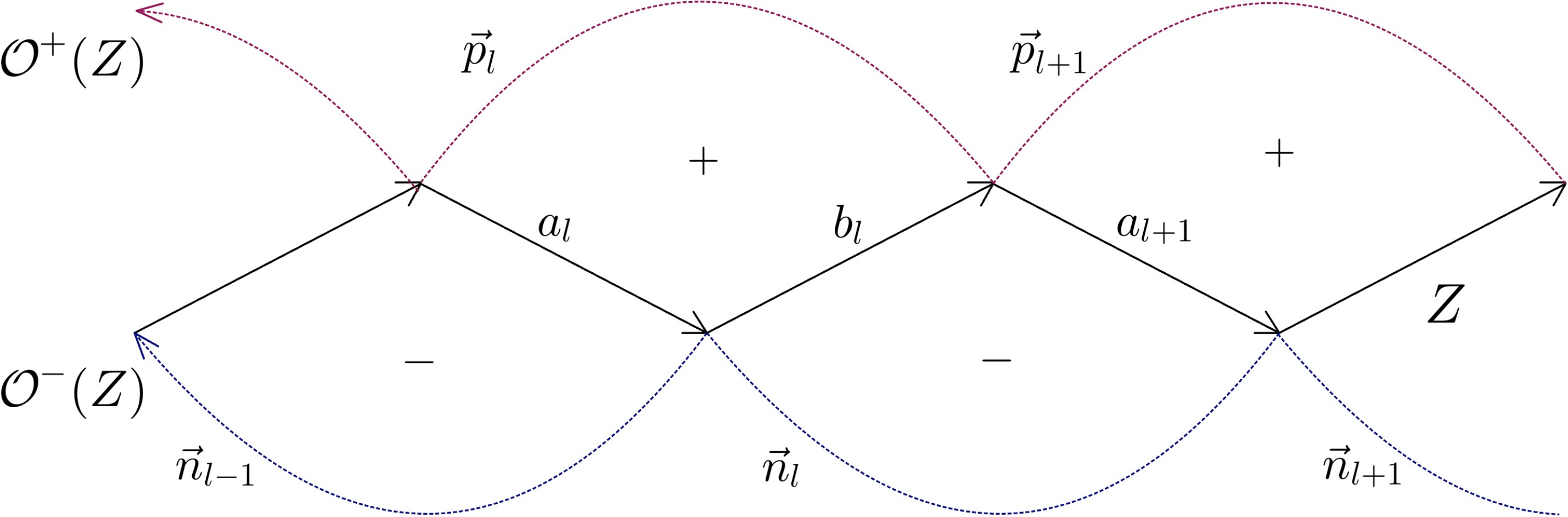}
\caption{A zigzag cycle $Z$ and its two anti-zigzags $\antipm{Z}$}
\label{fig:anti-zigzags}
\end{figure}

When zigzag cycle $Z$ is given as in \eqref{eqn:zigzag cycles}, we write
\begin{equation}\label{eqn:anti-zigzags}
\antipos{Z} = \vec{p}_1 \cdots \vec{p}_L \;\, \text{and} \;\; \antineg{Z} = \vec{n}_1 \cdots \vec{n}_L
\end{equation}
for anti-zigzags $\antipos{Z}$ and $\antineg{Z}$ (as elements in $\C\Q_{cyc}$). The indices are arranged in such a way that $\vec{p}_l b_l a_l$ are boundary words from positive disks adjacent to $Z$ and $\vec{n}_l a_{l+1} b_l$ are boundary words from negative disks adjacent to $Z$. 
Since concatenating $\antipm{Z}$ with $Z$ results in contractible loops in $\Sigma$, we see that $[\antipm{Z}]=-[Z]$ in $H_1(\Sigma;\Z)$.

\subsection{Perfect matchings}\label{subsec:pmpmpm}
Perfect matchings on a dimer is not only a tool to grade the Jacobi algebra $\Jac$, but also provides a strong link between theory of dimers and NCCR (noncommutative crepant resolution) of a 3 dimensional toric Gorenstein singularities.

\begin{defn}
A set $\PM$ of edges in $\Q$ is called a {\bf perfect matching} if for any face $F \in \Q_2$ there is exactly one edge $e \in \PM$ such that $e \in \partial F$. 
\end{defn}

Any perfect matching $\PM$ defines an obvious map 
\begin{equation*}
\deg_\PM: \Q_1 \to \mathbb{Z},  \qquad e \mapsto 
\left\{\begin{array}{l}
1 \quad e \in \PM \\
0 \quad  e \notin \PM,
\end{array}\right.
\end{equation*}
which can be easily shown to induce 
\begin{equation}\label{eqn:degpm}
\deg_\PM:\Jac \to \mathbb{C}.
\end{equation}
Observe that $\Q$ fixes a cellular decomposition of $\Sigma$, which is just $(\Q_0,\Q_1,\Q_2)$. Let us consider the associated cellular cochain complex
\begin{equation*}
\begin{tikzcd}[column sep = small]
0 \rar & \hom(Q_0, \Z) \rar{\partial} &  \hom(Q_1, \Z)  \rar{\partial} &  \hom(Q_2, \Z)  \rar & 0.
\end{tikzcd}
\end{equation*}
The coboundary map can be explicitly written as defined by $v^* \mapsto \sum_{h(e)=v} e^* - \sum_{t(e)=v} e^*$ and $e^* \mapsto \sum_{e\in\partial F} F^*$ in terms of dual basis elements. 
%
%
If we identify $\PM \subset \Q_1$ with a cellular 1-cochain $\sum_{e\in\PM} e^*$, then $\PM$ is a perfect matching if and only if we have
\begin{equation*}
\partial \PM = \sum_{F\in\Q_2} F^*.
\end{equation*}
Therefore the difference of two perfect matching defines an element in $H^1(\Sigma,\mathbb{Z}))$, and the set
%
\begin{equation}\label{eqn:matching lattice}
\{[\PM - \PM_o]:\PM \in PM(\Q)\} \subset H^1(\Sigma;\Z)
\end{equation}
for some fixed perfect matching $\PM_o$ represents a finite subset in $H^1(\Sigma;\Z)$. 

When $\Sigma\simeq T^2$, we have $H^1(T^2; \mathbb{Z}) \cong \mathbb{Z}^2$, and \eqref{eqn:matching lattice} defines a finite subset of this rank-2 lattice. The convex hull of \eqref{eqn:matching lattice} in the plane $H^1(\Sigma; \mathbb{Z}) \otimes \mathbb{R} \cong \mathbb{R}^2$ is called the \emph{matching polytope} of $\Q$, denoted by $\MP$. We classify perfect matchings as \emph{corner}, \emph{boundary}, or \emph{internal}, depending on the location of $[\PM - \PM_o]$ within $\MP$. These types reflect the behavior of perfect matchings with respect to zigzag cycles.
More specifically, let $\Q$ be a zigzag consistent dimer on a torus, and let
\begin{equation}\label{eqn:zigzag homology classes}
-\eta_1, \ldots, -\eta_N \in H_1(\Sigma; \mathbb{Z})
\end{equation}
denote the homology classes associated to the zigzag cycles. We assume these are ordered according to their counterclockwise cyclic order in the lattice $H_1(\Sigma; \mathbb{Z}) \cong \mathbb{Z}^2$.\footnote{The negative signs are chosen so that $\eta_1, \ldots, \eta_N$ correspond to the homology classes of the associated anti-zigzags.}
%

\begin{thm}[{\cite[§3.3, §3.4]{Gul08},\cite[Theorem 1.47]{BockABC}}]\label{thm:combinatorics of matching polytope for dimers in tori}
Suppose $\Q$ is a zigzag consistent dimer on a torus and $-\eta$ be one among \eqref{eqn:zigzag homology classes}.
\begin{enumerate}

\item[(i)] $\MP$ is a lattice polytope with $N$ edges, and the vectors $-\eta_1, \ldots, -\eta_N$ are the outward-pointing normals to these edges. 
\item[(ii)] Each corner of $\MP$ is represented by a unique perfect matching.
\item[(iii)] For $-\eta_i$, let $\PM_i,\PM_{i+1} \in MP(\Q)$ be two consecutive corner matchings joined by the edge normal to $-\eta_i$. Then we have
\begin{equation*}
\PM_i = \cJ_{\eta_i} \cup \bigcup_{j=1}^{m_i} \zig{Z_{i,j}}, \quad \PM_{i+1} = \cJ_{\eta_i} \cup \bigcup_{j=1}^{m_i} \zag{Z_{i,j}}.
\end{equation*}
where $Z_{i,1},\ldots,Z_{i,m_i}$ are the parallel zigzag cycles in the class $-\eta_i$, and $\cJ_{\eta_i} := \PM_i \cap \PM_{i+1}$ does not intersect $Z_{i,j}$'s.
\item[(iv)] The set 
\begin{equation}\label{eqn:boundary matchings}
\left\{\PM_I=\cJ_{\eta_i}\cup \bigcup_{j \in I} \zig{Z_{i,j}} \cup \bigcup_{j \notin I} \zag{Z_{i,j}} \in PM(\Q): I \subseteq \{1,\ldots,m_i\}\right\}
\end{equation}
exhausts the boundary matchings lying between $\PM_i$ and $\PM_{i+1}$.
\end{enumerate}
\end{thm}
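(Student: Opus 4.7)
The plan is to translate the combinatorial description of $\MP$ into a pairing computation in $H^\ast(T^2;\Z)$, using zigzag consistency as the key rigidity input. Since $\partial\PM=\sum_F F^\ast$ for every perfect matching, the difference $\PM-\PM_o$ is a cellular $1$-cocycle, giving a map $\iota:PM(\Q)\to H^1(T^2;\Z)$, $\PM\mapsto[\PM-\PM_o]$, whose image is $\MP\cap\Z^2$. Dually, for any cycle $\gamma\subset\Q$ one has $\langle\iota(\PM),[\gamma]\rangle=|\PM\cap\gamma|-|\PM_o\cap\gamma|$, so controlling the position of $\iota(\PM)$ amounts to controlling the intersections $|\PM\cap Z|$ over the zigzag cycles $Z$.

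The technical heart of the argument is a local rigidity statement: for every zigzag cycle $Z=b_La_L\cdots b_1a_1$ and every perfect matching $\PM$, the intersection $\PM\cap Z$ is contained in $\zig{Z}$ or in $\zag{Z}$, and the pairing $\langle\iota(\PM),[Z]\rangle$ is extremized exactly when $\PM\cap Z$ equals one of $\zig{Z}$ or $\zag{Z}$. To prove this, I would examine the $L$ positive faces containing the zigzags $b_la_l$ and the $L$ negative faces containing the zagzigs $a_{l+1}b_l$, using the single-edge-per-face constraint to rule out, face by face, configurations in which a zig and a zag of $Z$ both belong to $\PM$. Zigzag consistency (Definition~\ref{def:zigzag consistency}) enters here to guarantee that $Z$ is embedded in $\Sigma$ and avoids self-short-circuits, which is essential for the local rule to propagate globally around $Z$.

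With this in hand, the structural results follow. For (i)--(ii), the linear functional $\langle\,\cdot\,,-\eta_i\rangle$ on $H^1(T^2;\R)$ attains its maximum on $\iota(PM(\Q))$ exactly when $\PM\cap Z_{i,j}=\zig{Z_{i,j}}$ for every parallel cycle $j=1,\ldots,m_i$; the complementary edges are then forced uniquely by a strip-by-strip matching argument on the region bounded by consecutive parallel cycles, where Proposition~\ref{prop:intersection of zigzag cycles}(i) pins down shared edges with zigzag cycles of different directions. This produces the unique corner matching $\PM_i=\cJ_{\eta_i}\cup\bigcup_j\zig{Z_{i,j}}$ and exhibits $-\eta_i$ as an outward normal to a face of $\MP$; the adjacent corner $\PM_{i+1}$ arises by replacing every $\zig{Z_{i,j}}$ with $\zag{Z_{i,j}}$, yielding (iii). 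For (iv), any perfect matching whose class lies on the edge joining $\iota(\PM_i)$ and $\iota(\PM_{i+1})$ must satisfy $\PM\cap Z_{i,j}\in\{\zig{Z_{i,j}},\zag{Z_{i,j}}\}$ by the local rigidity, and the remaining edges are again forced to be $\cJ_{\eta_i}$, so such a matching is determined by the subset $I=\{j:\PM\cap Z_{i,j}=\zig{Z_{i,j}}\}$, recovering the enumeration \eqref{eqn:boundary matchings}; conversely each such $\PM_I$ is a valid matching by the standard bipartite flip along the cycles indexed by $I^c$.

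The main obstacle I anticipate is the local rigidity statement ruling out mixed intersection patterns $\PM\cap Z$. Without zigzag consistency, a zigzag cycle may fail to be embedded, and parallel or transverse zigzag cycles could meet in more than one edge, both of which would permit mixed matchings on $Z$ and hence create extra vertices of $\MP$. Making the face-by-face propagation rigorous likely requires tracking branches of the zigzag explicitly on the universal cover $\wt{\Sigma}$ to exclude such short-circuits, which is the core of the analysis carried out in \cite[\S3.3--3.4]{Gul08}.
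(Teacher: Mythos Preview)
The paper does not prove this theorem; it is quoted with attribution to \cite[\S3.3--3.4]{Gul08} and \cite[Theorem~1.47]{BockABC} and then used as input for the rest of the argument. There is therefore no in-paper proof to compare your proposal against, and your outline is in fact a sketch of what those references do.

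That said, one step in your sketch is overstated and would not survive as written. Your ``local rigidity statement'' claims that for \emph{every} perfect matching $\PM$ and every zigzag cycle $Z=b_La_L\cdots b_1a_1$, the intersection $\PM\cap Z$ lies entirely in $\zig{Z}$ or entirely in $\zag{Z}$. This is false for non-extremal matchings: the face constraints only exclude a zig and an \emph{adjacent} zag (sharing a face) from both lying in $\PM$, so for example $a_1$ and $b_3$ can coexist in $\PM$ when the intervening positive face contributes an edge from $\vec{p}_2$ instead. What is actually true, and what suffices, is the bound $|\PM\cap Z|\le L$ coming from the $L$ positive faces adjacent to $Z$, together with the observation that equality forces $\PM\cap Z$ to be exactly $\zig{Z}$ or exactly $\zag{Z}$: once $|\PM\cap Z|=L$, each positive face contributes one of $a_l,b_l$, and then the negative-face constraint $a_{l+1}b_l$ propagates the choice cyclically around $Z$. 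With this correction your argument for (i)--(iii) goes through, and (iv) then follows from the rigidity of $\cJ_{\eta_i}$ that you describe.
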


\begin{figure}[h]
\includegraphics[scale=0.25]{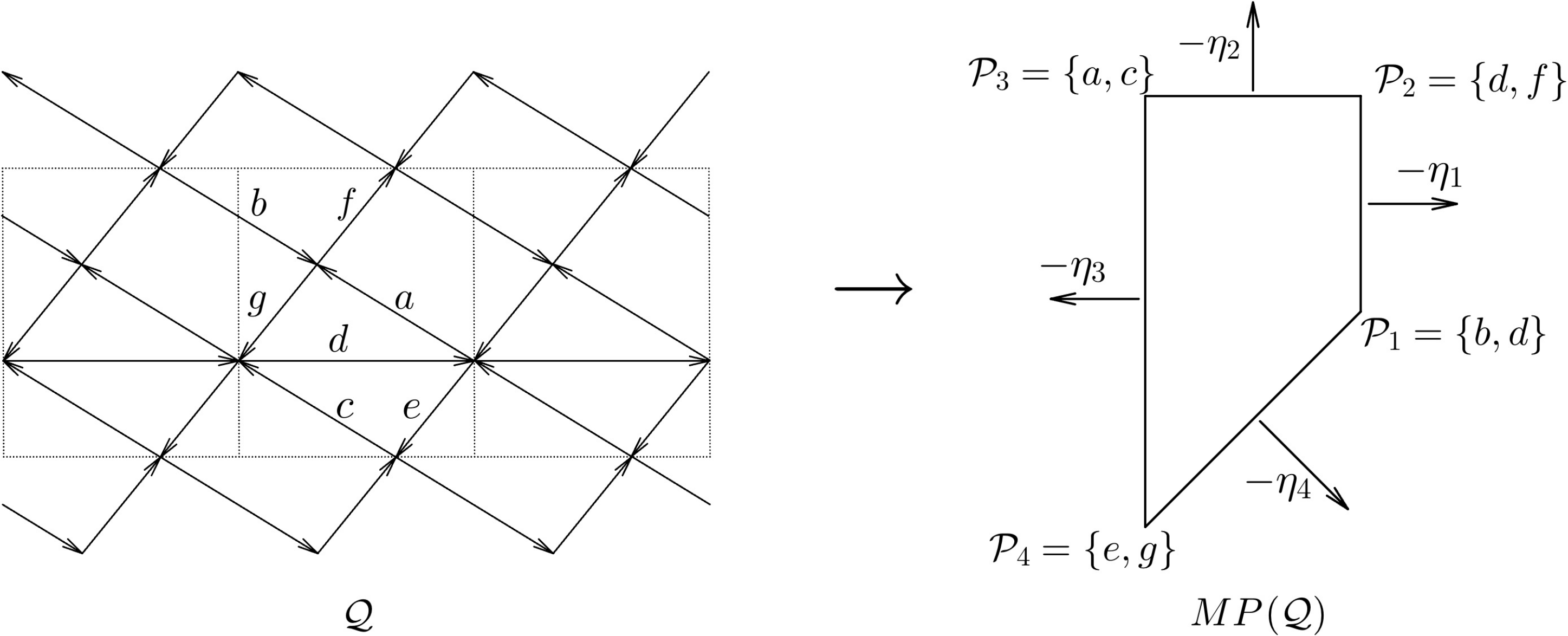}
\caption{A suspended pinchpoint $\Q$ and its matching polytope $\MP$}\label{fig:matchingpolytope}
\end{figure}

On the other hand, one can associate a 3-dimensional affine toric Gorenstein singularity $Y_\Q$ to the lattice polygon $\MP$. The fan of $Y_\Q$ consists of a single cone, constructed by placing $\MP$ in the hyperplane $\{z = 1\} \subset \mathbb{R}^3_{(x,y,z)}$ and taking the cone over $\MP$ with vertex at the origin $(0,0,0)$. The singular locus of $Y_\Q$ lies along codimension-2 strata corresponding to edges of $\MP$ that contain interior lattice points.


\begin{thm}[{\cite[Theorem 3.20]{BockABC}}]\label{thm:nccrmpmp}
Let $\Q$ be a zigzag consistent dimer on a torus. In the sense of \cite{vdB04}, $\Jac$ is the NCCR of its center $\Cent{\Jac}$ which is the coordinate ring of the affine toric Gorenstein threefold $Y_\Q$ defined by the matching polytope $\MP$.
\end{thm}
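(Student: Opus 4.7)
My plan is to split the theorem into two parts: (a) identifying $\Cent{\Jac}$ with the coordinate ring of the toric Gorenstein threefold $Y_\Q$ determined by the matching polytope $\MP$, and (b) verifying that $\Jac$ is an NCCR over this center in the sense of Van den Bergh, i.e.\ that it is a finitely generated reflexive $\Cent{\Jac}$-module which is maximal Cohen--Macaulay and has finite global dimension equal to three.

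For (a), I would first show that the natural map $\Jac_{cyc} \to \Cent{\Jac}$ sending a cyclic path at a vertex $v$ to its canonical extension (via Jacobi relations) to a consistent cyclic datum at every other vertex is an isomorphism of algebras; this rests on the fact that any two cyclic paths at $v$ representing the same homology class on the torus agree in $\Jac$ up to multiplication by $W_v$, which is a general feature of dimer Jacobi algebras. To identify the target with the toric ring of $Y_\Q$, I would exploit the multi-grading by perfect matchings: each $\PM \in \MP$ gives a character $\deg_\PM$ of $\Jac$ as in \eqref{eqn:degpm}, and combining these with the $H_1(T^2;\Z)$-grading by loop homology class presents $\Cent{\Jac}$ as a semigroup algebra on a submonoid of a rank-three lattice. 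Invoking Theorem~\ref{thm:combinatorics of matching polytope for dimers in tori}, the corner matchings $\PM_1, \ldots, \PM_N$ give exactly the extremal rays normal to the edges of $\MP$, so this submonoid is the intersection with the cone over $\MP$ placed at height one, giving precisely $\mathbb{C}[Y_\Q]$; the Gorenstein property is automatic because all ray generators lie on a common hyperplane.

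For (b), finite generation of $\Jac$ over $\Cent{\Jac}$ would follow by producing, for each ordered pair $(v,w) \in \Q_0 \times \Q_0$, a distinguished finite set of paths in $v \cdot \Jac \cdot w$ whose $\Cent{\Jac}$-span covers the whole summand; zigzag consistency enters here to prevent Jacobi relations from collapsing $v \cdot \Jac \cdot w$ or letting it grow beyond the central action. The remaining two properties---maximal Cohen--Macaulay and global dimension three---I would extract from the Calabi--Yau $3$ structure on $\Jac$: the Ginzburg bimodule resolution associated to the quiver-with-potential $(\Q, \Phi_\Q)$ is a length-three complex of projective bimodules, and provided it is exact, it immediately yields global dimension three, while its self-dual shape together with Auslander--Buchsbaum delivers MCM over $\Cent{\Jac}$.

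The main obstacle, as is standard in this circle of ideas, is establishing exactness of the Ginzburg resolution, which is far from automatic from the mere existence of a superpotential. This is exactly where zigzag consistency (Definition~\ref{def:zigzag consistency}) is used essentially, along the lines of Broomhead and Mozgovoy--Reineke. Concretely, one must show by combinatorial analysis in the universal cover $\wt{\Q}$ that every cycle in the putative middle homology of the resolution can be trivialized by Jacobi relations; the potentially obstructing configurations correspond precisely to the anomalous intersections of zig and zag rays that Definition~\ref{def:zigzag consistency} rules out, so the combinatorial hypothesis of the theorem is being used in exactly the place where it is indispensable.
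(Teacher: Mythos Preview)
The paper does not give its own proof of this statement; it is quoted verbatim as \cite[Theorem 3.20]{BockABC} and used as background input, with no argument supplied beyond the citation. There is therefore nothing in the paper to compare your proposal against.

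That said, your outline is a faithful sketch of the standard route taken in the cited literature (Bocklandt, Broomhead): identify $\Cent{\Jac}$ with the toric semigroup ring via the perfect-matching multigrading and Theorem~\ref{thm:combinatorics of matching polytope for dimers in tori}, then deduce the NCCR conditions from the CY3 property, with exactness of the Ginzburg resolution \eqref{eqn:koszulcy} being the load-bearing step where zigzag consistency enters. One small correction: in your part (a), the isomorphism $\Jac_{cyc} \to \Cent{\Jac}$ you describe is not the right map---$\Jac_{cyc}$ is a semisimple algebra over the center of rank $|\Q_0|$, not the center itself. What you want is that for a fixed vertex $v$, the summand $v \cdot \Jac \cdot v$ is already isomorphic to $\Cent{\Jac}$ via the extension-by-Jacobi-relations map you describe.
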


Moreover Broomhead proved that every affine toric Gorenstein singularity admits an NCCR from a consistent dimer model on a torus \cite[Theorem 8.6]{Bro12}.
%

\begin{remark}\label{rmk:pmdegandvan}
By Theorem~\ref{thm:nccrmpmp}, each corner matching $\mathcal{P}_i$ corresponds to a torus-invariant divisor in $Y_{\Q}$. For a central element $f$, its degree $\deg_{\mathcal{P}_i}(f)$ (as defined in~\eqref{eqn:degpm}) with respect to $\mathcal{P}_i$ measures the vanishing order of $f$, viewed as a regular function on $Y_{\Q}$, along the toric divisor associated to $\mathcal{P}_i$. This is essentially a reinterpretation of the proof of \cite[Theorem 3.20]{BockABC} in the language of toric geometry.
\end{remark}

\begin{example}\label{ex:dimer}
Figure~\ref{fig:matchingpolytope} depicts a dimer model in a torus (that resolves the toric Gorenstein singularity called a suspended pinchpoint), which has three vertices and six edges. The fundamental domain of the torus is given by dotted lines. There are five zigzag cycles, namely,
$$fb,\;adcf,\;ec,\;ga,\;dgbe$$
in four different homology classes:
$$-\eta_1 = [fb],\; -\eta_2 = [adcf],\; -\eta_3 = [ec]=[ga] (=-[fb]),\; -\eta_4 = [dgbe].$$
(Hence $N=4$.) From the picture and Theorem \ref{thm:combinatorics of matching polytope for dimers in tori}, one can find the matching polytope and corner matchings $\cP_i$. 
There are two boundary matchings $\{a,e\}$ and $\{c,g\}$, both sitting at the unique lattice point between $\cP_3$ and $\cP_4$.
\end{example}

\subsection{Zigzag Lagrangians in the dual dimer}\label{subsec:zigzag Lagrangians}
We now turn to the symplectic geometry associated with the dual dimer $\Qv$ of $\Q$. Following \cite[§10.1]{CHL21}, we recall how to construct immersed circles, known as \emph{zigzag Lagrangians}, from zigzag cycles in $\Qv$. We begin with the definition of dual dimers.

\begin{defn}\label{def:the dual dimer}
Suppose $\Q = (\Q_0,\Q_1) \subset \Sigma$ is a dimer model. We define a quiver
\begin{equation*}
\Qv = (\Qvv,\Qve)
\end{equation*}
as follows. First, we set $\Qvv$ to be the set of all zigzag cycles in $\Q$ and $\Qve=\Q_1$. Each $e\in\Qve$ has the head and tail $t(e) = v_{Z_1}$ and $h(e) = v_{Z_2}$ where $Z_1$ and $Z_2$ are zigzag cycles obtained from the projections of zag and zig rays at some lift $\tilde{e} \in \wt{\Q}$, respectively. It is called the {\bf dual dimer} of $\Q$.
\end{defn}

We orient the arrows $\Qve$ in the dual dimer $\Qv$ according to the orientations of the arrows in $\Q_1$.  
It is not difficult to see that the boundary cycle of a positive face in $\Q$ corresponds to a cycle in $\Qv$, along which we attach a positive face. The boundary cycle of a negative face in $\Q$ appears in $\Qv$ with reversed orientation, and we attach a negative face along this reversed cycle. Let $\Sigma^\vee$ denote the resulting Riemann surface, into which $\Qv$ embedds.
Thus, there is a one-to-one correspondence between the faces of the two dimers that preserves the set of boundary labels (though not the orientation in the case of negative faces). Obviously,

\begin{lemma}\label{lem:perfect matchings are the same}
Given a dimer model $\Q$, a set of edges $\PM \subset \Q_1=\Qve$ is a perfect matching of $\Q$ iff it is a perfect matching of $\Qv$.
\end{lemma}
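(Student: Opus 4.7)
The statement is essentially a tautology once one unpacks the construction of $\Qv$, so my plan is to reduce it to a clean bookkeeping argument rather than any substantive combinatorics. The plan is to use the two structural features of dimer duality that are already recorded in the paragraph preceding the lemma: first, the edge sets literally coincide ($\Qve = \Q_1$ by Definition \ref{def:the dual dimer}); second, there is a bijection $\Q_2 \leftrightarrow \Qv_2$ under which the set of edges appearing in the boundary of a face is preserved (only the orientation is possibly flipped, and only on negative faces).

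Concretely, I would proceed as follows. First, fix the bijection $\phi : \Q_2 \to \Qv_2$ described in the text, and for each face $F \in \Q_2$ denote by $E(F) \subset \Q_1$ the set of edges on its boundary, and similarly $E(\phi(F)) \subset \Qve$ for the dual face. The key observation to record is the identity
\[
E(\phi(F)) = E(F) \quad \text{as subsets of } \Q_1 = \Qve,
\]
which follows because each cycle of $\Q_1$ that bounds a face of $\Q$ is precisely the same cycle (as a set of edges, modulo reversal) that bounds the corresponding face of $\Qv$. Next, I would simply rewrite the defining condition of a perfect matching in terms of $E(\cdot)$: a subset $\PM \subset \Q_1$ is a perfect matching of $\Q$ iff $|\PM \cap E(F)| = 1$ for every $F \in \Q_2$, and analogously for $\Qv$. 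Combining these two facts gives the equivalence immediately, since for any $\PM \subset \Q_1 = \Qve$,
\[
|\PM \cap E(F)| = |\PM \cap E(\phi(F))| \qquad \text{for all } F \in \Q_2,
\]
so the condition on one side is satisfied for all faces iff it is on the other.

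The only thing that requires even a moment of thought is verifying the edge-set identity $E(\phi(F)) = E(F)$, and this I would justify directly from the construction: positive faces of $\Q$ are glued to $\Qv$ along the same oriented cycle, while negative faces are glued along the reversed cycle, so in both cases the underlying unoriented set of boundary edges is unchanged. There is no real obstacle here; the lemma is a sanity check that the notion of perfect matching is intrinsic to the shared combinatorial datum (edges and the face-incidence relation) and is insensitive to the orientations that distinguish $\Q$ from $\Qv$. I would keep the write-up to a short paragraph and move on, as the substance of dimer duality appears elsewhere.
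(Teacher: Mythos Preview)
Your proposal is correct and matches the paper's reasoning exactly: the paper does not give an explicit proof but simply writes ``Obviously,'' immediately before the lemma, relying on the face bijection $\Q_2 \leftrightarrow \Qv_2$ preserving boundary edge sets that is recorded in the preceding paragraph. Your write-up is just a careful unpacking of that one-word justification.
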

%
Figure \ref{fig:dualtozigzag} shows the arrangement of faces of $\Q^\vee$ near a vertex $v_Z \in \Q^\vee_0$ which is dual to zigzag cycle $Z$ in $\Q$. It is worthwhile to compare this with their corresponding faces in $\Q$ drawn in Figure \ref{fig:anti-zigzags}.
\begin{figure}[h]
\includegraphics[scale=0.3]{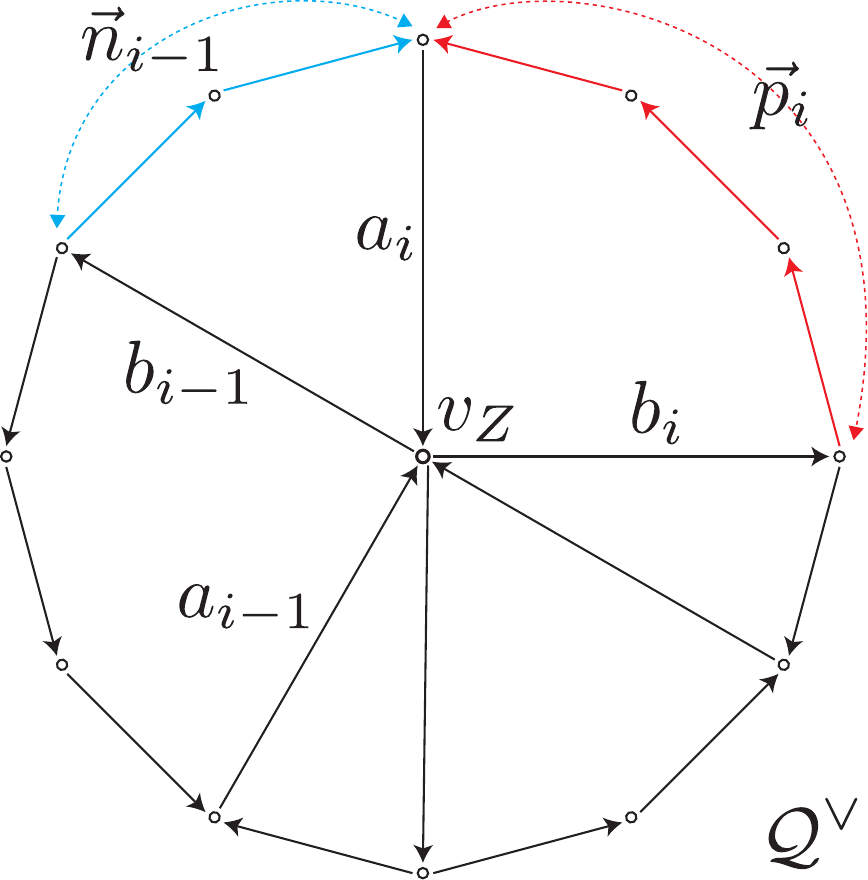}
\caption{Neighboring faces near the vertex $v_Z \in \Q^\vee_0$ dual to the zigzag path $Z$ in $\Q$ }
\label{fig:dualtozigzag}
\end{figure}

Removing $\Qvv$ from $\Sigma^\vee$ gives a punctured Riemann surface, which we denote by $X_\Qv := \psurfv$. We equip $X_\Qv$ with a Liouville structure such that near each puncture, the Liouville vector field points outward toward the puncture. As a result, $X_\Qv$ has conical ends modeled on $S^1 \times \mathbb{R}$ at each puncture.

%
%

We now introduce the \emph{zigzag Lagrangians} $\lL = \bigoplus_{i=1}^k L_i$. Let $Z_i^\vee$ be a zigzag cycle in the dual dimer $\Qv$. For each such cycle, we select the midpoints of all edges contained in $Z_i^\vee$ and connect consecutive midpoints by segments lying inside the face enclosed by the corresponding pair of edges (see Figure~\ref{fig:a zigzag Lag}). The union of these segments, after a slight smoothing at the corners, forms a closed curve. Since this curve avoids the punctures, it defines a Lagrangian circle $L_i$ in $X_\Qv$.

Repeating this construction for each zigzag cycle yields an immersed Lagrangian $\lL = \bigoplus_{i=1}^k L_i$ in $X_\Qv$. By construction, the components of $\lL$ correspond bijectively to the zigzag cycles in the dimer $\Qv$. Moreover, by applying a suitable smooth isotopy, these circles can be arranged to be exact Lagrangians \cite[Lemma 10.6]{CHL21}.

\begin{figure}[h]
\includegraphics[scale=0.25]{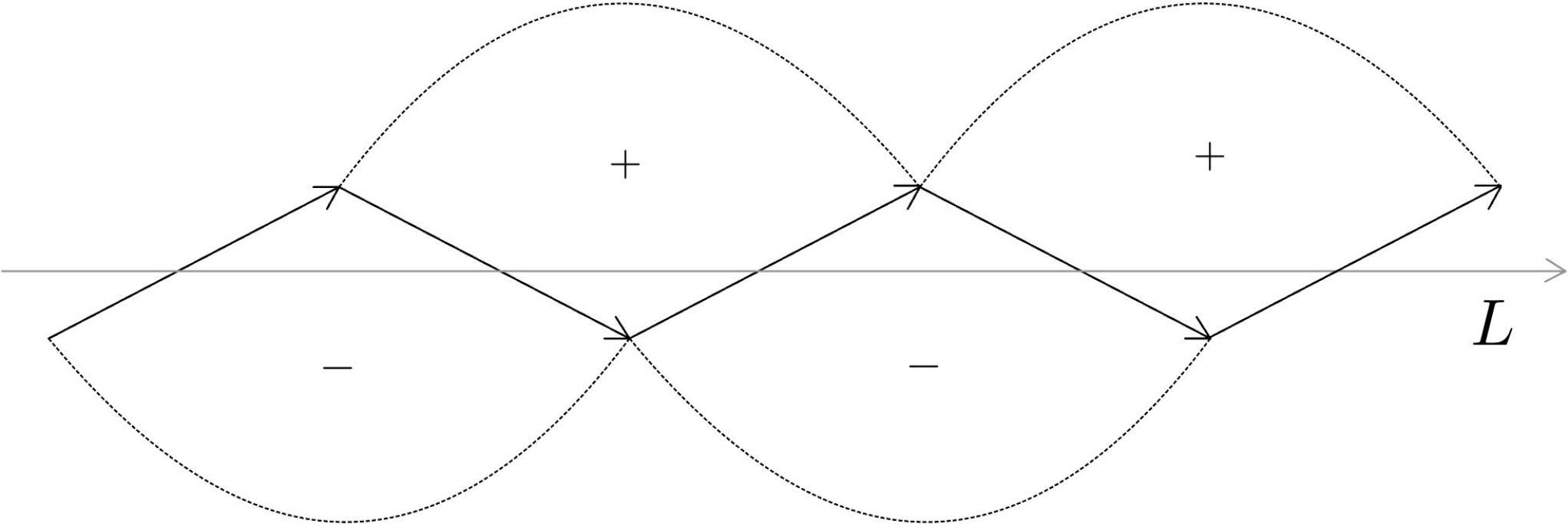}
\caption{A zigzag Lagrangian component $L$}
\label{fig:a zigzag Lag}
\end{figure}



We now describe the quiver $\Q^\lL$ introduced in Definition~\ref{def:endomorphism quiver}, in the context of the zigzag Lagrangians $\lL$. There is a one-to-one correspondence between the vertices of $\Q^\lL$ (that is, the components $L_i$) and the zigzag cycles in $\Qv$, which in turn correspond to the vertices of the original quiver $Q = (\Qv)^\vee$.
It is easy to see that there is an arrow $x$ from vertex $L_i$ to $L_j$ in $\Q^\lL$ whenever the zigzag cycles $Z_i$ and $Z_j$ intersect (it is possible that $i=j$). 

Let \( X_e \) denotes the intersection point of \( L_i \) and \( L_j \) corresponding to an edge \( e \in \Q^\vee_1 \). Then we obtain two immersed generators associated with this intersection: \( X_e \), of odd degree, and \( \bar{X}_e \), of even degree. These represent two distinct types of branch jumps between \( L_i \) and \( L_j \), as illustrated in Figure~\ref{fig:zigzagdisks}. When they appear as inputs of the \( m_k \)-operations, the boundary of a contributing holomorphic polygon turns at the corners formed by the pair \( (L_i, L_j) \) (at $X_e$).


Moreover, the correspondence between \( \Q^{\mathbb{L}} \) and \( \Q \) identifies the two algebras \( A_{\mathbb{L}} \) (constructed on \( \Q^{\mathbb{L}} \)) and \( \Jac \).


\begin{prop}[{\cite[Lemma 10.13 and 10.21, Corollary 10.16 and 10.20]{CHL21}}]\label{lem:spacetime superpotential}
The quiver $\Q^{\mathbb{L}}$ is canonically identified with $\Q$. 
Furthermore, with an appropriate choice of spin structures on $\mathbb{L}$, 
this identification induces an isomorphism between the (weak) 
Maurer--Cartan algebra $\MCA$ and the Jacobi algebra of $\Q$:
\[
\MCA \;\cong\; \Jac,
\]
where each variable dual to a generator $X_e$ corresponds naturally 
to the element $x_e \in \Jac$. 
Under this correspondence, the superpotential $\WL \in \MCA$ 
matches the potential $W \in \Jac$.
\end{prop}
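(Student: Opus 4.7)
The plan is to prove the proposition in three steps: first identify the quiver $\Q^\lL$ with $\Q$ combinatorially, then enumerate the rigid holomorphic disks computing $m(e^b)$, and finally read off the Jacobi relations and the potential. For the first step the key is the involutivity of dimer duality. The vertices of $\Q^\lL$ are the components $L_i$ of $\lL$, which correspond bijectively to zigzag cycles of $\Qv$; applying Definition~\ref{def:the dual dimer} once more, these are the vertices of $\Q = (\Qv)^\vee$. For the arrows: by Section~\ref{subsec:zigzag Lagrangians}, two Lagrangians $L_i$ and $L_j$ intersect transversely at the midpoint of each edge $e \in \Qve = \Q_1$ shared by $Z_i^\vee$ and $Z_j^\vee$, and each intersection produces one odd generator $X_e$ and one even generator $\bar X_e$. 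A local inspection at $X_e$, comparing the orientation of $e$ with the zig/zag sides of $Z_i^\vee, Z_j^\vee$ meeting there, shows that the head/tail conventions of Definition~\ref{def:endomorphism quiver} match those of $\Q$ under the opposite convention noted in the footnote.

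Next I would enumerate the rigid holomorphic polygons with boundary on $\lL$ and corners at the $X_e$'s. The central combinatorial input is that every face $F$ of $\Qv$ (equivalently of $\Q$) lifts to an immersed holomorphic polygon in $\Sigma^\vee$ whose boundary traces zigzag Lagrangian arcs and whose convex corners sit at the intersections $X_e$ for $e \in \partial F$. With the almost complex structure chosen in \cite{Bock16,CHL21}, zigzag consistency of $\Q$, exactness of $\lL$, and a combinatorial area/positivity argument together establish that these \emph{face disks}, along with small \emph{puncture disks} around each $v_Z \in \Qvv$, form the complete list of rigid disk contributions. This list is finite, so the convergence concern of Remark~\ref{rmk:convergenceissue} does not arise.

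With the disk enumeration in hand, $m(e^b) = \WL\,\unit + \sum_e P_{\bar X_e}\,\bar X_e$ becomes pure bookkeeping. A positive face $F \in \Q_2^+$ with boundary cycle $x_{e_k}\cdots x_{e_1}$ contributes to the coefficient of each $\bar X_{e_i}$ the monomial $x_{e_{i-1}}\cdots x_{e_1} x_{e_k}\cdots x_{e_{i+1}}$, namely the $i$-th term of the cyclic derivative $\partial_{x_{e_i}}(\partial F)$, while negative faces contribute with the opposite sign. Summing over all faces yields $P_{\bar X_{e_i}} = \partial_{x_{e_i}} \Phi_\Q$, so the conditions $P_{\bar X_{e_i}} \equiv 0$ cut $K$ down to $\Jac$ and give $\MCA \cong \Jac$. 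The puncture disks around $v_{Z_j^\vee} \in \Qvv$ trace boundary words of faces of $\Q$ meeting at the vertex of $\Q$ dual to $Z_j^\vee$, and summing their contributions yields $W_{L_j} = W_{v_j}$; hence $\WL = \sum_v W_v = W$ under the identification $\MCA \cong \Jac$.

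The main obstacle, encoded in the phrase ``an appropriate choice of spin structures,'' is the sign analysis: the $\ainf$-signs from positive and negative face disks must aggregate into $\Phi_\Q = \sum_{F\in\Q_2^+} \partial F - \sum_{F\in\Q_2^-}\partial F$ with the correct relative sign, rather than cancelling or producing some other linear combination. This is achieved by choosing the spin structure on each $L_i$ so that the holonomy flips the contribution of zag edges relative to zig edges, following \cite[§10]{CHL21}. A secondary technical point is excluding exotic immersed disks that cover faces with higher multiplicity or wrap nontrivially around punctures in unaccounted ways; zigzag consistency of $\Q$ is essential here.
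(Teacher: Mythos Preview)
The paper does not supply its own proof of this proposition; it is stated as a citation of \cite[Lemma 10.13 and 10.21, Corollary 10.16 and 10.20]{CHL21}, so there is no in-paper argument to compare your sketch against. Your overall strategy---identify $\Q^\lL$ with $\Q$ via the involutivity of dimer duality, enumerate rigid polygons, and read off $\partial_{x_e}\Phi_\Q$ and $W$---is indeed what the cited proof does.

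There is, however, a genuine error in your disk enumeration. You posit two classes of rigid disks: ``face disks'' (one per $F\in\Qv_2$) giving the Jacobi relations, and separate ``puncture disks around each $v_Z\in\Qvv$'' giving $\WL$. The second class does not exist: the vertices $\Qvv$ have been removed from $\Sigma^\vee$, so no holomorphic disk in $X_\Qv$ can cover a neighborhood of a puncture. The punctured polygon you seem to have in mind (the region bounded by arcs of $\lL$ encircling $v_Z$) contains the puncture and is therefore not a disk in $X_\Qv$. The complete list of rigid polygons with boundary on $\lL$ and odd-immersed corners is exactly the face disks; see the paper's own use of this in Figure~\ref{fig:zigzagdisks} and the proof of Lemma~\ref{lem:idplus}.

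Both pieces of $m(e^b)$ come from the \emph{same} face disks, distinguished only by the output. When the output marked point is an even immersed corner $\bar X_e$, the face disk contributes the cyclic-derivative term you wrote, giving $P_{\bar X_e}=\partial_{x_e}\Phi_\Q$. When the output marked point lies on the Morse maximum $\mathbf{1}_{L_j}$ (placed on the boundary arc of one particular face, cf.\ \S\ref{subsec:CFb for zigzag Lagrangians}), that single face disk contributes its full boundary word to $W_{L_j}$; in $\MCA=\Jac$ this word equals $W_v$ for the vertex $v$ corresponding to $L_j$, by the very Jacobi relations just imposed. Your argument for $\WL=W$ should be rewritten along these lines.
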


Therefore, the noncommutative mirror $(\MCA, \WL)$ associated to the zigzag Lagrangians $\lL \subset X_\Qv$ coincides with the noncommutative Landau–Ginzburg model $(\Jac, W)$. From this point on, we will not strictly distinguish between the two, and use the notation $(\Jac, W)$ to refer to either model.

\begin{example}\label{ex:dualdimer}
The dual dimer $\Qv$ (dotted arows inin Figure~\ref{fig:five punctured sphere}) of $\Q$ in the Example~\ref{ex:dimer} is embedded in a sphere. The zigzag Lagrangian consists of three irreducible components, which are dual to three vertices in $\Q$ of the Example~\ref{ex:dimer}.
\begin{figure}[h]
\includegraphics[scale=0.4]{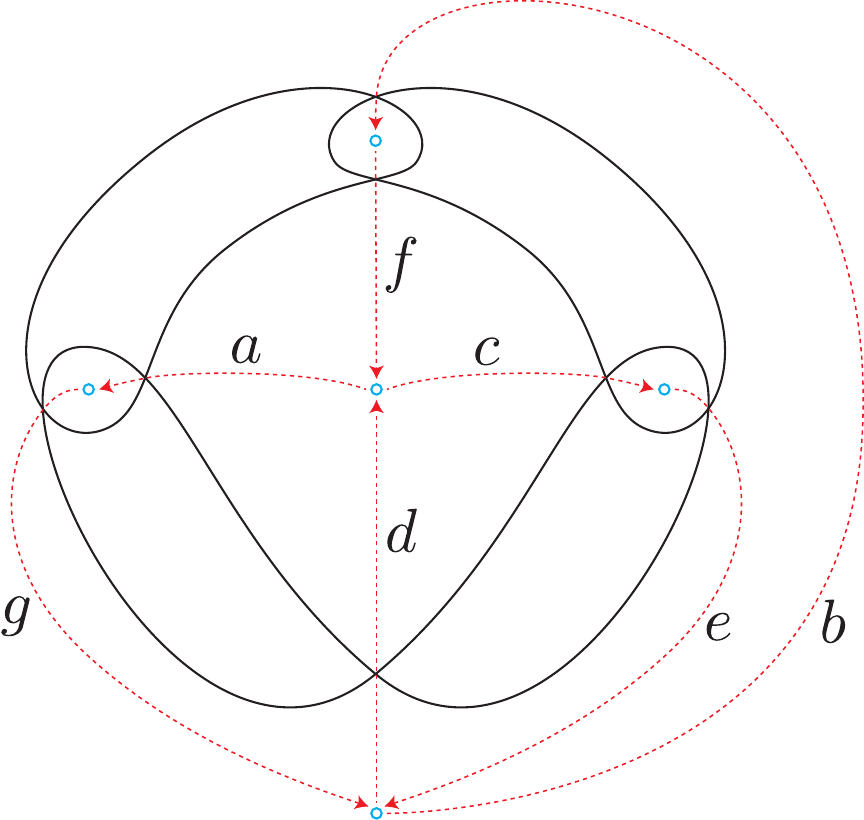}
\caption{The dual dimer $\Qv$ for the dimer $\Q$ in Figure~\ref{fig:matchingpolytope}}
\label{fig:five punctured sphere}
\end{figure}
\end{example}

Finally, we remark that for the zigzag Lagrangian $\lL$, the boundary-deformed complex
\[
\CFb = \left(\MCA \otimes_\Bbbk \CF, \{\mkb\}_{k \geq 0}\right)
\]
is well-defined without any convergence issues in the terms $m_k^{b, \dots, b}$. In fact, the output of $m_k^{b, \dots, b}$ is a finite sum, meaning that each coefficient therein is computed from finitely many contributions. While this may be intuitively clear from the geometric construction, it can be rigorously justified using the grading structure.

\begin{lemma}\label{lem:Floer theory coefficient for zigzag Lagrangians}
For zigzag Lagrangians $\lL$, the operations $m_k^{b, \dots, b}$ ($k \geq 0$) are well-defined on $\CFb$ without the need for completion.
\end{lemma}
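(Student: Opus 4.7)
The plan is to bound the $b$-insertion count in each nonvanishing term of the expansion \eqref{eqn:defainfty} by combining the multi-grading on $\Jac \cong \MCA$ with the energy bound coming from exactness.

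First, I would fix the corner matchings $\PM_1, \ldots, \PM_N$ of $\MP$ and assemble them into the multi-grading
\[
\deg := (\deg_{\PM_1}, \ldots, \deg_{\PM_N}) \colon \Jac \longrightarrow \Z_{\geq 0}^N.
\]
By Theorem~\ref{thm:combinatorics of matching polytope for dimers in tori}(iii), every edge $e \in \Q_1$ appears in at least one corner matching (as a zig or a zag of some zigzag cycle), so the total multi-degree $|\deg(x_e)| := \sum_i \deg_{\PM_i}(x_e)$ is at least $1$. Consequently, a monomial produced by $L$ insertions of $b = \sum_e x_e X_e$ carries total multi-degree at least $L$, and any a priori bound on the multi-degree of the output coefficient translates directly into an a priori bound on $L$.

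The required multi-degree bound is then furnished by the energy of holomorphic disks. Since $X_\Qv$ is an exact Liouville manifold and the components of $\lL$ are exact, each holomorphic polygon contributing to $m_{k+L}(b^{\otimes l_0}, f_1 X_1, \ldots, b^{\otimes l_k})$ has nonnegative symplectic area equal to an alternating sum of Liouville actions at its corners. For a fixed output Floer generator $Y$ and fixed inputs $f_1 X_1,\ldots,f_k X_k$, this leaves a bounded action budget to be absorbed by the $b$-insertion corners; after choosing primitives on the $L_i$'s so that every self-intersection generator $X_e$ carries strictly positive action, this budget bounds $L$ uniformly. For each such bounded $L$, the number of polygons with a prescribed corner sequence is finite by the standard finiteness of $A_\infty$ operations on an exact Liouville manifold with exact Lagrangians, so the coefficient of $Y$ in $m_k^{b,\ldots,b}(f_1 X_1, \ldots, f_k X_k)$ is a finite sum in $\MCA$, and no completion is needed.

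The main obstacle is arranging the positivity $a(X_e) > 0$ uniformly over all self-intersection generators, which requires an explicit choice of Liouville primitives on the components of $\lL$ adapted to the zigzag construction. An alternative route that avoids this subtlety altogether is to appeal directly to the combinatorial description of the Floer theory of zigzag Lagrangians in~\cite[\S 10]{CHL21}: the curves $\lL$ partition each face of $\Qv$ into finitely many sub-polygons, every contributing holomorphic disk projects onto a finite union of these sub-polygons, and its combinatorial type---including the pattern of $b$-insertion edges---is determined (with finite multiplicity) by its corner sequence, yielding the desired finiteness without any action comparison.
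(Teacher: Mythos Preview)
Your observation that every edge lies in at least one corner matching (from Theorem~\ref{thm:combinatorics of matching polytope for dimers in tori}(iii)) is exactly the right combinatorial input, but you apply it to the coefficient algebra $\Jac$ rather than to the Floer complex. The multi-degree of the output coefficient is not bounded a priori---it simply grows with $L$---so the first paragraph by itself yields nothing, and you are forced into the energy argument. There the step of choosing primitives so that every $X_e$ has a fixed sign of action is a genuine gap: shifting the primitives $h_i$ by constants $c_i$, the resulting linear system $c_{h(e)}-c_{t(e)}$ bounded on one side for all $e$ is feasible only if $\sum_{e\in C}a(X_e)$ has a uniform sign for \emph{every} directed cycle $C$ in $\Q$, not merely for face boundaries, and you give no argument for cycles representing nontrivial homology in $T^2$. (Depending on one's sign convention, the direction of the inequality needed for the bound may even be opposite to the one you state.) Your combinatorial alternative via \cite[\S10]{CHL21} would succeed but is only asserted, not carried out.

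The paper uses the same perfect-matching data but on the Floer side. Each perfect matching $\mathcal P$ determines a quadratic volume form $\eta_{\mathcal P}$ on $X_{\Qv}$, hence a genuine $\mathbb{Z}$-grading on $\CF$ under which $X_e$ has degree $-1$ when $e\in\mathcal P$ and $+1$ otherwise. Averaging over finitely many matchings that together cover every edge---precisely the covering property you isolated---every $X_e$ acquires fractional degree strictly less than $1$, i.e.\ strictly negative shifted degree; since each $m_k$ has shifted degree $+1$, only finitely many $b$-insertions can contribute to a fixed output. In effect, routing your corner-matching observation through a Floer grading rather than the algebra grading replaces the unresolved action-positivity step with a one-line index count.
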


\begin{proof}
Recall from \cite[10.5]{CHL21} that each perfect matching $\mathcal{P}$ determines a quadratic volume $\eta_P$ such that all odd-degree immersed generators have degree 1, except those lying on the perfect matching, which have degree $-1$.
Choose enough many perfect matchings, say $\mathcal{P}_1,\cdots,\mathcal{P}_l$, so that each odd-degree immersed generator $X$ has degree 1 for at least one $\eta_{\mathcal{P}_i}$. 
Now consider the fractional grading induced by the tensor product
$$\eta:= \eta_{\mathcal{P}_1} \otimes \cdots \otimes \eta_{\mathcal{P}_l} \left(\in \Gamma (T^\ast X_\Qv^{\otimes l})\right).$$
It is not difficult to see that the degree of $X$ with respect to $\eta$ averages out those with respect to $\eta_i$'s, i.e,.
$$ \deg_{\eta} (X) = \frac{1}{l} (\deg_{\eta_1} (X) + \cdots + \deg_{\eta_l} (X) ),$$
and by construction, we have $\deg_{\eta} (X) <1$. Therefore its shifted degree is negative, and hence $m_{k}^{b,\cdots,b}$ should be a finite sum (since the shifted degree of the operation is always $1$).
\end{proof}

\section{Closed-string B-model invariants for $(\Jac, W)$}
\label{subsec:compactly supported Hochschild cohomology}

The discussion naturally leads to the mirror pair arising from the dual pair of dimers $\Qv \subset \Sigma^\vee$ and $\Q \subset \Sigma$:
\begin{equation*}
\xymatrix{
\textnormal{\boxed{A} \, punctured Riemann surface} \,\, X_\Qv := \psurfv \ar@{<->}[d] \\
\textnormal{\boxed{B} \,  (nc) Landau–Ginzburg model} \,\, (\Jac, W)=(A_{\mathbb{L}},W_{\mathbb{L}})  }
\end{equation*}
Here, $\lL$ denotes the zigzag Lagrangian in $X_\Qv$. The homological mirror symmetry for this pair has been studied in \cite{Bock16, CHL21}, where it is shown that the wrapped Fukaya category of $X_\Qv$ is equivalent to a full subcategory of the matrix factorization category associated to $(\Jac, W)$.

Our goal is to compare the closed-string $A$-model invariants of $X_\Qv$ with the $B$-model invariants of $(\Jac, W)$. On the $A$-model side, we consider the symplectic cohomology of $X_\Qv$; on the $B$-model side, it is natural to consider the (compactly supported) Hochschild cohomology of the matrix factorization category of $(\Jac, W)$. The latter has been explicitly computed in \cite{Wong21}, which we now review.

\subsection{The compactly supported Hochschild cohomology $\cHH{\mf}$}\label{subsec:csupphh}

Let \((J, W)\) be a noncommutative Landau–Ginzburg (LG) model, where \(J\) is a \(\Bbbk\)-algebra and \(W \) is a central element. A general idea, going back to \cite{CT13}, is that one can instead study the Hochschild invariants of \((J, W)\) viewed as a \emph{curved algebra}. 

In this setting, the Hochschild cochain complex of \((J, W)\) is obtained by modifying the usual Hochschild differential \(d_J\) of the algebra \(J\) to include an additional component \(d_W\), which encodes the twist by \(W\). Although we will primarily work with a Koszul-type bimodule resolution of \(J\) later, we first present the explicit formula for the differential using the bar resolution.

For each \(k \geq 0\), the compactly supported Hochschild cochain complex \(CC_c^\ast(J,W)\), defined via the bar resolution, is given by:
\begin{equation*}
CC_c^k(J,W) = \bigoplus_{m+n = k} \mathrm{Hom}_\Bbbk^m(J^{\otimes n}, J).
\end{equation*}
Note that we take the \emph{direct sum} rather than the \emph{direct product}, which motivates the term ``compactly supported.'' As shown in \cite{CT13}, taking the product instead would lead to trivial cohomology.
It has the differential $d=d_J+d_W$ given by
\begin{equation*}
\begin{split}
d_J f(a_1\otimes\cdots\otimes a_{n+1}) = &(-1)^{|f||a_1|}a_1 f(a_2\otimes\cdots\otimes a_{n+1})\\
& + \sum_{i=1}^n (-1)^i f(a_1\otimes\cdots\otimes a_i a_{i+1} \otimes\cdots\otimes a_{n+1})\\
&+(-1)^{n+1}f(a_1\otimes\cdots\otimes a_n)a_{n+1}
\end{split}
\end{equation*}
and
\begin{equation}\label{eqn:dfndl}
d_W f(a_1\otimes\cdots\otimes a_{n-1}) = \sum_{i=1}^{n-1} (-1)^{i+1}f(a_1\otimes\cdots\otimes a_i \otimes W \otimes a_{i+1} \otimes\cdots\otimes a_{n-1}).
\end{equation}
\begin{defn}[{\cite[Definition 3.10]{CT13}, \cite[Section 2.4]{PP12}}]
We call
\begin{equation*}
\cHH{J,W} = H^*(CC_c^*(J,W);d)
\end{equation*}
the {\bf compactly supported Hochschild cohomology} of a curved algebra $(J,W)$. We identify this with $\cHH{\mf}$ throughout.
\end{defn}

Note that \((CC_c^*(J), d)\) forms a double complex, whose first page is precisely the Hochschild cohomology \(HH^*(J, J)\) of the algebra \(J\). The differential \(d_W\) then descends to the bracket \(\{W, -\}\) on \(HH^*(J, J)\), where \(\{-, -\}\) denotes the Gerstenhaber bracket on Hochschild cohomology. We have

\begin{prop}\label{prop:speccal}\cite[Proposition 5.2]{Wong21}
The spectral sequence  of the double complex \((CC_c^*(J), d = d_J + d_W)\) induces an additive isomorphism
$$ \cHH{J,W} (= \cHH{\mf} ) \cong H^*( HH^*(J,J);d_W = \{ W,-\}).$$
\end{prop}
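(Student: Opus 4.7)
The plan is to extract the desired isomorphism from the spectral sequence associated with a filtration on the total complex $CC_c^*(J,W)$, viewed as the totalization of the bigraded complex $C^{m,n} := \mathrm{Hom}_\Bbbk^m(J^{\otimes n}, J)$ equipped with two anticommuting differentials: $d_J$ of bidegree $(0,+1)$ and $d_W$ of bidegree $(|W|,-1)$.

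First I would verify the double-complex relations. The identity $d_J^2 = 0$ is the usual Hochschild identity; $d_W^2 = 0$ follows from a sign cancellation after summing over the pairs of insertion positions for two copies of $W$; and the anticommutation $d_J d_W + d_W d_J = 0$ reduces, after expanding the bar differential against the $W$-insertion, to cancellations that crucially use \emph{centrality} of $W$ in $J$. (Non-centrality would instead leave residual commutator terms.)

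Next, equip $CC_c^*(J,W)$ with the decreasing filtration $F^p := \bigoplus_{m \geq p} C^{m,\bullet}$ by internal degree. Both $d_J$ (preserving $m$) and $d_W$ (raising $m$ by $|W|$) respect this filtration. Because the compactly supported complex is defined as a \emph{direct sum} rather than a direct product, the filtration is exhaustive and bounded in each total degree, so the resulting spectral sequence converges strongly to $\cHH{J,W}$. At $E_0$ only $d_J$ survives, yielding
\[
E_1^{p,q} \;=\; HH^q(J,J)^p,
\]
the internal-degree-$p$ summand of the ordinary Hochschild cohomology. The first nonzero differential induced by $d_W$ is then to be identified with the Gerstenhaber bracket $\{W,-\}$: viewing $W$ as a Hochschild $0$-cochain and comparing the bar-complex formula \eqref{eqn:dfndl} with the explicit expression for $[W,\,\cdot\,]$ on Hochschild cochains yields a match up to standard sign conventions. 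The subsequent page is therefore $H^*(HH^*(J,J);\{W,-\})$.

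To promote this to an additive isomorphism one must argue collapse at that page. In our setting $\Jac$ is a $3$-Calabi--Yau algebra arising as the NCCR of a $3$-dimensional toric Gorenstein singularity, so $HH^*(\Jac,\Jac)$ is concentrated in a bounded range of internal degrees; together with the $\mathbb{Z}/2$-grading natural in the LG setting, this should force all higher differentials to vanish for bidegree reasons. The main obstacles I anticipate are the sign bookkeeping needed to identify the induced differential with $\{W,-\}$, and the collapse argument—both of which rely on the precise grading conventions and the structural properties of $\Jac$ rather than on any general curved-algebra formalism.
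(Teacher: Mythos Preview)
The paper does not prove this statement; it is cited from \cite{Wong21} without argument. Your proposal is therefore an independent attempt, and the setup through the $E_2$ page is correct: the filtration, convergence via the direct-sum definition of $CC_c^*$, the identification $E_1 \cong HH^*(J,J)$, and the identification of the induced differential with $\{W,-\}$ are all standard and valid.

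The gap is in the collapse step. Tracking bidegrees in the filtration by $m$, the differential $d_r$ shifts the Hochschild degree $n$ by $-(2r-1)$. The $3$-Calabi--Yau property confines $n$ to the range $0 \le n \le 3$, which kills $d_r$ for all $r \ge 3$ automatically---but $d_2$ still maps the $n=3$ piece of $E_2$ to the $n=0$ piece, and both are nonzero (they contain, respectively, the classes $x_{\eta_i}^m\Theta_v$ and $x_{\eta_i}^n$ appearing in Section~\ref{subsec:Kodaira-Spencer map is an isomorphism}). The $\mathbb{Z}/2$-grading provides no parity obstruction here, since $n=3$ and $n=0$ have opposite parities and $d_2$ has odd total degree as required. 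If instead by ``bounded range of internal degrees'' you meant a grading on $J$ itself (say via a perfect matching), then the premise fails outright: $HH^0(J,J) = \mathcal{Z}(\Jac)$ is the coordinate ring of $Y_\Q$ and is unbounded in any such grading. Your argument therefore yields degeneration at $E_3$, not $E_2$; the vanishing of $d_2$ is genuine content that depends on the specific structure of the dimer Jacobi algebra and is established in \cite{Wong21}, presumably by direct computation with the explicit generators of $HH^*(\Jac,\Jac)$ listed in Proposition~\ref{prop:hhjacw}.
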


In our main application, we will work with an alternative projective resolution \(J^\bullet\) of \(J=\Jac\) (see \ref{subsec:koszulresolJ}), which is better suited for this spectral sequence calculation due to its finite length.\footnote{While it is possible to transfer the differential \(d\) to \(J^\bullet\) via the standard comparison theorem (Theorem \ref{eqn:compthm}), we will not need an explicit formula for the transferred differential.} This resolution was originally introduced by Ginzburg \cite{Gin06}, and, interestingly, has a direct connection to the Floer theory of the zigzag Lagrangian \(\lL\). In fact, Wong \cite{Wong21} computed \(HH^*(J)\) primarily using this same resolution \(J^\bullet\), largely relying on Calabi–Yau duality.

%

\subsection{Calabi-Yau algebras and BV-structure on $\cHH{\Jac,W}$}\label{subsec:Calabi-Yau algebras}
Recall from \cite{Gin06} that a homologically smooth (DG-)algebra $J$ is called a \emph{Calabi-Yau algebra of dimension $d(\geq1)$}, or simply a CY$d$-algebra if 
\begin{equation*}
J  \cong  J^\vee[d]:=\mathrm{RHom}_{J\textsf{-Bimod}}(J,J \otimes_\mathbb{C} J)[d]
\end{equation*}
as $J$-bimodules. In general, we define $M^\vee := \hom_{J\textsf{-Bimod}} (M, J \otimes_\mathbb{C} J)$ to be the bimodule dual to a $J$-bimodule $M$. The target $J \otimes_\mathbb{C} J$ is viewed as the outer $J\textsf{-Bimod}$ bimodule when taking the bimodule hom, and $M^\vee$ is again a $J\textsf{-Bimod}$ bimodule via the inner structure on $J \otimes_\mathbb{C}  J$.

%

\begin{remark}
For any algebra $A$, the tensor product $A \otimes_\mathbb{C} A$ can be equipped with two $A$-bimodule structures, called the outer and the inner bimodule structures, defined as
$$ b( a \otimes a')c = ba \otimes a'c, \qquad b( a \otimes a')c = ac \otimes b a',$$
respectively. 
\end{remark}

If $J$ is Calabi-Yau of dimentions $d$, one can define a Van den Bergh duality map \cite{vdB98}
\begin{equation*}
\mathbb{D} :HH_*(J) \xrightarrow{\sim} HH^{d-*}(J)
\end{equation*}
which, coupled with the Connes $B$-operator  $B:HH_*(J) \to HH_{*+1}(J)$ (see \eqref{eqn:Connes} for the precise formula), 
induces a {\it Batalin-Vilkovisky structure}
\begin{equation}\label{eqn:BVop}
\Delta :HH^*(A) \to HH^{*-1}(A)
\end{equation}
on the Gerstenhaber algebra $(HH^*(A), \cup, \{-,-\})$, making it into a BV-algebra.
See \cite[3.4]{Gin06} for more details. 

One of great advantages from consistency condition on $\Q$ is the following.

\begin{thm}[{\cite[Theorem 4.4]{Bock16}}]
The Jacobi algebra $\Jac$ of a zigzag consistent dimer $\Q$ is a Calabi-Yau algebra of dimension 3.
\end{thm}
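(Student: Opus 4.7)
The plan is to exhibit a self-dual length-$3$ projective bimodule resolution of $\Jac$, as the existence of such a resolution is exactly the content of the CY$3$ property $\Jac \cong \Jac^\vee[3]$. The natural candidate is Ginzburg's resolution built directly from the quiver-with-potential data $(\Q, \Phi_\Q)$:
\begin{equation*}
0 \to \Jac \otimes_\Bbbk \Jac \xrightarrow{d_3} \Jac \otimes_\Bbbk \C\Q_1^\ast \otimes_\Bbbk \Jac \xrightarrow{d_2} \Jac \otimes_\Bbbk \C\Q_1 \otimes_\Bbbk \Jac \xrightarrow{d_1} \Jac \otimes_\Bbbk \Jac \xrightarrow{\mu} \Jac \to 0,
\end{equation*}
where $d_1$ is the canonical differential $a \otimes x_e \otimes b \mapsto ax_e \otimes b - a \otimes x_eb$, $d_2$ is built from the cyclic derivatives $\partial_{x_e}\Phi_\Q$, and $d_3$ encodes the ``second derivative'' symmetries of $\Phi_\Q$. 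I would first dispose of the easy exactness statements: exactness at $\Jac$ is formal from $\mu$, exactness at the rightmost $\Jac \otimes_\Bbbk \Jac$ follows because $\ker\mu$ is generated by commutators, and exactness at $\Jac \otimes_\Bbbk \C\Q_1 \otimes_\Bbbk \Jac$ follows from the Jacobi relations defining $\Jac$.

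The core of the proof is establishing exactness at the two remaining middle spots, and this is where the zigzag consistency hypothesis enters. Following the combinatorial approach of Broomhead--Bocklandt, the strategy is to extract a cancellation property from consistency: whenever $ap = aq$ (or $pa = qa$) in $\Jac$ with $a$ nonzero, then $p = q$. To establish cancellation I would exploit the rich family of corner perfect matchings guaranteed by Theorem \ref{thm:combinatorics of matching polytope for dimers in tori}(ii), each of which induces an integer grading $\deg_{\cP}$ on $\Jac$; zigzag consistency ensures these gradings combine to separate paths in $\Jac$ modulo Jacobi relations. Given cancellation, one controls $\ker d_2$ and $\ker d_3$ explicitly, showing that every syzygy among Jacobi relations is the expected one coming from second cyclic derivatives of $\Phi_\Q$.

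For the self-duality step, I would dualize $\Jac^\bullet$ by applying $\mathrm{Hom}_{\Jac\textsf{-Bimod}}(-,\Jac \otimes_\mathbb{C} \Jac)$ term by term. The natural pairing between $\C\Q_1$ and $\C\Q_1^\ast$ together with the cyclic symmetry of the spacetime superpotential $\Phi_\Q$ (every edge lies in exactly one positive and one negative face, which induces a canonical involution on the middle data of the resolution) makes the dualized complex canonically isomorphic to $\Jac^\bullet$ shifted by $[3]$. Combining this with the exactness established above yields $\Jac \simeq \Jac^\vee[3]$ as $\Jac$-bimodules in the derived category, which is the CY$3$ property.

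The hard part will be the exactness at $\Jac \otimes_\Bbbk \C\Q_1^\ast \otimes_\Bbbk \Jac$, where the combinatorial subtleties of dimers on a torus become unavoidable; the formal arguments on both ends of the complex and the final self-duality argument are routine by comparison. I anticipate spending most of the effort on reducing this exactness statement to the cancellation property for $\Jac$, and then deriving cancellation by a careful analysis of how the corner perfect matchings of Theorem \ref{thm:combinatorics of matching polytope for dimers in tori} refine the natural $\Z^2$-grading on $\Jac$ to one that separates all paths modulo Jacobi relations.
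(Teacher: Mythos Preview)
The paper does not actually prove this theorem: it is stated as a citation of \cite[Theorem 4.4]{Bock16} with no proof given. The paper does, in the immediately following subsection~\ref{subsec:koszulresolJ}, record the equivalence you invoke---that $\Jac$ is CY$3$ if and only if the Ginzburg/Koszul complex \eqref{eqn:koszulcy} is exact---but it takes the exactness itself as a black box from Bocklandt's work.

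Your outline is a faithful sketch of the strategy in the cited literature (Bocklandt, Broomhead, Davison): reduce CY$3$ to exactness of the self-dual Ginzburg resolution, then derive the nontrivial exactness from cancellativity of $\Jac$, which in turn is extracted from the consistency hypothesis via perfect-matching gradings. One point worth sharpening: cancellativity alone is not quite enough to get exactness at $\Jac \otimes_\Bbbk \C\Q_1^\ast \otimes_\Bbbk \Jac$; what one really needs is a normal form for elements of $\Jac$ (every path is equivalent to a minimal path times a power of the boundary cycle $W$), and the proof that the relations among Jacobi relations are generated by the obvious ``cyclic Leibniz'' ones uses this finer statement. Your plan to use corner matchings to separate paths is the right instinct, but the actual argument passes through this minimal-path normal form rather than cancellation directly. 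Also note that for $d_3$ one needs injectivity (not just $\ker d_3 = 0$ but that the leftmost map is a monomorphism), which is where the torus embedding and the specific combinatorics of $\Phi_\Q$ having exactly two terms through each arrow become essential.
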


In particular, we can define a BV-structure on $HH^\ast (\Jac,\Jac)$. In what follows, we introduce a new projective resolution of $\Jac$ that manifests this Calabi-Yau structure.

\subsection{The Koszul resolution of $\Jac$}\label{subsec:koszulresolJ}
It is known that $J$ is a Calabi-Yau algebra (of dimension 3) if and only if the following complex of $J$-bimodules is exact (hence it is a free resolution of $J$ as the diagonal bimodule).
\begin{equation}\label{eqn:koszulcy}
J^\bullet:  0 \longrightarrow (J \otimes_{\mathbb{C}} J)^\Bbbk \stackrel{j}{\longrightarrow}  J \otimes_\Bbbk E^\ast \otimes_\Bbbk J \stackrel{c}{\longrightarrow}  J \otimes_\Bbbk E \otimes_\Bbbk J \stackrel{j^\vee}{\longrightarrow} J \otimes_\Bbbk J \longrightarrow J \longrightarrow 0
 \end{equation}
where $E$ is the $\mathbb{C}$-vector space spanned by $\Q_1$, and $E^\ast = \hom_\mathbb{C} (E,\mathbb{C})$. Notice that the resolution has a finite length. Each map in the sequence is given as follows.

\begin{itemize}
\item[(i)] By definition, $(J \otimes_{\mathbb{C}} J)^\Bbbk$ consists of $a \otimes_{\mathbb{C}} b$ such that $h(a) = t(b)$. This is opposite to $J \otimes_\Bbbk J$ whose element is $a \otimes_\Bbbk b$ satisfying $t(a) = h(b)$. Indeed, we have $(J \otimes_{\mathbb{C}} J)^\Bbbk = (J \otimes_\Bbbk J)^\vee$.
As before, the inner bimodule structure on $J \otimes_\mathbb{C} J$ makes $(J \otimes_{\mathbb{C}} J)^\Bbbk$ a $J$-bimodule, and it is generated $\sum_i \pi_i \otimes \pi_i$ over $J$. The first map $j$ is defined as
$$j: (J \otimes_{\mathbb{C}} J)^\Bbbk \to  J \otimes_\Bbbk E^\ast \otimes_\Bbbk J, \qquad a \otimes_\mathbb{C} b \mapsto \sum_{x\in \Q_1} \left(bx \otimes_\Bbbk \bar{x} \otimes_\Bbbk a - b \otimes_\Bbbk \bar{x} \otimes_\Bbbk xa\right).$$
where $\{ \bar{x} : x \in \Q_1 \}$ is the basis for $E^\ast$ dual to $\Q_1$. 

\item[(ii)]The second map 
$c: J \otimes_\Bbbk E^\ast \otimes_\Bbbk J \to  J \otimes_\Bbbk E \otimes_\Bbbk J$ in the sequence is given by
$$c(a\otimes_\Bbbk \bar{x} \otimes_\Bbbk b) :=\sum_{y \in \Q_1} a \left(\frac{\partial^2 \Phi}{\partial x\partial y} \right)' \otimes_\Bbbk  y \otimes_\Bbbk \left(\frac{\partial^2 \Phi}{\partial x\partial y}\right)''b.$$
for the cyclic potential $\Phi=\Phi_\lL$. Here, the second derivative (Hessian) is the composion of the usual cyclic derivatives
\begin{equation*}\label{eqn:ppy}
\frac{\partial}{\partial y} : \mathbb{C}\Q_{cyc}:=\mathbb{C}\Q/[\mathbb{C}\Q,\mathbb{C}\Q] \to \mathbb{C}\Q
\end{equation*}
with
\begin{equation*}\label{eqn:ppx}
 \frac{\partial}{\partial x} : \mathbb{C}\Q \to \mathbb{C}\Q \otimes \mathbb{C}\Q, \qquad \frac{\partial f}{\partial x} := \sum \left(\frac{\partial f}{\partial x}  \right)' \otimes \left(\frac{\partial f}{\partial x}  \right)'',
 \end{equation*}
where we write
$$df = \sum_x \left(\frac{\partial f}{\partial x}  \right)' \otimes dx_j \otimes \left(\frac{\partial f}{\partial x}  \right)''.$$
(Here, $df:=1 \otimes f - f \otimes 1$ for $f \in J$.) 
In practice, it can be easily calculated using the formula
$$ \frac{\partial}{\partial x} ( x_{\vec{v}}\,  x\,  x_{\vec{w}}) = x_{\vec{v}} \otimes x_{\vec{w}}.$$
when $x_{\vec{v}}$ and $x_{\vec{w}}$ are words not containing $x$.

\item[(iii)] Finally, $j^{\vee} : J \otimes_\Bbbk E \otimes_\Bbbk J \to J \otimes_\Bbbk J $ takes the form of
$$j^{\vee} (a \otimes_\Bbbk x \otimes_\Bbbk b) := ax \otimes_\Bbbk b - a \otimes_\Bbbk xb.$$
This agrees with the differential in the bar resolution in the sense that $j^{\vee}$ factors through the obvious inclusion
$$ J \otimes_\Bbbk E \otimes_\Bbbk J \hookrightarrow J \otimes_\Bbbk J \otimes_\Bbbk J \to J \otimes_\Bbbk J$$
with the second map being the differential in the bar resolution. 
\end{itemize}
The self-duality can be clearly seen from \eqref{eqn:koszulcy}. Thus it is straightforward that $J$ is Calabi-Yau if this gives a resolution.

\subsection{Hochschild invariants from the Koszul resolution}\label{subsec:hhfromk}
Let us now compute the Hochschild invariants of the algebra $J$ using the resolution $J^\bullet$. Applying $\hom_{J\textsf{-Bimod}} ( J^\bullet, -)$ to a $J$-bimodule $M$, we get the Hochschild cochain complex in coefficient $M$
\begin{equation}\label{eqn:hhcohomorig} 
\begin{array}{l}
  \hom_{J\textsf{-Bimod}} (J\otimes_\Bbbk J, M ) \longrightarrow  \hom_{J\textsf{-Bimod}} (J\otimes_\Bbbk E \otimes_\Bbbk J, M) \\
   \longrightarrow \hom_{J\textsf{-Bimod}} ( J \otimes_\Bbbk E^\ast \otimes_\Bbbk J,M) \longrightarrow  \hom_{J\textsf{-Bimod}} ((J\otimes_\mathbb{C} J)^{\Bbbk}, M ) 
  \end{array} 
\end{equation}
which reduces to
\begin{equation*}
 \hom_\Bbbk (\Bbbk, M ) \longrightarrow  \hom_\Bbbk ( E,M)  \longrightarrow  \hom_\Bbbk (E^\ast, M)  \longrightarrow \hom_\Bbbk (\Bbbk, M ).
\end{equation*}
The above complex can be further simplified to
\begin{equation}\label{eqn:koszulcyhh^*}
CC^\ast(J,M): \,\,M_{cyc} \stackrel{d_0}{\longrightarrow}  (M \otimes_\Bbbk E^\ast)_{cyc}  \stackrel{d_1}{\longrightarrow}  (M \otimes_\Bbbk E)_{cyc} \stackrel{d_2}{\longrightarrow} M_{cyc}.
\end{equation}
In particular, the Hochschild cochain complex in coefficient $J$ is given by
\begin{equation}\label{eqn:CCJJ}
CC^\ast(J,J): \,\, J_{cyc}  \longrightarrow  (J \otimes_\Bbbk E^\ast)_{cyc}   \longrightarrow  (J \otimes_\Bbbk E)_{cyc} \longrightarrow J_{cyc}.
\end{equation}

To clarify the connection with the Floer complex later, we will write \( X := \bar{x} \) and \( \bar{X} := x \) for the generators of \( E^* \) and \( E \) appearing in \eqref{eqn:koszulcyhh^*}. Then the identification between \eqref{eqn:hhcohomorig} and \eqref{eqn:koszulcyhh^*} is given as
$$ \hom_{J\textsf{-Bimod}} (J\otimes_\Bbbk J, M ) \to M_{cyc} \qquad f \mapsto f( 1 \otimes_\Bbbk 1),$$
where $1$ denotes the unit $ \sum_i \pi_i$ of $J$, and
\begin{equation*}
\begin{array}{rclrcl}
\hom_{J\textsf{-Bimod}} (J\otimes_\Bbbk  E \otimes_\Bbbk J, M ) &\!\!\cong\!\!& (M\otimes_\Bbbk E^\ast)_{cyc} &\quad f &\!\mapsto\!& \sum_{x \in Q_1} f( 1 \otimes_\Bbbk x \otimes_\Bbbk 1) \otimes_\Bbbk X  \\
 \hom_{J\textsf{-Bimod}} (J\otimes_\Bbbk  E^\ast \otimes_\Bbbk J, M ) &\!\!\cong\!\!& (M\otimes_\Bbbk E)_{cyc} & f &\!\mapsto\!& \sum_{x \in Q_1} f( 1 \otimes_\Bbbk \bar{x} \otimes_\Bbbk 1) \otimes_\Bbbk \bar{X} \\
 \hom_{J\textsf{-Bimod}} ((J\otimes_\mathbb{C} J)^\Bbbk, M ) &\!\!\cong\!\!& M_{cyc} & f &\!\mapsto\!& f \left(  \sum_i e_i \otimes_\mathbb{C} e_i \right),
 \end{array}
 \end{equation*}
and the differential on \eqref{eqn:koszulcyhh^*} can be written as
\begin{equation}\label{eqn:disforhhk}
\begin{array}{rcl}
d_0 (m) &=& \sum_{x \in Q_1} (xm -mx) \otimes_\Bbbk X, \\
d_1 (m \otimes_\Bbbk Y) &=&  \sum_{x \in Q_1} \left( \frac{\partial^2 \Phi}{\partial x \partial y} \right)' m \left( \frac{\partial^2 \Phi}{\partial x \partial y} \right)'' \otimes_\Bbbk \bar{X} , \\
d_2 (m \otimes_\Bbbk \bar{Y}) &=& ym - my = [y,m].
\end{array}
\end{equation}
It is easy to check that the above coincides with the complex $J^\bullet \otimes_{J\textsf{-Bimod}} M$, or more precisely,
$$\hom_{J\textsf{-Bimod}} ( J^\bullet, M) \cong  J^{3-\bullet} \otimes_{J\textsf{-Bimod}} M$$
which establishes the duality 
\begin{equation}\label{eqn:hochdualcy}
 HH^\ast (J,M) \cong HH_{3-\ast} (J, M).
\end{equation}
(This duality holds for any Calabi-Yau algebra.)

%

\subsection{BV operators in the Hochschild cohomology in view of different resolutions}\label{subsec:bvformula}

Our upcoming argument requires an explicit identification between the generators of \( HH^*(J, J) \) used in \cite{Wong21} and those of a certain Floer cohomology. Although the identification of their underlying complexes will turn out to be straightforward, some of the generators in \cite{Wong21} involve the BV operator $\Delta$ (see \eqref{eqn:BVop}) in their definitions. To address this, we need to compute the BV operator on a specific generator in the Hochschild cohomology of $J$ (obtained from the Koszul resolution).

\begin{lemma}\label{lem:bvopex} For an element $a \in HH^3 (J,J)$, choose its representative $x_1 \cdots x_k \in \mathbb{C} \Q$ (i.e., a cyclic path which lifts $a$). Then we have
$$\Delta ( a ) =    \displaystyle\sum_i (x_{i+1} \cdots x_k )(x_1 \cdots x_{i-1} )\otimes_\Bbbk \bar{X_i},$$
which defines a class in $HH^2 (J,J)$. 
\end{lemma}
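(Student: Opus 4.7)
The plan is to compute $\Delta$ through its defining formula $\Delta = \mathbb{D} \circ B \circ \mathbb{D}^{-1}$ from \ref{subsec:Calabi-Yau algebras}, using the Koszul resolution $J^\bullet$ of \ref{subsec:koszulresolJ} to render both the Van den Bergh duality $\mathbb{D}$ and the Connes operator $B$ explicit. Since $J$ is CY$3$, the resolution $J^\bullet$ is self-dual up to a shift of $3$, so $\mathbb{D}$ reduces to the identity on underlying cochains of $J^\bullet$. In particular, the class $a \in HH^3(J,J)$ represented by the cyclic path $x_1 \cdots x_k \in J_{cyc}$ corresponds to the same cyclic path viewed in $HH_0(J,J) = J_{cyc}$, and a class $\sum_i m_i \otimes \bar{X_i} \in HH^2(J,J) \cong (J \otimes_\Bbbk E)_{cyc}$ matches $\sum_i m_i \otimes x_i \in HH_1(J,J) \cong (J \otimes_\Bbbk E)_{cyc}$ on the homology side after relabeling the basis of $E$.

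Next, I would compute $B$ on Hochschild chains. On the bar complex the Connes operator sends a $0$-chain $a_0$ to $1 \otimes a_0$, so $B(a) = 1 \otimes (x_1 \cdots x_k)$. To move this representative into the Koszul subcomplex (whose $1$-chains must have their second tensor factor in $E$), I telescope the Hochschild boundary
\begin{equation*}
b(p \otimes q \otimes r) \;=\; pq \otimes r \;-\; p \otimes qr \;+\; rp \otimes q
\end{equation*}
applied with $p = x_1 \cdots x_{i-1}$, $q = x_i$, $r = x_{i+1} \cdots x_k$ for $i = 1, \ldots, k$. The intermediate pairs $(x_1 \cdots x_i) \otimes (x_{i+1} \cdots x_k)$ cancel telescopically, the residual boundary term $(x_1 \cdots x_k) \otimes 1$ vanishes modulo $b(1 \otimes 1 \otimes x_1 \cdots x_k)$, and one is left with
\begin{equation*}
1 \otimes (x_1 \cdots x_k) \;\equiv\; \sum_{i=1}^{k} (x_{i+1} \cdots x_k)(x_1 \cdots x_{i-1}) \otimes x_i \pmod{\mathrm{im}(b)}.
\end{equation*}

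Since every summand on the right already has its second tensor factor in $E$, this expression sits in the Koszul model of $HH_1(J,J)$, and transporting it back to $HH^2(J,J)$ via $\mathbb{D}$ amounts to relabeling $x_i \leadsto \bar{X_i}$, giving the asserted formula. The main technical point is the bookkeeping in the telescoping step: one must verify that each intermediate expression is a genuine cyclic element with matching head and tail, which is guaranteed by the cyclicity of $x_1 \cdots x_k$ and the constraint that the final output lies in $(J \otimes_\Bbbk E)_{cyc}$. Transferring between the bar and Koszul resolutions is then automatic, since the reduced output already sits inside $J \otimes E \otimes J \subset J \otimes J \otimes J$, and no further chain-map lifting is required beyond the obvious inclusion of subcomplexes.
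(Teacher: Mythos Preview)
Your argument is correct and follows the same overall strategy as the paper: compute the Connes operator $B(a)=1\otimes a$ on the bar complex, transport to the Koszul model, and use the self-duality of $J^\bullet$ to identify $\mathbb{D}$ with the identity on underlying cochains.  The difference lies in how the transfer from bar to Koszul is carried out.  The paper constructs an explicit comparison chain map
\[
g_1:\; J\otimes_\Bbbk \underline{J}\otimes_\Bbbk J \longrightarrow J\otimes_\Bbbk E\otimes_\Bbbk J
\]
(depending on a lift $\sigma:J\to\mathbb{C}\Q$ and a ``spread'' map $\mathfrak{c}$), and then evaluates $id\otimes g_1$ on $1\otimes a$.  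You instead stay inside the bar chain complex and telescope boundaries to produce a representative of $[1\otimes a]$ whose second tensor factor already lies in $E$, so that no explicit $g_1$ is needed beyond the inclusion $f_1$.  Both routes give the same formula; yours is more elementary for this single degree, while the paper's comparison maps are set up once and could in principle be reused in other degrees.

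One small bookkeeping point: the paper works with the normalized bar complex (internal slots in $\underline{J}$), so the leftover term $(x_1\cdots x_k)\otimes 1$ vanishes outright because $1=0$ in $\underline{J}$, rather than via the extra boundary $b(1\otimes 1\otimes x_1\cdots x_k)$ you invoke (which itself has $1$ in an internal slot).  Equivalently, you can run the telescope in the unnormalized complex exactly as you wrote; either way the conclusion is the same.  Your final remark that ``no further chain-map lifting is required beyond the obvious inclusion'' is justified because $f_1$ is an inclusion that induces an isomorphism on $HH_1$, so a bar-homology class represented by a chain in $\operatorname{im}f_1$ has a well-defined Koszul preimage represented by the same element.
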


The proof of Lemma~\ref{lem:bvopex}, together with further details on the BV operators, will be provided in Appendix~\ref{sec:comparison}.

Recall that there is an additional component $d_W$ of the Hochschild differential (for $HH^\ast(J,W)$), which produces a differential acting on $HH^\ast(J, J)$. 
We remark that $d_W$ (see~\eqref{eqn:dfndl}) can be expressed entirely in terms of the BV operator $\Delta$ as
\begin{equation}\label{eqn:BVbracket}
d_W(f) = \{W, f\} = \Delta(Wf) - W\,\Delta(f),
\end{equation}
using the algebraic relations among the operators $\Delta$, $\cup$, and $\{-,-\}$. 
This formula will be useful later for explicitly computing $d_W$ on cocycles in $HH^3(J,J)$.

\subsection{Explicit calculation for $HH^\ast(J,J)$ and $d_W$}\label{subsec:hhjacw}
We present several explicit computational results on $HH^\ast(J,J)$ and $HH^\ast_c (mf(W)) = HH^\ast (J,W)$ that will be used in the later part of the paper. For details about the calculations, we refer readers to \cite[Section]{Wong21}.
We first list generators of $HH^\ast(J,J)$ according to their homological degree (i.e., the resolution degree from $J^\bullet$). 
Recall that the dimer $\Q$ is embedded in the torus $T^2$.
\begin{itemize}
\item[($\deg 0$)] Each nontrivial $\alpha \in H_1 (T^2;\mathbb{Z})$ defines a Hochschild cycle $x_\alpha \in HH^0 (J,J) = \mathcal{Z} (\Jac)$ whose underlying path in $\mathbb{C} \Q$ belongs to the class $\alpha$. By requiring it to be a non-multiple of $W$, it is uniquely determined (up to Jacobi relations). In particular, each antizigzag class $\eta_i$ defines a Hochschild cocycle $x_{\eta_i}$.
\item[($\deg 1$)]
Each corner matching $\mathcal{P}_i$ defines a Hochschild $1$-cocycle $\partial_{\cP_i}$ defined (on the chain level) by
\begin{equation*}
\partial_{\cP_i} \in (J \otimes E^\ast)_{cyc} : e \mapsto 
\left\{
\begin{array}{lcl}
e \quad \textnormal{if} \,\, e \in \mathcal{P}_i\\
0 \quad \textnormal{otherwise}
\end{array}\right.
\end{equation*}
where we regard this as a map $E \to J$ (hence an element of $(J \otimes_\Bbbk E^\ast)_{cyc} \subset CC^\ast (J,J)$ \eqref{eqn:CCJJ}), which can be extended to a derivation $J \to J$. Let $\sigma_i$ be the cone in $H_1 (T^2;\mathbb{Z})$ spanned by $\eta_i$ and $\eta_{i+1}$. For $\alpha \in \mathrm{int}\, \sigma_i$, we define
$$ \partial_\alpha := x_\alpha W^{-1} \partial_{\cP_i} \in (J \otimes E^\ast)_{cyc}.$$
The expression involves $W^{-1}$, and hence it a priori only gives a map $J\to J[W^{-1}]$, but one can prove that it factors through $J$ by some grading argument using perfect matchings.
\item[($\deg 3$)]
Any vertex $v \in \Q_0$ (or the corresponding idempotent) defines an element of $[v] \in  HH_0 (J,J)$. Denote by $\theta_v \in HH^3 (J,J)$ the element dual to $[v]$ under the duality $HH^3 (J,J) \cong HH_0 (J,J)$ \eqref{eqn:hochdualcy}. On the chain level, it is simply the idempotent at $v$ in $J_{cyc}$ .
\item[($\deg 2$)] Suppose $Z_{i,1} \cdots Z_{i,m_i}$ are parallel zigzag cycles in the class $\eta_i$ arranged in such a way that they divide $T$ into closed strips $\mathcal{E}_{i,j}$ bounded by $\mathcal{O}^+ (Z_{i,j} ) \cup \mathcal{O}^- (Z_{i,j+1})$. For $v \in \mathcal{E}_{i,j}$, we set $\psi_{i,j}:=\Delta ( x_{\eta_i} \theta_v) \in HH^2(J,J)$ where $\Delta$ is the BV operator on $HH^\ast (J,J)$.
\end{itemize}

One can specify a representative of $\psi_{i,j}$ as follows. Note that any vertex $v$ contained in the antizigzag $\mathcal{O}^+ (Z_{i,j})$  lies in $\mathcal{E}_{i,j}$ by definition. Moreover, $x_{\eta_i} \theta_v$ regarded as as element of $J_{cyc}$ in this case is nothing but the labelling of the antizigzag $\mathcal{O}^+ (Z_{i,j})$ itself that ends at $v$. Therefore, by Lemma \ref{lem:bvopex}, we have
\begin{equation}\label{eqn:psiij}
\psi_{i,j} = \Delta(x_{\eta_i} \theta_v) =\Delta ( x_1 \cdots x_k ) =    \displaystyle\sum_i (x_{i+1} \cdots x_k )(x_1 \cdots x_{i-1} )\otimes_\Bbbk \bar{X_i},
\end{equation}
where $x_1 \cdots x_k$ represents the antizigzag $\mathcal{O}^+ (Z_{i,j})$ such that $t(x_k) = v$.

In terms of the above cocycles, $HH^\ast (J,J)$ is given as follows.

\begin{prop}\cite[Theorem 1.2]{Wong21}\label{prop:hhjacw} For a zigzag consistent dimer model $\Q$, the Hochschild cohomology of $\Jac$ is given by
\begin{equation*} \label{eqn:HH2Jadd}
\begin{array}{rcl}
HH^0 (J,J) &\cong& \mathcal{Z}    \\
HH^1(J,J) &\cong&  \displaystyle\bigoplus_{1 \leq i \leq N} \mathcal{Z} \cup  \partial_{\cP_i}  \oplus \displaystyle\bigoplus_{\alpha \in \mathrm{int}\, \sigma_i} \mathbb{C}  \partial_\alpha \\
HH^2 (J,J) &\cong& \displaystyle\bigoplus_{1 \leq i,j \leq N} \mathcal{Z} \cup  \partial_{\cP_i}  \cup   \partial_{\cP_j} \oplus \displaystyle\bigoplus_{\substack{1 \leq j \leq N \\ \alpha \in \mathrm{int}\,\sigma_i}} \mathbb{C}    \partial_\alpha \cup  \partial_{\cP_j}  \oplus \displaystyle\bigoplus_{\substack{1 \leq i \leq N \\ 1\leq j \leq m_i, n\in \mathbb{Z}_{>0} }}  \mathbb{C}  x_{\eta_i}^n \psi_{i, j} \\
HH^3 (J,J) &\cong& \displaystyle\bigoplus_{1 \leq i,j,k \leq N} \mathcal{Z} \cup   \partial_{\cP_i}  \cup   \partial_{\cP_j} \cup \partial_{\cP_k}   \oplus  \displaystyle\bigoplus_{\substack{1 \leq j,k \leq N \\ \alpha \in \mathrm{int}\,\sigma_i}}  \mathbb{C}   \partial_\alpha \cup  \partial_{\cP_j} \cup \partial_{\cP_k}  \oplus \displaystyle\bigoplus_{\substack{1 \leq i \leq N \\ v \in \mathcal{E}_{i,j}, n \in \mathbb{Z}_{>0} } } \mathbb{C}  x_{\eta_i}^n \theta_v 
\end{array}
\end{equation*}
where $\mathcal{Z} = \mathcal{Z} (\Jac)$ denotes the center of the Jacobi algebra. 
\end{prop}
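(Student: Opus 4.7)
The plan is to compute the cohomology of the length-3 Koszul complex \eqref{eqn:koszulcyhh^*} directly, exploiting two structural tools: the $\mathbb{Z}^N$-multi-grading on $J$ induced by the corner perfect matchings $\cP_1,\ldots,\cP_N$ from Theorem~\ref{thm:combinatorics of matching polytope for dimers in tori}, and the Calabi--Yau duality \eqref{eqn:hochdualcy}. The multi-grading refines \eqref{eqn:koszulcyhh^*} into a direct sum of finite-dimensional weight subcomplexes, reducing the computation in each homological degree to a combinatorial question in each weight. In particular, by Theorem~\ref{thm:nccrmpmp} the degree-0 cohomology is identified with $\mathcal{Z}(J)=\mathbb{C}[Y_\Q]$, whose weights are the lattice points in the cone dual to $\MP$, which accounts for the first line of the proposition.

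For $HH^3$, I would use the Calabi--Yau duality to identify $HH^3(J,J)\cong HH_0(J,J)=J/[J,J]$. Every cyclic class in $J$ has a well-defined homology class $\alpha\in H_1(T^2;\mathbb{Z})$; decomposing by $\alpha$ and combining Jacobi relations with commutator relations yields the following dichotomy. If $\alpha$ lies in the interior of a cone $\sigma_i$ spanned by $\eta_i,\eta_{i+1}$, then any cyclic representative is a $W^n$-multiple of a central element and is already captured under CY duality by the triple-cup-product summand $\mathcal{Z}\cup\partial_{\cP_i}\cup\partial_{\cP_j}\cup\partial_{\cP_k}$. The genuinely new classes appear only on the rays $\alpha=n\eta_i$, yielding $x_{\eta_i}^n\theta_v$; the restriction $v\in \mathcal{E}_{i,j}$ arises because Jacobi relations allow one to push the basepoint across an anti-zigzag $\antipos{Z_{i,j}}$ or $\antineg{Z_{i,j+1}}$, but not across the zigzag $Z_{i,j}$ itself. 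For $HH^1$, I would compute $\ker d_1/\mathrm{im}\,d_0$ directly: $\mathrm{im}\,d_0$ is the space of inner derivations, and in each multi-weight the outer derivations are generated by the corner derivations $\partial_{\cP_i}$, twisted by central elements. The combination $\partial_\alpha=x_\alpha W^{-1}\partial_{\cP_i}$ for $\alpha\in\mathrm{int}\,\sigma_i$ is well-defined as a derivation landing in $J$ rather than in $J[W^{-1}]$, because Remark~\ref{rmk:pmdegandvan} forces $x_\alpha$ to vanish on every edge of $\cP_i$ to at least the order that $W^{-1}$ decreases the $\cP_i$-weight.

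The degree-2 piece $HH^2$ can be handled either by direct calculation on \eqref{eqn:koszulcyhh^*} or via CY duality $HH^2\cong HH_1$. The ``polynomial'' summands $\mathcal{Z}\cup\partial_{\cP_i}\cup\partial_{\cP_j}$ and $\partial_\alpha\cup\partial_{\cP_j}$ are forced by the ring structure, while the remaining classes $\psi_{i,j}$ are produced by $\Delta(x_{\eta_i}^n\theta_v)$ via Lemma~\ref{lem:bvopex}. By the explicit formula \eqref{eqn:psiij}, each $\psi_{i,j}$ is supported on the anti-zigzag $\antipos{Z_{i,j}}$, so the $m_i$ classes arising from the parallel zigzags $Z_{i,1},\ldots,Z_{i,m_i}$ have disjoint supports and are linearly independent; they correspond bijectively to the $m_i$ interior lattice points on the $\eta_i$-edge of $\MP$, which are precisely the boundary matchings of \eqref{eqn:boundary matchings} and, geometrically, the codimension-2 singular strata of $Y_\Q$.

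The main obstacle is the edge-stratum analysis: proving that no spurious classes appear in $HH^2$ and $HH^3$, and that the $\psi_{i,j}$ are independent from (and exhaust, modulo) the polynomial summands. This requires a careful weight-by-weight comparison of kernels against images, stratifying $\MP$ into corners, edge interiors, and polytope interior; the edge stratum is delicate because it couples the Jacobi--commutator reductions with the BV operator, and because the $m_i$ parallel zigzags generate $m_i$ anti-zigzag supports but only a single ``polynomial'' direction from the center. This match --- one extra class per interior lattice point on an edge of $\MP$ --- is precisely the algebraic reflection of the codimension-two singular stratification of $Y_\Q$ described in the introduction, and it is both the central conceptual content of the proposition and the consistency check that guides the case analysis.
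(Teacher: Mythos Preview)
The paper does not prove this proposition at all: it is quoted verbatim as \cite[Theorem~1.2]{Wong21} and used as a black box. The surrounding text (see the remarks before \S\ref{subsec:hhjacw} and at the end of \S\ref{subsec:csupphh}) indicates that Wong's computation indeed proceeds via the Koszul-type resolution $J^\bullet$ of \S\ref{subsec:koszulresolJ} together with the Calabi--Yau duality \eqref{eqn:hochdualcy}, so your overall strategy---multi-grading by corner matchings, CY duality for $HH^3\cong HH_0$, and building $\psi_{i,j}$ via the BV operator---is the expected one and is consistent with what the paper cites.

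That said, since there is no proof in the present paper to compare against, your sketch should be measured against \cite{Wong21} directly. A few of your assertions are heuristic rather than proofs as written: the claim that the ``polynomial'' summands in $HH^2$ and $HH^3$ are \emph{forced by the ring structure}, and the claim that the $\psi_{i,j}$ are independent from and exhaust the complement of those summands, are exactly the delicate weight-by-weight computations that you yourself flag as the main obstacle. Your outline is a reasonable roadmap, but as a self-contained argument it still owes the reader the actual edge-stratum case analysis you describe in the final paragraph; without it, the proposal is a plausible plan rather than a proof.
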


The following formulas from \cite{Wong21} will be used later.

\begin{lemma}\cite[Proposition 4.21, 4.23]{Wong21}\label{lem:calcinwong}
The following holds for the Gerstenharber bracket $\{-,-\}$ and the cup product $\cup$ on $HH^\ast(J,J)$.
\begin{itemize}
\item[(i)]
The Gerstenharber bracket between $f \in HH^0(J,J)$ and $\partial_{\cP_i} \in HH^1(J,J)$ is given by
$$ \{\partial_{\cP_i}, f\} = \deg_{P_i} (f) f$$
\item[(ii)] The cup product between $\partial_{\cP_k} \in HH^1(J,J)$ and $\psi_{i,j} \in HH^2(J,J)$ is given by
$$ \partial_{\cP_k} \cup \psi_{i,j}
= \deg_{P_k} (x_{\eta_i})\, x_{\eta_i}\,\theta_v$$
for $v \in \mathcal{E}_{i,j}$.
\end{itemize}
\end{lemma}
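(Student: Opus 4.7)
For part (i), the strategy is to use the standard fact that the Gerstenhaber bracket of a 1-cochain (derivation) with a 0-cochain (algebra element) on $HH^*(J,J)$ reduces to the derivation applied to the element. The 1-cocycle $\partial_{\cP_i} \in (J \otimes_\Bbbk E^*)_{cyc}$ corresponds to the derivation of $\mathbb{C}\Q$ extending $x_e \mapsto x_e$ for $e \in \cP_i$ and $x_e \mapsto 0$ otherwise. Applying the Leibniz rule to a word $x_{a_1}\cdots x_{a_n}$ gives
\[
\partial_{\cP_i}(x_{a_1}\cdots x_{a_n}) \;=\; \sum_{j:\, a_j \in \cP_i} x_{a_1}\cdots x_{a_n} \;=\; \deg_{\cP_i}(x_{a_1}\cdots x_{a_n})\cdot x_{a_1}\cdots x_{a_n}.
\]
Decomposing any $f \in \mathcal{Z}(J) = HH^0(J,J)$ into its $\cP_i$-homogeneous components (cf.\ Remark~\ref{rmk:pmdegandvan}) and extending linearly yields the claimed identity.

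For part (ii), I would compute the cup product directly on the Koszul resolution $J^\bullet$, using the explicit chain-level representatives. The cocycles are
\[
\partial_{\cP_k} \;=\; \sum_{e \in \cP_k} x_e \otimes_\Bbbk X_e, \qquad \psi_{i,j} \;=\; \sum_{l=1}^n (x_{l+1}\cdots x_n)(x_1\cdots x_{l-1}) \otimes_\Bbbk \bar{X}_l,
\]
where $x_1 \cdots x_n$ is the antizigzag $\antipos{Z_{i,j}}$ ending at $v$, cf.\ \eqref{eqn:psiij}. A diagonal approximation on the Koszul complex induces the cup product $HH^1 \otimes HH^2 \to HH^3$ by contracting $E^*$ against $E$ through the natural pairing and multiplying the remaining $J$-components, giving
\[
\partial_{\cP_k} \cup \psi_{i,j} \;=\; \sum_{l:\, x_l \in \cP_k} x_l (x_{l+1}\cdots x_n)(x_1 \cdots x_{l-1}) \;\in\; J_{cyc}.
\]
In $J_{cyc}$ each summand is a cyclic shift of the full antizigzag word $x_1 \cdots x_n$, hence represents the class $x_{\eta_i}\theta_v$; the number of surviving terms is the number of antizigzag edges lying in $\cP_k$, which by definition is $\deg_{\cP_k}(x_{\eta_i})$, so the whole expression collapses to $\deg_{\cP_k}(x_{\eta_i}) \cdot x_{\eta_i}\theta_v$.

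The main obstacle is justifying the cup product formula on the Koszul complex---that is, verifying that the naive contraction against $E^* \otimes E \to \mathbb{C}$ genuinely computes $\cup: HH^1 \otimes HH^2 \to HH^3$. This amounts to producing a diagonal approximation $J^\bullet \to J^\bullet \otimes_J J^\bullet$ extending the identity and checking its compatibility with the Koszul differential; for Jacobi algebras of consistent dimer models this is part of Ginzburg's framework \cite{Gin06} that \cite{Wong21} already exploits, though a careful write-up requires some bookkeeping around the quadratic-dual pieces of the resolution. An alternative, resolution-independent route is to invoke the BV seven-term relation applied to $\psi_{i,j} = \Delta(x_{\eta_i}\theta_v)$: the term $\Delta(\partial_{\cP_k} \cup x_{\eta_i}\theta_v)$ vanishes because $HH^4(J,J) = 0$ (the Koszul resolution has length $3$), and the remaining contributions $\{\partial_{\cP_k}, x_{\eta_i}\theta_v\}$ and $\Delta(\partial_{\cP_k}) \cup x_{\eta_i}\theta_v$ can be evaluated using the Leibniz rule for the Gerstenhaber bracket together with part (i) and the explicit divergence of the derivation $\partial_{\cP_k}$.
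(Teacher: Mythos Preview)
The paper itself does not prove this lemma; it is stated purely as a citation to \cite[Proposition 4.21, 4.23]{Wong21}. Your proposal therefore goes beyond the paper by supplying actual arguments, and both parts are essentially correct.

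For (i), your argument is the standard one and is complete as written.

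For (ii), your direct computation is correct but one step is slightly imprecise: each summand $x_l(x_{l+1}\cdots x_n)(x_1\cdots x_{l-1})$ is a cyclic rotation based at the vertex $t(x_l)$, so a priori it represents $x_{\eta_i}\theta_{t(x_l)}$ rather than $x_{\eta_i}\theta_v$ for the fixed $v$. These classes coincide in $HH^3(J,J)$ only because all vertices on the antizigzag $\antipos{Z_{i,j}}$ lie in $\mathcal{E}_{i,j}$, and for $v,v' \in \mathcal{E}_{i,j}$ one has $x_{\eta_i}\theta_v = x_{\eta_i}\theta_{v'}$ (cf.\ Remark~\ref{rmk:higerdegodd}). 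This deserves a sentence. The contraction formula you use for $\cup: HH^1 \otimes HH^2 \to HH^3$ is indeed what the diagonal approximation gives: in the paper's Floer-theoretic description of $\tilde{\Delta}$ (proof of the lemma identifying $m_{2,0}^{b,b,b}$ with the cup product), the relevant component comes from the operations $m_2(\bar{X}_e,X_e)=\pt$, which produces exactly the pairing $E^\ast\otimes E\to\mathbb{C}$ followed by multiplication in $J$. So the ``main obstacle'' you flag is real but resolved by the paper's own construction. Your alternative BV route via the seven-term relation also works and is closer in spirit to how \cite{Wong21} organizes things.
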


\section{Hochschild invariants of $(\Jac,W)$ from Floer theory of zigzag Lagrangians}\label{sec:compareresol}

In this section, we show that the Hochschild invariant $\cHH{\mathrm{mf}(W)}$ of the noncommutative LG-mirror in Section \ref{subsec:compactly supported Hochschild cohomology}  can be obtained from the deformed Floer complex $\CFbloop$ defined in \ref{def:cflbcyc} for the zigzag Lagrangian $\lL$. Namely, we prove

\begin{prop}\label{prop:flequalshh}
For a zigzag consistent dimer $\Q$, there is an (additive) isomorphism
$$\HFbloop \cong \cHH{\mathrm{mf}(W)}$$
where $\lL$ is the zigzag Lagrangian $\lL$ obtained from the dual dimer $\Qv$.
\end{prop}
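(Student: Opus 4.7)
The plan is to exhibit a chain-level identification between $\CFbloop$ and the Hochschild cochain complex $CC^*(\Jac,W)$ computed via the finite Koszul resolution $J^\bullet$ in \eqref{eqn:koszulcy}, and then to invoke Proposition~\ref{prop:speccal}. The starting observation is that the underlying $\Bbbk$-bimodule of $\CFbloop$ has exactly four types of generators that match the four terms of the Koszul complex \eqref{eqn:koszulcyhh^*} with coefficient $M=J$: (i) the Morse units $\mathbf{1}_{L_i}$ in degree $0$ give rise, after collecting cyclic elements over $\MCA\cong\Jac$, to $J_{cyc}$; (ii) the odd immersed generators $X_e$ give $(J\otimes_\Bbbk E^*)_{cyc}$ under the identification $X_e\leftrightarrow \bar{x}_e$ of Section~\ref{subsec:hhfromk}; (iii) the even immersed generators $\bar X_e$ give $(J\otimes_\Bbbk E)_{cyc}$; (iv) the Morse point classes in degree $1$ on each $L_i$ give another copy of $J_{cyc}$ placed in cohomological degree $3$. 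This identification is forced by Proposition~\ref{lem:spacetime superpotential} (which identifies $\Q^\lL\cong\Q$ and $\MCA\cong\Jac$) and Lemma~\ref{lem:Floer theory coefficient for zigzag Lagrangians} (which guarantees that no completion is needed).

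Next, I would compute the deformed differential $m_1^{b,b}$ on each of the four summands and match it with the Koszul Hochschild differential $d_J = d_0 \oplus d_1 \oplus d_2$ of \eqref{eqn:disforhhk} plus the LG twist $d_W$. On a Morse unit $f\cdot\mathbf{1}_{L_i}$, the $A_\infty$-unit axiom forces all $m_k^{b,\dots,b}$ contributions with $k\ge 2$ containing $\mathbf{1}_{L_i}$ to vanish, and the remaining $m_2(b,\mathbf{1}_{L_i}\!\cdot f)\pm m_2(\mathbf{1}_{L_i}\!\cdot f,b)$ contributions produce precisely the commutator $\sum_x(xf-fx)\otimes X$, matching $d_0$. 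On an odd immersed generator $f\cdot X_e$, the bigon and triangle contributions of $m_1^{b,b}$ are organized precisely by the pairs of edges appearing in the face polygons around $e$; after summing the boundary words of the positive and negative faces meeting at the two lifts of the self-intersection point (exactly the combinatorics of Figures~\ref{fig:anti-zigzags} and~\ref{fig:dualtozigzag}), the output has the shape of the second mixed cyclic derivative $(\partial^2\Phi/\partial x\partial y)'\,f\,(\partial^2\Phi/\partial x\partial y)''$, which is the formula for $d_1$. The analogous computation on even immersed generators $f\cdot\bar X_e$ reproduces $d_2(f\otimes\bar Y) = [y,f]$. Thus, the part of $m_1^{b,b}$ that preserves the homological grading (on the Koszul side) reproduces $d_J$.

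To locate $d_W$, I would track the remaining terms in $m_1^{b,b}$, which change the Koszul homological degree by $2$. These come from disks whose $\mathbf{1}_{L_i}$ input is ``absorbed'' by a curvature disk computing $m_0^b=W_\lL\unit$; more concretely, sequences of $m_k^{b,\dots,b}$ in which a full cycle of $b$-insertions completes a face contribution, producing a factor of $W$. Using the BV-formula \eqref{eqn:BVbracket}, namely $d_W(f)=\{W,f\}=\Delta(Wf)-W\Delta(f)$, together with the explicit description of $\Delta$ on Koszul cochains given in Lemma~\ref{lem:bvopex}, these extra terms are seen to coincide with $d_W$ applied on $HH^*(J,J)$. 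In particular, on $J_{cyc}$ (degree $0$) the $W$-twist appears as a map into the degree-$2$ piece $(J\otimes_\Bbbk E)_{cyc}$, which is exactly where curvature pushes units under $m_1^{b,b}$ when one uses the cyclic symmetrization inherent in $\CFbloop$.

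Combining the above, $m_1^{b,b}$ on $\CFbloop$ agrees with $d_J+d_W$ on the totalization of the double complex $(CC^*(J,J),d_J,d_W)$ built from the Koszul resolution. Its cohomology therefore coincides with the abutment of the spectral sequence of Proposition~\ref{prop:speccal}, giving the additive isomorphism $\HFbloop\cong\cHH{\mf}$. The main obstacle I anticipate is the sign-accurate matching in the middle step: identifying precisely which boundary-deformed disks contribute to the Hessian entries of $d_1$, verifying that the inner/outer bimodule ordering on the Koszul side corresponds to the backward concatenation rule \eqref{eqn:nc ainf structure}, and separating the $d_J$-terms from the $d_W$-terms coming from curvature insertions. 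The zigzag-consistency of $\Q$ and the explicit combinatorics of anti-zigzag faces around a self-intersection (Figure~\ref{fig:dualtozigzag}) should make this matching effective, but it is the place where the argument will be most technical.
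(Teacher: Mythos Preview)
Your identification of the underlying module and of the degree-raising part of $m_1^{b,b}$ is essentially the paper's Lemma~\ref{lem:idplus}: the decomposition $m_1^{b,b}=\delta_+ + \delta_-$ by exterior degree has $\delta_+$ matching $d_0,d_1,d_2$ exactly as you describe (though the contributions to $d_1$ come from arbitrary face polygons, not just bigons and triangles). The real gap is in your treatment of the remaining piece. First, the degree is wrong: $\delta_-$ \emph{decreases} the exterior (Koszul) degree by~$1$, not~$2$; in particular $\delta_-$ is zero on the degree-$0$ summand $J_{cyc}$, so your claim that ``on $J_{cyc}$ the $W$-twist appears as a map into the degree-$2$ piece'' cannot be right. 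Second, and more seriously, you assert a chain-level equality $\delta_-=d_W$ without a mechanism: the operator $d_W$ is defined via the bar resolution, and its transfer to the Koszul resolution depends on non-canonical comparison maps. Your description of $\delta_-$ as ``curvature disks absorbing $\mathbf{1}_{L_i}$'' does not pin down these contributions, and invoking \eqref{eqn:BVbracket} and Lemma~\ref{lem:bvopex} only makes sense once you have already passed to $HH^*(J,J)$.

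The paper does not attempt a chain-level match of $\delta_-$ with $d_W$. Instead it proves the equality on the $E_1$-page $HH^*(J,J)$ by checking on Wong's explicit generators (Proposition~\ref{prop:hhjacw}). The key structural inputs are: (i) both $\delta_-$ and $d_W$ are derivations for the cup product $\cup=m_{2,0}^{b,b,b}$ (this requires identifying $\cup$ on $HH^*(J,J)$ with the leading term of $m_2^{b,b,b}$, done in Section~5.3); (ii) on the degree-$1$ generators $\partial_{\cP_i}$ one computes $\delta_-\bigl(\sum_{e\in\cP_i}x_eX_e\bigr)=-W_\lL\unit$ by a direct polygon count and compares with Lemma~\ref{lem:calcinwong}(i); (iii) on $\theta_v$ and $\psi_{i,j}$ one uses the BV identity \eqref{eqn:BVbracket} together with Lemma~\ref{lem:bvopex} and Lemma~\ref{lem:calcinwong}(ii) to reduce to cases already handled. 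Without this generator-by-generator verification on the $E_1$-page (or an honest chain-level formula for the transferred $d_W$), your argument does not close.
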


We will use a certain nongeometric $\mathbb{Z}$-grading on this Floer complex to make it a double complex, and use associated spectral sequence to show that the cohomology of $\CFbloop$  computes $\cHH{\mf}$. This indeed coincides with the spectral sequence for $\cHH{\mf}$ itself, appearing in Proposition \ref{prop:speccal}.

\subsection{$\CFb$ for the zigzag Lagrangians $\lL\subset X_\Qv$}\label{subsec:CFb for zigzag Lagrangians}

Let $\Q$ be a dimer, and $\lL$ the zigzag Lagrangians in $X_\Qv$. Recall from \ref{subsec:zigzag Lagrangians} that
the self-intersections in $\lL$ occur along an edge of $\Qv$, which, by dimer duality, leads to an one-to-one correspondence between the edges $e \in\Q_1$ and the self-intersections in $\lL$. Such a self-intersection point gives rise to two generators of $CF(\lL,\lL)$ indicating branch jumps between local Lagrangian components at the intersection point. For $e\in\Q_1$, we denote the corresponding generators in the odd and even degrees by $\Xe$ and $\bXe$, respectively. The immersed Floer complex \eqref{eqn:immersed Lagrangian Floer complex} can be written as
\begin{equation*}
\CF = \bigoplus_{i=1}^k C^*(L_i) \oplus \bigoplus_{e\in\Q_1}\mathrm{Span}\{\Xe,\bXe\},
\end{equation*}
where $C^*(L_i)$ is a Morse complex of a Lagrangian circle $L_i$ (more precisely, the domain circle of a possibly immersed circle $L_i$) with respect to a chosen perfect Morse function.
Pictorially, $X_e$ corresponds to the corner formed by two branches of $\lL$ whose orientations are aligned (either positively or negatively), and $\bXe$ is complementary to $X_e$. See Figure \ref{fig:zigzagdisks}.

Solving (weak) Maurer-Cartan equation for $b=\sum_{e \in \Q_1} x_e X_e$ produces a noncommutative LG model $(\MCA,W)$. We know that it agrees with the Jacobi algebra $\Jac$ together with the central element $W$ (Proposition \ref{lem:spacetime superpotential}).

We define a grading on $\CFb = \MCA \otimes_\Bbbk CF^\ast (\lL,\lL)$ so that generators listed below have degree $0,1,2$ and $3$: 
\begin{equation}\label{eqn:extdeggen}
	\begin{tikzcd}
	 	\oplus_i \MCA\cdot\mathbf{1}_i,
	 	& \oplus_{e \in \Q_1} \MCA\cdot\Xe,
	 	& \oplus_{e\in \Q_1} \MCA\cdot\bXe,
	 	& \oplus_i \MCA\cdot \pt_i,
	 \end{tikzcd}
\end{equation}
where $\mathbf{1}_i$ is the maximum of the perfect Morse function on $L_i$ (hence it gives the unit class), and $\pt_i$ is the minimum. We will often call this the ``exterior grading" due to its resemblance with the grading on the exterior algebra (especially in the case of pair-of=pants), and write $|\mathbf{1}_i| =0$, $|\Xe|=1$, $|\bXe|=2$ and $|\pt_i|=3$ to indicate their exterior degrees. 

This agrees with the Floer grading only in modulo $2$, and 
the Floer differential $m_1^{b,b}$ on $\CFb$ decomposes as $m_1^{b,b} = m_{1,+1}^{b,b} + m_{1,-1}^{b,b}$ according to the exterior grading. Writing $\delta = m_1^{b,b}$, $\delta_+ = m_{1,+1}^{b,b}$ and $\delta_- = m_{1,-1}^{b,b}$ for simplicity, we have
\begin{equation*}\CFb: 
	\begin{tikzcd}
	 	\oplus_i \MCA\cdot\mathbf{1}_i \ar[r, bend left=10, "\deltap" description] 	 	& \oplus_{e \in \Q_1} \MCA\cdot\Xe \ar[l, bend left=10, "\deltam" description] \ar[r, bend left=10, "\deltap" description]
	 	& \oplus_{e\in \Q_1} \MCA\cdot\bXe \ar[l, bend left=10, "\deltam" description] \ar[r, bend left=10, "\deltap" description]
	 	& \oplus_i \MCA\cdot \pt_i,  \ar[l, bend left=10, "\deltam" description]
	 \end{tikzcd}
\end{equation*}
i.e., $|\delta_+|=1$ and $|\delta_-| =-1$.
Here, the component $ \bigoplus \MCA\cdot \mathbf{1}_i \to \bigoplus \MCA\cdot \pt_i$ of $\delta$ vanish due to unitality. By choosing Morse functions suitably, one can make $\bigoplus \MCA\cdot \pt_i \to \bigoplus \MCA\cdot \mathbf{1}_i$ also vanish. For instance, we may locate $\mathbf{1}_i$ on the boundary of a positive disk for all $i$, and $\pt_i$ on the boundary of a negative disk so that they never lie on the boundary of the same disk.
%

Finally, we restrict ourselves to the subcomplex $\CFbloop$ of $\CFb$ generated by cyclic element, and the decomposition $d =\delta_+ + \delta_-$ descends to $\CFbloop$:
\begin{equation}\label{eqn:cfbloopcyc}
	\begin{tikzcd}
	 	\oplus_i \pi_i \MCA \pi_i\cdot\mathbf{1}_i \ar[r, bend left=10, "\deltap" description] 	 	& \oplus_{e \in \Q_1} (\MCA\cdot\Xe)_{cyc} \ar[l, bend left=10, "\deltam" description] \ar[r, bend left=10, "\deltap" description]
	 	& \oplus_{e\in \Q_1} (\MCA\cdot\bXe)_{cyc} \ar[l, bend left=10, "\deltam" description] \ar[r, bend left=10, "\deltap" description]
	 	& \oplus_i \pi_i \MCA \pi_i \cdot \pt_i.  \ar[l, bend left=10, "\deltam" description]
	 \end{tikzcd}
\end{equation}
where $(\MCA\cdot\Xe)_{cyc}=\pi_{t(\Xe)}\MCA\cdot\Xe$ and $(\MCA\cdot\bXe)_{cyc}=\pi_{h(\Xe)}\MCA\cdot\bXe$. This makes $\CFbloop$ into a double complex. In the main application, we will compare this double complex with the one in \cite{Wong21} computing $\cHH{\mf}$.

\subsection{$\delta_+$-cohomology and $HH^\ast (\Jac,\Jac)$}
Recall that $HH_c^\ast (\mf)$, identified as the Hochschild cohomology of the curved algebra $(\Jac, W)$, can be (additively) computed as the cohomology of $HH^\ast(\Jac,\Jac)$ with respect to the differential $d_W = \{W,-\} : HH^\ast(\Jac,\Jac) \to HH^\ast(\Jac,\Jac)$. We first claim that the $\delta_+$-cohomology of $\CFbloop$ (which is the first page of the associated spectral sequence) agrees with $HH^\ast(\Jac,\Jac)$. In fact, one can easily identify their underlying complexes.

\begin{lemma}\label{lem:idplus}
There is a chain-level isomorphism 
\begin{equation}\label{eqn:hochadp}
(\CFbloop,\delta_+) \cong (\hom_{J\textsf{-Bimod}} (J^\bullet, \Jac ), d_A),
\end{equation}
where $d_A$ denotes the Hochschild differential with respect to the Koszul resolution $J^\bullet$ of $\Jac$ (see \ref{subsec:koszulresolJ}).
\end{lemma}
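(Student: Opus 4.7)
My plan is to identify the two complexes term-by-term as graded bimodules and then verify the three differential matchings. For the underlying modules, the isomorphism $\MCA \cong \Jac$ from Proposition~\ref{lem:spacetime superpotential}, together with the renaming convention for the Koszul resolution \eqref{eqn:koszulcy} (where $X = \bar x \in E^\ast$ and $\bar X = x \in E$ are indexed by $\Q_1$ with the same letter names as the Floer generators $X_e, \bar X_e$), gives row-by-row identifications between \eqref{eqn:cfbloopcyc} and \eqref{eqn:CCJJ}. The remaining content is to show that $\delta_+$ at each exterior degree realizes $d_0, d_1, d_2$ from \eqref{eqn:disforhhk}.

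The boundary rows are easy. For $f \mathbf{1}_i$ with $f \in \pi_i \MCA \pi_i$, the $A_\infty$ unit axiom kills $m_k(\ldots, \mathbf{1}_i, \ldots)$ for $k \geq 3$, and $m_1(f\mathbf{1}_i) = 0$. Unpacking the surviving $m_2(b, f\mathbf{1}_i) + m_2(f\mathbf{1}_i, b)$ via \eqref{eqn:nc ainf structure} together with the unit identities $m_2(\mathbf{1}_i, X_e) = X_e$ and $m_2(X_e, \mathbf{1}_i) = \pm X_e$ yields $\sum_e [x_e, f]\,X_e$, matching $d_0$. For the $\bar X_e$-row, a parallel unit-based calculation --- or equivalently, an appeal to the cyclic pairing on $\CF$ pairing $\mathbf{1}_i$ with $\pt_i$ and $X_e$ with $\bar X_e$, which makes $\delta_+$ in exterior degree $2$ adjoint to $\delta_+$ in exterior degree $0$ --- gives $\delta_+(f \bar X_e) = [x_e, f]$ in the appropriate $\pt$-component, matching $d_2$.

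The main step is matching $\delta_+$ on $(\MCA \cdot X_e)_{cyc}$ with $d_1$. Geometrically, a term in $m_k^{b, \ldots, b}(f X_e)$ with $\bar X_{e'}$-output counts a rigid holomorphic polygon with one distinguished input $X_e$, output at a $\bar X_{e'}$-corner, and $k-1$ other input corners of $X$-type decorated by $b$. By \S\ref{subsec:zigzag Lagrangians}, such disks are exactly the faces in $\Q_2^\pm$ whose boundary contains both $e$ and $e'$, and by Proposition~\ref{lem:spacetime superpotential} the sum over these boundary words reconstructs the cyclic spacetime superpotential $\Phi = \Phi_\Q$. Tracking the noncommutative concatenation in \eqref{eqn:nc ainf structure} as $f X_e$ slides through each of the $k$ input positions among the $b$-insertions, the total coefficient in front of $\bar X_{e'}$ assembles into
\[
\left(\tfrac{\partial^2 \Phi}{\partial x_{e'}\partial x_e}\right)'\,f\,\left(\tfrac{\partial^2 \Phi}{\partial x_{e'}\partial x_e}\right)'',
\]
which is $d_1$ applied to $f \otimes X_e$.

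The main obstacle is this middle identification, specifically the sign bookkeeping: reconciling the cyclic reorderings forced by \eqref{eqn:nc ainf structure}, the opposite signs assigned to $\Q_2^+$ and $\Q_2^-$ faces in the definition of $\Phi_\Q$, and the orientation and spin conventions fixed in Proposition~\ref{lem:spacetime superpotential}. Zigzag consistency --- already used to ensure finiteness of $m_k^{b, \ldots, b}$ in Lemma~\ref{lem:Floer theory coefficient for zigzag Lagrangians} --- guarantees that each relevant disk really is a single face of $\Qv$, with no bubbling or multi-covering, so the verification ultimately reduces to a finite combinatorial check.
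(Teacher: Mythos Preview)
Your proposal is correct and follows essentially the same approach as the paper: identify the underlying graded modules via $\MCA\cong\Jac$, then verify $\delta_+=d_0,d_1,d_2$ degree by degree using the unit axiom for degree~0, the polygon count (faces of $\Qv$) assembling into the Hessian of $\Phi_\Q$ for degree~1, and an $m_2$-computation for degree~2. One small terminological point: the degree~2 step in the paper is not literally ``unit-based'' but uses the explicit products $m_2(X_e,\bar X_e)=-\pt$ and $m_2(\bar X_e,X_e)=\pt$ (cf.\ \cite[(10.2)]{Sei11}); your alternative via the cyclic pairing and adjointness is equally valid. Also, the fact that contributing disks are exactly the faces of $\Qv$ comes directly from the construction of the zigzag Lagrangian in \cite{CHL21} rather than from zigzag consistency itself.
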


\begin{proof}
We write $J=\Jac$ for simplicity.
Recall from \ref{subsec:hhfromk} that the Hochschild cochain complex $\hom_{J\textsf{-Bimod}} (J^\bullet, M )$ simplifies to
\begin{equation}\label{eqn:koszulcyhh^*MJ}
J_{cyc} \stackrel{d_0}{\longrightarrow}  (J \otimes_\Bbbk E^\ast)_{cyc}  \stackrel{d_1}{\longrightarrow}  (J \otimes_\Bbbk E)_{cyc} \stackrel{d_2}{\longrightarrow} J_{cyc}.
\end{equation}
where $E$ is the vector space formally generated by $\Q_1$, and $E^\ast$ is its dual.
Given that $\MCA =\Jac$ by Proposition \ref{lem:spacetime superpotential}, there is an obvious graded vector space isomorphism between \eqref{eqn:koszulcyhh^*MJ} and \eqref{eqn:cfbloopcyc}. The identification for degree $0$ and $3$ components are simply the trivial identify $J_{cyc} = \oplus_i \pi_i J \pi_i$. 

Now the image generators in \eqref{eqn:extdeggen} under $\delta_+$ can be explicitly calculated as follows. Firstly, the even degree generators are mapped to
\begin{eqnarray}
\delta_+ (f \cdot \mathbf{1}_i) &=& \displaystyle\sum_{\substack{x \in Q_1 \\ h(X)=i}} xf X- \sum_{\substack{x \in Q_1\\ t(X) =i} } fxX  \label{eqn:imagedeltap0} \\
\delta_+ (f \cdot \bXe) &=& \xe f \pt_{t(e)} - f \xe \pt_{h(e)}. \label{eqn:imagedeltap2}
\end{eqnarray}
The former \eqref{eqn:imagedeltap0} is determined by the unital property $m_2 ( \mathbf{1}_i, X ) = (-1)^{|X|} m_2 ( X, \mathbf{1}_j ) = X$  for $X \in CF(L_i,L_j)$, while the latter \eqref{eqn:imagedeltap2} is due to  $m_2(X,\bar{X}) = - \pt_i,  m_2 (\bar{X},X) = \pt_j$ when $X \in CF(L_i,L_j)$ and $\bar{X} \in CF(L_j,L_i)$.
This is essentially the same calculation as \cite[(10.2)]{Sei11}. 

For a degree $1$ generator $f \cdot X_e$, 
$\delta_+ (f \cdot X_e)$ counts all polygons \( P \) having \( X_e \) as a corner. Let   $w_p = x_k \cdots x_1 x_e$ be the (clockwise) labeling of corners appearing in $\partial P$ up to cyclic permutations. See Figure \ref{fig:zigzagdisks}. Note that these are precisely two summands of the cyclic potential $\Phi$ involving $x_e$.

\begin{figure}[h]
\includegraphics[scale=0.175]{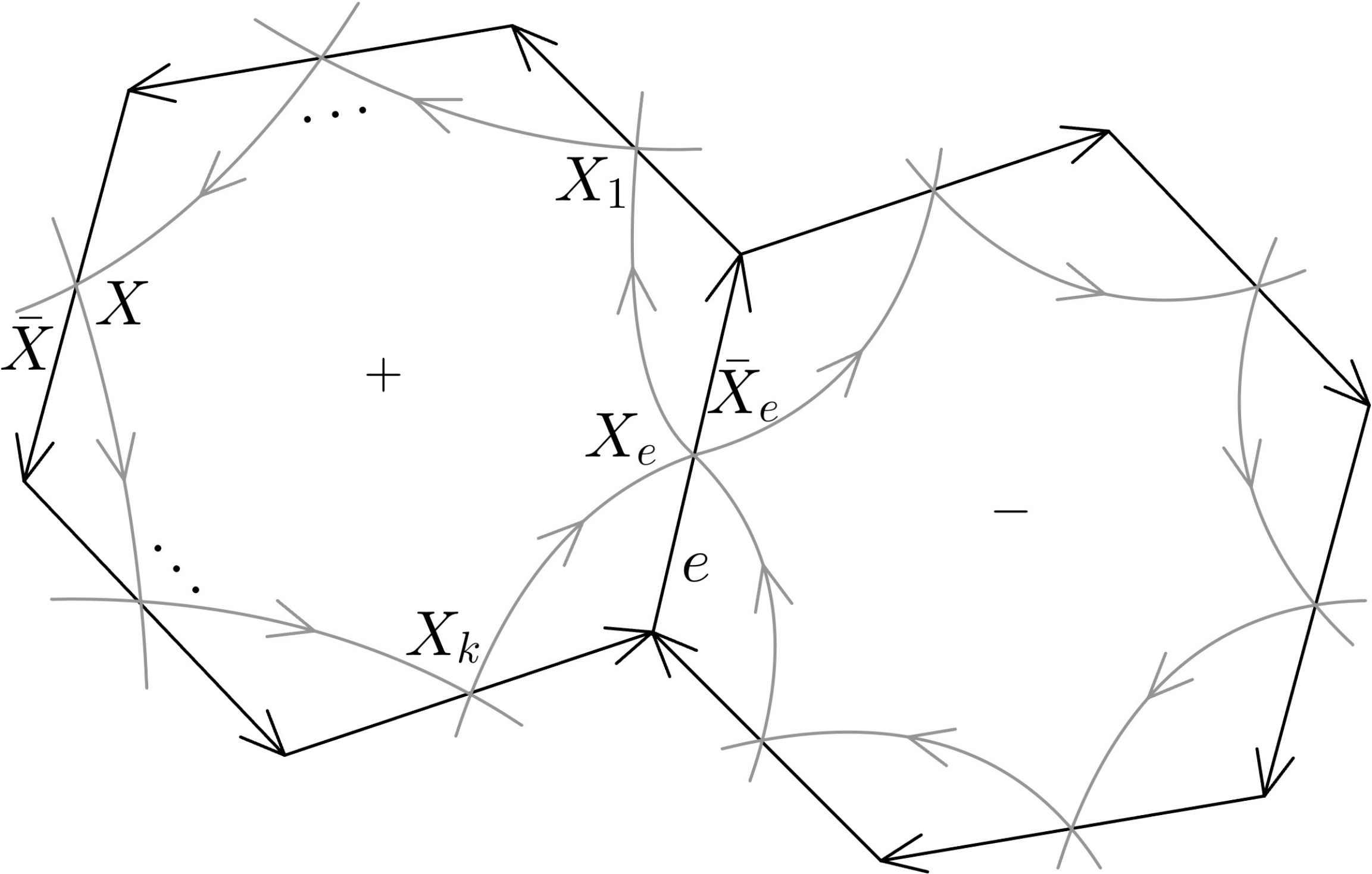}
\caption{Holomorphic disks bounded by $\lL$ (grey) with $\Xe$ on their corner}\label{fig:zigzagdisks}
\end{figure}

Since \( \delta_+ \) produces an output of exterior degree 2, each corner \( X_i \) of \( P \) other than \( X_e \) contributes a term of the form \( x_{i-1} \cdots x_1 f x_k \cdots x_{i+1} \bar{X}_i \). Therefore
\begin{equation}\label{eqn:imagedeltap1}
\begin{array}{rcl}
 \delta_+ (f \cdot X_e) &=& \displaystyle\sum_{\substack{P \,\, \textnormal{with}\\ w_P = x_k \cdots x_1 x_e}} x_{i-1} \cdots x_1 f x_k \cdots x_{i+1} \bar{X}_i\\
 &=& \displaystyle\sum_{e \in \Q_1} \left( \frac{\partial^2 \Phi}{\partial x \partial x_e} \right)' f \left( \frac{\partial^2 \Phi}{\partial x \partial x_e} \right)'' \otimes_\Bbbk \bar{X} 
\end{array}
\end{equation}
\eqref{eqn:imagedeltap0}, \eqref{eqn:imagedeltap2} and \eqref{eqn:imagedeltap1} agree with the calculation of $d_A$ given in \eqref{eqn:disforhhk}, and the lemma follows.
\end{proof}

 It is straightforward to match generators of $HH^\ast(J,J)$ appearing in Proposition \ref{prop:hhjacw} and Floer generators of $H^\ast (\CFbloop,\delta_+)$. Let us write 
\begin{equation}\label{eqn:Fidhhcf}
F: HH^\ast(J,J) \stackrel{\cong}{\to} H^\ast (\CFbloop,\delta_+)
\end{equation}
for the isomorphism in the proof of Lemma \ref{lem:idplus} (or its chain level map from the identification of the underlying complexes). Then we have
\begin{equation}\label{eqn:ajidn1}
F(x_\alpha) = x_\alpha \unit, \quad F(\partial_{\cP_i})=\sum_{e \in \mathcal{P}_i} x_e X_e,
\end{equation}
and $F(\partial_\alpha) =  \sum_{e \in \mathcal{P}_i} x_\alpha W^{-1} x_e X_e$ for $\alpha \in \mathrm{int}\, \sigma_i$. Here, $x_\alpha W^{-1} x_e$ belongs to $ \Jac$ since it has nonnegative degrees with respect to all perfect matchings (see \cite[Lemma 4.5]{Wong21}). Also,
\begin{equation}\label{eqn:ajidn2}
 F(\theta_v ) = \pt_{i}
 \end{equation}
where $L_i=L_{i_v}$ is the component of $\lL$ corresponding to the vertex $v \in \Q_0$. Finally, from \eqref{eqn:psiij}, we have
\begin{equation}\label{eqn:ajidn3}
 F(\psi_{i,j}) =   \displaystyle\sum_i (x_{i+1} \cdots x_k )(x_1 \cdots x_{i-1} )\otimes_\Bbbk \bar{X_i},
 \end{equation}
where $x_1 \cdots x_k$ represents the antizigzag $\mathcal{O}^+ (Z_{i,j})$.

\begin{remark}\label{rmk:idjandcf}
The resolution $J^\bullet$ itself can also be expressed in terms of the Floer complex of $\lL$. Let us write $V:= \CF$. As modules, we have
$$ J^\bullet \cong J \otimes_\Bbbk V^\ast \otimes_\Bbbk J,$$
which identifies $E^\ast$ with the dual of $\mathbb{C} \langle \bXe : e \in \Q_1 \rangle$, and $E$ with the dual of $\mathbb{C} \langle \Xe : e \in \Q_1 \rangle$.
Under this identification, the differential on the right hand side is a $J$-bimodule map obtained by taking dual of the $A_\infty$-operations (the ``deformed $m_1$") on $V$. Namely, it is given by
$$  a \in V^\ast   \mapsto \sum \lambda \,x_1 \cdots x_i  \otimes a' \otimes y_j \cdots y_1 $$
where the sum in the right hand side runs over all $A_\infty$-operations satsifying
$$ m_k ( Y_1, \cdots, Y_j ,A' ,X_i,\cdots, X_1) = \cdots + \lambda^{-1} A + \cdots$$
for degree 1 immersed generators $X$'s and $Y$'s (i.e., $A$ appears as a summand of the output with a nontrivial coefficient). Here $A$, $A'$ are Floer generators in the basis chosen in \ref{subsec:CFb for zigzag Lagrangians} with $|A| = |A'| +1$, and $a$, $a'$ are their corresponding elements in the dual basis. With this perspective, the identification between $d_A$ and $\delta_+$ becomes manifest.
\end{remark}

In order to finish the proof of Proposition \ref{prop:flequalshh}, it remains to compare $d_W=\{W,-\}$ and $\delta_-$ on the cohomologies of \eqref{eqn:hochadp}. 
This is automatic for (exterior) degree 0 ($d_A$- or $\delta_+$-)cocycles.
To simplify the comparison for positive degree cocycles, we need some preparations.

\subsection{The cup product on $HH^\ast (\Jac,\Jac)$}
Recall that $HH^\ast (\Jac,\Jac)$ carries the cup product $\cup$. We show that this can be obtained from the ``leading order" Floer product on $\CFbloop$.
The Floer product $m_2^{b,b,b}$ on $\CFbloop$ also decomposes according to the exterior grading as
$$m_2^{b,b,b} =  m_{2,0}^{b,b,b}  +  m_{2,<0}^{b,b,b}  $$
($ m_{2, \geq 2}^{b,b,b}$ vanishes due to obvious degree reason and unitality.) 

By breaking down the equation $\left[m_1^{b,b},m_2^{b,b,b} \right]=0$ degreewise, we see that
$\delta_+=m_{1,+1}^{b,b}$ is a derivation with respect to $ m_{2,0}^{b,b,b}$, and hence $m_{2,0}^{b,b,b}$ induces a product on 
$$H^\ast (\CFbloop, \delta_+) = HH^\ast(J,J)$$ 
We claim that this product coincides with the cup product on $HH^\ast(J,J)$.

\begin{lemma}
The product on $HH^\ast (J,J)$ induced from $m_{2,0}^{b,b,b}$ agrees with the cup (Yoneda) product on the Hochschild cohomology.
\end{lemma}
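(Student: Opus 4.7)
The plan is to exhibit the Floer product $m_{2,0}^{b,b,b}$ as arising from a chain-level diagonal on the Koszul resolution $J^\bullet$, which by general theory of Yoneda products will automatically induce the cup product on Hochschild cohomology upon passing to cohomology. Recall that for any projective bimodule resolution $P \to J$, any chain map $\Delta: P \to P \otimes_J P$ lifting the identity on $J$ induces a chain-level product on $\hom_{J\text{-Bimod}}(P,J)$ whose cohomological image is the cup product, independent of the choice of $\Delta$.

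The key is to construct such a diagonal on $J^\bullet$ that visibly corresponds to $m_{2,0}^{b,b,b}$ on the Floer side. To do this, I would invoke the Floer-theoretic description from Remark~\ref{rmk:idjandcf}: $J^\bullet \cong J \otimes_\Bbbk V^\ast \otimes_\Bbbk J$ with $V = \CF$, whose differential is dual to $m_1^{b,b}$. The natural candidate for $\Delta$ is then the ``dual'' of $m_{2,0}^{b,b,b}$: for each dual generator $a \in V^\ast$ of exterior degree $k$, send $a$ to a sum over all pairs $(A_1,A_2)$ of Floer generators with $|A_1|+|A_2|=k$ such that the corresponding $A$ appears as a summand in $m_{2,0}^{b,b,b}(A_1,A_2)$, weighted by the corresponding path coefficients read off from the $A_\infty$-structure. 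The restriction to the exterior-grading-preserving part is essential here: it ensures that $\Delta$ has the correct homological degree and matches the bigrading of $J^\bullet \otimes_J J^\bullet$.

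The next step is to verify the required properties of $\Delta$. Its chain-map property corresponds, under dualization, precisely to the Leibniz-type $A_\infty$-relation between $m_1^{b,b}$ and $m_2^{b,b,b}$ on $\CFb$; one only needs to observe that any $m_3^{b,\ldots,b}$-correction arising from the $A_\infty$-relations drops out after projecting to the exterior-grading-preserving part. That $\Delta$ lifts the identity on $J$ follows from unitality of the $A_\infty$-structure. Given these properties, for cocycles $\alpha, \beta \in \hom_{J\text{-Bimod}}(J^\bullet,J)$ corresponding under $F$ to Floer cochains $f \cdot A, g \cdot B \in \CFbloop$, the chain-level cup product computed with $\Delta$ coincides with $m_{2,0}^{b,b,b}(f\cdot A, g\cdot B)$, whence the induced product on cohomology agrees with the cup product.

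The main obstacle will be making the construction of $\Delta$ precise and verifying the chain-map identity with correct signs; in particular, one must justify carefully that only $m_2^{b,b,b}$ (and not higher $m_k^{b,\ldots,b}$) contributes, which is essentially the content of preserving exterior grading, combined with the finiteness statement of Lemma~\ref{lem:Floer theory coefficient for zigzag Lagrangians}. Should this global construction prove cumbersome, a fallback is a direct verification on the explicit generators $x_\alpha$, $\partial_{\mathcal{P}_i}$, $\partial_\alpha$, $\psi_{i,j}$, $\theta_v$ of Proposition~\ref{prop:hhjacw}, by comparing the cup products computed via Lemma~\ref{lem:calcinwong} with Floer products of their Floer-theoretic avatars as recorded in \eqref{eqn:ajidn1}--\eqref{eqn:ajidn3}.
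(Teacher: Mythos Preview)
Your proposal is correct and follows essentially the same approach as the paper: both construct a diagonal $\tilde{\Delta}: J^\bullet \to J^\bullet \otimes_J J^\bullet$ on the Koszul resolution by dualizing the deformed $m_2$-operations (with $b$-insertions), verify via unitality that it lifts the identity on $J$, and then appeal to the standard fact that any such lift computes the cup product. The paper's write-up is somewhat more terse about the chain-map property and the role of the exterior-grading restriction, but the content is the same as what you outline.
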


\begin{proof}
Let us first recall the following well-known fact about the cup product on the Hochschild cohomology. Consider a resolution $J^\bullet$ of $J$, and suppose $\tilde{\Delta}: J^\bullet  \to J^\bullet \otimes_J J^\bullet$ lifts the diagonal map, i.e,
\begin{equation}\label{eqn:liftdiag}
\xymatrix{ J^\bullet \ar[r] \ar[d]_{\tilde{\Delta}} & J  \ar[d]^{\Delta}\\
J^\bullet \otimes_J J^\bullet \ar[r] & J \otimes_J J \cong J,
 }
 \end{equation}
Given such $\tilde{\Delta}$, the cup product on $HH^\ast (J) =H^\ast \hom_{J\textsf{-Bimod}}(J^\bullet, J)$ can be computed by
 \begin{equation}\label{eqn:cuponHH}
 \hom_{J\textsf{-Bimod}}(J^\bullet, J)^{\otimes 2} \to \hom_{J\textsf{-Bimod}}(J^\bullet \otimes J^\bullet, J) \to  \hom_{J\textsf{-Bimod}}(J^\bullet, J),
 \end{equation}
 where the second map is the dual of $\tilde{\Delta}$. 
 
 In our situation, we have $J^\bullet = J \otimes_\Bbbk V^\ast \otimes_\Bbbk J$ for $V=\CF$ as explained in Remark \ref{rmk:idjandcf}, and hence, we need to find a $J$-bimodule map
 $$ \tilde{\Delta} : J \otimes_\Bbbk V^\ast \otimes_\Bbbk J \to (J \otimes_\Bbbk V^\ast \otimes_\Bbbk J) \otimes_J (J \otimes_\Bbbk V^\ast \otimes_\Bbbk J)= J \otimes_\Bbbk V^\ast \otimes_\Bbbk J \otimes_\Bbbk V^\ast \otimes_\Bbbk J.$$
 Analogously to Remark \ref{rmk:idjandcf}, it can be obtained by daulising $A_\infty$-operations (the ``deformed $m_2$") in the sense that 
 $$ \tilde{\Delta}: a \in V^\ast \mapsto \sum \lambda \, x_1 \cdots x_i \otimes  a_1 \otimes y_1 \cdots y_j \otimes a_2  \otimes  z_1 \cdots z_k  $$
where the sum is taken for all $A_\infty$-operations satisfying
\begin{equation}\label{eqn:hatm2}
 m_k (Z_k,\cdots, Z_1,A_2, Y_j,\cdots, Y_1, A_1,X_i,\cdots, X_1) = \cdots + \lambda^{-1} A + \cdots 
 \end{equation}
for any degree 1 immersed generators $X$, $Y$ and $Z$. It is easy to check that $\tilde{\Delta}$ satisfies \eqref{eqn:liftdiag} (the only $A_\infty$-operation involved here is $m_2 (\unit, \unit) = \unit$).
%
%
Thus the product in \eqref{eqn:cuponHH} is essentially the double dual of \eqref{eqn:hatm2} and can be easily identified with $m_{2,0}^{b,b,b}$.
\end{proof}

The following remark explains the origin of the discrepancy between the product structure on $SH^\ast (X)$ and that on $HH^\ast(\mf)$, which is used in the comparison carried out in \cite{Wong21}.
\begin{remark}\label{rmk:discrem2}
Observe that 
$$\left[m_{1,-1}^{b,b},m_{2,0}^{b,b,b} \right] = -  \left[m_{1,+1}^{b,b},m_{2,<0}^{b,b,b} \right]$$
where the right hand side vanishes on the $\delta_+(=m_{1,+1}^{b,b})$-cohomology. Therefore $m_{2,0}^{b,b,b}$ also induces a product on the $E_2$-page of the spectral sequence of the double complex $d_A + d_W$. This is precisely the product appearing in \cite{Wong21}.
Although the $E_2$-page agrees with $\cHH{\mf}$ as vector spaces, $m_{2,0}^{b,b,b}$ does not seem to be an intrinsic product on $\cHH{\mf}$, nor is it compatible with the product on $SH^\ast(X)$. 
\end{remark}
%

\subsection{Proof of $\HFbloop \cong HH_c^\ast (\mf)$}
We now prove Propostion \ref{prop:flequalshh}. By Lemma \ref{lem:idplus}, it suffices to show that $\delta_-$ equals $d_W = \{W,-\}$, or more precisely $F$ in \eqref{eqn:Fidhhcf} intertwines $d_W = \{W,-\}$
\begin{equation*}
\xymatrix{ HH^\ast (J,J) \ar[d]_{F}^{\cong} \ar[r]^{d_W} & HH^{\ast-1} (J,J) \ar[d]^F_{\cong}\\
H^\ast (\CFbloop,\delta_+) \ar[r]^{\delta_-} & H^{\ast-1} (\CFbloop,\delta_+).
}
\end{equation*}
This is trivial for exterior degree $0$ generators in $HH^\ast (J)$. We begin with the following set of  generators of the degree $1$ component
$$\partial_{\cP_i} ,\; \partial_\alpha \quad \textnormal{for} \,\, 1 \leq i \leq N \,\,\textnormal{and} \,\, \alpha \in H_1 (T^2,\mathbb{Z})$$
(as a module over the ring spanned by degree $0$ elements, see \ref{subsec:hhjacw}).
The image of $d_W$ evaluated at these degree $1$ generators are explicitly computed in \cite{Wong21}. Lemma \ref{lem:calcinwong} applying to $f=W$ tells us that
\begin{equation}\label{eqn:dlpartiali}
d_W (\partial_{\cP_i}) = -\{\partial_{\cP_i}, W \} = - W,
\end{equation}
and using the definition $ \partial_\alpha = x_\alpha W^{-1} \partial_{\cP_i}$, we have
\begin{equation}\label{eqn:dlpartialalpha}
d_W (\partial_\alpha)= -\{\partial_\alpha, W \} = -x_\alpha.
\end{equation}
for $\alpha \in \mathrm{int}\, \sigma_i$. 


\begin{lemma}
$d_W$ and $\delta_-$ agree on the degree $1$ component $ HH^1 (J,J) \cong H^1 (\CFbloop,\delta_+)$.
\end{lemma}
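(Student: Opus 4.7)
The plan is to exploit $\mathcal{Z}$-linearity to reduce the identity to the $\mathcal{Z}$-module generators of $HH^1(J,J)$, namely $\partial_{\cP_i}$ and $\partial_\alpha$. Both $d_W=\{W,-\}$ and $\delta_-$ are $\mathcal{Z}$-linear: the former because $\{W,z\}=0$ for $z\in\mathcal{Z}$ (the Gerstenhaber bracket of two Hochschild degree $0$ elements vanishes), and the latter by Remark~\ref{rmk:linearmkovercenter}.

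For $\partial_{\cP_i}$, I would compute $\delta_-\bigl(F(\partial_{\cP_i})\bigr)=\sum_{e\in\cP_i}\delta_-(x_eX_e)$ using the geometric interpretation of $\delta_-$ as a disk count. Here $\delta_-(x_eX_e)$ collects pseudo-holomorphic polygons bounded by $\lL$ having $X_e$ at a distinguished corner and producing a unit-class output on some component of $\lL$. By the construction of zigzag Lagrangians in Section~\ref{subsec:zigzag Lagrangians}, such polygons are in bijection with pairs $(F^\vee,a_j)$ where $F^\vee\in\Qvf$ is a face whose boundary contains $e$ and $a_j\subset\partial F^\vee$ is a choice of ``output arc.'' By the nc $A_\infty$-rule~\eqref{eqn:nc ainf structure}, each such pair contributes (up to a sign $\varepsilon_{F^\vee,j}$) the cyclic word of $x$'s labeling $\partial F^\vee$ times the unit of the Lagrangian component containing $a_j$; crucially this contribution is independent of which corner is labeled $X_e$. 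The key combinatorial input is the defining property of a perfect matching: $\cP_i$ meets each face of $\Qv$ in exactly one edge. Hence summing over $e\in\cP_i$ counts each pair $(F^\vee,a_j)$ exactly once, and the total equals $m(e^b)=\WL\,\unit=W\,\unit$ by the weak Maurer--Cartan equation and Proposition~\ref{lem:spacetime superpotential}. This matches $F\bigl(d_W(\partial_{\cP_i})\bigr)=-W\,\unit$ up to the overall sign dictated by the Gerstenhaber bracket.

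For $\partial_\alpha=x_\alpha W^{-1}\partial_{\cP_i}$ with $\alpha\in\mathrm{int}\,\sigma_i$, centrality of $x_\alpha$ and $W$ together with $\mathcal{Z}$-linearity reduce the computation to the previous case, giving $\delta_-\bigl(F(\partial_\alpha)\bigr)=x_\alpha W^{-1}\cdot W\,\unit=x_\alpha\,\unit$, which matches $F\bigl(d_W(\partial_\alpha)\bigr)=-x_\alpha\,\unit$. The formal factor $W^{-1}$ is handled by working with representatives $x_\alpha W^{-1}x_e$ that lie in $J$ itself, as justified by the grading argument cited just after~\eqref{eqn:dlpartialalpha}. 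The hardest part will be the consistent bookkeeping of signs: the sign $\varepsilon_{F^\vee,j}$ of each disk's contribution, reflecting the positive or negative character of $F^\vee$ and the chosen spin structure on $\lL$, must combine correctly with the Gerstenhaber convention defining $d_W$. These signs are already implicitly fixed by $W_\lL=W$ (Proposition~\ref{lem:spacetime superpotential}) and by $\delta_+=d_A$ (Lemma~\ref{lem:idplus}), so no new geometric input is required beyond careful tracking.
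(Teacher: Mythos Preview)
Your treatment of $\partial_{\cP_i}$ is essentially the paper's argument: the perfect-matching property ensures each face of $\Qv$ is hit exactly once in the sum $\sum_{e\in\cP_i}\delta_-(x_eX_e)$, yielding $-\WL\unit$. (The paper phrases the reduction via the derivation property of $\delta_-$ and $d_W$ with respect to $\cup=m_{2,0}^{b,b,b}$ rather than $\mathcal{Z}$-linearity, but either reduces the check to the same generators.)

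There is a genuine gap in your handling of $\partial_\alpha$. You invoke ``centrality of $x_\alpha$ and $W$ together with $\mathcal{Z}$-linearity'' to factor out $x_\alpha W^{-1}$, but $x_\alpha W^{-1}\notin\mathcal{Z}$---it lives only in the localization $\mathcal{Z}[W^{-1}]$. Your stated remedy, that the chain-level coefficients $x_\alpha W^{-1}x_e$ lie in $J$, addresses a different issue: it shows $F(\partial_\alpha)$ is well-defined in $\CFbloop$, but does \emph{not} justify pulling $x_\alpha W^{-1}$ past the (non-central) $b$-insertions in the formula for $\delta_-$. Concretely, $\delta_-(fX_e)$ produces coefficients of the form $(\text{word}_2)\cdot f\cdot(\text{word}_1)$, and for $f=x_\alpha W^{-1}x_e$ you cannot commute $f$ to the front inside $J$. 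The paper closes this by passing to $J[W^{-1}]$, where $x_\alpha W^{-1}$ \emph{is} central, performing the computation there, and then invoking that $HH^1(J,J)\to HH^1(J[W^{-1}],J[W^{-1}])$ is injective (equivalently, the relevant Hochschild groups are $W$-torsion-free, citing \cite[Theorem 4.6]{Wong21}). Equivalently: multiply both sides by $W$, use honest $\mathcal{Z}$-linearity to get $W\cdot\delta_-(F(\partial_\alpha))=-Wx_\alpha\unit$, and cancel $W$ using that $\mathcal{Z}$ is a domain. Your proposal is missing this step.
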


\begin{proof}
Since both $\delta_-$ and $d_W$ are derivations with respect to $\cup=m_{2,0}^{b,b,b}$, it suffices to check if \eqref{eqn:dlpartiali} and \eqref{eqn:dlpartialalpha} hold for their corresponding Floer generators, i.e., if the following hold true:
$$\delta_- \left(\sum_{e \in \mathcal{P}_i} x_e X_e \right) = F(-W)=-\WL \unit, \qquad  \delta_- \left( \sum_{e \in \mathcal{P}_i} x_\alpha W^{-1} x_e X_e \right) = F(-x_\alpha)=- x_\alpha \unit.$$
The former counts a polygon with odd-degree corners containing \( X_e \), assigning its output to one of the \( \mathbf{1}_{L_i} \)'s. By the definition of a perfect matching, exactly one such polygon exists for each \( e \), and the conclusion follows. For the latter, we first need to invert $W$ (by taking localization), and use the linearly of operators (since they are trivial on degree 0 components). This is possible since $HH^1 (J,J) \to HH^1 (J[W^{-1}],J[W^{-1}])$ is known to be injective, or equivalently $HH^1 (J,J)$ is torsion free. (See the proof of \cite[Theorem 4.6]{Wong21}.)
\end{proof}

It follows that \( \delta_- \) and \( d_W \) agree on the subspace multiplicatively generated by degree 0 and degree 1 elements. By Proposition \ref{prop:hhjacw}, the complement of this subspace is generated over the degree 0 component by \( \theta_v \) and \( \theta_{i,j} \). Therefore, the following lemma completes the proof of Proposition \ref{prop:flequalshh}.

\begin{lemma}
$d_W$ and $\delta_-$ agree on the generators $\psi_{i,j}$ and $\theta_v$ in $HH^\ast(J,J)$ and their corresponding generators in $H^\ast (\CFbloop,\delta_+)$.
\end{lemma}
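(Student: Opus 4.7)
The plan is to verify the equality separately on the two types of generators. I would handle $\theta_v$ by a direct computation on both sides, and then reduce the $\psi_{i,j}$ case to it using the cup-product identity of Lemma \ref{lem:calcinwong}(ii) together with the derivation property of both operators on $HH^*(J,J)$.

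For $\theta_v$, I would apply the BV-formula \eqref{eqn:BVbracket} to obtain $d_W(\theta_v) = \Delta(W\theta_v) - W\Delta(\theta_v)$. Since $\theta_v$ is represented in $J_{cyc}$ by the length-zero idempotent $\pi_v$, Lemma \ref{lem:bvopex} gives $\Delta(\theta_v) = 0$, so $d_W(\theta_v) = \Delta(W_v)$, where $W_v = W\pi_v$ has a representative $x_1\cdots x_k$ given by the boundary word of any face of $\Q$ containing $v$ as a corner. Lemma \ref{lem:bvopex} applied once more yields
\[
d_W(\theta_v) \;=\; \sum_{i=1}^k (x_{i+1}\cdots x_k)(x_1\cdots x_{i-1}) \otimes_\Bbbk \bar{X}_i.
\]
On the Floer side, $F(\theta_v) = \pt_v$, and $\delta_-(\pt_v)$ counts holomorphic polygons bounded by $\lL$ in $X_\Qv$ passing through $\pt_v \in L_v$, with a single $\bar{X}$-corner output and the remaining $X$-corners absorbed into $b$-insertions. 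Under dimer duality, these polygons correspond exactly to faces of $\Qv$ whose boundary contains a piece of $L_v$, and such faces biject with the faces of $\Q$ having $v$ as a corner. The choice of output corner $\bar{X}_i$ matches the summation index $i$, producing the same formula up to signs, which one verifies using the spin-structure convention of \cite{CHL21}.

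For $\psi_{i,j}$, choose $k$ with $\deg_{\cP_k}(x_{\eta_i}) \neq 0$ and apply Leibniz to the identity $\partial_{\cP_k} \cup \psi_{i,j} = \deg_{\cP_k}(x_{\eta_i})\, x_{\eta_i}\,\theta_v$ of Lemma \ref{lem:calcinwong}(ii). Both $d_W$ and $\delta_-$ define derivations of the cup product on $HH^*(J,J) = H^*(\delta_+)$: for $d_W$ this is standard from the Gerstenhaber algebra structure, while for $\delta_-$ it follows from the $A_\infty$-relation for $m_1^{b,b} = \delta_+ + \delta_-$ and $m_2^{b,b,b} = m_{2,0}^{b,b,b} + m_{2,<0}^{b,b,b}$, decomposed by exterior degree and restricted to $\delta_+$-cocycles; the curvature $m_0^b(1) = \WL\unit$ drops out by strict unitality and Remark \ref{rmk:linearmkovercenter}. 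Both operators also vanish on $HH^0 = \mathcal{Z}(J)$ for degree reasons. Combining these with the already-established identities $d_W(\partial_{\cP_k}) = -W$ and $d_W(x_{\eta_i}) = 0$, Leibniz gives
\[
\partial_{\cP_k} \cup d_W(\psi_{i,j}) \;=\; \pm W \cup \psi_{i,j} \;\mp\; \deg_{\cP_k}(x_{\eta_i})\, x_{\eta_i}\, d_W(\theta_v),
\]
and the same formula holds with $d_W$ replaced by $F^{-1}\delta_- F$. Subtracting and invoking Step~1 yields $\partial_{\cP_k} \cup \bigl(d_W(\psi_{i,j}) - F^{-1}\delta_- F(\psi_{i,j})\bigr) = 0$. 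Varying $k$ and invoking Proposition \ref{prop:hhjacw} to see that the family $\{\cup\,\partial_{\cP_k}\}_k$ separates classes in the relevant subspace of $HH^1(J,J)$ forces the difference to vanish.

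The main obstacle is Step~1: the combinatorial identification of the polygons counted by $\delta_-(\pt_v)$ with the summands of $\Delta(W_v)$ rests on the dimer-duality bijection between faces of $\Q$ at $v$ and the relevant $X_\Qv$-polygons, together with a delicate sign analysis at each branch-jump corner $\bar{X}_e$. While the geometric correspondence is natural, tracking the orientations, spin structures, and Morse-theoretic contributions at $\pt_v$ requires careful use of the conventions from Section \ref{subsec:zigzag Lagrangians}. Step~2 is largely formal, though verifying the separation property of $\cup\,\partial_{\cP_k}$ may require using several corner matchings simultaneously.
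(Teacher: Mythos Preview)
Your proposal follows essentially the same two-step strategy as the paper: compute $d_W(\theta_v)$ via the BV formula \eqref{eqn:BVbracket} and match it against the polygon count for $\delta_-(\pt_v)$, then reduce $\psi_{i,j}$ to $\theta_v$ using Lemma~\ref{lem:calcinwong}(ii), the derivation property, and separation by the operators $\partial_{\cP_k}\cup{-}$ together with the explicit basis of $HH^2(J,J)$ from Proposition~\ref{prop:hhjacw}.

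One imprecision in Step~1: you assert that the polygons contributing to $\delta_-(\pt_v)$ are ``faces of $\Qv$ whose boundary contains a piece of $L_v$,'' which would give all faces of $\Q$ having $v$ as a corner. In fact $\pt_v$ is a single point on $L_v$, placed (by the convention in \S\ref{subsec:CFb for zigzag Lagrangians}) on the boundary of one specific negative face; only the polygon(s) whose boundary actually passes through that point contribute. The paper phrases this as ``contributed solely by the same polygon $P$,'' and chooses $W\theta_v$ to be the boundary word of that particular face so that the sum over output corners $\bar X_{i_a}$ matches $\Delta(W\theta_v)$ term by term. Your bijection with all faces at $v$ overcounts. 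This does not break the argument---once you restrict to the correct face the computation goes through exactly as you outline---but it is the point where the sign and Morse-placement conventions you flag as the ``main obstacle'' actually enter.
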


\begin{proof}
Making use of the algebraic compatibility between the BV-operator $\Delta$ and the bracket \eqref{eqn:BVbracket}, one has
$$d_W (\theta_v) = \{W, \theta_v\} = \Delta (W \theta_v) - W \Delta(\theta_v)  = \Delta (W \theta_v).$$
By Lemma \ref{lem:bvopex}, we have $\Delta(\theta_v) =0$ since through Calabi-Yau duality it can be represented by a constant path in $HH_\ast$.  

Note that \( W \theta_v \) can be represented by any boundary cycle in \( \mathcal{Q} \) (a summand of \( W \)) starting at the vertex \( v \). In particular, one may choose \( W \theta_v = x_{i_1} \cdots x_{i_k} \), which is one such cycle obtained from the labeling of the holomorphic polygon \( P \) (with boundary on \( \mathbb{L} \)) passing through \( \pt_{i_v} \). Here, \( i_v \) is chosen such that the component \( L_{i_v} \) of \( \mathbb{L} \) is dual to the vertex \( v \in \mathcal{Q}_0 \). Then, by Lemma~\ref{lem:bvopex},
\[
d_W (\theta_v) = \Delta (W \theta_v) = \sum_i \left( x_{i_a +1} \cdots x_{i_k} \right) \left( x_{i_1} \cdots x_{i_a -1} \right) \bar{X}_{i_a}.
\]
This is precisely \( \delta_{-} (F(\theta_v)) = \delta_{-} (\pt_{i_v}) \), contributed solely by the same polygon \( P \). 

For $\psi_{i,j}$, we use
\[
\partial_{\cP_k} \cup \psi_{i,j}
= \deg_{P_k} (x_{\eta_i})\, x_{\eta_i}\,\theta_v
\]
for all $1 \leq  k \leq N$ ((ii) of Lemma \ref{lem:calcinwong}), where $v \in \mathcal{E}_{i,j}$. Apart from $\psi_{i,j}$, we know that $d_W$ and $\delta_-$ agree on all the other factors. Thus we see that the difference of $d_W (\psi_{i,j}$ and $\delta_- (F(\psi_{i,j}))$ is anahilated by by $\partial_{\cP_k} \cup$. More precisely, we have
$$ \partial_{\cP_k} \cup (d_W (\psi_{i,j}) - F^{-1} \delta_- (F(\psi_{i,j})) ) = 0$$
Hence, if we write $d_W (\psi_{i,j}) - F^{-1} \delta_- (F(\psi_{i,j})) ) := \sum w_i \partial_{\cP_i} + \sum_\alpha c_\alpha \partial_\alpha$, then the above implise
$$ \sum_{i \neq k} w_i \partial_{\cP_i} \cup \partial_{\cP_k} + \sum_{\alpha} c_\alpha \partial_\alpha \cup \partial_{\cP_k} =0$$
for all $k$. On the other hand, one knows explicitly an additive basis of $HH^2 (J)$ from Proposition \ref{prop:hhjacw}, and this forces  $w_i  = c_\alpha =0$ for all $i$ and $\alpha$. Therefore, we have
$ F(d_W (\psi_{i,j})) = \delta_- (F(\psi_{i,j}))$.
%
%
%
%
\end{proof}

\section{The Kodaira-Spencer map $\mathsf{KS}:SH^\ast(X_\Qv) \to \cHH{\mf}$}

The closed string mirror symmetry in our geometric setting is expressed as an equivalence between the symplectic cohomology \( SH^\ast(X_\Qv) \) and the invariant \( \cHH{\mf} \), introduced in Section~\ref{subsec:csupphh}, which is isomorphic to \( \HFbloop \) by Proposition~\ref{prop:flequalshh}. To establish this equivalence, we use a geometrically constructed map
\[
\mathsf{KS}: SH^\ast(X_\Qv) \to \HFbloop,
\]
called the \emph{Kodaira--Spencer map}. This map is analogous to the one introduced in~\cite{FOOO16}, but adapted to the noncompact setting. Its construction is essentially provided in~\cite[Section~4.1]{HJL25}, though now incorporating noncommutative mirror variables. While this does not affect the definition of the relevant moduli spaces, special care must be taken with the ordering of \( b \)-inputs, as the associated variables are noncommutative.

We begin with a brief review of the symplectic cohomology of $X_\Qv$, the domain of $\mathsf{KS}$.

\subsection{Symplectic cohomology of a punctured Riemann surface}\label{subsec:shrs}
Let \( (E, \omega) \) be an exact convex symplectic manifold. There exists a compact domain \( E^{\mathrm{in}} \subset E \) such that \( \partial E^{\mathrm{in}} \) is a contact boundary with contact 1-form \( \alpha \) (defined by \( \alpha = \lambda|_{\partial E^{\mathrm{in}}} \), where \( \omega = d\lambda \)). The complement of \( E^{\mathrm{in}} \) is the conical end of \( E \), given by the symplectomorphism
\begin{equation*}
(E \setminus E^{\mathrm{in}}, \omega) \cong \big( \partial E^{\mathrm{in}} \times (1, \infty),\, d(r\alpha) \big).
\end{equation*}

Let \( H : E \to \mathbb{R} \) be a generic autonomous Hamiltonian, which is \( C^2 \)-small and Morse on \( E^{\mathrm{in}} \), and becomes linear with some slope \( \epsilon > 0 \) for sufficiently large \( r \) on the conical end:
\begin{equation}
H(y, r) = \epsilon r \quad \text{for } r \gg 1,
\end{equation}
where \( (y, r) \in \partial E^{\mathrm{in}} \times (1, \infty) \) are the cylindrical coordinates. The slope \( \epsilon \) is chosen generically so that (i) there are no Reeb orbits on \( \partial E^{\mathrm{in}} \) with period less than or equal to \( \epsilon \), and (ii) no positive multiple of \( \epsilon \) is the period of a Reeb orbit.

For each positive integer \( w \), consider the Hamiltonian Floer complex \( CF^*(wH) \), generated over a coefficient ring \( \mathbb{C} \) by the 1-periodic orbits of the Hamiltonian vector field \( X_{wH} \) on \( E \). To achieve transversality and break the \( S^1 \)-symmetry of the periodic orbits, one replaces \( H \) with a small time-dependent perturbation, still denoted by \( H \).
We use a \( \mathbb{Z}/2 \)-grading on \( CF^*(wH) \), induced by the Conley–Zehnder index. Introducing a formal variable \( \qq \) of degree \( -1 \), with \( \qq^2 = 0 \), the symplectic cochain complex of \( E \) is defined as
\begin{equation*}
SC^*(E) = \bigoplus_{w=1}^\infty CF^*(wH)[\qq].
\end{equation*}
The differential \( d \) on \( SC^*(E) \) is given by
\begin{equation}\label{eqn:SCdifferential}
d(x + \qq y) = (-1)^{|x|} \dd x + (-1)^{|y|} (\qq \dd y + \KK y - y),
\end{equation}
where \( \dd: CF^*(wH) \to CF^{*+1}(wH) \) is the Floer differential, and \( \KK: CF^*(wH) \to CF^*((w+1)H) \) is the continuation map.

Finally, the symplectic cohomology is then defined as the direct limit
\begin{equation}
SH^*(E) = \varinjlim HF^*(wH),
\end{equation}
where the limit is taken over the continuation maps, which are canonical at the level of cohomology.
Symplectic cohomology is equipped with a graded-commutative product, known as the \emph{pair-of-pants product}. Moreover, one can define an \( A_\infty \)-structure \( \{ \mu^k \} \) on \( SC^*(E) \), with \( \mu^1 = d \). We refer the reader to \cite[§4.6]{RS17} for further details.

Returning to our geometric setup, let \( E = X_\Qv \) denote the punctured Riemann surface obtained by removing the vertices of the dual dimer \( \Q^\vee \) from the surface \( \Sigma^\vee \) in which it is embedded. We equip \( E \) with a canonical conical end structure near each puncture. Then $\mathbb{Z}_{>0}$ noncontractible Hamiltonian orbits appear near each puncture, where $\mathbb{Z}_{>0}$ encodes their winding numbers. 

On the other hand, a puncture in \( \Sigma^\vee \) corresponds to a vertex $v_Z$ of the dimer \( \Q^\vee \), and hence to a zigzag cycle $Z$ in $\Q$ by dimer duality. Therefore noncontractible generators of \( SH^\ast(E) \) can be parametrized in terms of zigzag cycles in $\Q$ together with winding numbers. In fact, it can be explicitly computed as the \emph{logarithmic cohomology} of the pair \( (\Sigma^\vee, \Q_0^\vee) \), as shown in \cite{GP20}:
\begin{equation}\label{eqn:logarithmic cohomology presentation for SH}
SH^\ast(E) \cong H^*_{\mathrm{log}}(\Sigma^\vee, \Q_0^\vee) = H^*(\psurf; \mathbb{C}) \oplus \bigoplus_Z t_Z H^*(\mathring{S}_Z; \mathbb{C})[t_Z],
\end{equation}
where the sum is taken over all zigzag cycles $Z$ in $\Q$ (or equivalently for all $v_Z \in \Q^\vee_0$). \( t_Z \) are distinct formal variables, each recording the winding number around the puncture \( v_Z \in \Q_0^\vee \) (dual to $Z$), and each \( \mathring{S}_Z \) is diffeomorphic to \( S^1 \), arising as the boundary of the real-oriented blow-up at the point \( Z \subset \Sigma^\vee \). Consequently,
\begin{equation*}
H^0(\mathring{S}_Z; \mathbb{Z}) = \langle e_Z \rangle, \quad H^1(\mathring{S}_Z; \mathbb{Z}) = \langle f_Z \rangle.
\end{equation*}
The summand \( H^*(\psurf; \mathbb{C})=H^*(X_\Qv;\C) \) is computed using the Morse cohomology of the Hamiltonian \( H \). A specific choice of Morse function will be made in the proof of Lemma~\ref{lem:pquvks}.

The ring structure on \( SH^\ast(E) \) is readily described. The nontrivial products among the noncontractible generators are given by:
\[
e_Z t_Z^n \cdot e_{Z'} t_{Z'}^m = \delta_{Z Z'}\, e_Z t_Z^{n+m}, \quad 
e_Z t_Z^n \cdot f_{Z'} t_{Z'}^m = \delta_{Z Z'}\, f_Z t_Z^{n+m}, \quad 
f_Z t_Z^n \cdot f_{Z'} t_{Z'}^m = 0.
\]
In addition, for an odd-degree critical point \( p \) of \( H \), we have the product:
\[
p \cdot e_Z t_Z^n = f_Z t_Z^n,
\]
where \( Z \) is the puncture toward which the gradient trajectory of \( p \) asymptotically flows. This, together with the unital property of the unit \( \mathbf{1}_E \in H^*(X_\Qv; \mathbb{C}) \) (corresponding to the maximum of \( H \)), completely determines the ring structure on \( SH^\ast(E) \).

\subsection{Construction of $\mathsf{KS}$}\label{subsec:construction of Kodaira-Spencer map}

The \emph{Kodaira–Spencer map}
\begin{equation}\label{eqn:ksksk}
\mathsf{KS}: SH^\ast(E) \to \HFbloop,
\end{equation}
is a variant of the closed-open map constructed in \cite{RS17}\footnote{Their construction also applies in the monotone setting.}:
\begin{equation}\label{eqn:coco}
\mathcal{CO} : SH^\ast(E) \to HH^\ast(\mathcal{W}(E)),
\end{equation}
but incorporates an arbitrary number of boundary insertions taken from a \emph{noncommutative} weak bounding cochain \( b \). The map \eqref{eqn:ksksk} is essentially the same as the Kodaira–Spencer map constructed in \cite[Sec 4.1]{HJL25}, except that here we record the order of boundary inputs more precisely due to their noncommutativity.
In \eqref{eqn:coco}, \( \mathcal{W}(E) \) denotes the wrapped Fukaya category of the exact convex symplectic manifold \( E \), whose objects are exact Lagrangians (equipped with suitable brane structures), and whose morphism spaces are given by the wrapped Floer complex:
\begin{equation*}
\mathrm{hom}_{\mathcal{W}(E)}(L, L') = CW^*(L, L') := \bigoplus_{w \geq 1} CF^*(L, L'; wH)[\qq],
\end{equation*}
where \( \qq \) is a formal variable of degree \(-1\) satisfying \( \qq^2 = 0 \). The differential involves continuation maps between Hamiltonians \( wH \to (w+1)H \), analogously to \eqref{eqn:SCdifferential}. We refer to \cite{AS10} for further details.

In our case, the target of \( \mathsf{KS} \) is the Floer complex of a compact Lagrangian \( \mathbb{L} \), which uses a \emph{slope zero} Hamiltonian. (This is modeled using a Morse–Bott setup; see \S\ref{subsec:CFb for zigzag Lagrangians}.) To accommodate this, we introduce a slight modification of the wrapped Fukaya category.
Define the extended wrapped Fukaya category \( \mathcal{W}_\Diamond(E) \) to have the same objects as \( \mathcal{W}(E) \), but with morphism spaces between compact Lagrangians \( L, L' \) given by:
\begin{equation*}
\mathrm{hom}_{\mathcal{W}_\Diamond(E)}(L, L') := CF^*(L, L')[\qq] \oplus CW^*(L, L').
\end{equation*}
Here, \( CF^*(L, L') \) is computed using a \( C^2 \)-small Hamiltonian (of slope zero). Alternatively, a Morse–Bott model may be used if \( L \) and \( L' \) intersect cleanly. Since continuation maps are quasi-isomorphisms for compact Lagrangians, it does not alter the equivalence class of the category: \( \mathcal{W}_\Diamond(E) \simeq \mathcal{W}(E) \). Also, this construction defines a natural functor from the compact Fukaya category \( \mathcal{F}(E) \) to \( \mathcal{W}(E) \). 

The object \( \mathbb{L} \in \mathcal{W}_\Diamond(E) \) thus has endomorphism algebra
\begin{equation*}
\mathrm{hom}_{\mathcal{W}_\Diamond(E)}(\mathbb{L}, \mathbb{L}) = CF^*(\mathbb{L}, \mathbb{L})[\qq] \oplus CW^*(\mathbb{L}, \mathbb{L}).
\end{equation*}
The natural inclusion
\[
f : CF^*(\mathbb{L}, \mathbb{L}) \hookrightarrow \mathrm{hom}_{\mathcal{W}_\Diamond(E)}(\mathbb{L}, \mathbb{L})
\]
into the first summand is an \( A_\infty \) quasi-isomorphism (with trivial higher-order maps), and hence allows one to transfer the deformation theory by weak bounding cochains, as in \eqref{eqn:boundary deformed Maurer-Cartan base change}. The resulting deformation of \( \mathrm{hom}_{\mathcal{W}_\Diamond(E)}(\mathbb{L}, \mathbb{L}) \) by the noncommutative weak bounding cochain \( b = \sum x_e X_e \) is denoted by
\begin{equation}\label{eqn:wdiall}
\mathrm{hom}_{\mathcal{W}_\Diamond(E)}((\mathbb{L}, b), (\mathbb{L}, b)).
\end{equation}
The quasi-isomorphism from \( \CFb \) to \eqref{eqn:wdiall} follows directly from \( f \).

Accordingly, we extend the symplectic cochain complex to include a slope-zero component
\begin{equation*}
SC^*_\Diamond(E) := H^*(E)[\qq] \oplus \bigoplus_{w=1}^\infty CF^*(wH)[\qq],
\end{equation*}
where the second summand is the usual symplectic cochain complex \( SC^*(E) \), and the first term accounts for the slope-zero (compact) sector. The closed-open map can be adapted to this modification, and we have 
\[
\mathcal{CO}_\Diamond : SC^*_\Diamond(E) \to CC^*(\mathcal{W}_\Diamond(E)).
\]

Within this technical framework, we first define
\begin{equation*}
\tilde{\ksmap} : SC^*_\Diamond(E) \to \mathrm{Hom}_{\EW{E}}((\mathbb{L}, b), (\mathbb{L}, b))
\end{equation*}
as the count of punctured pseudo-holomorphic disks with boundary and interior data specified as in Figure~\ref{fig:Kodaira-Spencer moduli a}. More precisely, consider such a punctured disk with boundary inputs at odd-degree corners labeled by \( X_1, \ldots, X_k \), an output labeled by some \( Y \), and a puncture asymptotic to a Hamiltonian orbit \( \alpha \) (see Figure~\ref{fig:Kodaira-Spencer moduli b}). This disk contributes to \( \tilde{\ksmap} (\alpha) \) a term of the form:
\[
x_k \cdots x_1 \cdot Y \in \mathrm{Hom}_{\EW{E}}((\mathbb{L}, b), (\mathbb{L}, b)).
 \] 
Here, each input \( x_i X_i \) represents a deformation input corresponding to the chosen bounding cochain, and the scalar coefficients \( x_i \) are factored out according to the sign and ordering conventions analogous to \eqref{eqn:nc ainf structure}. Thus, the coefficient for \( Y \) is \( x_k \cdots x_1 \).

Note that \( \mathcal{CO}_\Diamond \) (the closed-open map in this setting) uses the same moduli spaces of punctured disks, although the way contributions are extracted differs. The detailed description of the data carried by pseudo-holomorphic disks in the moduli space can be found in \cite[Section 4.1]{HJL25}, and will not be repeated here for the sake of conciseness.

\begin{figure}[h]
\centering
\hfill
\begin{minipage}{0.475\textwidth}
\centering
{\includegraphics[scale=0.125]{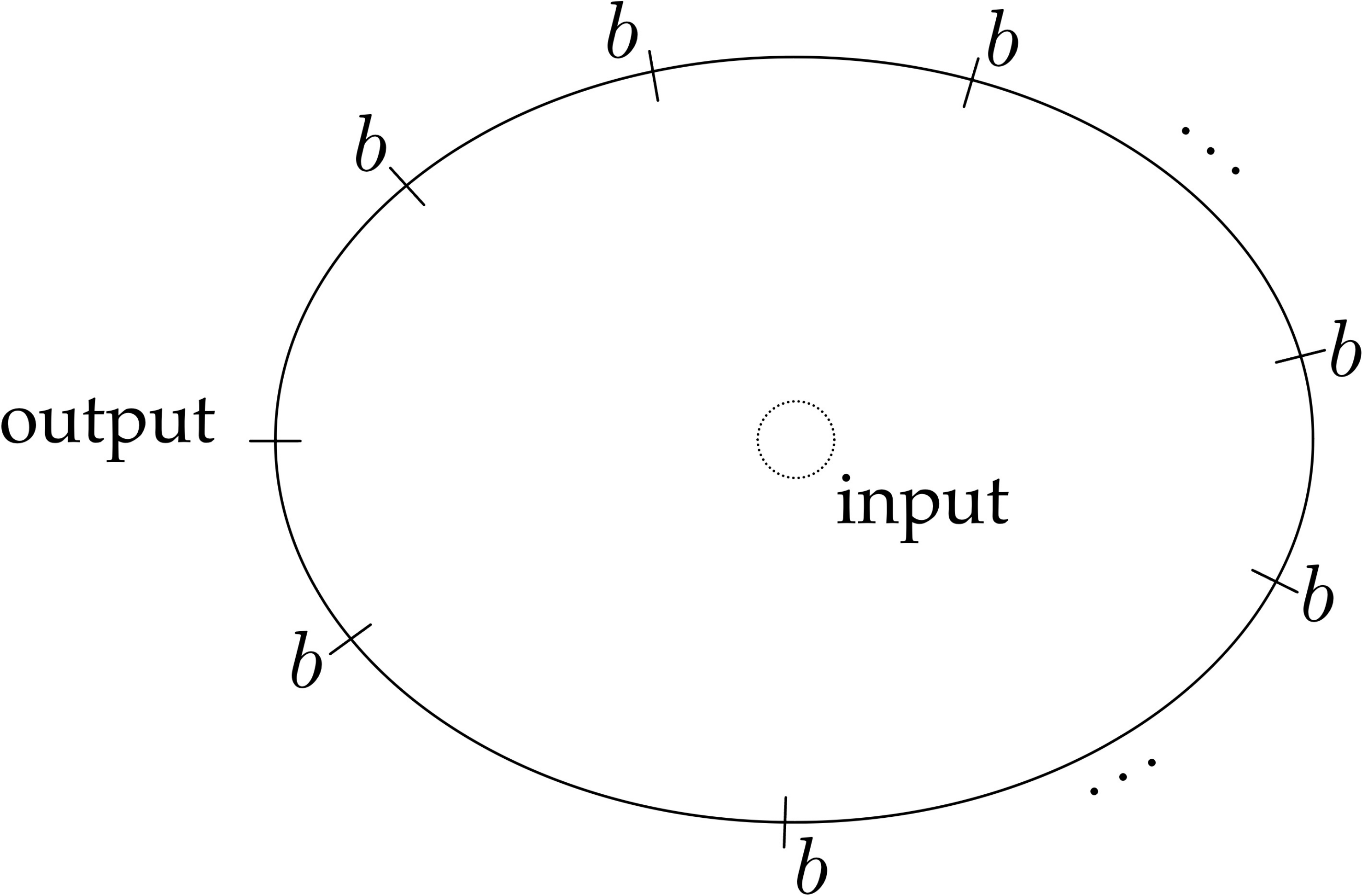}}\caption{The general configuration of a disk in the Kodaira-Spencer moduli}\label{fig:Kodaira-Spencer moduli a}
\end{minipage}
\hfill
\begin{minipage}{0.45\textwidth}
\centering
{\includegraphics[scale=0.12]{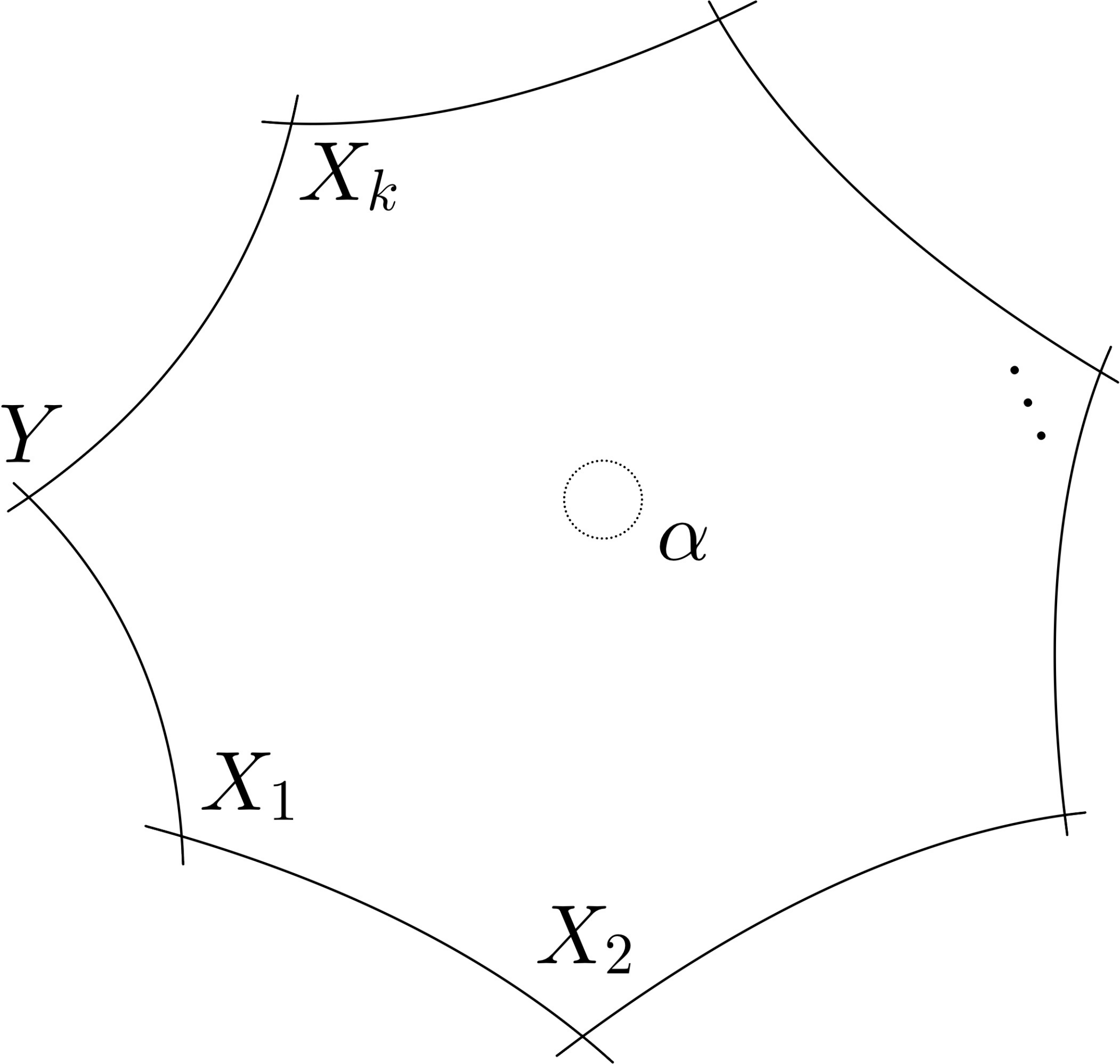}}\caption{A contributing disk for $\ksmap(\alpha)\label{fig:Kodaira-Spencer moduli b}$ with an output $Y$}
\end{minipage}
\hfill
\end{figure}

Finally we define $\mathsf{KS}: SC^\ast(E) \to \CFb$ by the following commutative diagram
\begin{equation*}
\begin{tikzcd}[row sep = 3.5em, column sep = 2.2em]
SC^*(E) \ar[r, hook, "\sim", "qis"'] \ar[d, "\ksmap"]& SC_\Diamond^*(E) \ar[d, "\tilde{\ksmap}"] \\
\CFb \ar[r, hook, "\sim", "f"'] & \mathrm{hom}_{\EW{E}}((\lL,b),(\lL,b)).
\end{tikzcd}
\end{equation*}
The target of $\mathsf{KS}$ is a priori $\CFb$, but it lands on $\CFbloop$.

\begin{prop}\label{prop:KS loops}
The map $\ksmap: SC^\ast(E) \to \CFb$ is a ring homomorphism, and it factors through $\CFbloop$:
\begin{equation*}
\ksmap:SC^*(E) \to \CFbloop ( \subset \CFb).
\end{equation*}
\end{prop}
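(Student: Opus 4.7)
The plan is to establish the two assertions of the proposition separately: first the cyclicity of the image, which is essentially combinatorial, and then the ring homomorphism property, which is a standard TQFT-type degeneration argument adapted to the noncommutative setting.

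For the cyclicity, I would argue directly from the geometry of the disks contributing to $\ksmap(\alpha)$. Each such disk has its boundary broken into arcs on the components of $\lL$ by the branch-jump corners $X_1,\ldots,X_k$ and the output corner $Y$, as in Figure~\ref{fig:Kodaira-Spencer moduli b}. The standard $A_\infty$ composability rule forces $h(X_1)=h(Y)$, $t(X_k)=t(Y)$, and $t(X_i)=h(X_{i+1})$, exactly as in Proposition~\ref{prop:loops form a subalgebra}. Translating via $h(x_e)=t(X_e)$ and $t(x_e)=h(X_e)$, the backward concatenation $x_k\cdots x_1$ is a nonzero element of $\MCA$ with
\[
h(x_k\cdots x_1) \;=\; h(x_k) \;=\; t(X_k) \;=\; t(Y),
\]
so the contribution $x_k\cdots x_1\cdot Y$ is a cyclic element in the sense of Definition~\ref{def:cflbcyc}. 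Summing over contributions then shows $\ksmap$ factors through $\CFbloop$.

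For the ring homomorphism property (meant at the cohomology level, as in the introduction), I would follow the degeneration scheme of \cite{FOOO16}. The key object is the moduli space of punctured disks carrying \emph{two} interior negative punctures asymptotic to Hamiltonian orbits $\alpha,\beta$, together with $k$ boundary insertions from $b=\sum x_e X_e$ and one boundary output. After suitable choices of perturbation data, this forms a $1$-parameter family whose codimension-one boundary splits into three types of configurations: (i) Floer-type breakings at the interior punctures or at the output, which furnish the chain homotopy making the desired identity hold up to a coboundary; (ii) the \emph{merging} stratum, where the two interior punctures collide and a pair-of-pants bubble attaches to a single-puncture disk, contributing $\ksmap(\alpha\cdot\beta)$; and (iii) the \emph{splitting} stratum, where the domain degenerates into two disks joined along a common boundary strip, each carrying one interior puncture and a consecutive run of the boundary insertions, contributing $\mbbb{\ksmap(\alpha)}{\ksmap(\beta)}$. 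Setting the signed count of this codimension-one boundary to zero yields the desired multiplicativity after passing to cohomology.

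The main obstacle, and the genuinely new input beyond the setting of \cite{FOOO16}, is the bookkeeping of the noncommutative coefficients in the splitting stratum. When the boundary insertions are partitioned into two consecutive runs $(x_{i_1}X_{i_1},\ldots,x_{i_l}X_{i_l})$ and $(x_{j_1}X_{j_1},\ldots,x_{j_m}X_{j_m})$ by the splitting, each component disk produces outputs $f_\alpha Y_\alpha$ and $f_\beta Y_\beta$ with prescribed backward concatenations of the $x_e$'s. One must verify that the subsequent multiplication $\mbbb{f_\alpha Y_\alpha}{f_\beta Y_\beta} = f_\beta f_\alpha\,m_2(Y_\alpha,Y_\beta) + (\text{higher order})$ dictated by Definition~\ref{def:nc extension} reproduces exactly the total word $x_k\cdots x_1$ read off from the original undegenerated disk. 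This is geometrically transparent---the cyclic order of boundary corners is preserved under the degeneration---but it is precisely the point where the noncommutativity of $\MCA$ must be aligned with the TQFT gluing, and verifying this compatibility is the core technical step.
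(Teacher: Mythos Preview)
Your proposal is correct and follows essentially the same approach as the paper's own proof. The paper argues the cyclicity exactly as you do---via the boundary composability conditions $h(X_1)=h(Y)$, $t(X_k)=t(Y)$, $t(X_i)=h(X_{i+1})$---and for the ring homomorphism it defers to the standard cobordism argument of \cite[Lemma 4.4]{HJL25}, noting only that the degenerations preserve the cyclic order of boundary inputs and hence the ordering of the noncommutative variables; your expanded account of the merging/splitting strata and the coefficient bookkeeping is precisely what that citation unpacks.
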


\begin{proof}
The fact that \( \ksmap: SC^*(E) \to \CFb \) is a ring homomorphism follows from a standard cobordism argument. The proof is essentially the same as that of \cite[Lemma 4.4]{HJL25}, as the relevant disk degenerations preserve the cyclic order of inputs, and hence the ordering of noncommutative variables.

Now, let \( \alpha \) be any generator in the \( \qq^0 \)-component of \( SC^*(E) \). Suppose \( u \) is a pseudo-holomorphic punctured disk contributing to \( \ksmap(\alpha) \), with input \( \alpha \), output \( Y \), and boundary insertions \( x_1 X_1, \ldots, x_k X_k \) from the weak bounding cochain $b$. 
By examining the boundary conditions of the punctured disk, it is straightforward to verify that  
\[h(X_1) =h(Y), \quad t(X_i) = h(X_{i+1}) \text{ for } 1 \leq i \leq k-1, \quad t(X_k) = t(Y).
\]
Otherwise, the corresponding moduli space is empty. The resulting summand \( x_k \cdots x_1 Y \) in \( \ksmap_u(\alpha) \) thus forms a cyclic path in \( \CFbloop \).

\end{proof}

\section{$\mathsf{KS}$ is an isomorphism}
We now establish the closed-string mirror symmetry for punctured Riemann surfaces $X_\Qv:=\psurf$, namely, we prove that the map  
\begin{equation}\label{eqn:KSisom}
\mathsf{KS} : \SH \to \HFbloop(\cong\cHH{\mf}),
\end{equation}
constructed in the previous section, is a ring isomorphism.  
The proof naturally divides into two parts.
First, we restrict $\mathsf{KS}$ to the even-degree component and compute the image of \emph{primitive orbits} (that is, noncontractible Hamiltonian orbits of winding number~$1$).  
An interesting feature is that these orbits do not map to multiples of the unit $\unit$ when there exist parallel zigzag cycles (Definition~\ref{defn:parallels}) within the same homology class.  
This phenomenon explains the appearance of the elements~$\psi_{i,j}$ on the mirror side.  
The odd-degree component forms a finite-rank module over the even-degree component.  
After a careful analysis of its module structure, the argument reduces to comparing the corresponding ranks.

In \ref{subsec:KSmapandsing}, we also give a detailed account of the behavior of the Kodaira--Spencer map in relation to the Landau--Ginzburg models
\[
(Y, \underline{W}) \quad \text{and} \quad (\tilde{Y}_{\mathrm{crep}}, \tilde{W}),
\]
where $Y$ denotes the toric Gorenstein singularity resolved by $J$, and $\tilde{Y}_{\mathrm{crep}}$ its toric crepant resolution.  
Here $\tilde{W} : \tilde{Y}_{\mathrm{crep}} \to \mathbb{C}$ is simply given as the product of the three toric affine coordinates on each local chart~$\mathbb{C}^3$.  
\[
\xymatrix{
(\text{``Spec'' } J, W) \ar[rd]_{\mathrm{NCCR}} && (\tilde{Y}_{\mathrm{crep}}, \tilde{W}) \ar[ld]^{\mathrm{CCR}} \\
& (Y, \underline{W}) &
}
\]

We begin with a more detailed calculation on $\HFbloop$ using the spectral sequence associated with the double complex $\delta(=m_1^{b,b})=\delta_+ + \delta_-$.

\subsection{The $E_2$ page of the spectral sequence and $\HFbloop$}
\label{subsec:Kodaira-Spencer map is an isomorphism}

Recall that we have an additive isomorphism 
\begin{equation}\label{eqn:hfisome2}
\HFbloop \cong E_2
\end{equation}
where $E_2$ denotes the $E_2$-page of the spectral sequence associated with the double complex $\delta_{+} + \delta_{-} = d_A + d_W$.\footnote{The actual $E_2$-page is a periodic repetition of what we refer to as the $E_2$-page here, where ours corresponds precisely to a single period (one component) of this repeated structure. See \cite[5.1]{Wong21}.} 
We will frequently identify generators of $\CFbloop$ and $CC^\ast (J,J)$ \eqref{eqn:CCJJ} via the chain-level identification $F$ \eqref{eqn:ajidn1}, \eqref{eqn:ajidn2} and \eqref{eqn:ajidn3}.

Following \cite{Wong21}, $E_2$ (originally for ($CC^\ast (J,J), d_A + d_W)$) can be additively described as follows: for a fixed $v_0 \in \Q_0$, and $1 \leq r<s<t \leq N$, 
\begin{equation*}
E_2^{\mathrm{even}}= \mathbb{C} \langle x_{\eta_i}^{n},\; x_{\eta_i}^{n} \Psi_{i,j}  : 1 \leq i \leq N, 1<j\leq m_i, n \geq 0 \rangle,
\end{equation*}
\begin{equation}\label{eqn:E2odd1}
E_2^{\mathrm{odd}} = \mathbb{C} \langle x_{\eta_i}^n U,\; x_{\eta_i}^n V,\; x_{\eta_i}^n \Theta_v : 1 \leq i \leq N, v(\neq v_0) \in \mathcal{E}_{i, j>1}) \rangle / V (\eta_i) x_{\eta_i} U = U (\eta_i) x_{\eta_i} V
\end{equation}
where
\begin{equation*}
U= \partial_{\cP_r} - \partial_{\cP_t}, \quad V= \partial_{\cP_s} - \partial_{\cP_t}, \quad \Psi_{i,j} = \psi_{i,j} - \psi_{i,1}, \quad \Theta_v = \theta_v - \theta_{v_0}.
\end{equation*}
By cyclically permuting the index $j$ in $\mathcal{E}_{i,j}$, we may assume that $v_0 \in \mathcal{E}_{i,1}$ for each $i$.  

A few remarks are in order.

\begin{remark}
Continuing from Remark~\ref{rmk:discrem2}, we emphasize that the product in the above expression is induced by the cup product on $HH^\ast(J,J)$, which corresponds to the operation $m_{2,0}^{b,b,b}$ on the chain level, that is, on the $E_0$-page. By contrast, the map $\mathsf{KS}$ in \eqref{eqn:KSisom} is a ring homomorphism with respect to the full product $m_2^{b,b,b}$ on the target.
\end{remark}

\begin{remark}\label{rmk:higerdegodd}
$U$ and $V$, originally in $(J \otimes_\Bbbk E^\ast)_{cyc} \subset CC^\ast (J,J)$, can be seen as elements of $H^1 (T^2;\mathbb{Z})$ where $T^2$ is the torus that $\Q$ embeds in. $U(\eta_i)$ and $V(\eta_i)$ are simply their evaluations at the $1$-cycle $\eta_i$. 
Choose $W:=aU + bV$ for $a,b \in \mathbb{Z}$ such that $\ker W$ does not contain any $\eta_i$'s. When $x_{\eta_i} U \neq 0$, then $U(\eta_i) \neq 0$ from the relation in $E_2^{\mathrm{odd}}$ \eqref{eqn:E2odd1}, and
 $$ x_{\eta_i} U =  c x_{\eta_i} W
 $$
 with $c= \frac{ U(\eta_i) }{a U(\eta_i) + b V(\eta_i) }(\neq 0)$. Similarly any nonzero $x_{\eta_i} V$ can be expressed as a scalar multiple of $x_{\eta_i} W$. Therefore
 \begin{equation}\label{eqn:basisredundant}
 \{ U,V, x_{\eta_i}^n W, x_{\eta_i}^m \Theta_v :  v(\neq v_0) \in \mathcal{E}_{i, j}, 1 \leq i \leq N, 1< j \leq m_i, n \geq 1, m \geq 0  \}
 \end{equation}
forms a basis of $E_2^{\mathrm{odd}}$. Readers are cautioned that the expression in \eqref{eqn:basisredundant} contains a redundancy, in the sense that  
\[
x_{\eta_i}^m \Theta_v = x_{\eta_i}^m \Theta_{v'}
\]
for distinct vertices $v \neq v'$ whenever (and only when) $m \ge 1$ and $v, v' \in \mathcal{E}_{i,j}$. 
To avoid a potential confusion, we write $y_{i,j}^m=x_{\eta_i}^m \Theta_v$ for any $v (\neq v_0) \in  \mathcal{E}_{i,j}$.
\end{remark}

%
%
%
%
%

Note that $E_2$-page is $\mathbb{Z}$-graded, and each can be identified with the component of the associated graded of the homology. The precise identification goes as follows:
$$(E_2)_{\deg 0} = \{ [a_0] \in HF^{\mathrm{even}}_{cyc} ((\lL,b),(\lL,b)) : \mbox{(exterior)}\, \deg a_0 = 0 \},$$
$$(E_2)_{\deg 1} = \{ [a_1] \in HF^{\mathrm{odd}}_{cyc} ((\lL,b),(\lL,b)) : \mbox{(exterior)}\, \deg a_1 = 1 \},$$
$$(E_2)_{\deg 2} =  \dfrac{HF^{\mathrm{even}}_{cyc} ((\lL,b),(\lL,b))}{  \{ [a_0] \in HF^{\mathrm{even}}_{cyc} ((\lL,b),(\lL,b)) : \mbox{(exterior)}\, \deg a_0 = 0 \} },$$
$$(E_2)_{\deg 3} = \dfrac{ HF^{\mathrm{odd}}_{cyc} ((\lL,b),(\lL,b)) }{ \{ [a_1] \in HF^{\mathrm{odd}}_{cyc} ((\lL,b),(\lL,b)) : \mbox{(exterior)}\, \deg a_1 = 1 \}},$$
and hence, we have short exact sequences
\begin{equation}\label{eqn:sesforeven}
0 \to (E_2)_{\deg 0} \to  \Loop{HF}^{\mathrm{even}} \to (E_2)_{\deg 2}  \to 0,
\end{equation}
\begin{equation}\label{eqn:sesforodd}
0 \to (E_2)_{\deg 1} \to  \Loop{HF}^{\mathrm{odd}} \to (E_2)_{\deg 3}  \to 0.
\end{equation}
The isomorphism \eqref{eqn:hfisome2} can be obtained by choosing (noncanonical) splitting of these short exact sequences.

\subsection{$\mathsf{KS}$ image of the sum of ``parallel" orbits}
\label{subsec:Kodaira-Spencer map hits the function ring}
One distinguishing feature of our $\mathsf{KS}$-map, compared to \cite{FOOO16}, is that its image (in even degrees) is not necessarily a scalar multiple of the unit class $\mathbf{1}_{\lL}$. While this introduces some additional complexity in later analysis, there is nevertheless a particular combination of Hamiltonian orbits that always maps to the unit component in $\HFbloop$.

Let $\eta \in H_1(T^2;\mathbb{Z})$ be a homology class such that the zigzag cycles in $\Q$ are supported on $-\eta$. In general, there may be several zigzag cycles in this class, say $Z_1, \dots, Z_m$ (see Definition \ref{defn:parallels}). Consider the degree $0$  Hamiltonian orbits $e_{Z_i} t_{Z_i}$ which wind once around the punctures $v_{Z_i} \in \Q_0^\vee \subset \Sigma^\vee$, the vertex dual to the cycle $Z_i$. Via this duality, we may refer to these as \emph{parallel orbits}.
We will show that the sum of these parallel orbits is always mapped to a multiple of $\mathbf{1}_{\lL}$ under the Kodaira--Spencer map.

\begin{prop}\label{lem:KS isomorphism for even degree}
Let $\{Z_1, Z_2,\ldots,Z_m\}$ be the set of all zigzag cycles in the homology class $- \eta $ for some $\eta\in H_1 (T^2;\mathbb{Z})$. Then we have, modulo $m_1^{b,b}$-coboundary,
\begin{equation}\label{eqn:KS deg0 formula}
\sum_{i=1}^{m} \ks{e_{Z_i}t_{Z_i}}  = x_\eta \unit \mod \mathrm{Im} (m_1^{b,b}).
\end{equation}
\end{prop}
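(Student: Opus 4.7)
The plan is to compute $\ks{e_{Z_i}t_{Z_i}}$ by a direct enumeration of the pseudo-holomorphic disks contributing to $\tilde{\ksmap}$, exploiting the explicit local structure of $\lL$ near each puncture $v_{Z_i}$, and then to sum over $i$ to recover $x_\eta \unit$ modulo $m_1^{b,b}$-coboundary. The first step is a confinement argument: since the primitive orbit $e_{Z_i}t_{Z_i}$ is localized in a conical collar of $v_{Z_i}$ and the Hamiltonian has small slope there, the rigid disks contributing to $\ks{e_{Z_i}t_{Z_i}}$ should wind exactly once around $v_{Z_i}$. Under dimer duality, the edges of $\Qv$ meeting at $v_{Z_i}$ are precisely the edges of the cycle $Z_i$ viewed in $\Q$, and the faces of $\Qv$ surrounding $v_{Z_i}$ alternate in sign with outer boundary words organized by $\antipos{Z_i}$ and $\antineg{Z_i}$ (compare Figure~\ref{fig:dualtozigzag}).

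Next I would enumerate the winding-one disks explicitly. The boundary of such a disk, decorated with odd-degree insertions $X_e$ at its corners, traces a cyclic word in $\Qv$; under the identification $\MCA \cong \Jac$ (Proposition~\ref{lem:spacetime superpotential}) and the noncommutative ordering~\eqref{eqn:nc ainf structure}, this word represents the class $\eta$ and matches a cyclic rotation of either $\antipos{Z_i}$ or $\antineg{Z_i}$, possibly truncated when the output marked point lies at an $\bar{X}_e$ rather than at a Morse unit $\mathbf{1}_{L_v}$. This yields a schematic formula
\begin{equation*}
\ks{e_{Z_i} t_{Z_i}} \;=\; \sum_{v \in V_i^+} x^+_{i,v}\, \mathbf{1}_{L_v} \;+\; \sum_{v \in V_i^-} x^-_{i,v}\, \mathbf{1}_{L_v} \;+\; (\text{contributions with }\bar{X}_e\text{-type output}),
\end{equation*}
where $V_i^{\pm}$ index the vertices of $\Q$ lying along $\antipm{Z_i}$, and $x_{i,v}^{\pm}$ are cyclic sub-words of the corresponding anti-zigzag ending at $v$.

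The last step is to sum over $i=1, \ldots, m$ and identify the total with $x_\eta \unit = \sum_{v\in \Q_0}(\pi_v x_\eta \pi_v)\,\mathbf{1}_{L_v}$. By Theorem~\ref{thm:combinatorics of matching polytope for dimers in tori}, the parallel cycles $Z_1, \ldots, Z_m$ partition the torus into strips $\mathcal{E}_{i,j}$, and consecutive strips share an anti-zigzag along their common boundary; consequently the pairing $(V_i^+, V_{i+1}^-)$ telescopes over $i$ so as to cover each vertex of $\Q$ exactly once. The distinct cyclic representatives of $x_\eta$ attached to a single vertex differ only by Jacobi relations, which on the Floer side appear as images of $\delta_+ = m_{1,+1}^{b,b}$. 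The $\bar{X}_e$-output contributions should cancel pairwise between adjacent anti-zigzags, or be exhibited as explicit $\delta_-$-coboundaries, giving the desired identity modulo $\mathrm{Im}(m_1^{b,b})$.

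The main obstacle is the second step, namely the explicit disk enumeration. One must rule out exotic configurations (higher winding multiplicities, broken Floer cylinders near the Reeb orbit, bubbled polygons) via a transversality-and-index argument, and carefully track the signs coming from the spin structure on $\lL$ together with the noncommutative ordering of the $x_e$. As an alternative, one could exploit the ring-homomorphism property of $\ksmap$ (Proposition~\ref{prop:KS loops}) together with the known ring structure of $SH^*(E)$ following~\eqref{eqn:logarithmic cohomology presentation for SH} to constrain $\ks{e_{Z_i}t_{Z_i}}$ up to a controlled ambiguity, but this route still requires one unambiguous low-order unit-sector computation to pin down the remaining scalars.
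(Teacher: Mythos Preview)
Your approach is genuinely different from the paper's, and the gaps you flag in ``step two'' are precisely what the paper circumvents by \emph{not} attempting a direct enumeration of the rigid disks for $\ks{e_{Z_i}t_{Z_i}}$.

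The paper's proof runs through three algebraic devices that you do not use. First (Lemma~\ref{prop:computing Kodaira-Spencer map in even degree}), rather than compute $\ks{e_Z t_Z}$ itself, the paper computes $\WL\cdot \ks{e_Z t_Z}$ and shows it equals $\sum_{e\in\zig{Z}} x_\eta\, m_1^{b,b}(\xe\Xe)$ (and equally for the zags). Multiplying by $\WL$ means each contributing configuration contains one of the face-polygons, making the enumeration tractable and the $\bar X$-outputs cancel cleanly via type-\textcircled{3} pairs. Second, the paper invokes the perfect-matching identity $\sum_{e\in\cP} m_1^{b,b}(\xe\Xe)=\WL\unit$, applied to the boundary matchings $\cP_I=\cJ_\eta\cup\text{zigs/zags}$ of Theorem~\ref{thm:combinatorics of matching polytope for dimers in tori}(iv); summing over the $m$ parallel cycles, the zig/zag parts assemble into $\WL\sum_i\ks{e_{Z_i}t_{Z_i}}$, and what remains is $\sum_{e\in\cJ_\eta} m_1^{b,b}(x_\eta \xe\Xe)$. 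Third, a perfect-matching degree count shows $x_\eta\xe$ is divisible by $\WL$ for every $e\in\cJ_\eta$, and since $\Jac$ is cancellative for a zigzag-consistent dimer, one may divide the whole identity by $\WL$.

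Your telescoping picture for the unit sector is plausible, but the assertion that the $\bar X_e$-outputs ``cancel pairwise between adjacent anti-zigzags'' is not justified and in fact is not how the paper handles them: those terms survive as genuine $m_1^{b,b}$-coboundaries $m_1^{b,b}(x_b\Xe)$ coming from the $\cJ_\eta$-edges, and recognizing them as such requires the $\WL$-divisibility argument. Likewise, your step-one confinement (winding number exactly one) is reasonable heuristically, but the paper bypasses it by computing $\WL\cdot\ks{e_Z t_Z}$, where the extra polygon pins down the combinatorics without a separate index/transversality analysis. If you want to complete your route directly, you would need to make the disk count rigorous and then exhibit the leftover $\bar X$-terms explicitly as coboundaries; the paper's $\WL$-multiplication trick is precisely the device that makes both of these steps automatic.
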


We first need some preparation.
\begin{lemma}\label{prop:computing Kodaira-Spencer map in even degree}
Let $Z$ be a zigzag cycle in $\Q$. Let $e_Z t_Z^n$ denote the symplectic cohomology generator around $v_Z \in \Qvv$ in the presentation \eqref{eqn:logarithmic cohomology presentation for SH}. Let $\eta = -[Z] \in H_1(T^2;\Z)$, the homology class of an anti-zigzag. 
We have
\begin{equation}\label{eqn:formula of Kodaira-Spencer map in even degree}
\WL\ks{e_{Z}t_Z} = \sum_{e\in\zig{Z}}x_\eta \mbb{\xe\Xe} = \sum_{e\in\zag{Z}} x_\eta \mbb{\xe\Xe}.
\end{equation}
\end{lemma}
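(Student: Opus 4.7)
The plan is as follows. Since $x_\eta$ lies in the center of $\Jac$ and the operations $m_k^{b,\ldots,b}$ are linear over the center by Remark~\ref{rmk:linearmkovercenter}, the right-hand side of \eqref{eqn:formula of Kodaira-Spencer map in even degree} can be rewritten as $\mbb{x_\eta \sum_{e \in \zig{Z}} \xe \Xe}$, an explicit $m_1^{b,b}$-coboundary. The lemma therefore asserts that $\WL \ks{e_Z t_Z}$ coincides, on the chain level, with this primitive, and analogously with $\zig{Z}$ replaced by $\zag{Z}$.

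To establish this chain-level identity, I would run a cobordism argument on a one-dimensional moduli space $\mathcal{M}(Z)$ of pseudo-holomorphic disks in $X_\Qv$ with boundary on $\lL$, carrying an interior puncture asymptotic to the Reeb orbit $e_Z t_Z$ near $v_Z$, a distinguished boundary marked point, and arbitrary $b$-insertions on the remaining boundary corners. The oriented boundary $\partial \mathcal{M}(Z)$ is expected to split into two strata. In the first, a holomorphic disk carrying only $b$-insertions bubbles off at the distinguished marked point; by the Maurer--Cartan equation $m(e^b) = \WL \unit$, this disk-bubble stratum contributes precisely $\WL \ks{e_Z t_Z}$. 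In the second, the distinguished marked point is pushed through the cylindrical end at $v_Z$; by the explicit local geometry of zigzag Lagrangians (Section~\ref{subsec:zigzag Lagrangians}), the emerging limit is a small bigon bounded by two consecutive strands of $\lL$ meeting at an immersed generator $X_e$ with $e \in \zig{Z}$, together with a boundary arc winding once around $v_Z$. The winding supplies the central element $x_\eta \in \Jac$ representing $\eta \in H_1(T^2;\mathbb{Z})$, while the remaining disk contributes to $\mbb{\xe \Xe}$; summing over all zigs of $Z$ yields the right-hand side. Cancellation of $\partial \mathcal{M}(Z)$ gives the chain-level equality. The equality with the sum over $\zag{Z}$ follows by running the analogous cobordism with the local bigon attached on the opposite side of $\lL$ at each edge of $Z$, reflecting the positive/negative face duality in the dimer $\Qv$ along the zigzag cycle.

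The main obstacle is the analysis of the puncture-corner collision stratum and the combinatorial identification of the emerging local bigons with the zigs (respectively, zags) of $Z$. This requires a precise local model around $v_Z$ built from the zigzag Lagrangian arrangement depicted in Figure~\ref{fig:dualtozigzag}, together with careful transversality, orientation, and sign verifications for disks with both an interior Reeb puncture and a degenerating boundary marked point. Additional care must be taken with the noncommutative ordering of the $b$-insertions as dictated by Definition~\ref{def:nc extension}, so that the coefficient produced by the degeneration is exactly the central word $x_\eta$ acting from the left on $\mbb{\xe \Xe}$. Finiteness of all sums involved is guaranteed by Lemma~\ref{lem:Floer theory coefficient for zigzag Lagrangians}, which validates the formal manipulations throughout.
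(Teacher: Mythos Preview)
Your proposal takes a cobordism route that differs from the paper's direct computation, but the second boundary stratum you describe is not geometrically well-posed, and this is a genuine gap rather than a detail to be filled in.

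Concretely: you introduce a one-parameter moduli space $\mathcal{M}(Z)$ with an interior puncture asymptotic to $e_Z t_Z$, a distinguished free boundary marked point, and $b$-insertions. The first stratum (disk-bubble producing $\WL\unit$ via the Maurer--Cartan equation, leaving the main component to compute $\ks{e_Z t_Z}$) is plausible. But the second stratum, where ``the distinguished marked point is pushed through the cylindrical end at $v_Z$'', does not make sense: $v_Z$ is an interior puncture of $X_{\Qv}$, whereas the boundary marked point is constrained to $\lL$ and cannot escape through an interior cylindrical end. Moreover, your description of the purported limit---``a small bigon bounded by two consecutive strands of $\lL$ meeting at $X_e$ together with a boundary arc winding once around $v_Z$''---is in fact the picture of a polygon contributing to $\ks{e_Z t_Z}$ itself, not an end of a one-parameter family. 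There is no obvious degeneration of your $\mathcal{M}(Z)$ that produces $m_1^{b,b}(\xe\Xe)$ for $e\in\zig{Z}$ together with the central coefficient $x_\eta$; the factor $x_\eta$ does not arise from any standard bubbling or breaking at the boundary of such a moduli space.

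The paper's proof avoids these issues entirely by direct comparison. One writes $Z=b_L a_L\cdots b_1 a_1$ with $a_l$ the zigs, and enumerates the polygons contributing to each $\mbb{x_{a_l}X_{a_l}}$. The degree-2 outputs fall into three types, indexed by whether the output corner lies on $\vec{p}_l$, on $\vec{n}_{l-1}$, or is $\bar{X}_{b_{l-1}}$. The third type cancels in pairs when summed over $l$, and the surviving terms $x_\eta\,\partial\Phi/\partial x_{e_p}\cdot\bar{X}_{e_p}$ and $x_\eta\,\partial\Phi/\partial x_{e_n}\cdot\bar{X}_{e_n}$ match term by term the contributions of the explicit polygons $u_1,u_2$ (winding once around $v_Z$) to $\WL\ks{e_Z t_Z}$. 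The degree-0 component is handled by the same bookkeeping with outputs on edges rather than corners. The zag case is symmetric. No cobordism or auxiliary moduli space is invoked; the equality is established by matching finite lists of polygons on both sides.
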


\begin{proof}
We will only prove the first equality, and the second follows by a symmetric argument. 
Choose a representation $Z=b_L a_L\cdots b_1 a_1$ (unique up to cyclic permutations) such that $a_i$'s are zigs, and
and $\antipm{Z}$ be anti-zigzags of $Z$ as presented in 
\begin{equation*}
\antipos{Z} = \vec{p}_1 \cdots \vec{p}_L \;\, \text{and} \;\; \antineg{Z} = \vec{n}_1 \cdots \vec{n}_L
\end{equation*}
arranged as in \eqref{eqn:anti-zigzags}
(see Figure~\ref{fig:anti-zigzags}). 
We want to show 
\begin{equation}\label{eqn:eztzai}
\WL\ks{e_{Z}t_Z} =  \sum_{i=1}^L x_\eta \mbb{ x_{a_i} X_{a_i}}.
\end{equation}
Let us first compare the degree 2 components (with respect to the exterior grading) of both sides.  
The possible degree 2 outputs of the terms $\mbb{x_{a_i} X_{a_i}}$ are of the following types:
\[
\textcircled{1} \frac{\partial \Phi}{\partial x_{e_p}} \cdot \bar{X}_{e_p}, \quad 
\textcircled{2} \frac{\partial \Phi}{\partial x_{e_n}} \cdot \bar{X}_{e_n}, \quad \text{or} \quad 
\textcircled{3} \frac{\partial \Phi}{\partial x_{b_{i-1}}} \cdot \bar{X}_{b_{i-1}}
\]
for an edge $e_p$ lying in $\vec{p}_i$ and $e_n$ in $\vec{n}_{i-1}$. See Figure \ref{fig:ksimage1}, which is related via dimer duality to the face arrangement near the zigzag cycle $Z$ drawn in Figure \ref{fig:anti-zigzags}. For example, the coefficient $\frac{\partial \Phi}{\partial x_{e_n}}$ reads the edge labelling of the polygon $u_0$ (contributing to $W_\lL$ also) that omits $x_{e_n}$.
\begin{figure}[h]
\includegraphics[scale=0.35]{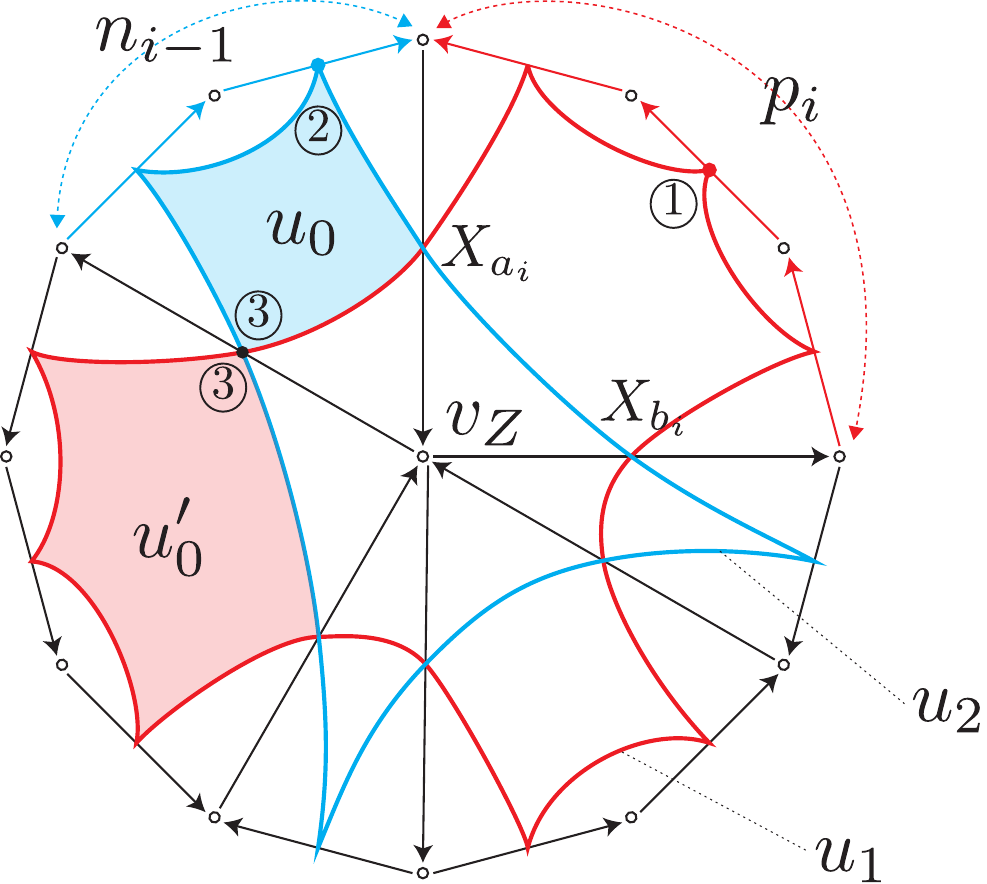}
\caption{Holomorphic polygon counting for $\sum_{i=1}^L x_\eta \mbb{ x_{a_i} X_{a_i}}$}
\label{fig:ksimage1}
\end{figure}

Observe that the type~\textcircled{3} terms cancel among themselves in the sum $\sum_{i=1}^L \mbb{x_{a_i} X_{a_i}}$. In Figure~\ref{fig:ksimage1}, the polygons $u_0$ and $u_0'$ form a cancelling pair for the output $\bar{X}_{b_{i-1}}$.  
As a result, we are left with the following degree 2 terms in the output of the sum $ \sum_{i=1}^L x_\eta \mbb{x_{a_i} X_{a_i}}$:
\[
x_\eta \frac{\partial \Phi}{\partial x_{e_p}} \cdot \bar{X}_{e_p}, \quad 
x_\eta \frac{\partial \Phi}{\partial x_{e_n}} \cdot \bar{X}_{e_n}.
\]
Note that $x_\eta \frac{\partial \Phi}{\partial x_{e_p}}$ corresponds to the path obtained by removing $x_{e_p}$ from the cyclic path
\[
h(\bar{X}_{e_p}) \cdot W_\lL x_\eta \cdot h(\bar{X}_{e_p}),
\]
that is, the component of $W_\lL x_\eta$ based at the vertex $h(\bar{X}_{e_p})$. This precisely matches the summand of $\WL\ks{e_Z t_Z}$ with output $\bar{X}_{e_p}$, which is contributed by the polygon $u_1$ depicted in Figure~\ref{fig:ksimage1}.\footnote{Here, one requires a more careful analysis of the contributing \emph{pseudo-holomorphic polygon} to confirm that the count matches the topological picture. This can be justified by applying the same domain-stretching argument used in the proof of Lemma~4.6, which addresses an essentially identical situation.}
Similarly, the term $x_\eta \frac{\partial \Phi}{\partial x_{e_n}} \cdot \bar{X}_{e_n}$ appears in $\WL\ks{e_Z t_Z}$ as the contribution from the polygon $u_2$ in Figure~\ref{fig:ksimage1}. 

This establishes the equality \eqref{eqn:eztzai} when restricted to the degree 2 components.  
To compare the degree 0 components, a parallel argument applies, with the only difference being that the outputs (maximum points of Morse functions on $L_i$) now lie along the edges of the polygons. We omit the details.
\end{proof}

Now we are ready to prove Proposition \ref{lem:KS isomorphism for even degree}.

\begin{proof}[Proof of Proposition \ref{lem:KS isomorphism for even degree}]
Observe that, for every perfect matching $\cP$, we have
\begin{equation}\label{eqn:m1bbpm}
\sum_{e\in\cP}\mbb{\xe\Xe} = \WL\unit,
\end{equation}
since each holomorphic polygon contributing to \( \WL \) contains exactly one \( X_e \) in $\{ X_e : e \in \mathcal{P}\}$, and every \( \bar{X} \)-term appears twice in the computation of \( m_1^{b,b} \), with canceling coefficients due to the Jacobi relations.
By Theorem \ref{thm:combinatorics of matching polytope for dimers in tori} (iv), there exists a subset $\cJ_\eta \subset \Q_1$ intersecting none of $Z_i$'s such that for any subset $I\subseteq \{1,\ldots,m\}$, 
\begin{equation}\label{eqn:perfect matchings on a boundary edge}
\PM_I:=\cJ_\eta\cup \bigcup_{i \in I} \zig{Z_i} \cup \bigcup_{i \notin I} \zag{Z_i} \subset \Q_1
\end{equation}
defines a boundary matching that sits on the edge of $PM(\Q)$ perpendicular to $\eta$. 

Taking $\mathcal{P}$ in \eqref{eqn:m1bbpm} to be any such $\mathcal{P}_I$ \eqref{eqn:perfect matchings on a boundary edge}, we have
\begin{equation*}\label{eqn:perfect matching decomposition1}
\WL\unit=\sum_{e\in\cP_I}\mbb{\xe\Xe}=\sum_{\substack{i \in I\\ e\in\zig{Z_i}}}\mbb{\xe\Xe} + \sum_{\substack{i \notin I\\ e\in\zag{Z_i}}}\mbb{\xe\Xe} + \sum_{e\in\cJ_\eta}\mbb{\xe\Xe}
\end{equation*}
holds. Multiplying the central elelement $x_\eta$  in $ \MCA$ yields
\begin{equation}\label{eqn:perfect matching decomposition2}
\WL x_\eta \unit=\sum_{i\in I}\sum_{e\in\zig{Z_i}} x_\eta \mbb{\xe\Xe} + \sum_{i \notin I} \sum_{ e\in\zag{Z_i}}x_\eta \mbb{\xe\Xe} + \sum_{e\in\cJ_\eta}x_\eta \mbb{\xe\Xe}.
\end{equation}

On the other hand, Proposition \ref{prop:computing Kodaira-Spencer map in even degree} tells us that for all $1\leq i \leq m$,
\begin{equation*}
\sum_{e\in\zig{Z_i}}x_\eta \mbb{\xe\Xe} = \WL \ks{e_{Z_i}t_{Z_i}},\; \sum_{ e\in\zag{Z_i}}x_\eta \mbb{\xe\Xe} = \WL \ks{e_{Z_i}t_{Z_i}},
\end{equation*}
and, combining with \eqref{eqn:perfect matching decomposition2}, we have
\begin{equation}\label{eqn:wxeta1rem}
\WL x_\eta \unit=\WL\sum_{i=1}^m \ks{e_{Z_i}t_{Z_i}} + \sum_{e\in\cJ_\eta}\mbb{ x_\eta\xe\Xe}.
\end{equation}
Here, we used the fact that $m_1^{b,b}$ is linear over the center of $A_\lL$ (Remark \ref{rmk:linearmkovercenter}).
We claim that the second term in \eqref{eqn:wxeta1rem} is divisible by $W_\lL$. By \cite[Lemma 3.19]{BockABC}, it is enough to show that $x_\eta\xe$ has a strictly positive degree with respect to any corner matching in $PM(\Q)$. First of all, this is true for the two corner matchings whose join contains boundary matchings $\mathcal{P}_I$ \eqref{eqn:perfect matchings on a boundary edge}.\footnote{These are the cases when $I=\phi$ or $I=\{1,\cdots,m\}$ in \eqref{eqn:perfect matchings on a boundary edge}.} This is because these two should also contain $\cJ_\eta$ by Theorem \ref{thm:combinatorics of matching polytope for dimers in tori} (iii), but $e \in \cJ_\eta$ in the second summand of \eqref{eqn:wxeta1rem}. For any corner matching $\mathcal{P}$ other than these two, we have $\deg_\mathcal{P} (x_\eta) \geq 1$ due to Remark \ref{rmk:pmdegandvan}, and hence the claim follows.

Therefore there exists an element $x_b \in J$ such that $W_\lL \cdot x_b = x_\eta x_e$, and in particular $x_b X_e$ is a well defined element in $\CFbloop$. We have
\begin{equation}\label{eqn:perfect matching decomposition3}
\WL x_\eta \unit=\WL\sum_{i=1}^m \ks{e_{Z_i}t_{Z_i}} + \WL\sum_{e\in\cJ_\eta}\mbb{x_b \Xe}
\end{equation}
Because $\Q$ is zigzag consistent, both $J$ is cancellative, and hence, multiplying $W_\lL$ is injective. Therefore we obtain
\begin{equation*}
x_\eta \unit=\sum_{i=1}^m\ks{e_{Z_i}t_{Z_i}} + \sum_{e\in\cJ}\mbb{x_b \Xe}.
\end{equation*}
\end{proof}
Passing to cohomology and using the fact that $\mathsf{KS}$ is a ring homomorphism, we have:
\begin{cor} In the setting of Lemma \ref{lem:KS isomorphism for even degree},
\begin{equation*}
\mathsf{KS} \left(\sum_{i=1}^m e_{Z_i} t_{Z_i}^n \right)  = x_\eta^n \unit.
\end{equation*}
\end{cor}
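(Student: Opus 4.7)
The plan is to deduce the corollary from the $n=1$ case, i.e., Proposition~\ref{lem:KS isomorphism for even degree}, by bootstrapping via the ring-homomorphism property of $\mathsf{KS}$ established in Proposition~\ref{prop:KS loops}. First I would record the algebraic identity
\[
\sum_{i=1}^{m} e_{Z_i} t_{Z_i}^n \;=\; \Bigl(\sum_{i=1}^m e_{Z_i} t_{Z_i}\Bigr)^n
\]
in $SH^\ast(X_\Qv)$. This is immediate from the ring structure reviewed in \ref{subsec:shrs}: the generators $e_{Z}$ attached to distinct punctures are mutually orthogonal idempotents under the pair-of-pants product (so $e_{Z_i} t_{Z_i} \cdot e_{Z_j} t_{Z_j} = \delta_{ij} e_{Z_i} t_{Z_i}^2$), whence every cross term in the multinomial expansion vanishes and the diagonal contributions sum to $\sum_i e_{Z_i} t_{Z_i}^n$.

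Applying $\mathsf{KS}$ to both sides and using Proposition~\ref{lem:KS isomorphism for even degree} then yields
\[
\mathsf{KS}\Bigl(\sum_i e_{Z_i} t_{Z_i}^n\Bigr) \;=\; \mathsf{KS}\Bigl(\sum_i e_{Z_i} t_{Z_i}\Bigr)^n \;=\; (x_\eta \unit)^n
\]
in $\HFbloop$. It remains to verify $(x_\eta \unit)^n = x_\eta^n \unit$. The product on $\HFbloop$ is induced by $m_2^{b,b,b}$; since $\unit = \sum_i \mathbf{1}_{L_i}$ is a strict $A_\infty$-unit, all higher operations $m_r^{b,b,b}$ with a unit input vanish for $r\neq 2$, so $m_2^{b,b,b}(x_\eta \unit, x_\eta \unit)$ collapses to the unperturbed term $m_2(x_\eta \unit, x_\eta \unit)$. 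By the noncommutative extension rule~\eqref{eqn:nc ainf structure}, this equals $x_\eta \cdot x_\eta \cdot m_2(\unit,\unit) = x_\eta^2 \unit$, and iterating produces $(x_\eta\unit)^n = x_\eta^n \unit$. Centrality of $x_\eta \in A_\lL \cong \Jac$ ensures that the power $x_\eta^n$ is well-defined independently of ordering.

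I do not anticipate a genuine obstacle: the only point worth checking is that no insertions of the weak bounding cochain $b$ produce correction terms in powers of $x_\eta \unit$, and this is a direct consequence of strict unitality together with the fact that $x_\eta$ is pulled out as a central scalar. All identifications are interpreted at the cohomology level, so the ``modulo $m_1^{b,b}$-coboundary'' qualifier in Proposition~\ref{lem:KS isomorphism for even degree} is absorbed automatically once one passes to $\HFbloop$.
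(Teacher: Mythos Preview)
Your proposal is correct and follows exactly the approach the paper intends: the paper's own justification is the single sentence ``Passing to cohomology and using the fact that $\mathsf{KS}$ is a ring homomorphism,'' and you have simply spelled out the two ingredients behind that sentence (the orthogonality of the $e_{Z_i}t_{Z_i}$ in $SH^\ast$, and the computation $(x_\eta\unit)^n = x_\eta^n\unit$ via unitality and centrality of $x_\eta$).
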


%
%
%
%
%
%

\subsection{$\mathsf{KS}$ on the even degree cohomology}
From the above discussion, we see that $x_\eta \unit$ solely cannot distinguish parallel orbits, and it will turn out that the additional even cycles $\Psi_{i,j}$ remedy the situation. 

Let $Z_{i,j}$ be parallel zigzag cycles in $\Q$ in class $-\eta_i \in H_1(T^2,\mathbb{Z})$ for $1\leq j\leq m_i$.
For simplicity, we denote their corresponding degree $0$ generators in $SH^\ast(X)$  by
$$ \alpha_{i,j}:=e_{Z_{i,j}} t_{Z_{i,j}}  \qquad \mbox{for} \,\, 1\leq j \leq m_i.$$
We set
\begin{equation}\label{eqn:alphaiii}
\alpha_i = \alpha_{\eta_i} := \sum_{l=1}^{m_i} \alpha_{i,l},\qquad \tau_{i,j}:= \sum_{l=1}^{j-1} \alpha_{i,l}  \quad  (\mbox{for}  \,\, 2 \leq j \leq m_i),
\end{equation}
and write $\left(SH^{\mathrm{even}} (X) \right)_0$ for the subspace of $SH^{\mathrm{even}} (X)$ (multiplicatively) generated by $\alpha_i$'s. Thus
$$ 0 \to \left(SH^{\mathrm{even}} (X) \right)_0 \to SH^{\mathrm{even}} (X) \to \overline{SH^{\mathrm{even}} (X)} \to 0.$$
We have shown in Proposition \ref{lem:KS isomorphism for even degree} that 
$$\mathsf{KS}: \alpha_i^n \mapsto x_{\eta_i}^n \unit$$
which gives the isomorphism $\left(SH^{\mathrm{even}} (X) \right)_0 \stackrel{\cong}{\longrightarrow} (E_2)_{\deg 0}$.
Comparing with \eqref{eqn:sesforodd}, we have
\begin{equation}\label{eqn:evendecompses}
\xymatrix{ 0 \ar[r] &\left(SH^{\mathrm{even}} (X) \right)_0  \ar[r]\ar[d]^{\cong}& SH^{\mathrm{even}} (X) \ar[r]\ar[d]^{\mathsf{KS}|_{\mathrm{even}}} & \overline{SH^{\mathrm{even}} (X)} \ar[r]\ar[d]^{h} & 0     \\
0 \ar[r]  &(E_2)_{\deg 0} \ar[r] &HF^{\mathrm{even}}_{cyc} ((\lL,b),(\lL,b)) \ar[r] &   (E_2)_{\deg 2}\ar[r] &      0 }.
 \end{equation}

On the other hand, using the explicit description of the ring structure on $SH^{\mathrm{even}} (X) $ given in \ref{subsec:shrs}, we see that $\left(SH^{\mathrm{even}} (X) \right)_0$ is a subring of $SH^{\mathrm{even}}  (X)$, and the set 
$$\left\{\alpha_i^n \cdot \tau_{i,j} =\sum_{l=1}^{j-1}  \alpha_{i,l}^{n+1}  = \tau_{i,j}^{n+1} : 1\leq i \leq N, 1< j \leq m_i, n \in \mathbb{Z}_{\geq 0} \right\} (\subset SH^{\mathrm{even}} (X) ) $$ 
projects to a basis of the quotient $\overline{SH^{\mathrm{even}} (X)}$.
It follows from convergence of the spectral sequence that the map $h$ on the last column of \eqref{eqn:evendecompses} is simply extracting the degree 2 component from the actual image of $\mathsf{KS}$. Namely, if $\mathsf{KS}(\alpha) = [a_0+a_2] \in \Loop{HF}^{\mathrm{even}}$ then, it maps $\overline{\alpha} \in \overline{SH^{\mathrm{even}} (X)}$ to $[a_2] \in (E_2)_{\deg 2}$. 
We claim that this also gives an isomorphism.

\begin{lemma} $[\tau_{i,j}^{n+1}] \in \overline{SH^{\mathrm{even}} (X)}$ maps to $x_{\eta_i}^n \Psi_{i,j} \in (E_2)_{\deg 2}$ under the induced map $h$ in \eqref{eqn:evendecompses}.  
\end{lemma}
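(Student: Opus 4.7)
The plan is to reduce the statement to a telescoping sum over $l$ and identify each summand with a representative of $\psi_{i,l}$ via~\eqref{eqn:psiij}. Since $\mathsf{KS}$ is linear, and by the decomposition $\tau_{i,j}^{n+1}=\sum_{l=1}^{j-1}\alpha_{i,l}^{n+1}$ (which uses the orthogonality of the idempotents $e_{Z_{i,l}}\in SH^{\mathrm{even}}(X)$),
\[
\mathsf{KS}\bigl(\tau_{i,j}^{n+1}\bigr)=\sum_{l=1}^{j-1}\mathsf{KS}\bigl(e_{Z_{i,l}}t_{Z_{i,l}}^{n+1}\bigr).
\]
The first computational step is to generalize Lemma~\ref{prop:computing Kodaira-Spencer map in even degree} to winding number $n+1$: the same neck-stretching argument near the puncture $v_Z$, now with the cylindrical neck wrapping $n+1$ times along the antizigzag, yields $W_\lL\cdot\mathsf{KS}\bigl(e_Z t_Z^{n+1}\bigr)=\sum_{e\in\zig{Z}}x_\eta^{n+1}\,m_1^{b,b}(x_e X_e)$, the factor $x_\eta^n$ accounting for the $n$ extra wraps. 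An important consistency check is that by the corollary following Proposition~\ref{lem:KS isomorphism for even degree}, $\sum_{l=1}^{m_i}[\mathsf{KS}(\alpha_{i,l}^{n+1})]_2=0$ in $(E_2)_{\deg 2}$ since the full sum $\sum_l\mathsf{KS}(\alpha_{i,l}^{n+1})=x_{\eta_i}^{n+1}\unit$ lies purely in exterior degree $0$.

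Extracting the exterior-degree-$2$ component via the pairwise cancellation of type~\textcircled{3} contributions as in the proof of Lemma~\ref{prop:computing Kodaira-Spencer map in even degree}, the surviving outputs are indexed by $e_p\in\antipos{Z_{i,l}}$ and $e_n\in\antineg{Z_{i,l}}$, giving the explicit cochain
\[
W_\lL\bigl[\mathsf{KS}(\alpha_{i,l}^{n+1})\bigr]_2 = \sum_{e_p\in\antipos{Z_{i,l}}}x_{\eta_i}^{n+1}\frac{\partial\Phi}{\partial x_{e_p}}\bar X_{e_p}+\sum_{e_n\in\antineg{Z_{i,l}}}x_{\eta_i}^{n+1}\frac{\partial\Phi}{\partial x_{e_n}}\bar X_{e_n}.
\]
Interpreting $x_{\eta_i}\cdot\frac{\partial \Phi}{\partial x_{e_p}}$ (as in the proof of Lemma~\ref{prop:computing Kodaira-Spencer map in even degree}) as $W_\lL$ times the antizigzag $\antipos{Z_{i,l}}$ cyclically broken at $e_p$, and dividing by $W_\lL$ using cancellativity of $\Jac$, the first sum realizes $x_{\eta_i}^n\psi_{i,l'}$ via its representative on $\antipos{Z_{i,l}}\subset\mathcal{E}_{i,l}$; symmetrically, the second sum realizes $-x_{\eta_i}^n\psi_{i,l'-1}$, with the opposite sign coming from the convention $\Phi=\sum_{F\in\Q_2^+}\partial F-\sum_{F\in\Q_2^-}\partial F$. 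Here the labelling $l'=l'(l)$ is pinned down by the orientation conventions. Summing over $l=1,\ldots,j-1$ telescopes, and the resulting equality with $x_{\eta_i}^n\Psi_{i,j}$ in $(E_2)_{\deg 2}$ follows from the cyclic relation $\sum_{l=1}^{m_i}(\psi_{i,l'}-\psi_{i,l'-1})\equiv 0$ together with the base normalization $v_0\in\mathcal{E}_{i,1}$ that sets $\Psi_{i,1}=0$.

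The main obstacle is the identification step above: one must carefully verify, via Jacobi relations in $\mathbb{C}\Q$, that the explicit Floer cochain matches the representatives in~\eqref{eqn:psiij}, track the sign conventions distinguishing positive and negative faces, and pin down the index alignment so that the telescoping produces exactly $\Psi_{i,j}=\psi_{i,j}-\psi_{i,1}$. A secondary technical point is justifying the neck-stretching argument for wind $n+1$, which requires a more intricate moduli analysis than the $n=0$ case of Lemma~\ref{prop:computing Kodaira-Spencer map in even degree} but proceeds on the same principle.
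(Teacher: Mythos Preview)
Your telescoping computation of $h(\overline{\alpha_{i,l}})=\psi_{i,l'}-\psi_{i,l'-1}$ via the two antizigzag representatives is exactly the paper's argument for the base case $n=0$, including the sign coming from the $\pm$ in $\Phi$. So the core step is the same.

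Where you diverge is in the passage to general $n$. You propose to generalize Lemma~\ref{prop:computing Kodaira-Spencer map in even degree} directly to winding number $n+1$ via a neck-stretching analysis, obtaining $W_\lL\cdot\mathsf{KS}(e_Zt_Z^{n+1})=\sum_{e\in\zig{Z}}x_\eta^{n+1}m_1^{b,b}(x_eX_e)$. This is plausible but requires a genuinely more intricate moduli count than the $n=0$ case---one has to track curves that wrap the puncture $n+1$ times and verify that no new configurations appear---and you only sketch it. The paper bypasses this entirely: once $h(\overline{\tau_{i,j}})=\Psi_{i,j}$ is established, the identity $\tau_{i,j}^{n+1}=\alpha_i^n\cdot\tau_{i,j}$ together with $\mathsf{KS}(\alpha_i)=x_{\eta_i}\unit$ (already proved in Proposition~\ref{lem:KS isomorphism for even degree}) and the ring-homomorphism property of $\mathsf{KS}$ gives $\mathsf{KS}(\tau_{i,j}^{n+1})=x_{\eta_i}^n\cdot\widetilde{\Psi}_{i,j}$ for some lift $\widetilde{\Psi}_{i,j}$; since $x_{\eta_i}^n\unit$ has pure exterior degree~$0$, projecting to $(E_2)_{\deg 2}$ immediately yields $x_{\eta_i}^n\Psi_{i,j}$. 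This reduces the whole lemma to the single $n=0$ computation plus algebra already in hand, whereas your route re-does the geometric analysis for every $n$.
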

\begin{proof}
We first prove $h( [\tau_{i,j}] )= \Psi_{i,j}$.
Let us find the image of $\alpha_{i,j}$ in $(E_2)_{\deg 2}$ under the induced map $h$ from $\mathsf{KS}$. If we write the antizigzags associated with $\alpha_{i,j}$ appearing in the boundary of $\mathcal{E}_{i,j}$ and $\mathcal{E}_{i.j-1}$, respectively as $x_1 \cdots x_k$ and $y_1 \cdots y_l$ (up to cyclic permutation), then,
$$\overline{\alpha_{i,j}} \stackrel{h}{\mapsto} \sum x_{a+1} \cdots x_k x_1 \cdots x_{a-1} \bar{X}_a - \sum y_{b+1} \cdots y_l x_1 \cdots x_{b-1} \bar{X}_b.$$
(By definition of $h$, degree 0 part of the actual image of $\mathsf{KS}$ is ignored.)
As shown in Figure \ref{fig:psiij}, take a vertex $v$ and $v'$ on these antizigzags. In particular, we have $v \in \mathcal{E}_{i,j}$ and $v' \in \mathcal{E}_{i,j-1}$.
\begin{figure}[h]
\includegraphics[scale=0.4]{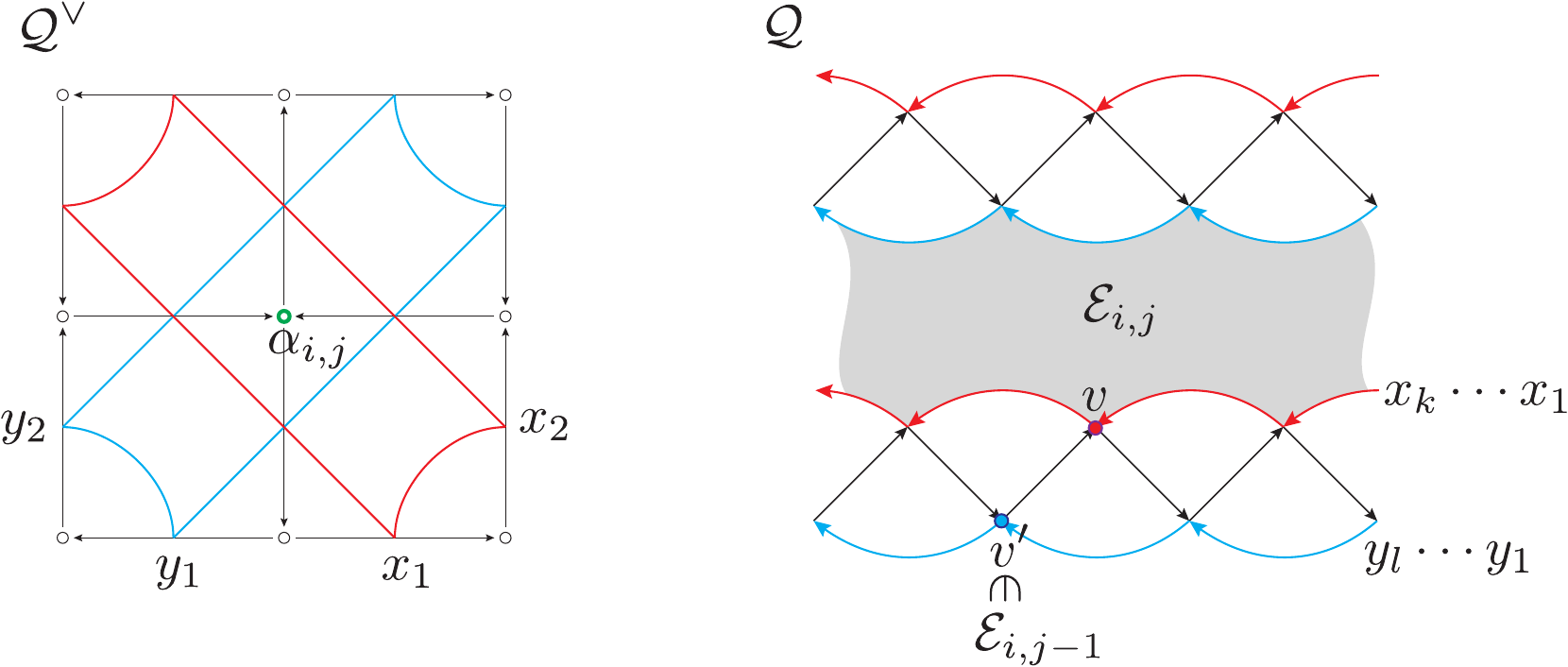}
\caption{}
\label{fig:psiij}
\end{figure}
Therefore, 
$$ \psi_{i,j} = \Delta (x_{\eta_i}\theta_v) = \sum x_{a+1} \cdots x_k x_1 \cdots x_{a-1} \bar{X}_a,$$
$$ \psi_{i,j-1} = \Delta (x_{\eta_i}\theta_{v'}) = \sum y_{b+1} \cdots y_l x_1 \cdots x_{b-1} \bar{X}_b,$$
and hence $\overline{\alpha_{i,j}}$ maps to $\psi_{i,j} - \psi_{i, j-1}$ under $h$,
and
$$h:\overline{\tau_{i,j}} \mapsto \psi_{i,2} - \psi_{i, 1} + \psi_{i,3} - \psi_{i,2} + \cdots + \psi_{i,j} - \psi_{i, j-1} = \psi_{i,j} - \psi_{i,1} =\Psi_{i,j}.$$

The actual image of $\tau_{i,j}$ in $\Loop{HF}^{\mathrm{even}}$ is some lifting, say  $\tilde{\Psi}_{i,j}$, of $\Psi_{i,j}$ obtained by adding a suitable degree $0$ element to it. Since $\mathsf{KS}$ is a ring homomorphism and the image of $\alpha_i$ is a multiple of unit,
$$ \alpha_i^n \cdot \tau_{i,j} \left(=\sum_{l=1}^{j-1}  \alpha_{i,l}^{n+1}  = \tau_{i,j}^{n+1} \right) \mapsto x_{\eta_i}^n  \tilde{\Psi}_{i,j}.$$
We see that the map $\overline{SH^{\mathrm{even}} (X)} \to (E_2)_{\deg 2}$ sends $[\tau_{i,j}^{n+1}]$ to $x_{\eta_i}^n \Psi_{i,j}$. (Here, $\alpha_i^n \cdot \tau_{i,j}  = \tau_{i,j}^{n+1}$ uses the knowledge of the ring structure on $SH^{\mathrm{even}} (X)$.)
\end{proof}

Therefore, by $5$-lemma, we have
\begin{prop}
$\mathsf{KS}|_{\mathrm{even}} : SH^{\mathrm{even}} (X) \to HF^{\mathrm{even}}_{cyc} ((\lL,b),(\lL,b))$ is a ring isomorphism.
\end{prop}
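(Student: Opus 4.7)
My plan is a direct application of the five lemma to the commutative diagram \eqref{eqn:evendecompses}, using the ingredients already assembled in the preceding two results.

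First, I would verify that the left vertical arrow in \eqref{eqn:evendecompses} is an isomorphism. Proposition \ref{lem:KS isomorphism for even degree}, together with the multiplicativity of $\mathsf{KS}$ from Proposition \ref{prop:KS loops}, yields $\mathsf{KS}(\alpha_i^n) = x_{\eta_i}^n \unit$. The monomials $\{\alpha_i^n\}_{i,n}$ form a basis of $\bigl(SH^{\mathrm{even}}(X)\bigr)_0$ (by construction in \eqref{eqn:alphaiii}), while $\{x_{\eta_i}^n \unit\}_{i,n}$ form a basis of $(E_2)_{\deg 0}$ according to the description of $E_2^{\mathrm{even}}$ recalled in \ref{subsec:Kodaira-Spencer map is an isomorphism}. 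Hence the left arrow is a bijection.

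Second, the right vertical arrow $h$ is an isomorphism: the lemma immediately preceding the proposition computes
\[
h\bigl([\tau_{i,j}^{n+1}]\bigr) = x_{\eta_i}^n \Psi_{i,j},
\]
and the elements on the right-hand side are precisely the basis of $(E_2)_{\deg 2}$ in the additive description of $E_2^{\mathrm{even}}$, whose preimages $\{[\tau_{i,j}^{n+1}]\}_{i,j,n}$ already form a basis of $\overline{SH^{\mathrm{even}}(X)}$. So $h$ sends a basis bijectively to a basis.

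With the two outer vertical arrows in \eqref{eqn:evendecompses} identified as isomorphisms, the five lemma forces the middle arrow $\mathsf{KS}|_{\mathrm{even}}$ to be a vector-space isomorphism; combined with Proposition \ref{prop:KS loops} it is a ring isomorphism. The only nontrivial bookkeeping I anticipate is confirming commutativity of \eqref{eqn:evendecompses}: the lower row is the filtration of $HF^{\mathrm{even}}_{cyc}((\lL,b),(\lL,b))$ by exterior degree, the upper row is the subring--quotient decomposition along sums of parallel orbits, and the map $h$ was defined by extracting the exterior-degree-$2$ part of $\mathsf{KS}$, so that $\mathsf{KS}$ manifestly respects the filtration (its image on $(SH^{\mathrm{even}})_0$ lands in the exterior-degree-$0$ summand). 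Once this compatibility is recorded, no genuine obstacle remains.
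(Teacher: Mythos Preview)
Your proposal is correct and follows exactly the paper's approach: the paper simply states ``Therefore, by $5$-lemma, we have'' immediately before the proposition, relying on the same two ingredients you identify (the isomorphism on $\bigl(SH^{\mathrm{even}}(X)\bigr)_0 \to (E_2)_{\deg 0}$ from Proposition~\ref{lem:KS isomorphism for even degree} and the isomorphism $h$ from the preceding lemma). Your bookkeeping on commutativity and bases is accurate and matches the surrounding text.
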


\subsection{$\mathsf{KS}$ on the odd degree cohomology}

Observe that the short exact sequence \eqref{eqn:sesforodd} is indeed a short exact sequence of $R:=(SH^{\mathrm{even}} (X))_0 = \Loop{HF}^{0}$-modules:
$$0 \to (E_2)_{\deg 1} \to  HF^{\mathrm{odd}}_{cyc} ((\lL,b),(\lL,b)) \to (E_2)_{\deg 3}  \to 0.$$
Recall from \ref{subsec:Kodaira-Spencer map is an isomorphism} that $(E_2)_{\deg 1}$ is generated over $R$ by the two elements $U$ and $V$, where we choose them to be
$$ U:= \partial_{\cP_i} - \partial_{\cP_{i-1}}, \qquad V:= \partial_{\cP_{i+1}} - \partial_{\cP_{i-1}}$$
for some fixed corner matching $\mathcal{P}_i$.
We denote by $\underline{(E_2)_{\deg 1}}$ the $\mathbb{C}$-vector space generated by $U$ and $V$ so that
\begin{equation}\label{eqn:undere21}
(E_2)_{\deg 1} = R \cdot \underline{(E_2)_{\deg 1}}.
\end{equation}
Likewise, we may write
\begin{equation}\label{eqn:undere23}
 (E_2)_{\deg 3} = R \cdot \underline{(E_2)_{\deg 3}}
 \end{equation}
where $\underline{(E_2)_{\deg 3}}$ is the  $\mathbb{C}$-vector space generated by $\Theta_v$'s.

Analogously, $SH^{\mathrm{odd}} (X)$ admits a decomposition
\begin{equation}\label{eqn:shcontnoncont}
SH^{\mathrm{odd}} (X) = H^1 (X;\mathbb{C}) \oplus SH^{\mathrm{odd}}_+ (X),
\end{equation}
and $SH^{\mathrm{odd}} (X)$ is generated by $H^1 (X;\mathbb{C}) $ over $R$.\footnote{This is from the explicit computation in \ref{subsec:shrs} for the punctured Riemann surface $X_\Qv$. In general, the three terms in \eqref{eqn:shcontnoncont} form a short exact sequence only, which may not be canonically split.} Recall that in our setup, \( H^1(X; \mathbb{C}) \) is modeled on the Morse homology of a proper Morse function on \( X \), with a choice of Morse function to be specified shortly.

\begin{lemma}\label{lem:pquvks}
There exists two independent elements $p$ and $q$ in $H^1 (X;\mathbb{C}) $ that are mapped to $U$ and $V$ under $\mathsf{KS}$.
\end{lemma}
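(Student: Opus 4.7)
My plan is to realize $p, q \in H^1(X;\mathbb{C})$ as classes of explicit cellular $1$-cocycles arising from the differences $\mathcal{P}_i - \mathcal{P}_{i-1}$ and $\mathcal{P}_{i+1} - \mathcal{P}_{i-1}$ of consecutive corner matchings, and then to compute $\mathsf{KS}$ on them via a localized disk count near each self-intersection $X_e$ of $\lL$.

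By Lemma~\ref{lem:perfect matchings are the same}, every corner matching $\mathcal{P}_j$ of $\mathcal{Q}$ is simultaneously a perfect matching of $\mathcal{Q}^\vee$, and in the cellular cochain complex of $\Sigma^\vee$ the identity $\partial \mathcal{P}_j = \sum_{F} F^*$ makes $\mathcal{P}_i - \mathcal{P}_{i-1}$ and $\mathcal{P}_{i+1} - \mathcal{P}_{i-1}$ genuine $1$-cocycles. I would set
\begin{equation*}
p := [\mathcal{P}_i - \mathcal{P}_{i-1}], \qquad q := [\mathcal{P}_{i+1} - \mathcal{P}_{i-1}] \;\in\; H^1(X;\mathbb{C}),
\end{equation*}
obtained by restriction along the open inclusion $X \hookrightarrow \Sigma^\vee$. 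To make $\mathsf{KS}$ computable, I would then choose a Morse function $f$ on $X$ adapted to $\mathcal{Q}^\vee$, with one index-$1$ critical point $s_e$ in a Weinstein neighborhood of each self-intersection $X_e$ arranged so that $W^s(s_e)$ passes through $X_e$ transversely to both local branches of $\lL$; in this model $p$ is represented by the Morse cocycle $\sum_{e \in \mathcal{P}_i} s_e - \sum_{e \in \mathcal{P}_{i-1}} s_e$, and similarly for $q$.

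Next, I would evaluate $\mathsf{KS}(s_e)$ by classifying rigid pseudoholomorphic configurations with interior input $s_e$, finitely many boundary $b$-insertions, and a single boundary output in $\bigoplus_{e'}\MCA \cdot X_{e'}$. The key local claim is that, within a Weinstein neighborhood of $X_e$, the unique rigid contribution is a degenerate bigon at $X_e$ carrying one boundary $b$-insertion (producing the coefficient $x_e$) at the $X_e$-corner, one $X_e$-output, and an interior marked point on the Morse trajectory from $s_e$; this contributes $x_e \cdot X_e$ to $\mathsf{KS}(s_e)$. Larger (nonconstant) pseudoholomorphic disks are ruled out by the energy filtration on the exact manifold $X$ combined with the finite-sum grading argument of Lemma~\ref{lem:Floer theory coefficient for zigzag Lagrangians}. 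Assembling with the signs prescribed by the spin structure of Proposition~\ref{lem:spacetime superpotential} yields
\begin{equation*}
\mathsf{KS}(p) = \sum_{e\in\mathcal{P}_i} x_e X_e - \sum_{e\in\mathcal{P}_{i-1}} x_e X_e, \qquad \mathsf{KS}(q) = \sum_{e\in\mathcal{P}_{i+1}} x_e X_e - \sum_{e\in\mathcal{P}_{i-1}} x_e X_e,
\end{equation*}
which under $F$ of \eqref{eqn:ajidn1} equal $U$ and $V$. Linear independence of $p$ and $q$ then follows automatically from the linear independence of $U,V \in (E_2)_{\deg 1}$ recorded in \S\ref{subsec:Kodaira-Spencer map is an isomorphism}.

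The principal technical obstacle is the disk enumeration at each $X_e$: verifying uniqueness of the degenerate bigon with the prescribed incidence data, confirming that its orientation sign matches the cellular sign $(\mathcal{P}_i - \mathcal{P}_{i-1})(e)$, and excluding all other pseudoholomorphic contributions. The analysis is structurally parallel to the thin-strip localization in the proof of Proposition~\ref{prop:computing Kodaira-Spencer map in even degree}, with the roles of the Hamiltonian-orbit input and the Morse critical-point input interchanged; it is precisely the spin-structure choice from Proposition~\ref{lem:spacetime superpotential}---under which $(\MCA, W_\lL) \cong (\Jac, W)$---that ensures the signs agree.
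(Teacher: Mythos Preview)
Your overall strategy—representing $p,q$ by differences of corner matchings and computing $\mathsf{KS}$ edge by edge—is reasonable, and in fact lands on essentially the same cohomology classes the paper uses. The fatal gap is your ``key local claim'' that the only rigid contribution to $\mathsf{KS}(s_e)$ is a degenerate bigon at $X_e$ producing $x_e X_e$.

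In the pearl model for $\mathsf{KS}$ on a contractible odd-degree input $s_e$, the relevant configuration consists of an ambient gradient trajectory of $h$ from $s_e$ to a point on some branch $L_i$ of $\lL$, followed by a gradient trajectory of the Lagrangian Morse function $f_{L_i}$ along $L_i$. This second flow can terminate at \emph{any} downstream immersed generator $X_{e'}$ (contributing a term $x_{e'}X_{e'}$) or at the minimum $\pt_i$ (contributing $\pt_i$), not only at $X_e$. All of these are zero-energy, constant-disk pearl trajectories, so neither exactness of $X$ nor the grading argument of Lemma~\ref{lem:Floer theory coefficient for zigzag Lagrangians} eliminates them; that lemma guarantees only finiteness of the output, not uniqueness of the contributing configuration. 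Your ``degenerate bigon with one $b$-insertion at the $X_e$-corner and one $X_e$-output'' does not even parse in this model: a constant bigon at $X_e$ has complementary corners $X_e,\bXe$, so it cannot carry both an odd $b$-input and an odd output.

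The paper's proof confronts exactly this issue. It fixes a specific ambient Morse function $h$ (with $p_e$ perturbed into the adjacent positive face) and specific $f_{L_i}$ (with the extrema placed on a designated zig), and then computes $\mathsf{KS}$ not on a single $p_e$ but on the cyclic sum $p_{i_0,j}=\sum_l p_{a_l}+\sum_l p_{b_l}$ over all edges incident to the puncture $v_{Z_{i_0,j}}$. Only in this sum do the unwanted $x_{e'}X_{e'}$ terms cancel in consecutive pairs and the $\pt_i$ terms cancel cyclically around the puncture, leaving $\sum x_{a_l}X_{a_l}-\sum x_{b_l}X_{b_l}$. Moreover, the sign difference between the $a$- and $b$-terms arises from the two distinct pearl configurations (Figure~\ref{fig:morsemorse}), not from the cellular sign $(\mathcal{P}_i-\mathcal{P}_{i-1})(e)$; invoking the spin structure of Proposition~\ref{lem:spacetime superpotential} does not by itself produce this. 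To repair your argument you would have to carry out precisely this cancellation analysis, which is the substance of the paper's proof.
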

\begin{proof}
We choose a Morse function $h=h_{X_\Qv}$ on $X_\Qv$ that fits into the dimer structure of $\Q^\vee$, as depicted on the left of Figure \ref{fig:choicemorse}. It has an odd degree critical point $p_e$ for each edge $e \in \Q^\vee_1$, and an even degree one $p_f$ for each face $f \in \Q^\vee_2$. Note that this is not a perfect Morse function, but we are interested  only in odd degree critical points for which the differential vanishes. To avoid (self-)intersection points $\mathbb{L}$, we slightly perturb each $p_e$ away from the edge $e$ so that it lies in the positive face adjacent to $e$.
\begin{figure}[h]
\includegraphics[scale=0.45]{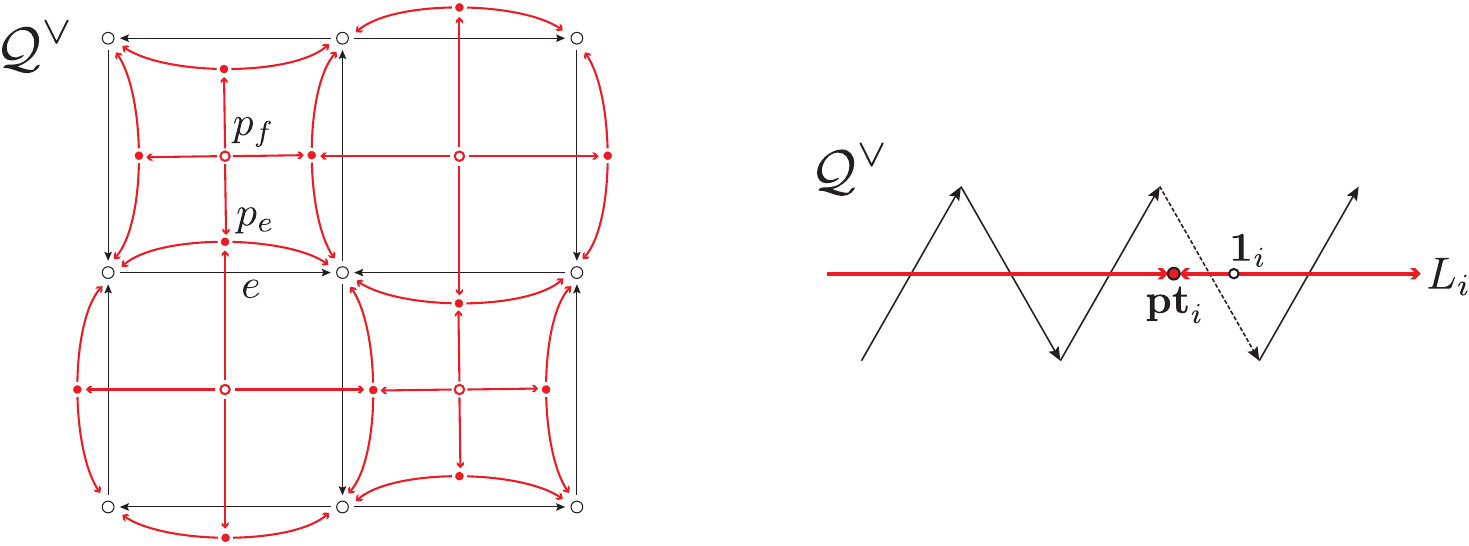}
\caption{Critical points and gradient flows of the Morse functions $h$ and $f_{L_i}$}
\label{fig:choicemorse}
\end{figure}

We next specify a perfect Morse function $f_{L_{i_v}}$ on each Lagrangian (irreducible) component $L_{i_v}$ for each $v\in \Q_0$. Recall that $L_{i_v}$ is obtained by smoothing the zigzag path in $\Q^\vee$ dual to $v$. As shown in the right of Figure \ref{fig:choicemorse}, we fix a zig edge in this zigzag path, and locate the maximum and the minimum, respectively, on the positive and the negative face adjacent to this zig. With this choice, the gradient trajectory flows along the orientation $L_v$ except on a short segment that intersects this fixed zig edge.

For a fixed $i_0$, we consider all zigzag paths $Z_{i_0,j}$ $(1 \leq j \leq m_{i_0})$ in $\Q$ in class $-\eta_{i_0,j} \in H^1 (T^2,\mathbb{Z})$.
By dimer duality, they correspond to vertices $v_{Z_{i_0,j}} \in \Q^\vee_0$.
We denote by $a_1, b_1, \cdots, a_L, b_L$ edges of $\Q^\vee$ incident to $v_{Z_{i_0,j}}$ arranged in counterclockwise order where $a_i$ and $b_i$ indicate  incoming and outgoing, respectively (this is consistent with Figure \ref{fig:dualtozigzag}).
Let
\[p_{i_0,j} :=  \sum_{l=1}^k p_{a_l}  + \sum_{l=1}^k p_{b_l}.\]

Let \( L_1 \) and \( L_2 \) be the two Lagrangian branches in \( \lL \) that intersect at $X_{a_l}$ (lying on the edge \( a_l \)).\footnote{It is possible that $L_1=L_2$, in which case $X_{a_l}$ is a self-intersection point of a single Lagrangian circle component.} Then \( \mathsf{KS}(p_{a_l}) \) is computed by counting pearl trajectories, each consisting of an ambient Morse trajectory of $h$ from \( p_{a_l} \), followed by a Morse trajectory of $f_{L_i}$, which then flows to one of the following:
\[
\textcircled{1} \,\text{a self-intersection point } X, \quad \text{or} \quad \textcircled{2}\, \text{a minimum point } \pt_i \text{ on } L_i,
\]
See Figure~\ref{fig:morsemorse} for type \textcircled{1}. Type  \textcircled{2} is obtained by letting the gradient trajectory of $f_{L_i}$ flow to the minimum. Case \textcircled{1} contributes an output of the form \( xX \) to \( \mathsf{KS}(p_{a_l}) \), while case \textcircled{2} contributes simply \( \pt_i \). The image \( \mathsf{KS}(p_{b_l}) \) can be computed similarly.

\begin{figure}[h]
\includegraphics[scale=0.6]{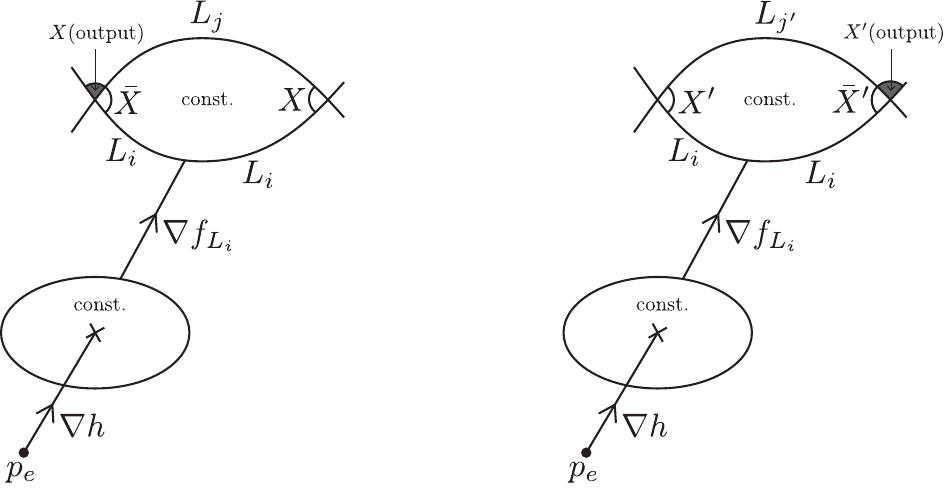}
\caption{Two pearl trajectories consisting of constant disks and resulting in $xX$ (or $x' X'$) as outputs}
\label{fig:morsemorse}
\end{figure}

Let \( Q \) be the punctured polygon formed by connecting the midpoints of the edges \( a_1, b_1, \ldots, a_L, b_L \) in order, as illustrated in Figure~\ref{fig:noptone} (see the shaded rectangle in the center). Since the critical points 
\begin{equation}\label{eqn:papbarr}
 p_{a_1}, p_{b_1}, \ldots, p_{a_L}, p_{b_L} 
 \end{equation}
arise as small perturbations of the vertices of \( Q \), the terms in \( \mathsf{KS}(p_{i_0,j}) \)  of type \( \pt_i \) cancel cyclically. As a result, \( \mathsf{KS}(p_{i_0,j}) \) lies in \( (E_2)_{\deg 1} \subset SH^{\text{odd}}(X) \) (i.e., It receives contributions only from case \textcircled{1} above.). In what follows, we compute \( \mathsf{KS}(p_{i_0,j}) \) by direct counting.

\begin{figure}[h]
\includegraphics[scale=0.45]{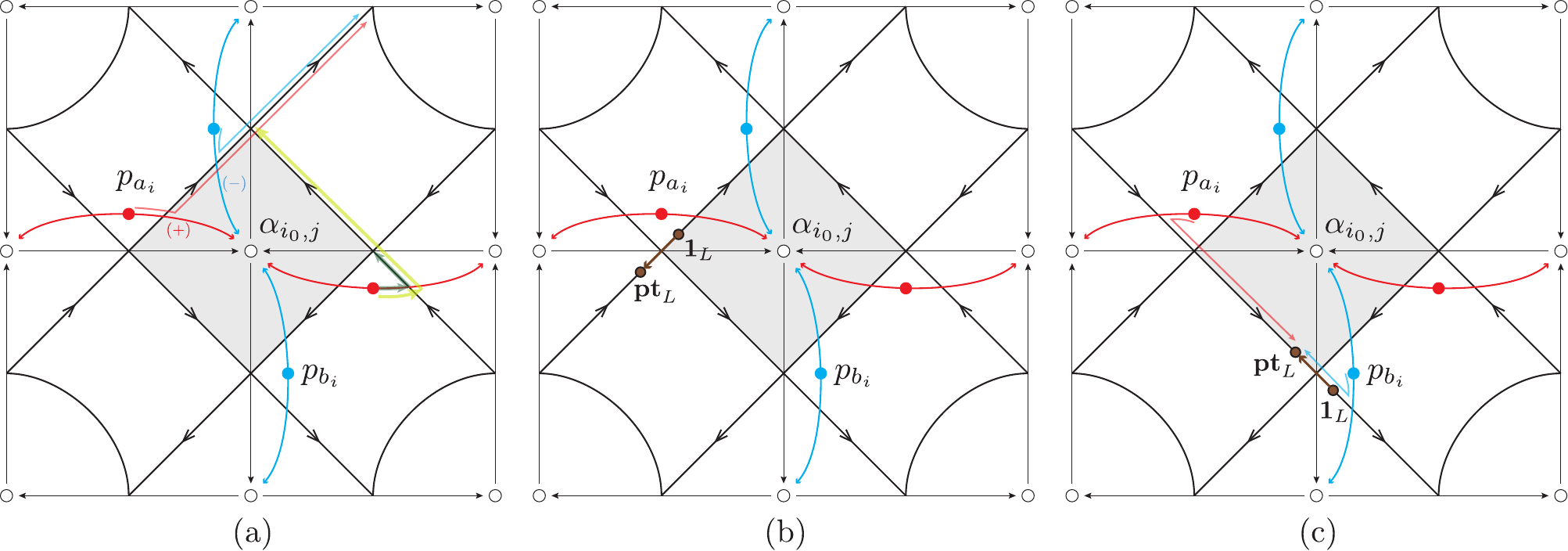}
\caption{Neighboring faces of $\alpha_{i_0,j}$ in $\Q^\vee$ and ambient Morse flows. The red and blue trajectories in (a) are cancel pairs whereas the green and yellow ones contribute $x_{a_j} X_{a_j}$ and $x_{b_j} X_{b_j}$ (with opposite signs). (b) and (c) show the effect of presence of $\mathbf{1}_{L_v}$ and $ \pt_{L_v}$ on $\partial Q$, respectively.}
\label{fig:noptone}
\end{figure}
If the boundary edges of \( Q \) do not carry any insertions of \( \mathbf{1}_{L_v} \) or \( \pt_{L_v} \), then Figure~\ref{fig:noptone}  (a) shows that the image is given by
\begin{equation}\label{eqn:kspi0}
\mathsf{KS}(p_{i_0,j}) =  \sum x_{a_l} X_{a_l} - \sum x_{b_l} X_{b_l},
\end{equation}
since all other \( xX \)-terms cancel out due to consecutive pearl trajectories of type~\textcircled{1}, when arranged according to \eqref{eqn:papbarr}.
The sign difference for this cancellation arises from the distinct orientations of the gradient flows of \( h \) in their contributing pearl trajectories. 
On the other hand, the terms \( x_{a_l} X_{a_l} \) and \( \sum x_{b_l} X_{b_l} \) in \eqref{eqn:kspi0} appear with opposite signs due to the differing configurations of their contributing pearl trajectories, as illustrated in the two diagrams in Figure~\ref{fig:morsemorse}. 
(This is essentially the same reason why \( m_2(X, \bar{X}) \) and \( m_2(\bar{X}, X) \) have opposite signs, \cite[(10.2)]{Sei11}.)\footnote{Adopting a different sign convention results only in an overall change of signs in \eqref{eqn:kspi0}, which does not affect our argument (since we may replace $p_{i_0,j}$ by $-p_{i_0,j}$ if necessary). What truly matters is the relative sign difference on the right-hand side 
of \eqref{eqn:kspi0}.}

Even if there is $\mathbf{1}_{L_v}$ or $\pt_{L_v}$ on the boundary of $Q$, Figure~\ref{fig:noptone} (b) and (c) show that it does not affect $\mathsf{KS}(p_{i_0,j})$ making use of our choice of (relative) positions of critical points of $h$ and $f_{L_i}$. 

Now, let $p:= \sum_j p_{i_0,j}$. Observe that  $\sum x_{a_l} X_{a_l}$ in $\mathsf{KS}(p_{i_0,j})$ is the sum of all zigs in the zigzag cycle $Z_{i_0,j}$ whereas $\sum x_{b_l} X_{b_l}$ is the sum of all zags in $Z_{i_0,j}$.
Therefore
$$\mathsf{KS}(p) =\partial_{\cP_{i_0-1}}- \partial_{\cP_{i_0}}  $$
by properties of the corner matchings $\cP_{i_0}$ and $\cP_{i_0+1}$ in Theorem \ref{thm:combinatorics of matching polytope for dimers in tori}. 
Lastly, we take $q:= \sum_j p_{i_0 -1,j}$, run the same argument to have
$$\mathsf{KS}(q)=\partial_{\cP_{i_0-2}} - \partial_{\cP_{i_0 -1}}.
$$
Therefore, $q$ and $p$ map to $U$ and $V$, respectively, for appropriate choices of $r,s,t$.
\end{proof}

Let $K$ be the subspace of $H^1 (X;\mathbb{C}) $ generated by $p$ and $q$.
Then, by Lemma \ref{lem:pquvks}, $K$ maps isomorphically onto $\underline{(E_2)_{\deg 1}} \subset (E_2)_{\deg 1}$. Thus the restriction
$$\mathsf{KS}|_{H^1 (X;\mathbb{C})} : H^1 (X;\mathbb{C}) \to SH^{\mathrm{odd}} (X)$$ 
descends to $\tilde{r}: H^1 (X;\mathbb{C}) / K \to (E_2)_{\deg 3}$, i.e., 
\begin{equation*}
\xymatrix{ 0 \ar[r] &K \ar[r]\ar@{^{(}->}[d]& H^1 (X;\mathbb{C})  \ar[r]\ar[d] &  H^1 (X;\mathbb{C}) /K  \ar[r]\ar[d] & 0   \\
0 \ar[r]  &(E_2)_{\deg 1} \ar[r] & HF^{\mathrm{odd}}_{cyc} ((\lL,b),(\lL,b)) \ar[r] &   (E_2)_{\deg 3}\ar[r] &     0 }.
 \end{equation*}
We claim that the image actually lies in $\underline{(E_2)_{\deg 3}}= \mathbb \langle \Theta_v : v (\neq v_0) \in \Q_0 \rangle$. Moreover,

\begin{lemma}\label{lem:tilderisom}
$\tilde{r}: H^1 (X;\mathbb{C}) / K \to (E_2)_{\deg 3}$ factors through an isomorphism
\begin{equation}\label{eqn:rh1xk}
r:  H^1 (X;\mathbb{C}) / K \stackrel{\cong}{\longrightarrow} \underline{(E_2)_{\deg 3}}.
\end{equation}
\end{lemma}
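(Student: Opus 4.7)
The plan is to prove the lemma in three stages: a dimension count, a locality argument establishing the factorization of $\tilde r$ through $\underline{(E_2)_{\deg 3}}$, and the construction of explicit preimages to obtain surjectivity of $r$.

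First I would verify that source and target have the same dimension. From $\chi(T^2)=0$ one has $|\Q_1| = |\Q_0| + |\Q_2|$, and combining this with $\chi(X_\Qv) = \chi(\Sigma^\vee) - |\Qvv|$ together with the one-to-one correspondence $|\Qvf| = |\Q_2|$ of faces under dimer duality, the Euler characteristic of $X_\Qv$ becomes $-|\Q_1| + |\Q_2|$. Since $X_\Qv$ is connected with vanishing $H^2$, this yields $\dim H^1(X_\Qv;\mathbb{C}) = |\Q_0| + 1$, which combined with $\dim K = 2$ from Lemma~\ref{lem:pquvks} gives
\[
\dim H^1(X_\Qv;\mathbb{C})/K \;=\; |\Q_0| - 1 \;=\; \dim \underline{(E_2)_{\deg 3}}.
\]
Thus bijectivity of $r$ reduces to either injectivity or surjectivity.

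Second, I would show that $\mathsf{KS}|_{H^1(X_\Qv;\mathbb{C})}$ carries no $x_{\eta_i}^n$-contribution with $n \geq 1$ when projected to $(E_2)_{\deg 3}$. Under the logarithmic-cohomology decomposition \eqref{eqn:logarithmic cohomology presentation for SH}, the subspace $H^1(X_\Qv;\mathbb{C}) \subset SH^{\mathrm{odd}}(X_\Qv)$ corresponds to the zero-winding sector; the even-degree result of Proposition~\ref{lem:KS isomorphism for even degree}, which sends the winding-$n$ orbit combination $\sum_j e_{Z_{i,j}} t_{Z_{i,j}}^n$ to $x_{\eta_i}^n \unit$, is precisely the statement that $\mathsf{KS}$ is compatible with the filtration by winding number on the symplectic side and the $x_{\eta_i}$-degree on the mirror side. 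A parallel analysis for the odd sector, tracking how pearl trajectories originating from a compact interior marked point $p_e$ (as opposed to a puncture asymptotic to a Reeb orbit) produce output coefficients of trivial $H_1(T^2;\mathbb{Z})$-class in the cyclic algebra $\pi_{i_v} J \pi_{i_v}$, confirms that no term of the form $x_{\eta_i}^n \Theta_v$ with $n \geq 1$ can occur in $\mathsf{KS}(p_e)$. Hence $\tilde r$ factors through $r: H^1(X_\Qv;\mathbb{C})/K \to \underline{(E_2)_{\deg 3}}$.

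Third, I would establish surjectivity by constructing preimages of the generators $\Theta_v$. For each $v \in \Q_0$, let $Z^\vee_v$ denote the zigzag cycle in $\Qv$ whose smoothing produces the Lagrangian $L_{i_v}$, and define the Morse $1$-cochain
\[
\beta_v \;:=\; \sum_{e \in Z^\vee_v} \epsilon_{e,v}\, p_e
\]
with signs $\epsilon_{e,v} \in \{\pm 1\}$ chosen so that $\beta_v$ is a cocycle representing the Poincar\'e--Lefschetz dual of $[L_{i_v}] \in H_1(X_\Qv;\mathbb{Z})$. An analysis analogous to the proof of Lemma~\ref{lem:pquvks}, but now summing pearl contributions along a zigzag cycle in $\Qv$ rather than around a puncture, will show that the $\pt_{i_w}$-coefficient of $\mathsf{KS}(\beta_v)$ vanishes for $w \neq v$ by cyclic cancellation and equals $\pm 1$ for $w = v$ via the transverse Morse flows crossing $L_{i_v}$. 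Consequently the collection $\{\,r(\overline{\beta_v - \beta_{v_0}}) : v \neq v_0\,\}$ spans $\underline{(E_2)_{\deg 3}}$, giving surjectivity and, by the dimension count, the desired isomorphism.

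The hardest step will be the locality claim in the second stage, namely that the $\pt_{i_v}$-coefficients of $\mathsf{KS}(p_e)$ carry trivial $H_1(T^2;\mathbb{Z})$-class. Although the geometric intuition is clear---a compactly supported interior insertion cannot force the contributing pseudo-holomorphic polygon to wind around a puncture---turning this into a rigorous statement requires careful control over the moduli of pseudo-holomorphic polygons in $X_\Qv \subset \Sigma^\vee$ combined with the identification of their boundary words as cyclic paths in $\Q$ on $T^2$ via the dimer duality. Once this locality is secured, the surjectivity argument in the third stage reduces to a direct combinatorial analysis of pearl trajectories along each zigzag Lagrangian.
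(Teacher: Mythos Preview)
Your overall three-stage strategy (dimension count, factorization, surjectivity) matches the paper's, and your dimension count via Euler characteristics is a valid alternative to the paper's use of Pick's formula. The execution of stages~2 and~3, however, diverges substantially from the paper and contains a genuine gap.

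For the factorization (your stage~2), you frame the landing in $\underline{(E_2)_{\deg 3}}$ as a difficult ``locality'' claim requiring control over moduli of pseudo-holomorphic polygons. The paper bypasses this entirely: the pearl-trajectory analysis already carried out in Lemma~\ref{lem:pquvks} shows directly that the degree-3 part of $\mathsf{KS}(p_e)$ is $\theta_{t(e)} - \theta_{h(e)}$, because the only contributions to a $\pt_i$-output are Morse flow lines (constant disks) with trivial coefficient in $J$. This formula is manifestly in $\underline{(E_2)_{\deg 3}}$, so no separate locality argument is needed.

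Your surjectivity argument (stage~3) contains the real problem. You claim that for $\beta_v = \sum_{e \in Z^\vee_v} \epsilon_{e,v}\, p_e$ the $\pt_{i_w}$-coefficient of $\mathsf{KS}(\beta_v)$ vanishes for $w \neq v$ by cyclic cancellation. But once one knows $r(p_e) = \theta_{t(e)} - \theta_{h(e)}$, summing over edges incident to $v$ in $\Q$ gives an expression of graph-Laplacian type: the $\theta_w$-coefficient for $w \neq v$ is (up to sign) the number of edges joining $v$ and $w$, which is generally nonzero. The cyclic cancellation in Lemma~\ref{lem:pquvks} worked because the sum there was over edges around a \emph{puncture} (a vertex of $\Qv$), where consecutive $\pt$-contributions pair off along the boundary of the punctured polygon $Q$; for a zigzag cycle in $\Qv$ the geometry is different and no such pairing is available. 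The paper instead uses the connectedness of $\Q$: given any path $e_l \cdots e_1$ from $v_0$ to $v$, the telescoping sum $r\bigl(\sum_i p_{e_i}\bigr) = \Theta_v$ hits every generator directly.

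Your $\beta_v$ approach could be salvaged by recognizing that the resulting vectors $r(\beta_v)$ span the image of the graph Laplacian of $\Q$, which for a connected graph is exactly the hyperplane $\underline{(E_2)_{\deg 3}}$; but this is not the argument you gave, and the paper's telescoping-path method is both simpler and avoids the incorrect cancellation claim.
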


\begin{proof}
Consider an odd degree critical point $p_e$ of the ambient Morse function $h$ constructed in the proof of Lemma \ref{lem:pquvks} (for $e \in \Q^\vee_1$). If we denote by $L_1$ and $L_2$ two Lagrangian branches in $\lL$ meeting at $X_e$, then we see that the image $\bar{p_e} \in H^1 (X;\mathbb{C}) / K$ maps to 
\begin{equation}\label{eqn:imagepebar}
\bar{p_e} \mapsto \theta_{t(e)} - \theta_{h(e)}(=\Theta_{t(e)} - \Theta_{h(e)} ) \in \underline{(E_2)_{\deg 3}},
\end{equation}
where we now view $e$ as an edge in $\Q$ joining $v_{L_1}$ and $v_{L_2}$. Notice that we do not have $U,V$ terms in the image, due to the projection $\HFbloop^{\mathrm{odd}} \to (E_2)_{\deg 3}$. Thus $r$ in \eqref{eqn:rh1xk} is well-defined.

Since $\Q$ is a connected graph, the map $r$ must be surjective. Indeed, if  
$e_l e_{l-1} \cdots e_1$ is a path in $\Q$ from $v_0$ to $v$, then by \eqref{eqn:imagepebar} we have
\begin{equation}\label{eqn:eieiei}
\sum_{i=1}^l p_{e_i} = \theta_v - \theta_{v_0} = \Theta_v.
\end{equation}
Furthermore, the combinatorial correspondence from dimer duality implies that the number 
$|\Q_0|$ of vertices of $\Q$ coincides with the elementary (normalized) area of its matching polygon $MP(\Q)$. 
This area can be expressed as
\[
|\Q_0| = 2I + B -2 = 2g+N -2,
\]
where $I$ and $B$ denote the numbers of interior and boundary lattice points of $MP(\Q)$, respectively, 
and $g$ and $N$ are the genus and the number of punctures of 
$X = \Sigma^\vee \setminus \Q^\vee_0$. The first equality follows from Pick's formula, and the second from \cite[Theorem 1.50]{BockABC}. Therefore
\begin{equation*}
\begin{array}{rcl}
\dim H^1 (X;\mathbb{C}) / K  &=& \dim H^1 (X;\mathbb{C}) -2 = (2g + N-1 ) -2 \\
&=& 2I +B -3 = |\Q_0| -1 \\
&=& \dim \underline{(E_2)_{\deg 3}}.
\end{array}
\end{equation*}
We conclude that $r$ is an isomorphism of $\mathbb{C}$-vector spaces.
\end{proof}

In summary, we have
\begin{equation*}
\xymatrix{ 0 \ar[r] &K \ar[r]\ar[d]^{\cong}& H^1 (X;\mathbb{C})  \ar[r]\ar@{^{(}->}[dd] &  H^1 (X;\mathbb{C}) / K \ar[r]\ar[d]^{\cong} & 0   \\
&\underline{(E_2)_{\deg 1}}\ar@{^{(}->}[d]&&\underline{(E_2)_{\deg 3}}\ar@{^{(}->}[d]&
  \\
0 \ar[r]  &(E_2)_{\deg 1} \ar[r] & HF^{\mathrm{odd}}_{cyc} ((\lL,b),(\lL,b)) \ar[r] &   (E_2)_{\deg 3}\ar[r] &      0 }.
 \end{equation*}
where the injectivity of the middle map follows from the $5$-lemma.
Therefore, $H^1 (X;\mathbb{C})$ isomorphically maps onto some subspace $\underline{HF}$ of $HF^{\mathrm{odd}}_{cyc} ((\lL,b),(\lL,b))$.
From \eqref{eqn:undere21} and \eqref{eqn:undere21}, we see that $HF^{\mathrm{odd}}_{cyc} ((\lL,b),(\lL,b))$ is generated by $\underline{HF}$ over $R$. In particular, we have:
\begin{lemma}\label{lem:kssurodd}
$\mathsf{KS}_{\mathrm{odd}}: SH^{\mathrm{odd}} (X) \to \Loop{HF}^{\mathrm{odd}} $ is surjective.
\end{lemma}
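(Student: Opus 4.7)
The plan is to exploit the $R$-module generation of $SH^{\mathrm{odd}}(X)$ by $H^1(X;\mathbb{C})$ (recorded immediately after \eqref{eqn:shcontnoncont}) together with the ring-homomorphism property of $\mathsf{KS}$. Since the even-degree result already identifies $R := (SH^{\mathrm{even}}(X))_0$ with $\Loop{HF}^{0}$ via $\mathsf{KS}$, one has
\[
\mathrm{Im}(\mathsf{KS}_{\mathrm{odd}}) \;=\; \mathsf{KS}\bigl(R \cdot H^1(X;\mathbb{C})\bigr) \;=\; R\cdot \underline{HF},
\]
where $\underline{HF} := \mathsf{KS}(H^1(X;\mathbb{C}))$. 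Thus the task reduces to showing that $R\cdot \underline{HF}$ exhausts $\Loop{HF}^{\mathrm{odd}}$.

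To establish this I would use the exterior-degree filtration on $\Loop{HF}^{\mathrm{odd}}$ whose associated-graded pieces are $(E_2)_{\deg 1}$ and $(E_2)_{\deg 3}$ from \eqref{eqn:sesforodd}, together with the $R$-module decompositions $(E_2)_{\deg 1} = R \cdot \underline{(E_2)_{\deg 1}}$ and $(E_2)_{\deg 3} = R \cdot \underline{(E_2)_{\deg 3}}$ recorded in \eqref{eqn:undere21} and \eqref{eqn:undere23}. Given any $y \in \Loop{HF}^{\mathrm{odd}}$, I would first project its class to $(E_2)_{\deg 3}$, decompose it as $\sum r_i v_i$ with $r_i \in R$ and $v_i \in \underline{(E_2)_{\deg 3}}$, and lift each $v_i$ via the isomorphism $r: H^1(X;\mathbb{C})/K \xrightarrow{\cong} \underline{(E_2)_{\deg 3}}$ of Lemma~\ref{lem:tilderisom} to a class $\tilde{v}_i \in \underline{HF}$. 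Then $y - \sum r_i \tilde{v}_i$ has vanishing projection to $(E_2)_{\deg 3}$, so it lies in the lower piece $(E_2)_{\deg 1}$; iterating the same procedure there via the isomorphism $K \xrightarrow{\cong} \underline{(E_2)_{\deg 1}}$ from Lemma~\ref{lem:pquvks} writes this residue as an $R$-combination of elements in $\underline{HF}$. Together, these two steps realise $y$ inside $R\cdot \underline{HF}$ and finish the surjectivity.

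The main obstacle I anticipate is the discrepancy between the full Floer product $m_2^{b,b,b}$ on the target and the reduced product $m_{2,0}^{b,b,b}$ that controls the $R$-action on the $E_2$-page (cf.\ Remark~\ref{rmk:discrem2}). When one expands $\mathsf{KS}(\alpha)\cdot \mathsf{KS}(\xi)$ for $\alpha \in R$ and $\xi \in H^1(X;\mathbb{C})$, the full product differs from the $E_2$-prediction by correction terms of strictly smaller exterior degree. Fortunately this is exactly the kind of error that the filtration induction absorbs: each step pushes the residue further down the filtration, and the bottom-degree step, where the identification $K \xrightarrow{\cong} \underline{(E_2)_{\deg 1}}$ is exact on the nose, closes the argument.
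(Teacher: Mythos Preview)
Your proposal is correct and follows essentially the same route as the paper: the paper simply asserts (just before the lemma) that $\Loop{HF}^{\mathrm{odd}}$ is generated over $R$ by $\underline{HF}$ ``from \eqref{eqn:undere21} and \eqref{eqn:undere23}'', whereas you spell out the two-step filtration induction that makes this precise. One minor remark: your anticipated obstacle is not actually present. Since $\mathsf{KS}(\alpha) = x_\alpha \unit$ for $\alpha \in R$ with $x_\alpha$ central, and $\unit$ is a strict unit for $m_2^{b,b,b}$, the full product $m_2^{b,b,b}(x_\alpha\unit,\,-)$ is exactly multiplication by $x_\alpha$, with no lower-degree correction terms; so the $R$-action via $m_2^{b,b,b}$ already agrees on the nose with the $R$-action on the $E_2$-page.
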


In order to prove $\mathsf{KS}_{\mathrm{odd}}$ is an isomorphism, it suffices to show that the induced map
\begin{equation}\label{eqn:ksinducedoddinj}
SH_+^{\mathrm{odd}} (X) (\cong SH^{\mathrm{odd}} (X) / H^1 (X;\mathbb{C})) \to \overline{HF}: =  HF^{\mathrm{odd}}_{cyc} ((\lL,b),(\lL,b)) /\underline{HF}
\end{equation}
is an isomorphism. (In fact, it suffices to show that \eqref{eqn:ksinducedoddinj} is injective due to Lemma \ref{lem:kssurodd}.) For this, we put $\overline{\Loop{HF}^{\mathrm{odd}}}$ in the analogous short exact sequence from the spectral sequence, and look at
\begin{equation*}
\xymatrix{ 
&&SH_+^{\mathrm{odd}} (X) \ar[d]&&  \\
0 \ar[r]  &\overline{(E_2)_{\deg 1}} \ar[r]  & \overline{HF} \ar[r]  &   \overline{(E_2)_{\deg 3}} \ar[r]  &      0}.
 \end{equation*}
The above short exact sequence is obtained by taking quotient of the first row by the second in the diagram below:
\begin{equation*}
\xymatrix{ 
&0 \ar[d] &0 \ar[d] &0 \ar[d]& \\
0 \ar[r] &\underline{(E_2)_{\deg 1}} \ar[r]\ar[d] & \underline{HF}  \ar[r] \ar[d] & \underline{(E_2)_{\deg 3}} \ar[r]\ar[d]^{\cong} & 0   \\
0 \ar[r]& (E_2)_{\deg 1}\ar[r]  \ar[d]& HF^{\mathrm{odd}}_{cyc} ((\lL,b),(\lL,b)) \ar[r]\ar[d]& (E_2)_{\deg 3} \ar[d]\ar[r]& 0
  \\
0 \ar[r]  &\overline{(E_2)_{\deg 1}} \ar[r] \ar[d]& \overline{HF} \ar[r] \ar[d]&   \overline{(E_2)_{\deg 3}} \ar[r] \ar[d]&      0 \\
&0&0&0& }.
 \end{equation*} 
 From Remark \ref{rmk:higerdegodd}, we know that $\overline{(E_2)_{\deg 1}} $ and $ \overline{(E_2)_{\deg 3}}$ admit free (additive) generators
 $$\{x_{\eta_i}^n W : n \geq 1   \} \quad \mbox{and} \quad \{ x_{\eta_i}^m \Theta_v :  v(\neq v_0) \in \mathcal{E}_{i, j>1}, m >1 \}, $$ 
respectively. 
Now we are ready to prove
\begin{prop}
$\mathsf{KS}_{\mathrm{odd}}: SH^{\mathrm{odd}} (X) \to HF^{\mathrm{odd}}_{cyc} ((\lL,b),(\lL,b)) $ is an ($R$-module) isomorphism.
\end{prop}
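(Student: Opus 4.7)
The plan is to exploit the $R$-module structure on both sides, where $R = (SH^{\mathrm{even}}(X))_0 \cong (\Loop{HF})_0$. Since $\mathsf{KS}$ is a ring homomorphism and $\mathsf{KS}|_{\mathrm{even}}$ has already been shown to be an isomorphism, $\mathsf{KS}_{\mathrm{odd}}$ is an $R$-linear map between $R$-modules. Surjectivity is given by Lemma \ref{lem:kssurodd}, so the only remaining step is injectivity. Applying the $5$-lemma to the diagram preceding the statement, together with the already-established isomorphism $\mathsf{KS}|_{H^1(X;\mathbb{C})}: H^1(X;\mathbb{C}) \xrightarrow{\cong} \underline{HF}$, reduces the problem to showing that the induced map
\[
\overline{\mathsf{KS}}: SH^{\mathrm{odd}}_+(X) \longrightarrow \overline{HF}
\]
is injective (equivalently, an isomorphism).

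The strategy is to compute $\overline{\mathsf{KS}}$ directly on the natural basis $\{f_{Z_{i,j}} t_{Z_{i,j}}^n : 1 \leq i \leq N,\; 1 \leq j \leq m_i,\; n \geq 1\}$ of $SH^{\mathrm{odd}}_+(X)$. Using the pair-of-pants product in $SH^*(X)$ recalled in \ref{subsec:shrs}, each such class factors as $f_{Z_{i,j}} t_{Z_{i,j}}^n = p_{i,j} \cdot e_{Z_{i,j}} t_{Z_{i,j}}^n$ for an odd Morse critical point $p_{i,j}$ whose gradient flow asymptotes to the puncture $v_{Z_{i,j}}$; such a $p_{i,j}$ can be chosen among the $p_e$'s appearing in the proof of Lemma \ref{lem:pquvks}. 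The ring homomorphism property of $\mathsf{KS}$ then yields
\[
\mathsf{KS}(f_{Z_{i,j}} t_{Z_{i,j}}^n) = \mathsf{KS}(p_{i,j}) \cdot \mathsf{KS}(e_{Z_{i,j}} t_{Z_{i,j}}^n),
\]
in which the second factor is determined by the even isomorphism via the identities $\alpha_i = \sum_j e_{Z_{i,j}} t_{Z_{i,j}}$ and $e_{Z_{i,j}} t_{Z_{i,j}} = \tau_{i,j+1} - \tau_{i,j}$, while the first factor's exterior-degree-$1$ component is the signed sum of $\partial_{\cP_k}$'s computed via \eqref{eqn:imagepebar} and \eqref{eqn:eieiei}.

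The main obstacle lies in verifying that the resulting images fill out both families $\{x_{\eta_i}^n W : n \geq 1\}$ spanning $\overline{(E_2)_{\deg 1}}$ and $\{y_{i,j}^n = x_{\eta_i}^n \Theta_v : n \geq 1,\; v \in \mathcal{E}_{i,j>1}\}$ spanning $\overline{(E_2)_{\deg 3}}$ in a linearly independent way. Filtering by exterior degree, the $(E_2)_{\deg 1}$-layer of $\mathsf{KS}(p_{i,j}) \cdot \mathsf{KS}(e_{Z_{i,j}} t_{Z_{i,j}}^n)$ should be identified with a nonzero multiple of $x_{\eta_i}^n W$ by combining Lemma \ref{lem:pquvks} with the rewriting in Remark \ref{rmk:higerdegodd} that expresses any $x_{\eta_i}^n U,\, x_{\eta_i}^n V$ as scalar multiples of $x_{\eta_i}^n W$; meanwhile, the $(E_2)_{\deg 3}$-layer should be matched with the $y_{i,j}^n$-basis through the same ambient Morse structure underlying the proof of Lemma \ref{lem:tilderisom}, where the $\Theta_v$-component is detected through the vertex $v$ adjacent to the chosen $p_{i,j}$.

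Once this identification is in place, a degree-by-degree rank comparison in the $x_{\eta_i}^n$-filtration, together with the already proven surjectivity from Lemma \ref{lem:kssurodd}, forces $\overline{\mathsf{KS}}$ to be bijective, completing the proof that $\mathsf{KS}_{\mathrm{odd}}$ is an $R$-module isomorphism, and hence that $\mathsf{KS}: SH^*(X) \xrightarrow{\cong} \cHH{\mf}$ is the desired ring isomorphism.
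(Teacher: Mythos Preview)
Your reduction to the injectivity of $\overline{\mathsf{KS}}: SH_+^{\mathrm{odd}}(X)\to\overline{HF}$ is correct and coincides with the paper's setup. The gap is in how you propose to compute $\overline{\mathsf{KS}}$ on the natural basis $\{f_{Z_{i,j}}t_{Z_{i,j}}^n\}$.

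First, your citation of \eqref{eqn:imagepebar} and \eqref{eqn:eieiei} for ``the first factor's exterior-degree-$1$ component'' is wrong: those formulas compute only the exterior-degree-$3$ part of $\mathsf{KS}(p_e)$. The degree-$1$ part of $\mathsf{KS}(p_e)$ for a \emph{single} $p_e$ is not a clean combination of the $\partial_{\cP_k}$'s; Lemma~\ref{lem:pquvks} obtains its result only after summing over all $p_e$'s incident to a given puncture, whereby the unwanted $xX$-terms cancel cyclically (Figure~\ref{fig:noptone}). For an individual $p_{i,j}$ no such cancellation occurs.

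Second, and more seriously, your factorization $f_{Z_{i,j}}t^n=p_{i,j}\cdot e_{Z_{i,j}}t^n$ forces you to multiply by $\mathsf{KS}(e_{Z_{i,j}}t^n)$, which is \emph{not} a multiple of $\unit$: it carries a nonzero degree-$2$ component $\psi_{i,j}-\psi_{i,j-1}$. Hence the product on the target involves the full $m_2^{b,b,b}=m_{2,0}^{b,b,b}+m_{2,<0}^{b,b,b}$, not just the cup product on the $E_2$-page (cf.\ Remark~\ref{rmk:discrem2}). In particular the degree-$3$ layer of $\mathsf{KS}(p_{i,j})\cdot\mathsf{KS}(e_{Z_{i,j}}t^n)$ receives a contribution $(\text{deg-}1\text{ of }\mathsf{KS}(p_{i,j}))\cup(\text{deg-}2\text{ of }\mathsf{KS}(e_{Z_{i,j}}t^n))$ via Lemma~\ref{lem:calcinwong}(ii), which you cannot evaluate without the degree-$1$ information you don't have. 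The ``degree-by-degree rank comparison'' at the end does not rescue this: surjectivity of a map between infinite-dimensional spaces does not give injectivity unless you first establish that $\overline{\mathsf{KS}}$ respects a grading with finite-dimensional pieces of equal rank, which is exactly the computation in question.

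The paper avoids both problems by multiplying only by $\alpha_i^n=\sum_j e_{Z_{i,j}}t_{Z_{i,j}}^n$, whose $\mathsf{KS}$-image \emph{is} the central element $x_{\eta_i}^n\unit$ (Proposition~\ref{lem:KS isomorphism for even degree}), so that the product is pure scalar extension. It then constructs auxiliary odd classes $\xi_{v_{i,j}}:=\sum_a p_{x_a}$ along a chosen zigzag path in $\Q$ from $v_0$ to a specific vertex $v_{i,j}\in\mathcal{E}_{i,j}$, whose degree-$3$ image is exactly $\Theta_{v_{i,j}}$, and proves the nontrivial Claim that $\{\alpha_i^n(aq+bp),\,\alpha_i^n\xi_{v_{i,j}}\}$ is an alternative basis of $SH_+^{\mathrm{odd}}(X)$. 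Injectivity then follows by reading off the $\overline{(E_2)_{\deg 1}}$- and $\overline{(E_2)_{\deg 3}}$-components of the images of this basis.
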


\begin{proof}
From our discussion above, it only remains to show that \eqref{eqn:ksinducedoddinj} is injective. 
Recall from Remark \ref{rmk:higerdegodd} that
$$ \{ x_{\eta_i}^n W, x_{\eta_i}^m \Theta_v :  v(\neq v_0) \in \mathcal{E}_{i, j>1}, n \geq 1, m \geq 0  \}$$
forms a basis of $\overline{(E_2)_{\deg 1}} \oplus\overline{(E_2)_{\deg 3}}$, where $y_{i,j}^m = x_{\eta_i}^m \Theta_v$ for any $v \in \mathcal{E}_{i,j}$. We choose a particular $v=v_{i,j} \in \mathcal{E}_{i,j}$ as follows, which will help us fixing an explicit representative for $y_{i,j}^m$.
As shown in Figure \ref{fig:tthetav}, take any zigzag path $Z$ in $\Q$ starting from $v_0$ that is transversal to $\pm \eta_i$. This path will pass through regions $\mathcal{E}_{i,2}, \mathcal{E}_{i,3}, \cdots, \mathcal{E}_{i,j}$. For each $j$, we first choose $v_{i,j}$ to be the point at which the zigzag path $Z$ entering the region $\mathcal{E}_{i,j}$. Thus we have $y_{i,j}^m = x_{\eta_i}^m \Theta_{v_{i,j}}$ in this case.

Suppose the zigzag path $Z$ from $v_0$ to $v_{i,j}$ is written as $x_{l_j} x_{l_j-1} \cdots x_2 x_1$. Each $x_i$ has the corresponding edge denoted by the same letter $x_i$ in the dual $\Q^\vee$. We then take
$$  \xi_{v_{i,j}} := \sum_{a=1}^{l_j} p_{x_a}.$$
Observe that since $x_1,\cdots, x_{l_j}$ are a composable sequence of arrows, 
the argument as in the proof of Lemma \ref{lem:tilderisom} (see \eqref{eqn:eieiei}) tells us that the degree 3 part of $\mathsf{KS}( \xi_{v_{i,j}})$ is given as
$$ \theta_{v_{i,j}} -\theta_{v_0}=   \Theta_{v_{i,j}}.$$
Therefore $\mathsf{KS}( \xi_{v_{i,j}})$ is a lift of $ \Theta_{v_{i,j}} \in (E_2)_{\deg 3}$ to $HF^{\mathrm{odd}}_{cyc} ((\lL,b),(\lL,b))$,
$$HF^{\mathrm{odd}}_{cyc} ((\lL,b),(\lL,b)) \to (E_2)_{\deg 3},\qquad  \mathsf{KS}( \xi_{v_{i,j}}) \mapsto \Theta_{v_{i,j}}.$$

\begin{figure}[h]
\includegraphics[scale=0.55]{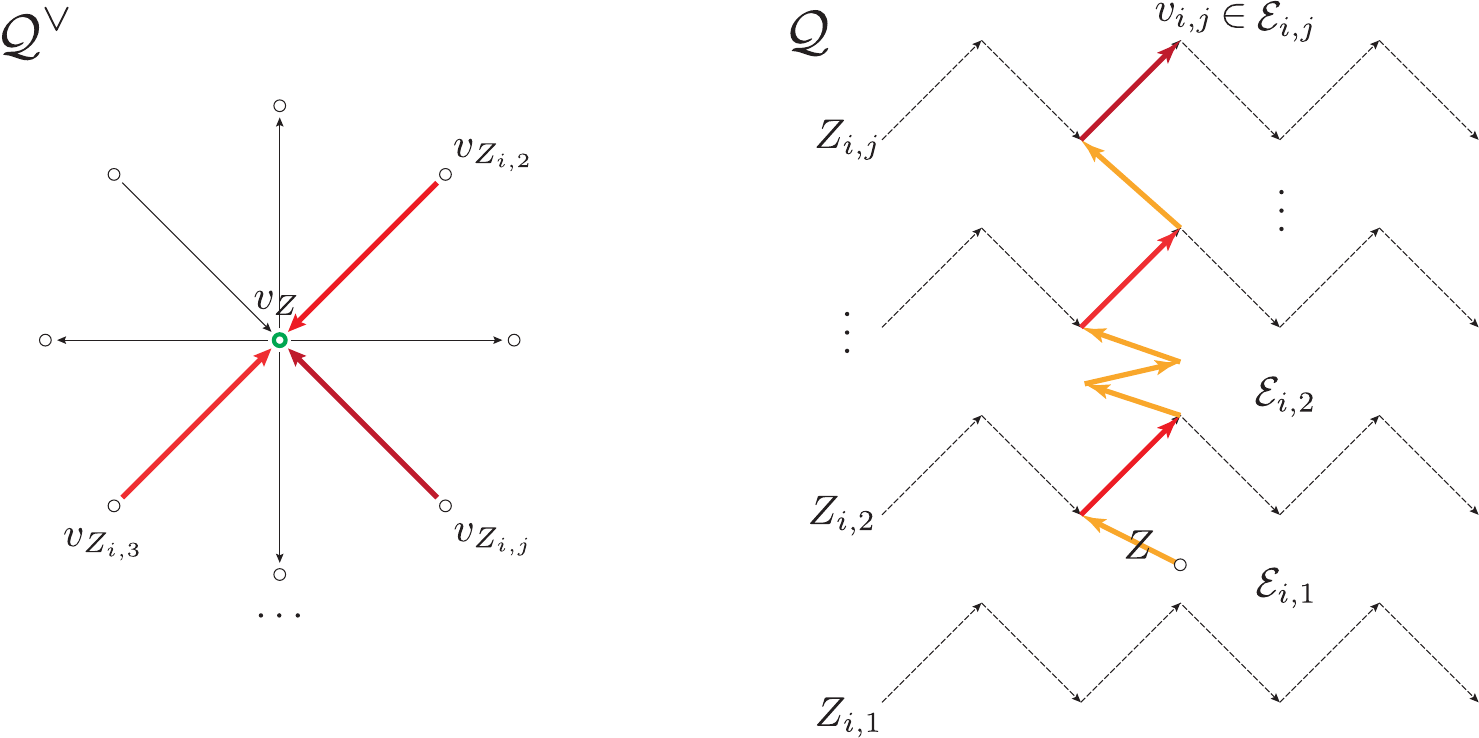}
\caption{}
\label{fig:tthetav}
\end{figure}

\begin{claim*}
We claim that 
\begin{equation}\label{eqn:newbasissh}
\{ \alpha_{i}^n (aq+bp),  \alpha_{i}^n \xi_{v_{i,j}} : 1 \leq i \leq N, 1< j \leq m_i, n \geq 1 \}
\end{equation}
is a basis of $SH_+^{\mathrm{odd}} (X)$ where $a,b$ are coefficients appearing in Remark \ref{rmk:higerdegodd}, and $\alpha_i$ is the sum of degree 0 primitive orbits winding once around $v_{Z_{i,j}}$ for all $j$ as in \eqref{eqn:alphaiii}. 
\end{claim*}

\begin{proof}[Proof of Claim]
Let $\tilde{\beta}_{i,j}$ be the odd degree Hamiltonian orbit that is in the same homology class with $\alpha_{i,j}$ for $j>2$. Then, using the explicit product structure described in  \ref{subsec:shrs}, we have
\begin{equation}\label{eqn:betatildeij}
\alpha_{i,j'} \cdot \xi_{v_{i,j}} = \left\{
\begin{array}{cl}
 \tilde{\beta}_{i,j'} & j' \leq j\\
 0 & \textnormal{otherwise}.
\end{array}\right.
\end{equation}
We denote
$ \beta_{i,j} := \tilde{\beta}_{1,1} + \cdots + \tilde{\beta}_{i,j}$
for $1 \leq j \leq m_i -1$, and
$ \beta_i :=  \beta_{1,1} + \cdots + \beta_{i,m_i}.$
It is easy to see from the explicit ring structure of $SH^\ast(X)$  that
$$ \{\alpha_i^n \beta_i, \alpha_i^n \beta_{i,j} : 1 \leq j \leq m_i -1, n \geq 0 \}$$
forms a basis of $SH^{\mathrm{odd}}_+ (X)$. It follows from \eqref{eqn:betatildeij} that 
$$ \alpha_i^n \cdot \xi_{v_{i,j}} = \alpha_i^{n-1} \cdot \beta_{i,j}$$
for $n \geq 1$.

We now show that \( \alpha_i^n \beta_i \) is a nonzero scalar multiple of \( \alpha_i^n (a p + b q) \), which completes the proof of the lemma. It suffices to consider the case $n=0$. Let us first compute $\alpha_i \cdot p$. This is a linear combination of $\tilde{\beta}_{i,j}$'s whose coefficient is the signed count of Morse trajectories from one of critical points in $p$ asymptotic to the puncture $\alpha_{i,j}$. We denote this number by $r_{i,j;p}$ so that
$\alpha_i \cdot p = \sum_j r_{i,j;p} \tilde{\beta}_{i,j}$.

Recall $p = \sum_j p_{i_0,j}$, and each $p_{i_0,j}$ is the sum of $p_e$ for $e$ incident to $v_{Z_{i_0,j}}$.
Hence, a Morse trajectory from $p$ to $\alpha_{i,j}$ exists whenever there is an edge $e$ joining $v_{Z_{i,j}}$ and $v_{Z_{i_0,j'}}$  for some $1 \leq j' \leq m_{i_0}$. On the dual dimer $\Q$, the corresponding edge $e$ is nothing but an intersection of zigzag cycles $Z_{i,j}$ and $Z_{i_0,j'}$. By the construction of $h$ (see the proof of Lemma \ref{lem:pquvks}, especially Figure \ref{fig:noptone}), the sign of the  Morse trajectory is positive (resp. negative) if $e$ is a zig (resp. a zag) of $Z_{i_0,j'}$.
\footnote{In fact, the zigzag consistency condition implies that if $Z_{i,j}$ intersects 
$Z_{i_0,j'}$ at a zig (resp. a zag), then all other intersections 
between them must also occur at a zig (resp. a zag).  
Moreover, whether the intersection is a zig or a zag is determined solely 
by the sign of the intersection of their homology classes.}


By Theorem \ref{thm:combinatorics of matching polytope for dimers in tori},  $V = \partial_{\cP_{i_0-1}} - \partial_{\cP_{i_0}}$ (after cancellation of the common edges in $\partial_{\cP_{i_0}}$ and $\partial_{\cP{i_0 -1}}$) consist of zigs of $Z_{i_0,j'}$ with positive signs and zags of $Z_{i_0,j'}$ with negative signs (for all $j'$). Thus we have 
$$r_{i,j;p} 
= m_{i_0} V(\eta_i).$$ 
where the factor \( m_{i_0} \) appears since the above occurs for each \( Z_{i_0, j'} \) with \( 1 \leq j' \leq m_{i_0} \).
Therefore
$$ \alpha_{i,j} \cdot p = m_{i_0} V(\eta_i) \tilde{\beta}_{i,j}$$
Proceeding with similar argument, we have
$$\alpha_{i,j} \cdot (aq+ bp) = m_{i_0} (aU(\eta_i) + b V(\eta_i)) \tilde{ \beta}_{i,j}$$
which implies 
$$\alpha_i  \cdot (ap+bq) = \left(\sum_{j=1}^{m_i} \alpha_{i,j} \right)\cdot (aq +bp) =  m_{i_0} (aU(\eta_i) + b V(\eta_i)) \cdot \sum_{j=1}^{m_i} \tilde{\beta}_{i,j} = c  \beta_i,$$
with $c = m_{i_0} (aU(\eta_i) + b V(\eta_i))$ which is nonzero by our choice of $a$ and $b$ in Remark \ref{rmk:higerdegodd}. Therefore \eqref{eqn:newbasissh} is a basis of $SH^{\mathrm{odd}}_+ (X)$.
\end{proof}

Now, suppose 
$$\beta:= \sum_i c_i \alpha_{i}^{n_i} (aq+bp) + \sum_{i,j} c_{i,j}\alpha_{i}^{n_{i,j}} \xi_{v_{i,j}}$$
for some $c_i, c_{i,j} \in \mathbb{C}$ and $n_i, n_{i,j} \in \mathbb{Z}_{\geq 1}$ belongs to the kernel of $\overline{KS}: SH_+^{\mathrm{odd}} (X) \to \overline{HF}$ \eqref{eqn:ksinducedoddinj}. Then $\varphi (\beta) = \sum_{i,j} c_{i,j} x_{\eta_i}^{n_{i,j}} \Theta_{v_{i,j}} =0$ must vanish, where $\varphi$ is defined as the composition
\begin{equation*}
\xymatrix{ 
&&SH_+^{\mathrm{odd}} (X) \ar[d]_{\overline{KS}} \ar[dr]^{\varphi}&&  \\
0 \ar[r]  &\overline{(E_2)_{\deg 1}} \ar[r]  & \overline{HF} \ar[r]  &   \overline{(E_2)_{\deg 3}} \ar[r]  &      0}.
 \end{equation*}
By their linear independence, we have $c_{i,j}=0$ for all $i,j$. We see that the image of $\beta$ lies in $\overline{(E_2)_{\deg 1}} $, and is indeed given by $\sum_i c_i x_{\eta_i} W$, which should vanish as well. Again, by linear independence, we have $c_i=0$, and hence $\beta=0$.
\end{proof}

Continuing from Remark~\ref{rmk:discrem2}, several multiplicative relations on 
$$HH^\ast_c(\mf) = \HFbloop$$ 
stated in \cite[Theorem~5.3]{Wong21} no longer hold when the more natural product $m_2^{b,b,b}$ is used---that is, the product for which the Kodaira--Spencer map $\mathsf{KS}$ in \eqref{eqn:KSisom} becomes an isomorphism.  For instance, $\Psi_{i,j} \cdot \Psi_{i,j'}$ is no longer zero with respect to $m_2^{b,b,b}$.  
(The precise value of this product can be determined once one chooses liftings of these elements (in $E_2$) to $\HFbloop$, or equivalently, a splitting of \eqref{eqn:sesforeven}.)

\subsection{Singularities of $W$ occurring along discriminant loci of $Y_\Q$}\label{subsec:KSmapandsing}
For a given \emph{consistent dimer model} $\mathcal{Q}$ embedded in the two-torus $T^2$, let $Y_{\mathcal{Q}}$
denote the associated toric Gorenstein singularity, so that the \emph{Jacobian algebra} $\Jac(\mathcal{Q})$ provides a noncommutative crepant resolution (NCCR) of $Y_{\mathcal{Q}}$. 
The \emph{singular locus} of $Y_{\mathcal{Q}}$ lies along its toric strata of codimension $\ge 2$, which can be read directly from its \emph{toric fan}. 
Note that the fan of $Y_{\mathcal{Q}}$ is simply the cone over the matching polytope $MP(\mathcal{Q})$.

We are interested in the Landau--Ginzburg model on $Y_{\mathcal{Q}}$ with the superpotential
\[
W \in \mathcal{Z} (\Jac) \cong \mathbb{C}[Y_{\mathcal{Q}}],
\]
viewed as a regular function on $Y_{\mathcal{Q}}$. 
Locally on a toric crepant resolution
$\widetilde{Y_{\mathcal{Q}}} \longrightarrow Y_{\mathcal{Q}}$, 
the function $W$ lifts to the product of three affine coordinates. 
Hence its  critical locus coincides precisely with the toric strata of codimension $\ge 2$ in $\widetilde{Y_{\mathcal{Q}}}$.
The map $\widetilde{Y_{\mathcal{Q}}} \to Y_{\mathcal{Q}}$ identifies certain \emph{noncompact codimension-2 strata} (each isomorphic to $\mathbb{C}$), and \emph{contracts} all the \emph{compact codimension-2 strata} (each isomorphic to $\mathbb{P}^1$).

If an edge of $MP(\mathcal{Q})$ contains interior lattice points, then the corresponding codimension-2 stratum acquires an \emph{orbifold singularity}, whose order equals the number of such lattice points. 
In our setup, the edge of $MP(\mathcal{Q})$ normal to $\eta_i$ contains $(m_i - 1)$ interior lattice points. 
This implies that $m_i$ irreducible codimension-2 components (each $\cong \mathbb{C}$) are identified under the map $\widetilde{Y_{\mathcal{Q}}} \to Y_{\mathcal{Q}}$. 
This identification is precisely captured by the element $\Psi_{i,j}$ (for $1 < j \leq m_i$). 
For example, if there were no singularities along the codimension-2 strata, then all even cocycles in $\HFbloop$ would be scalar multiples of the unit.

Similarly, if the elementary (normalized) area of $MP(\mathcal{Q})$ exceeds $1$, then the vertex (the unique torus fixed point) becomes singular. 
This can be detected by the presence of nontrivial elements $\Theta_v$. 
The elementary area of $MP(\mathcal{Q})$ equals the absolute value of the Euler characteristic of the mirror curve $X_\Qv$. 
Hence, it is natural that the number of irreducible components in $\mathbb{L}$ increases as the singularity at the torus fixed point of $Y_{\mathcal{Q}}$ becomes deeper.

\appendix

\section{Comparison between two resolutions of $J$}\label{sec:comparison}

Our goal is to find an explicit formula of the $BV$ operator $\Delta$ on the Hochschild cohomology of $J=\Jac$
$$HH^\ast(J,J) = \hom_{J\textnormal{-bimod}} (J^\bullet, J)$$ 
when applying to the degree $3$ elements, where $J^\bullet$ is the length $3$ resolution introduced in \ref{subsec:koszulresolJ}. The original definition of the $BV$ operator uses the Connes operator on $HH_\ast(J,J)$ which is described in terms of the bar resolution.
We use the following comparison theorem from elementary homological algebra to compare the standard bar resolution of $J = \Jac$ and $J^\bullet$. 

\begin{thm}[Comparison Theorem]\label{eqn:compthm}
For $R-S$ bimodules $M$ and $N$, let $ \cdots P_2 \to P_1 \to P_0 \to M$ be a complex with all $P_i$ projective, and $\cdots A_2  \to A_1 \to A_0 \to N$ an acyclic complex. Then for any $f \in \hom_{R-S} (M,N)$, there exists a chain map $f_\bullet : P_\bullet \to A_\bullet$ that extends $f$, i.e., $f_{-1} = f$. Moreover, any two such liftings are homotopic.
\end{thm}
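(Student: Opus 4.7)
The proof is the standard diagrammatic induction in homological algebra. I would construct $f_\bullet$ degree by degree, using projectivity of $P_\bullet$ at each stage to lift against the surjections onto kernels supplied by the acyclicity of $A_\bullet$; uniqueness up to chain homotopy will follow from the same mechanism applied to the difference of two liftings.

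First I would handle existence. For the base case, since $A_0 \to N$ is surjective (being part of an acyclic augmented complex) and $P_0$ is projective, the composite $P_0 \to M \xrightarrow{f} N$ lifts through $A_0 \to N$ to some $f_0 \colon P_0 \to A_0$. Proceeding inductively, assume that $f_0, \ldots, f_i$ have been constructed so that $d^A_j \circ f_j = f_{j-1} \circ d^P_j$ for $1 \leq j \leq i$. A short diagram chase, using $d^A_i \circ f_i \circ d^P_{i+1} = f_{i-1} \circ d^P_i \circ d^P_{i+1} = 0$, shows that $f_i \circ d^P_{i+1} \colon P_{i+1} \to A_i$ factors through $\ker d^A_i$, which equals $\operatorname{im} d^A_{i+1}$ by acyclicity. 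Hence $d^A_{i+1}$ restricts to a surjection $A_{i+1} \twoheadrightarrow \ker d^A_i$, and projectivity of $P_{i+1}$ yields the desired $f_{i+1}$ with $d^A_{i+1} \circ f_{i+1} = f_i \circ d^P_{i+1}$.

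For uniqueness up to homotopy, let $f_\bullet$ and $g_\bullet$ be two liftings of $f$, and set $h_\bullet := f_\bullet - g_\bullet$, a chain map lifting the zero map $0 \colon M \to N$. I would construct a chain homotopy $s_\bullet \colon P_\bullet \to A_{\bullet + 1}$ with $s_{-1} = 0$ satisfying
\[
d^A_{i+1} \circ s_i + s_{i-1} \circ d^P_i = h_i
\]
inductively by the same recipe: assuming $s_{i-1}$ has been built, the map $h_i - s_{i-1} \circ d^P_i \colon P_i \to A_i$ is easily checked to land in $\ker d^A_i = \operatorname{im} d^A_{i+1}$, and projectivity of $P_i$ produces the required $s_i$.

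The argument is entirely formal and presents no real obstacle beyond the bookkeeping common to such inductive proofs. The one point to keep in mind is that all lifts and homotopies must be taken in the category of $R$-$S$-bimodules, but since projectivity and the lifting property behave exactly as in the one-sided module case, this imposes no additional difficulty.
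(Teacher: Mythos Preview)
Your argument is the standard textbook proof and is correct. Note, however, that the paper does not actually prove this statement: it is quoted in the appendix as the well-known Comparison Theorem from elementary homological algebra and used as a tool to compare the Koszul and bar resolutions of $J$, with no proof supplied. There is therefore nothing to compare against; your write-up simply fills in the classical argument the paper takes for granted.
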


Thus we need to find a chain map $f_\bullet$ that lifts the identify map $\underline{J} \to \underline{J}$. We present one concrete lifting here.
\begin{equation*}
\xymatrix{0  \ar[r] \ar[d]^{f_{\geq 4} \equiv 0}&(J \otimes_{\mathbb{C}} J)^\Bbbk \ar[r]^j \ar[d]^{f_3}& J \otimes_\Bbbk E^\ast \otimes_\Bbbk J \ar[r]^{c} \ar[d]^{f_2} &J \otimes_\Bbbk E \otimes_\Bbbk J \ar[r]^{\quad j^\vee} \ar[d]^{f_1} &J \otimes_\Bbbk J \ar[r] \ar[d]^{f_0} &\underline{J} \ar[d]^{f_{-1}=id} \\
  \cdots \ar[r]& J \otimes_{\Bbbk} \underline{J}^{\otimes 3} \otimes_{\Bbbk} J \ar[r]_{b_3}& J \otimes_{\Bbbk} \underline{J}^{\otimes 2} \otimes_{\Bbbk} J \ar[r]_{b_2}& J  \otimes_{\Bbbk} \underline{J} \otimes_{\Bbbk} J \ar[r]_{\quad b_1}&   J \otimes_{\Bbbk} J \ar[r]&\underline{J} }
\end{equation*}
First few maps are obvious:
$$f_0 = id : J \otimes_\Bbbk J \to J \otimes_\Bbbk J,$$
$$ f_1 =incl. : J \otimes_\Bbbk E \otimes_\Bbbk J \to J \otimes_\Bbbk \underline{J} \otimes_\Bbbk J,$$
and we set $f_{\geq 4} \equiv 0$.

%

We next define $f_2$ as follows:
\begin{equation*}
\begin{array}{rcl}
 f_2 : J \otimes_\Bbbk E^\ast \otimes_\Bbbk J &\to& J \otimes_\Bbbk \underline{J}^{\otimes 2} \otimes_\Bbbk J \\
a \otimes_\Bbbk \bar{x} \otimes_\Bbbk b &\mapsto& \sum_y a \left( \frac{\partial^2 \Phi}{\partial x \partial y} \right)' \otimes_\Bbbk y \otimes_\Bbbk   \left( \frac{\partial^2 \Phi}{\partial x \partial y} \right)''  \otimes_\Bbbk  b .
\end{array}
\end{equation*}
In other words, $f_2   (a \otimes_\Bbbk \bar{x} \otimes_\Bbbk b) = c(a \otimes_\Bbbk \bar{x} \otimes_\Bbbk 1) \otimes_\Bbbk b$.
%
%
To see this is a chain map, 
\begin{equation*}
\begin{array}{rcl}
b_2 ( f_2 (a \otimes_\Bbbk \bar{x} \otimes_\Bbbk b)) &=&  b_2 \left( c(a \otimes_\Bbbk \bar{x} \otimes_\Bbbk 1)) \otimes_\Bbbk b \right)  \\
&=& \overbrace{b_1 (c(a \otimes_\Bbbk \bar{x} \otimes_\Bbbk 1))}^{=j^{\vee} (c(a \otimes_\Bbbk \bar{x} \otimes_\Bbbk 1))=0} \otimes_\Bbbk b + c(a \otimes_\Bbbk \bar{x} \otimes_\Bbbk 1) b \\
&=& c(a \otimes_\Bbbk \bar{x} \otimes_\Bbbk b).
\end{array}
\end{equation*}
Finally, we take $f_3 : (J \otimes_\mathbb{C} J)^\Bbbk \to J \otimes_\Bbbk \underline{J}^{\otimes 3} \otimes_\Bbbk J$ to be
\begin{equation*}
\begin{array}{rcl}
f_3(a\otimes_\mathbb{C} b) &:=& \displaystyle\sum_x \sum_y  b \left(\frac{\partial^2 \Phi}{\partial x \partial y} \right)' \otimes_\Bbbk y \otimes_\Bbbk  \left( \frac{\partial^2 \Phi}{\partial x \partial y} \right)'' \otimes_\Bbbk x    \otimes_\Bbbk a \\
&=&  \displaystyle\sum_x c(b \otimes_\Bbbk \bar{x} \otimes_\Bbbk 1) \otimes_\Bbbk x \otimes_\Bbbk a.
\end{array}
\end{equation*}
Note that using $j^\vee \circ c (=b_1 \circ c)=0$, we have
\begin{equation*}
\begin{array}{rcl}
 b_3 ( f_3(a\otimes_\mathbb{C} b) ) &=& \displaystyle\sum_x\left( b \cdot c(1 \otimes_\Bbbk \bar{x} \otimes_\Bbbk 1) \cdot x \otimes_\Bbbk a - c(b \otimes_\Bbbk \bar{x} \otimes_\Bbbk 1) \otimes_\Bbbk xa \right) \\
 &=& \displaystyle\sum_x\left( b \cdot c( 1 \otimes_\Bbbk \bar{x} \otimes_\Bbbk x)   \otimes_\Bbbk a - c(b \otimes_\Bbbk \bar{x} \otimes_\Bbbk 1) \otimes_\Bbbk xa \right) \\
  &=& \displaystyle\sum_x\left( b \cdot c( x \otimes_\Bbbk \bar{x} \otimes_\Bbbk 1)   \otimes_\Bbbk a - c(b \otimes_\Bbbk \bar{x} \otimes_\Bbbk 1) \otimes_\Bbbk xa \right) \\
 &=& \displaystyle\sum_x\left( b \cdot f_2 (x \otimes_\Bbbk \bar{x} \otimes_\Bbbk a) -  f_2 (b \otimes_\Bbbk \bar{x} \otimes_\Bbbk xa) \right)\\
 &=& f_2 \left( \sum_x bx \otimes_\Bbbk \bar{x} \otimes_\Bbbk a -  b \otimes_\Bbbk \bar{x} \otimes_\Bbbk xa \right) = f_2 (j (a\otimes_\mathbb{C} b)).
 \end{array}
\end{equation*}
where we used in the third line
$x  \cdot  c(1_\Bbbk \otimes \bar{x} \otimes_\Bbbk 1) =  c(1_\Bbbk \otimes \bar{x} \otimes_\Bbbk 1)  \cdot  x$
which is equivalent to $c \circ j =0$.

\subsection{Homotopy inverse of $f$}\label{subsec:g1g1}

By comparison theorem, there also exists a homotopy inverse $g$ of $f$ that fits into
\begin{equation*}
\xymatrix{0  \ar[r] \ar[d]^{f_{\geq 4} \equiv 0}&(J \otimes_{\mathbb{C}} J)^\Bbbk \ar[r]^j \ar[d]^{f_3}& J \otimes_\Bbbk E^\ast \otimes_\Bbbk J \ar[r]^{c} \ar[d]^{f_2} &J \otimes_\Bbbk E \otimes_\Bbbk J \ar[r]^{\quad j^\vee} \ar[d]^{f_1} &J \otimes_\Bbbk J \ar[r] \ar@{=}[d]^{f_0=id} &\underline{J} \ar@{=}[d]^{f_{-1}=id} \\
  \cdots \ar[r] \ar[d]^{ 0}& J \otimes_{\Bbbk} \underline{J}^{\otimes 3} \otimes_{\Bbbk} J \ar[r]_{b_3} \ar[d]^{g_3}& J \otimes_{\Bbbk} \underline{J}^{\otimes 2} \otimes_{\Bbbk} J \ar[r]_{b_2}  \ar[d]^{g_2}& J  \otimes_{\Bbbk} \underline{J} \otimes_{\Bbbk} J \ar[r]_{\quad b_1} \ar[d]^{g_1}&   J \otimes_{\Bbbk} J \ar[r] \ar@{=}[d] &\underline{J} \ar@{=}[d]   \\
  0  \ar[r] &(J \otimes_{\mathbb{C}} J)^\Bbbk \ar[r]^j  & J \otimes_\Bbbk E^\ast \otimes_\Bbbk J \ar[r]^{c}  &J \otimes_\Bbbk E \otimes_\Bbbk J \ar[r]^{\quad j^\vee}   &J \otimes_\Bbbk J \ar[r]  &\underline{J} }.
\end{equation*}
For our purpose, it is enough to choose an explicit $g_1$.
It depends on a choice of a lifting $\sigma: J \to \mathbb{C}Q$, and we fix one here. 
Then we define $g_1$ to be a composition of $\sigma$ and the following map
$$ \mathfrak{c}: \mathbb{C}Q \to J \otimes_\Bbbk E \otimes_\Bbbk J \qquad x_1 x_2 \cdots x_k \mapsto \sum_i x_1 \cdots x_{i-1} \otimes_\Bbbk x_i \otimes_\Bbbk x_{i+1} \cdots x_k.$$
More precisely, $g_1$ is given as the composition of sequence of maps
\begin{equation}\label{eqn:g1g1g1}
g_1 : J \otimes_\Bbbk \underline{J} \otimes_\Bbbk J \to J \otimes_\Bbbk \mathbb{C} Q \otimes_\Bbbk J \to (J \otimes_\Bbbk \mathbb{C}Q) \otimes_\Bbbk E  \otimes_\Bbbk (\mathbb{C}Q \otimes_\Bbbk J) \to J \otimes_\Bbbk E \otimes_\Bbbk J
\end{equation}
where the first map and second map are $1\otimes_\Bbbk \sigma \otimes_\Bbbk 1$ and $1\otimes_\Bbbk \mathfrak{c} \otimes_\Bbbk 1$, and the last map is the projection $\mathbb{C} Q \to J$ (composed with the multiplication $J \otimes_\Bbbk J \to J$). 

\subsection{Proof of Lemma \ref{lem:bvopex}}\label{subsec:appbvformula}

We start by the Connes differential $B$ on the Hochschild homology $HH_\ast (J,J)$ which dualizes to the BV operator $\Delta$ via \eqref{eqn:hochdualcy},
\begin{equation*}
\xymatrix{  HH_{3-\ast} (J,J) \ar[r]^{B} \ar[d]_{\cong} &HH_{4-\ast} (J,J) \ar[d]^{\cong} \\
HH^\ast (J,J) \ar[r]^{\Delta} & HH^{\ast-1} (J,J) }
\end{equation*}
$B$ is originally described in terms of  the bar resolution $P^\bullet:= J \otimes_\Bbbk\underline{J}^{\otimes \bullet} \otimes_{\Bbbk} J $, and hence, is an operator on the standard bar complex (another model for the Hochschild chain complex)
$$ J \otimes_{J\textsf{-Bimod}} P^\bullet = \oplus_{n \geq 0} \left( J \otimes_\Bbbk \underline{J}^{\otimes n} \right)_{cyc}.
$$ 
Specifically, the Connes differential $B$ is defined by
\begin{equation}\label{eqn:Connes}
\begin{array}{rcl}
 B: \left( J \otimes_\Bbbk \underline{J}^{\otimes n} \right)_{cyc} &\longrightarrow & \left( J \otimes_\Bbbk \underline{J}^{\otimes n+1} \right)_{cyc} \\
 a_0 \otimes_\Bbbk a_1 \otimes_\Bbbk \cdots \otimes_\Bbbk a_n &\mapsto&  \sum_{i=0}^{n} (-1)^{in} \, 1 \otimes a_i \otimes \cdots \otimes a_n \otimes a_0 \otimes \cdots \otimes a_{i-1}.
\end{array}
\end{equation}

\begin{proof}[Proof of Lemma \ref{lem:bvopex}]
The case of our interest is the degree $3$ Hochschild cocycle in $HH^\ast(J,J)$, and hence, dually, we need to look at the Connes operator applying to the Hochschild cycle of degree $0$ (i.e., $n=0$ in \eqref{eqn:Connes}):
$$ B:\underline{J}_{cyc} \to (J \otimes_\Bbbk \underline{J})_{cyc} \qquad a  \mapsto 1 \otimes_\Bbbk a$$ 
for a cyclic path $J_{cyc}$. We want to read off in terms of the Koszul resolution (or the associated Hochschild chain complex with coefficient $J$). 
That is, we  find $\tilde{B}$ that commutes the diagram
\begin{equation*}
\xymatrix{   \underline{J} \otimes_{J\textsf{-Bimod}} (J \otimes_\Bbbk J)  \ar@{=}[d] \ar[r]^{\hspace{-0.6cm}\tilde{B}} &  \underline{J} \otimes_{J\textsf{-Bimod}} (J \otimes_\Bbbk E \otimes_\Bbbk J)  \ar@<-0.5ex>[d]_{id \otimes f_1}  \\
\underline{J} \otimes_{J\textsf{-Bimod}}(J \otimes_\Bbbk J) \ar[r]_{\hspace{-0.6cm} B} &
\underline{J} \otimes_{J\textsf{-Bimod}}(J \otimes_\Bbbk \underline{J} \otimes_\Bbbk J)  \ar@<-0.5ex>[u]_{id \otimes g_1} 
} \qquad \cong \qquad
\xymatrix{ \underline{J}_{cyc}  \ar@{=}[d] \ar[r]^{\hspace{-0.7cm}\tilde{B}}&  (\underline{J} \otimes_\Bbbk E)_{cyc} \ar@<-0.5ex>[d]_{id \otimes f_1}   \\
  \underline{J}_{cyc} \ar[r]_{\hspace{-0.7cm}B} & (\underline{J} \otimes_\Bbbk \underline{J})_{cyc}   \ar@<-0.5ex>[u]_{id \otimes g_1} 
}
\end{equation*}
where vertical maps are obtained by the comparison map between $J^\bullet$ and $P^\bullet$\begin{equation*}
\xymatrix{ J^\bullet:  \quad J \otimes_\Bbbk E \otimes_\Bbbk J \ar[r] \ar@<3.5ex>[d]_{f_1} & J \otimes_\Bbbk J \ar@{=}[d] \\
P^\bullet : \quad J \otimes_\Bbbk \underline{J} \otimes_\Bbbk J \ar[r] \ar@<-4.5ex>[u]_{g_1}  & J \otimes_\Bbbk J
} 
\end{equation*}
given in \eqref{eqn:g1g1g1}.
For this purpose, we only need to compute the image of $B(a)=1 \otimes_\Bbbk a$ under the comparison map $id \otimes g_1$, i.e.,
$$ \tilde{B} (a) = id \otimes g_1 (1 \otimes_\Bbbk a).$$
By definition, this can be computed from (any of) its lifting $\sigma(a)=x_1 x_2 \cdots x_k \in \mathbb{C} \Q$ as in \ref{subsec:g1g1}. Since $a$ is a cyclic path, we have $h(x_k) = t(x_1)$. 

Identifying $1 \otimes_\Bbbk a = 1 \otimes_{J\textsf{-Bimod}} (1 \otimes_\Bbbk a \otimes_{\Bbbk} 1)$ under $(\underline{J} \otimes_\Bbbk \underline{J})_{cyc}  \cong  \underline{J} \otimes_{J\textsf{-Bimod}} (J \otimes_\Bbbk \underline{J} \otimes_\Bbbk J)$, the image $id \otimes g_1 (1 \otimes_\Bbbk a)$ is given by
\begin{equation*}
\xymatrix{ \displaystyle\sum_i (x_{i+1} \cdots x_k )(x_1 \cdots x_{i-1} )\otimes_\Bbbk x_i   \ar@{=}[r]&  1 \otimes_{J\textsf{-Bimod}}
 \left(\displaystyle\sum_{i} x_1 \cdots  x_{i-1} \otimes_\Bbbk x_i \otimes_{\Bbbk} x_{i+1} \cdots x_k \right) \\
B(a)=1 \otimes_\Bbbk a   \ar@{=}[r] \ar[u]& 1 \otimes_{J\textsf{-Bimod}} (1 \otimes_\Bbbk a \otimes_{\Bbbk} 1)  \ar[u]^{id \otimes g_1} 
 }
 \end{equation*}
where $x_i$ denotes its projection in $J$ by abuse of notation, and the equality at the bottom is from the identification $(\underline{J} \otimes_\Bbbk E)_{cyc} \cong  \underline{J} \otimes_{J\textsf{-Bimod}} (J \otimes_\Bbbk E \otimes_\Bbbk J) $.

By duality \ref{eqn:hochdualcy}, this immediately implies that $$\Delta:HH^3 (J,J) \to HH^2 (J,J)$$ is given by
$$\Delta ( a ) =    \displaystyle\sum_i (x_{i+1} \cdots x_k )(x_1 \cdots x_{i-1} )\otimes_\Bbbk \bar{X_i}$$ 
where we view $a \in \underline{J}$ as a degree 3 Hochschild cochain (cocycle) this time. 

\end{proof}
%
%

\bibliographystyle{amsalpha}
\bibliography{geometry}

\end{document}